\numberwithin{equation}{section}
\newcommand{\eiso}{\simeq}
\newcommand{\closure}[1]{\overline{#1}}
\newcommand{\astt}{\bullet}
\newcommand{\arisorr}{\ar@{}[r]|*[@]{\simeq}}
\newcommand{\arisodd}{\ar@{}[d]|*[@]{\simeq}}
\newcommand{\arisoll}{\ar@{}[l]|*[@]{\simeq}}
\newcommand{\arisouu}{\ar@{}[u]|*[@]{\simeq}}
\newcommand{\arisor}{\ar@{}[r]|-*[@]{\simeq}}
\newcommand{\arisod}{\ar@{}[d]|-*[@]{\simeq}}
\newcommand{\arisol}{\ar@{}[l]|-*[@]{\simeq}}
\newcommand{\arisou}{\ar@{}[u]|-*[@]{\simeq}}
\newcommand{\tens}{\otimes}
\newcommand{\tensor}{\otimes}
\newcommand{\SH}{\mathrm{SH}}
\newcommand{\SHf}{\SH^\mathrm{fin}}
\newcommand{\SHpf}{\SHf_{(p)}}
\newcommand{\SHfp}{\SHpf}
\newcommand{\SHftor}{\SHf_{\mathrm{tor}}}
\newcommand{\Fp}{\mathbb{F}_p}
\newcommand{\eps}{\epsilon}
\renewcommand{\v}{v}
\newcommand{\p}{\mathfrak{p}}
\newcommand{\Z}{\mathbb{Z}}
\newcommand{\Q}{\mathbb{Q}}
\newcommand{\cZ}{\mathcal{Z}}
\newcommand{\cW}{\mathcal{W}}
\newcommand{\T}{\mathcal{T}}
\newcommand{\I}{\mathcal{I}}
\newcommand{\J}{\mathcal{J}}
\newcommand{\unit}{\mathbb{1}}
\newcommand{\K}{\mathcal{K}}
\renewcommand{\L}{\mathcal{L}}
\newcommand{\C}{\mathcal{C}}
\newcommand{\D}{\mathcal{D}}
\renewcommand{\P}{\mathcal{P}}
\newcommand{\cQ}{\mathcal{Q}}
\newcommand{\xra}{\xrightarrow}
\newcommand{\ra}{\rightarrow}
\newcommand{\Si}{\Sigma}
\newcommand{\ftri}[6]{#1 \xrightarrow{#4} #2 \xrightarrow{#5} #3 \xrightarrow{#6} \Si #1}
\newcommand{\ftrii}[6]{\xymatrix@1{ #1 \ar[r]^{#4} & #2 \ar[r]^{#5} & #3 \ar[r]^{#6} & \Si #1 }}
\newcommand{\lfraction}[5]{(\xymatrix@1{ #1 \ar[r]^-{#2} & #3 & #5 \ar[l]_-{#4}})}
\newcommand{\lfractionstretch}[6]{(\xymatrix@1 @C=#6{ #1 \ar[r]^-{#2} & #3 & #5 \ar[l]_-{#4}})}
\newcommand{\id}{\mathrm{id}}
\DeclareMathOperator{\colim}{colim}
\DeclareMathOperator{\Spech}{Spec^h}
\DeclareMathOperator{\Spec}{Spec}
\DeclareMathOperator{\Spc}{Spc}
\DeclareMathOperator{\End}{End}
\DeclareMathOperator{\supp}{supp}
\DeclareMathOperator{\cone}{cone}
\DeclareMathOperator{\Center}{Center}
\DeclareMathOperator{\op}{op}
\DeclareMathOperator{\rep}{rep}
\newtheorem{theorem}[equation]{Theorem}
\newtheorem{lemma}[equation]{Lemma}
\newtheorem{proposition}[equation]{Proposition}
\newtheorem{corollary}[equation]{Corollary}
\theoremstyle{definition}
\newtheorem{definition}[equation]{Definition}
\theoremstyle{remark}
\newtheorem{remark}[equation]{Remark}
\theoremstyle{remark}
\newtheorem{example}[equation]{Example}
\theoremstyle{definition}
\theoremstyle{definition}
\theoremstyle{definition}
\newtheorem{notation}[equation]{Notation}
\title[Higher Comparison Maps]{Higher Comparison Maps for the Spectrum of a Tensor Triangulated Category}
\author{Beren Sanders}
\date{\today}
\begin{document}
\begin{abstract}
	For each object in a tensor triangulated category, we construct a natural
	continuous map from the object's support---a closed subset of the
	category's triangular spectrum---to the Zariski spectrum of a certain
	commutative ring of endomorphisms.  When applied to the unit object this
	recovers a construction of P.~Balmer.  These maps provide an iterative
	approach for understanding the spectrum of a tensor triangulated category
	by starting with the comparison map for the unit object and iteratively
	analyzing the fibers of this map via ``higher'' comparison maps.  We
	illustrate this approach for the stable homotopy category of finite
	spectra.  In fact, the same underlying construction produces a whole
	collection of new comparison maps, including maps associated to (and
	defined on) each closed subset of the triangular spectrum.  These latter
	maps provide an alternative strategy for analyzing the spectrum by
	iteratively building a filtration of closed subsets by pulling back
	filtrations of affine schemes.
\end{abstract}

\maketitle
\tableofcontents

\section*{Introduction}
Many triangulated categories arising in nature come equipped with natural
\mbox{$\tensor$-product} structures---that is, they are \emph{tensor}
triangulated categories---and in recent years there has been a growing
appreciation for the significance of these $\tensor$-structures.  For example,
a (nice) scheme can be recovered from the tensor triangulated structure of its
derived category of perfect complexes, but not from the triangulated structure
alone (see \cite[Remark~64]{Balmer_SSS}, for example).  Using the
$\tensor$-structure, Paul Balmer \cite{Balmer_Spectrum} has introduced the
\emph{spectrum} of a tensor triangulated category.  Just as the spectrum of a
commutative ring provides a geometric approach to commutative algebra, the
spectrum of a tensor triangulated category provides a geometric approach to the
study of tensor triangulated categories---an approach referred to as
\emph{tensor triangular geometry} by its originators.  The present paper makes
a contribution to tensor triangular geometry and the antenatal reader is
referred to \cite{Balmer_TTG} for an introduction to this relatively new field
and for additional background that leads to the present work.

Determining the spectrum of a given tensor triangulated category is a highly
non-trivial problem, which is essentially equivalent to classifying the thick
triangulated $\tensor$-ideals in the category---in other words, classifying the
objects of the category up to the naturally available structure:
$\tensor$-products, $\oplus$-sums, $\oplus$-summands, suspensions, and
cofibers.  Major classification theorems in algebraic geometry, modular
representation theory and stable homotopy theory give complete descriptions of
the spectrum in several important examples, but one of the goals of tensor
triangular geometry is to go the other way---to develop techniques for
determining the spectrum (and thereby solve the classification problem), or to
at least say something interesting about the spectrum when a full determination
proves to be too ambitious.

In any tensor triangulated category, the endomorphism ring of the unit is
commutative, and the first step towards saying something about the spectrum of
a general tensor triangulated category was taken in \cite{Balmer_SSS} where
continuous maps 
\begin{equation*}
	\label{balmer_comparison_maps}
	\rho : \Spc(\K) \ra \Spec(\End_\K(\unit)) \qquad\text{ and }\qquad
\rho^\bullet: \Spc(\K) \ra \Spech(\End_\K^\bullet(\unit))
\end{equation*}
were defined going from the triangular spectrum to the (homogeneous) spectrum
of the (graded) endomorphism ring of the unit.  These ``comparison maps'' are
often surjective and so attention focusses on understanding conditions under
which they are injective and more generally on understanding their fibers.  If
$\K = D^\text{perf}(A)$ is the derived category of perfect complexes of a
commutative ring~$A$, then $\End_\K(\unit)$ is isomorphic to $A$ and $\rho$
turns out to be a homeomorphism.  This can be proved directly 
and provides an alternative proof of the affine case of the
Hopkins-Neeman-Thomason theorem.  On the other hand, if $G$ is a finite group,
$k$ is a field, and $\K = D^b(kG\text{-mod})$ with $\tens=\tens_k$, then
$\End_\K^\bullet(\unit)$ is group cohomology $H^\bullet(G,k)$ and it is known
using the classification theorem of Benson-Carlson-Rickard that the map
$\rho^\bullet$ is a homeomorphism.
A more direct proof of the injectivity of $\rho^\bullet$ in this example would
provide a new proof of the Benson-Carlson-Rickard theorem.  In general,
however, one cannot expect the (graded) endomorphisms of the unit to determine
the global structure of the whole category and we are left with the important
general problem of understanding the fibers of these comparison maps.

In this paper, we will construct new comparison maps which generalize those
mentioned above.  More specifically, for each object $X$ in a tensor
triangulated category $\K$ we will define maps
\begin{equation*}
\label{new_comparison_maps}
\rho_X :\supp(X)\ra \Spec(R_X) \qquad\text{ and }\qquad \rho_X^\astt : \supp(X)\ra \Spech(R_X^\astt)
\end{equation*}
from the support of $X$ (a closed subset of the triangular spectrum) to the
(homogeneous) spectrum of a certain (graded-)commutative ring of (graded)
endomorphisms of $X$, which recover the original comparison maps when
$X=\unit$.
The author's initial interest in these new comparison maps stems from the fact
that they provide a method for studying the fibers of the original maps.  This
in turn leads to an iterative strategy for studying the spectrum based on a
repeated analysis of the fibers of a sequence of generalized comparison maps.
The idea runs as follows. Given an arbitrary tensor triangulated category $\K$,
we can take the unit object and consider the comparison map
$\rho_\unit:\Spc(\K)\ra \Spec(R_\unit)$.  Understanding the fibers of this map
reduces by a localization technique to the case when $R_\unit$ is a local ring.
If the unique closed point $\mathfrak{m} = \langle f_1, \ldots, f_n \rangle$ is
finitely generated then it is straightforward to show that the fiber
$\rho_\unit^{-1}(\{\mathfrak{m}\})$ is equal to the support of the object $X_1
:= \cone(f_1) \tens \cdots \tens \cone(f_n)$.  This fiber can then be examined
more closely by considering the ``higher'' comparison map $\rho_{X_1} :
\supp(X_1)\rightarrow \Spec (R_{X_1})$ associated with the object $X_1$.  The
same procedure can then be used to study the fibers of $\rho_{X_1}$ and the
process repeats itself.  Following any particular thread in this process
produces a linear filtration
\begin{equation*}
	\Spc(\K) \supset \supp(X_1) \supset \supp(X_2) \supset \cdots \supset \supp(X_n)
\end{equation*}
which can be extended for however long the rings involved possess finitely
generated primes.

One of the difficulties with this method is that to understand the fiber over a
non-closed point we must first apply a localization procedure.  The reason is
that for a finitely generated prime $\mathfrak{p} = \langle f_1,\ldots,f_n
\rangle$, the support of $\cone(f_1)\tens \cdots \tens \cone(f_n)$ is actually
the preimage of the closure $\closure{\{\mathfrak{p}\}} = V(\mathfrak{p})$
rather than the fiber over~$\mathfrak{p}$.  More generally, $\rho_X^{-1}(V(I))
= \supp(\cone(f_1)\tens \cdots \tens \cone(f_n))$ for any finitely generated
ideal $I = \langle f_1, \ldots, f_n \rangle$.  Thus, rather than examining the
fibers of a comparison map $\rho_X$, an alternative strategy is to take a look
at the preimages of all of the Thomason closed subsets $V(I) \subset
\Spec(R_X)$.\footnote{Recall that a Thomason closed subset is the same thing as
	a closed subset whose complement is quasi-compact. In the case of an affine
	scheme $\Spec(A)$ this a closed set of the form $V(I)$ for a finitely
	generated ideal $I \subset A$, while in the case of $\Spc(\K)$ this is a
	closed subset of the form $\supp(a)$ for an object $a \in \K$. These
	notions will be reviewed in Section~\ref{section:spectral_and_thomason}.}
Choosing generators of the ideal $I$ provides us with an object of $\K$ whose
support is the closed subset $\rho_X^{-1}(V(I))$ and we can examine this subset
further via the comparison map associated with this ``generator''
object.\footnote{A ``generator'' of a closed subset $\cZ \subset \Spc(\K)$ is
	an object $a \in \K$ with $\supp(a) = \cZ$.}

Both of these strategies suffer from the fact that (a) they only deal with
finitely generated primes and Thomason closed subsets (which may be an
undesirable limitation in non-noetherian situations) and (b) they involve
non-canonical choices of generators.  The fundamental idea in both approaches
is to examine a Thomason closed subset $\cZ\subset \Spc(\K)$ by the comparison
map associated with an object which generates $\cZ$, but this comparison map
depends on the choice of generator.  Such considerations lead to the desire for
a ``generator-independent'' comparison map which only depends on the Thomason
closed subset on which it is defined, and more generally for a comparison map
associated to \emph{every} closed subset of the spectrum.

Indeed, the map $\rho_X$ is just one of a host of new comparison maps
introduced in this paper.  The most general construction associates a natural,
continuous map \[\rho_\Phi : \bigcap_{X \in \Phi} \supp(X) \ra \Spec(R_\Phi) \]
to each set of objects $\Phi \subset \K$ that is closed under the
$\tens$-product.  Taking $\Phi = \{X^{\tens n} \mid n \ge 1\}$ gives the map
$\rho_X$ above, while taking $\Phi = \{ a \in \K \mid \supp(a) \supset \cZ \}$
gives a map $\rho_\cZ: \cZ \ra \Spec(R_\cZ)$ associated to (and defined on) an
arbitrary closed subset of the spectrum.  Following on from the previous
discussion, the latter ``closed set'' comparison maps $\rho_\cZ$ afford perhaps
the most robust strategy for studying the spectrum.  The idea is to iteratively
build a filtration of closed subsets by pulling back filtrations of the affine
schemes $\Spec(R_\cZ)$.  This idea has the advantage that it utilizes all
closed subsets (not just Thomason ones) and is purely deterministic: no choices
are involved.  The hope is that ultimately the filtration will become fine
enough to completely determine the spectrum.  Although certain difficulties
prevent these strategies from working out in the full generality that one might
hope, the author nevertheless considers them to be the primary justification
for the theory developed in this paper.

In any case, one example where none of the difficulties arise is the stable
homotopy category of finite spectra $\SHf$.  This is an elusive example for
tensor triangular geometry.  Although the structure of the space $\Spc(\SHf)$
is known via the work of Devinatz, Hopkins and Smith (see
\cite{HS98} and \cite[Section~9]{Balmer_SSS}), the unit comparison map
\begin{equation*}
	\label{diagram:stable_rho_map}
	\xymatrix @R=1em{
 &		\C_{2,\infty} \ar@{-}[d] & \C_{3,\infty}\ar@{-}[d] & \cdots & \C_{p,\infty}\ar@{-}[d] & \cdots \\
 \Spc(\SHf) = \ar[dddddd]_{\rho_{\unit}}&		\vdots \ar@{-}[d]& \vdots \ar@{-}[d] && \vdots\ar@{-}[d] \\
	&	\C_{2,n+1}\ar@{-}[d] & \C_{3,n+1} \ar@{-}[d] & \cdots& \C_{p,n+1}\ar@{-}[d] &\cdots \\
	&	\C_{2,n}\ar@{-}[d] & \C_{3,n} \ar@{-}[d] & \cdots& \C_{p,n}\ar@{-}[d] & \cdots\\
&		\vdots\ar@{-}[d] & \vdots \ar@{-}[d] & & \vdots\ar@{-}[d] \\
&		\C_{2,2}\ar@{-}[drr] & \C_{3,2} \ar@{-}[dr] & \cdots& \C_{p,2} &\cdots \\
&		&& \SHftor \ar@{-}[ur] && \\
\Spec(\Z) = & 2\Z \ar@{-}[drr] & 3\Z \ar@{-}[dr]& \cdots & p\Z & \cdots \\
 &&& (0) \ar@{-}[ur]
}
\end{equation*}
is far from injective
and understanding the fibers (which are given by the Morava $K$-theories) is related
to the important problem of understanding
residue fields in tensor triangular geometry.
In any case, the
iterative 
procedure
we have mentioned above works out very nicely in this example,
and it provides one illustration of how the higher comparison maps can work out
in practice.
However, determining the comparison maps in this example---in
particular, determining the structure of the rings $R_X^\astt$---requires the
full strength of the results in \cite{HS98} on nilpotence and periodicity in
stable homotopy theory.  In particular, it presupposes knowledge of the
classification of thick subcategories in $\SHf$.  Nevertheless, these results
allow us to show that the new comparison maps refine the view of $\SHf$ provided
by Balmer's original comparison map $\rho_\unit$.  This is an important test for our theory
as other generalizations of the original maps have failed to provide additional
insight into this example.

The primary portion of this paper is devoted to the construction of the new
comparison maps and laying the foundations of their basic theory.  For example,
we establish their naturality, show that passing to the idempotent completion
does not change anything, and develop a technique for localizing with respect
to primes in $R_\Phi$ (which has been alluded to in the discussion above and
generalizes the ``central localization'' of \cite{Balmer_SSS}).  Other results
include establishing that the object comparison maps $\rho_X$ are invariant
under a number of natural operations that can be performed on the object $X$
such as taking suspensions, or duals, or $\tens$-powers, etc.  In addition, we
establish some connections of a topological nature between the target affine
scheme $\Spec(R_\Phi)$ and the domain of $\rho_\Phi$; for example, we show that
the domain is connected if and only if $\Spec(R_\Phi)$ is connected. Other
results of that nature include establishing that the image of $\rho_\Phi$ is
always dense in $\Spec(R_\Phi)$.  

There remain
a number of unresolved questions and speculations related to the comparison
maps defined in this paper and it will take more examples to determine the
value of these constructions. 
Finally, it is worth mentioning that
\cite{GregIvo_2rings} has also defined generalizations of the original
comparison maps from \cite{Balmer_SSS}. However, that work focuses on
invertible objects in the category and goes in quite a different direction than
the present paper.

\section{Tensor triangulated categories}
A tensor triangulated category is a triangulated category $(\K,\Si)$ together with a ``compatible'' 
symmetric monoidal structure $(\K,\tensor,\unit)$.
More precisely, it is required that 
$- \tensor a : \K \ra \K$ and $a\tensor -:\K \ra \K$ be 
exact functors for each $a \in \K$.
Implicit in this statement is the data of two natural isomorphisms
\begin{equation}
	\label{suspension_isomorphisms}
	\Si a \tensor b \simeq \Si(a \tensor b) \qquad\text{ and }\qquad a \tensor \Si b \simeq \Si(a\tensor b) 
\end{equation}
which relate the suspension and the tensor.
In addition to requiring $a \tensor -$ and $- \tensor a$ be exact functors, these
suspension isomorphisms are required to relate suitably with the
symmetry, associator, and unitor isomorphisms of the symmetric monoidal
structure (see \cite[Appendix~A.2]{HPS97} for details).
Finally, it is assumed that the diagram
\begin{eqnarray}
	\label{diagram:anticommutes}
	\begin{gathered}
	\xymatrix @=1.5em{
		\label{commute_or_anticommute}
		\Si a \tensor \Si b \arisod \arisorr & \Si(a \tensor \Si b) \arisod \\
		\Si(\Si a \tensor b) \arisorr & \Si^2 (a \tensor b) }
\end{gathered}
\end{eqnarray}
anti-commutes.

By a morphism of tensor triangulated categories we mean a functor $F:\C \ra \D$
that is both an exact functor of triangulated categories as well
as a strong $\tens$-functor. Moreover, 
some compatibility is required between
the various isomorphisms that are attached to the functor
($F \Si a \simeq \Si Fa$, $Fa \tens Fb \simeq F(a \tens b)$, $F\unit_\C \simeq \unit_\D$),
as well as between those isomorphisms and the suspension isomorphisms 
of the categories $\C$ and $\D$.
The compatibility axioms are mostly obvious and are not usually spelled out. 
One point to be made is that the naturality of our graded comparison maps
depends on the compatibility axiom which asserts that 
\begin{eqnarray}
	\label{diagram:morphism_compatibility}
	\begin{gathered}
\xymatrix @=1.5em{
		\Si(Fa \tens Fb) \arisorr \arisodd & \Si Fa \tens Fb \arisorr & F\Si a \tens Fb \arisodd \\
		\Si F(a \tens b) \arisorr & F \Si(a \tens b) \arisorr & F(\Si a \tens b)
	}
\end{gathered}
\end{eqnarray}
commutes.

	Many authors include stronger assumptions about the monoidal structure in
	their definition of a tensor triangulated category. For example, it is
	common to assume that the monoidal structure is a \emph{closed} symmetric monoidal structure
(i.e., that internal homs exist).
	For the general constructions of this paper, 
	we don't need anything more than a symmetric monoidal
	structure. 
	However, for some results we will need to assume that our tensor triangulated category
	is \emph{rigid}; in other words, 
	that every object is dualizable and that the ``taking duals'' functor
	$D: \K^{\op} \ra \K$ preserves exact triangles. See \cite[Definition~1.5]{Balmer_SSS} for a precise definition.
	Many tensor triangulated categories of interest are rigid, but
	we will be explicit about when this
	assumption is required.

\begin{notation}
For a collection of objects $\mathcal{E}$ in a tensor triangulated category~$\K$, 
$\langle \mathcal{E} \rangle$ will denote the thick $\tens$-ideal generated by $\mathcal{E}$.
\end{notation}

The reader is assumed to be familiar with the basic definitions and theory of
the spectrum $\Spc(\K)$ of an essentially small tensor triangulated category
$\K$ (introduced in \cite{Balmer_Spectrum}), as well as with the notion of a
\emph{local} tensor triangulated category (introduced in \cite[Section~4]{Balmer_SSS}).
In this paper, we will not explicitly state
the assumption
that $\K$ is essentially small, but it will be tacitly assumed any time we speak of~$\Spc(\K)$.
Although the spectrum has the structure of a locally ringed space,
for our purposes only its topological structure is relevant.
Bear in mind that the Balmer topology on $\Spc(\K)$ is \emph{not} the Zariski topology one would obtain
by mimicing the definition of the Zariski topology on the prime spectrum of a
commutative ring; from the point of view of spectral spaces (see below) it is the Hochster-dual of the Zariski topology.
The result is that 
some things in tensor triangular geometry behave a bit differently
than one might expect.
For example, closure in the Balmer topology goes \emph{down} rather than \emph{up}:
$\overline{ \{\P\} } = \{ \cQ \in \Spc(\K) \mid \cQ \subset \P \}$.
In particular, the closed points in $\Spc(\K)$ are the \emph{minimal} primes.
Another consequence of the differences between the Balmer and Zariski topologies is that
our comparison maps will be inclusion-\emph{reversing}.

\section{Spectral spaces and Thomason subsets}\label{section:spectral_and_thomason}
\begin{definition}
	A topological space is \emph{spectral} if it is $T_0$, quasi-compact, the quasi-compact open subsets are closed under finite intersection and form an open basis, and every non-empty irreducible closed
	subset has a generic point.
\end{definition}
Hochster \cite{Hochster69} showed that a topological space is spectral if and only if it is homeomorphic to the Zariski spectrum of a commutative ring.
On the other hand, the spectrum of a tensor triangulated category is spectral and it follows from the results of Hochster, Thomason, and Balmer that every spectral space arises in this way.
\begin{definition}
	A subset $\mathcal{Y} \subset \mathcal{X}$ of a spectral space is \emph{Thomason} if it is a
	union of closed subsets each of which has quasi-compact complement.
\end{definition}
Hochster showed that every spectral space admits a ``dual'' spectral topology whose open sets are precisely the Thomason subsets.
The nomenclature comes from the prominent role these ``dual-open'' sets play in the work of Thomason \cite{Thomason97}.
In this paper, we will be interested in subsets of spectral spaces that are both Thomason and closed:
\begin{lemma}
	\label{lemma:thomason_subsets}
	Let $\K$ be a tensor triangulated category and let $\mathcal{Z}$ be a closed subset of $\Spc(\K)$.
	The following are equivalent:
	\begin{enumerate}
		\item $\mathcal{Z}$ is Thomason;
		\item $\mathcal{Z}$ has quasi-compact complement;
		\item $\mathcal{Z} = \supp(a)$ for some $a \in \K$.
	\end{enumerate}
\end{lemma}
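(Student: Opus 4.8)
The plan is to prove the cycle of implications $(3) \Rightarrow (2) \Rightarrow (1) \Rightarrow (3)$, using only elementary properties of the support together with two facts recalled from \cite{Balmer_Spectrum}: that $\Spc(\K)$ is a spectral space, and that for each $a \in \K$ the complement $U(a) := \Spc(\K) \setminus \supp(a)$ is a quasi-compact open subset, these $U(a)$ forming an open basis. First I would record the key consequence: from $\supp(a \tens b) = \supp(a) \cap \supp(b)$ one gets $U(a) \cup U(b) = U(a \tens b)$, so any finite union of basic opens is again of the form $U(c)$; since the basic opens are quasi-compact and form an open basis, a quasi-compact open subset is a finite union of basic opens, hence itself of the form $U(c)$. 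This identifies the quasi-compact open subsets of $\Spc(\K)$ with the sets $U(a)$, $a \in \K$. Granting this, $(3) \Rightarrow (2)$ is immediate, since $\mathcal{Z} = \supp(a)$ has complement $U(a)$; and $(2) \Rightarrow (1)$ is a tautology, a closed subset with quasi-compact complement being a union of closed subsets with quasi-compact complement (namely, of itself).

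For $(1) \Rightarrow (3)$, write $\mathcal{Z} = \bigcup_{i \in I} \mathcal{Z}_i$ with each $\mathcal{Z}_i$ closed and $\mathcal{Z}_i^c := \Spc(\K) \setminus \mathcal{Z}_i$ quasi-compact open, so that $\Spc(\K) \setminus \mathcal{Z} = \bigcap_{i} \mathcal{Z}_i^c$. Since $\mathcal{Z}$ is assumed closed, $\Spc(\K) \setminus \mathcal{Z}$ is open, hence a union $\bigcup_{\lambda} U(b_\lambda)$ of basic opens. The crux is to pass to a \emph{finite} subcover, and for this I would invoke the constructible (patch) topology on $\Spc(\K)$, which is compact by Hochster \cite{Hochster69}. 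Each quasi-compact open $\mathcal{Z}_i^c$ is clopen for the patch topology, so the intersection $\Spc(\K) \setminus \mathcal{Z} = \bigcap_i \mathcal{Z}_i^c$ is patch-closed, hence patch-compact; since the patch topology refines the original one, the $U(b_\lambda)$ are patch-open and cover it, so finitely many suffice. Thus $\Spc(\K) \setminus \mathcal{Z} = U(b_{\lambda_1}) \cup \cdots \cup U(b_{\lambda_n}) = U(b_{\lambda_1} \tens \cdots \tens b_{\lambda_n})$, and taking $a := b_{\lambda_1} \tens \cdots \tens b_{\lambda_n}$ yields $\mathcal{Z} = \supp(a)$, which is $(3)$.

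The only substantive step is the last one, and within it the only real idea is the appeal to the patch topology to upgrade ``open and an intersection of quasi-compact opens'' to ``quasi-compact''; everything else is routine manipulation of the identities $\supp(a \tens b) = \supp(a) \cap \supp(b)$ and $U(a) \cup U(b) = U(a \tens b)$. Equivalently, one could phrase this step through Hochster's dual spectral topology $\Spc(\K)^\vee$---whose open sets are precisely the Thomason subsets, as recalled above---where the claim becomes that a subset which is open in $\Spc(\K)^\vee$ and closed in $\Spc(\K)$ is quasi-compact-open in $\Spc(\K)^\vee$; this again reduces to compactness of the common patch topology. I anticipate no real difficulty beyond pinning down the precise citations for spectrality of $\Spc(\K)$, quasi-compactness of the $U(a)$, and compactness of the patch topology; the statement itself is essentially folklore.
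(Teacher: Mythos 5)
Your argument is correct, but it takes a genuinely different route from the paper's. The paper only proves $(1)\Rightarrow(2)$ directly, and does so tensor-triangularly: for a Thomason closed $\cZ$ it identifies the complement $\Spc(\K)\setminus\cZ$ with $V(\K_\cZ)\simeq\Spc(\K/\K_\cZ)$, where $\K_\cZ=\{a\in\K\mid\supp(a)\subset\cZ\}$ is the thick $\tens$-ideal of objects supported in $\cZ$, and then invokes the quasi-compactness of the spectrum of any tensor triangulated category; the equivalence $(2)\Leftrightarrow(3)$ is simply cited from \cite[Proposition~2.14]{Balmer_Spectrum}. You instead work purely inside the spectral space $\Spc(\K)$: you prove $(1)\Rightarrow(3)$ directly by passing to Hochster's constructible (patch) topology, using that quasi-compact opens are patch-clopen and that the patch topology is compact to extract a finite subcover of the open complement by basic opens $U(b_\lambda)$, and then using $\supp(a\tens b)=\supp(a)\cap\supp(b)$ to combine these into a single $U(a)$. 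This re-derives the $(2)\Rightarrow(3)$ half of Balmer's Proposition~2.14 rather than citing it (you still rely on his results for spectrality of $\Spc(\K)$ and for the $U(a)$ being a quasi-compact open basis, which is where $(3)\Rightarrow(2)$ comes from). What each approach buys: the paper's proof stays within tensor triangular geometry and needs nothing beyond quasi-compactness of the spectrum of a quotient category, whereas yours isolates the purely point-set content---that in a spectral space a closed Thomason subset automatically has quasi-compact complement---at the cost of importing patch-topology compactness; the tensor structure enters your argument only at the very end, to realize a finite union of basic opens as a single $U(a)$. Both proofs are complete and correct.
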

\begin{proof}
	We will sketch the proof of (1) implies (2)
	since (2) clearly implies (1) and \cite[Proposition~2.14]{Balmer_Spectrum} gives the equivalence of (2) and (3).
	For any closed subset $\cZ \subset \Spc(\K)$, $\K_\cZ := \{ a \in \K \mid \supp(a) \subset \cZ\}$ is a thick $\tens$-ideal
	and it is easily checked from the definitions that $\Spc(\K)\setminus \cZ \subset \{ \P \in \Spc(\K) \mid \P \supset \K_\cZ\}=:V(\K_\cZ)$.
	On the other hand, if $\cZ$ is Thomason then one readily checks that the reverse inclusion holds using the equivalence of (2) and (3).
	Thus, if $\cZ$ is Thomason and closed then $\Spc(\K)\setminus \cZ = V(\K_\cZ) \simeq \Spc(\K/\K_\cZ)$ and the spectrum of any
	tensor triangulated category is quasi-compact
	(by \cite[Corollary~2.15]{Balmer_Spectrum}).
\end{proof}
\begin{definition}
	A \emph{spectral map} between spectral spaces is a continuous map with the property that the preimage of any
	quasi-compact open subset is again quasi-compact.
	This is equivalent to being a continuous map that is also continuous with respect to the dual spectral topologies (although one direction of this equivalence is not immediate).
\end{definition}
\begin{remark}
	Any closed subspace of a spectral space is also spectral, as is
	the homogeneous spectrum of a graded-commutative graded ring; for the latter, see \cite[Proposition~2.5]{BKS07} and
	\cite[Proposition~2.43]{GregIvo_2rings}.
	Our comparison maps will be spectral maps defined on closed subsets of the spectrum of a tensor triangulated category
	and mapping to the (homogeneous) spectrum of a (graded-)commutative (graded) ring.
\end{remark}
\begin{remark}
	It is well-known 
	that the Thomason closed subsets of an affine scheme $\Spec(A)$ are those
	closed sets of the form $V(I)$ for a finitely generated ideal $I\subset A$, and 
	similarly for the homogeneous spectrum of a
	(graded-)commutative graded ring; cf.~Lemma~\ref{lemma:thomason_subsets}, $(1) \Leftrightarrow (2)$, above, and 
	\cite[Lemma~2.2]{BKS07}.
\end{remark}


\section{Basic constructions}
\label{section:basic_construction}
It is now time to introduce the new comparison maps. As mentioned in the
introduction, there are actually several different constructions, but they are
closely related and the fundamental ideas are exposed in the simplest example.
In all cases, there are graded and ungraded versions.  The proofs for the
graded constructions are essentially the same as for the ungraded ones, but the
ideas are more transparent in the ungraded setting.
The notion of a ``tensor-balanced'' endomorphism will play a central role
in these constructions.

\begin{definition}
	An endomorphism $f:X \ra X$ in a tensor triangulated category is said to be 
	\emph{$\tens$-balanced}
	if $f \tens X = X \tens f$ as an endomorphism of $X \tens X$.
\end{definition}
\begin{remark}
	The following lemma was established in \cite[Proposition~2.13]{Balmer_SSS}
	in the case when $f:\unit \ra \unit$ is an arbitrary endomorphism of the
	unit and was a crucial technical result used in the construction of the
	original unit comparison maps.  The key to generalizing the result to
	endomorphisms of an arbitrary object~$X$ is to restrict ourselves to
	$\tens$-balanced endomorphisms.
\end{remark}
\begin{lemma}
	\label{lemma:killscone}
	If $f:X \ra X$ is a $\tens$-balanced endomorphism then $f^{\tens 2}\tens \cone(f)=0$.
\end{lemma}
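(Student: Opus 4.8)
The plan is to exploit the distinguished triangle $X \xrightarrow{f} X \to \cone(f) \to \Si X$ and tensor it up. First I would tensor this triangle with $X$ to obtain a triangle $X^{\tens 2} \xrightarrow{f \tens X} X^{\tens 2} \to \cone(f) \tens X \to \Si X^{\tens 2}$, and likewise tensoring with $X$ on the other side gives $X^{\tens 2} \xrightarrow{X \tens f} X^{\tens 2} \to X \tens \cone(f) \to \Si X^{\tens 2}$. The $\tens$-balanced hypothesis says precisely that $f \tens X = X \tens f$ as endomorphisms of $X^{\tens 2}$, so these two triangles have the same first map; hence (using that the cone of a morphism is determined up to isomorphism) $\cone(f) \tens X \simeq X \tens \cone(f)$, though in fact I only need the first triangle. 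The key observation is the standard fact that for any distinguished triangle $A \xrightarrow{u} B \to C \to \Si A$, the composite $B \to C \to \Si A \xrightarrow{\Si u} \Si B$ is zero, and dually any map that becomes zero after one of the connecting maps factors appropriately; more useful here is that tensoring $\cone(f)$ with the map $f$ and analyzing when it vanishes reduces to a factorization through the triangle.

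Concretely, I would argue as follows. In the triangle $X \xrightarrow{f} X \xrightarrow{g} \cone(f) \xrightarrow{h} \Si X$, apply the functor $\cone(f) \tens -$ to get the triangle
\begin{equation*}
	\cone(f)\tens X \xrightarrow{\cone(f)\tens f} \cone(f) \tens X \xrightarrow{\cone(f)\tens g} \cone(f)\tens \cone(f) \xrightarrow{\cone(f)\tens h} \Si(\cone(f)\tens X).
\end{equation*}
The morphism $\cone(f)\tens f : \cone(f)\tens X \to \cone(f)\tens X$ is, by the $\tens$-balanced condition (after identifying $\cone(f)\tens X \simeq X \tens \cone(f)$), equal to $f \tens \cone(f)$, i.e.\ to the map obtained by tensoring $\cone(f)$ with $f : X \to X$ on the left. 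But tensoring the triangle $X \xrightarrow{f} X \xrightarrow{g}\cone(f)$ with $\cone(f)$ shows that $\cone(f)\tens g$ is a cokernel-type map and the standard consequence of $gf = 0$ gives $(\cone(f)\tens g)\circ(\cone(f)\tens f) = \cone(f)\tens(gf) = 0$. The hard part — and the place where one must be careful — is bootstrapping from "$f$ is zero on $\cone(f)$ up to the relevant factorization" to the stated vanishing of $f^{\tens 2}\tens \cone(f)$: one shows that $\cone(f)\tens f$ factors through $h\tens\cone(f):\cone(f)\tens\Si^{-1}\cdots$, so that $\cone(f)\tens f$ is "divisible by $h$", and iterating once more (using $\tens$-balancedness a second time to commute the two copies of $f$) produces the factor $f^{\tens 2}$ multiplying something that is already zero. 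Equivalently, and more cleanly: $f^{\tens 2}\tens\cone(f) = (f\tens X\tens\cone(f))\circ(X\tens f\tens\cone(f))$, and each factor, being $f$ tensored with $\cone(f)$ on one side, equals $\cone(f)$ tensored with $f$ on the other by $\tens$-balancedness, hence each factors through the connecting map $h$; composing two such factorizations and using $h\tens h$ against the zero composite $g\circ f$ forces the composite to vanish.

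The main obstacle I anticipate is purely bookkeeping: keeping track of the suspension isomorphisms \eqref{suspension_isomorphisms} and the associativity/symmetry coherences while identifying $f\tens X$ with $X\tens f$ on $X^{\tens 2}$ and propagating this identification through a tensor with $\cone(f)$ — in particular verifying that "$\cone(f)\tens f = f\tens\cone(f)$ as endomorphisms of $\cone(f)\tens X \simeq X\tens\cone(f)$" really does follow from the $\tens$-balanced hypothesis on $X$ alone, rather than needing $\cone(f)\tens X$ to be $\tens$-balanced as well. I expect this to go through because $\cone(f)$ appears only as an inert tensor factor and the nontrivial commutation is entirely between the two copies of $f$ acting on the two $X$-slots, which is exactly what the hypothesis controls. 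Once that identification is in hand, the vanishing is forced by the triangle axioms via $g\circ f = 0$ and a single factorization argument, with no further input needed.
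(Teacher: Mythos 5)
Your overall shape (tensor the defining triangle, exploit factorizations coming from vanishing composites) is the right one, but the two decisive steps are missing or wrong, and in particular the $\tens$-balanced hypothesis never actually does any work in your write-up. Identifying $\cone(f)\tens f$ with $f\tens\cone(f)$ across the symmetry is mere naturality of the braiding, and commuting the two copies of $f$ in $f\tens f=(f\tens X)\circ(X\tens f)=(X\tens f)\circ(f\tens X)$ is mere bifunctoriality; neither uses balancedness, so attributing them to it signals that the real mechanism has not been located. What balancedness buys---and what your sketch never establishes---are two vanishing composites for the endomorphism $f\tens\cone(f)$ of $X\tens\cone(f)$: first, $(f\tens\cone(f))\circ(X\tens g)=(X\tens g)\circ(f\tens X)=(X\tens g)\circ(X\tens f)=X\tens(g\circ f)=0$, and second, $(X\tens h)\circ(f\tens\cone(f))$, which after replacing $f\tens X$ by $X\tens f$ becomes (up to the suspension isomorphism) $X\tens(\Si f\circ h)=0$ by the rotated triangle. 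These two vanishings, via the weak cokernel and weak kernel properties of the tensored triangle $X\tens X\xra{X\tens f}X\tens X\xra{X\tens g}X\tens\cone(f)\xra{X\tens h}\Si(X\tens X)$, are exactly what let you write $f\tens\cone(f)=\beta\circ(X\tens h)$ and also $f\tens\cone(f)=(X\tens g)\circ\gamma$.

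Your concluding step is then not a valid argument: if each of the two factors of $f^{\tens 2}\tens\cone(f)$ merely ``factors through the connecting map $h$,'' the composite has the form $u_1\circ(\cdots h\cdots)\circ u_2\circ(\cdots h\cdots)$ with an unknown map $u_2$ sandwiched between the two occurrences of $h$, and nothing (certainly not ``$h\tens h$ against $g\circ f$'') forces it to vanish. What does work is to use the two \emph{complementary} factorizations of the \emph{same} map $f\tens\cone(f)$ above and square it: $(f\tens\cone(f))^2=\beta\circ(X\tens h)\circ(X\tens g)\circ\gamma=\beta\circ\bigl(X\tens(h\circ g)\bigr)\circ\gamma=0$, the point being that the inner composite is two consecutive maps of the triangle ($h\circ g=0$), with no unknown map in between. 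Finally, balancedness is used once more to conclude: $f^{\tens 2}=(X\tens f)\circ(f\tens X)=X\tens f^{2}$, hence $f^{\tens 2}\tens\cone(f)=X\tens\bigl((f\tens\cone(f))^2\bigr)=0$. This is precisely the paper's proof; your proposal would need the two balancedness-driven vanishing computations and the complementary (one through $X\tens g$, one through $X\tens h$) factorization of the single map $f\tens\cone(f)$ to close the argument.
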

\begin{proof}
Start with an exact triangle
$\ftri{X}{X}{\cone(f)}{f}{g}{h}$
	and observe that in the following morphism of exact triangles
\[ 
\xymatrix @C=5em{
	X\tens X \ar[r]^-{X\tens f} \ar[d]_-{f\tens X} & X\tens X \ar[r]^-{X\tens g} \ar[d]_-{f\tens X} \ar@{.>}[rd]^-0 & X\tens \cone(f) \ar[r]^-{X\tens h} \ar[d]^-{f\tens \cone(f)} \ar@{.>}[rd]^-0 &\Si(X\tens X) \ar[d]^-{\Si(f\tens X)}\\
	X\tens X \ar[r]^-{X\tens f} & X\tens X \ar[r]^-{X\tens g} & X\tens \cone(f) \ar[r]^-{X\tens h} &\Si(X\tens X)\\
}
\]
the middle diagonal 
is zero because $(X\tens g)\circ(f\tens X) = (X\tens g)\circ(X \tens f) = X\tens(g\circ f) = 0$
and the rightmost diagonal is zero for similar reasons.
This implies that the map $f \tens \cone(f)$ factors through $X \tens g$ and $X \tens h$ and hence $(f \tens \cone(f))^2 = 0$.
It follows that $f^{\tens 2} \tens \cone(f) = 0$ 
by observing that $f^{\tens 2} = (X \tens f)\circ (f \tens X) = X \tens f^2$.
\end{proof}
\begin{notation}\label{notation:E_X}
	Let $E_X := \{f \in [X,X] \mid f \tens X = X \tens f\}$ denote the collection of $\tens$-balanced endomorphisms of $X$.
\end{notation}
\begin{proposition}
	\label{proposition:local_ring}
	For each object $X$ in a tensor triangulated category $\K$, $E_X$ is
	an inverse-closed
	subring of the endomorphism ring $[X,X]$.
	If $(0)$ is a prime in $\K$, for example if $\K$ is rigid and local, then $E_{X}$ is a 
	local ring provided that $X\neq 0$.
\end{proposition}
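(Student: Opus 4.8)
The plan is to establish the ring-theoretic assertions in two stages.

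\emph{Stage 1: $E_X$ is an inverse-closed subring of $[X,X]$.} That $\id_X \in E_X$ is immediate, and closure under sums, negatives and composition is a mechanical consequence of the bilinearity of $\tens$ on morphisms together with the functoriality of $-\tens X$ and $X\tens-$; e.g.\ for $f,g \in E_X$ one has $(g\circ f)\tens X = (g\tens X)\circ(f\tens X) = (X\tens g)\circ(X\tens f) = X\tens(g\circ f)$. For inverse-closedness: if $f\in E_X$ is invertible in $[X,X]$, then applying the functors $-\tens X$ and $X\tens-$ (which preserve inverses) to the equality $f\tens X = X\tens f$ gives $f^{-1}\tens X = (f\tens X)^{-1} = (X\tens f)^{-1} = X\tens f^{-1}$, so $f^{-1}\in E_X$. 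In particular, an element of $E_X$ is a unit of the ring $E_X$ if and only if it is an isomorphism in $\K$, that is, if and only if its cone vanishes.

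\emph{Stage 2: locality.} Assume $(0)$ is prime (as noted, this holds when $\K$ is rigid and local) and $X\neq 0$; the latter guarantees $\id_X\neq 0$, so $E_X$ is a nonzero ring, and by the usual characterization of (possibly non-commutative) local rings it suffices to show that the sum of any two non-units of $E_X$ is a non-unit. The crux is the following claim: \emph{for every $h\in E_X$, either $h$ or $\id_X-h$ is an isomorphism in $\K$.} Granting this, suppose $f,g\in E_X$ are non-units with $f+g=u$ a unit. Then $u^{-1}\in E_X$ by Stage~1 and $u^{-1}f + u^{-1}g = \id_X$, so by the claim one of $u^{-1}f$, $u^{-1}g = \id_X - u^{-1}f$ is invertible; hence $f = u(u^{-1}f)$ or $g = u(u^{-1}g)$ is a unit, a contradiction. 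Thus $E_X$ is local.

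\emph{Proof of the claim.} This is where Lemma~\ref{lemma:killscone} enters. Put $C=\cone(h)$ and consider the endomorphism $h\tens\id_{X\tens C}$ of $X\tens X\tens C$. Since $h$ is $\tens$-balanced, $h^{\tens2} = h^2\tens X$ (as recorded in the proof of Lemma~\ref{lemma:killscone}), so the conclusion $h^{\tens2}\tens C = 0$ of that lemma reads exactly $(h\tens\id_{X\tens C})^2 = h^2\tens\id_{X\tens C} = 0$. Therefore $(\id_X-h)\tens\id_{X\tens C} = \id - (h\tens\id_{X\tens C})$ is an isomorphism, with inverse $\id + (h\tens\id_{X\tens C})$. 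Applying the exact functor $-\tens(X\tens C)$ to the triangle defining $\cone(\id_X-h)$ identifies $\cone\big((\id_X-h)\tens\id_{X\tens C}\big)$ with $\cone(\id_X-h)\tens X\tens\cone(h)$, which is therefore $0$. Because $(0)$ is prime (and primality passes to finite $\tens$-products), one of $\cone(\id_X-h)$, $X$, $\cone(h)$ vanishes; as $X\neq 0$, either $\id_X-h$ or $h$ is an isomorphism, proving the claim.

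I expect the only genuine obstacle to be the bookkeeping in the claim: recognizing that tensoring the cone-triangle of $\id_X-h$ with $X\tens\cone(h)$ turns the relation $h+(\id_X-h)=\id_X$ into ``identity minus a square-zero endomorphism'', and that the hypothesis that $(0)$ is prime is precisely what is needed to break the resulting vanishing tensor product into the desired dichotomy. (If one is willing to cite the sharper fact $(h\tens\id_{\cone(h)})^2 = 0$ that is actually proved inside Lemma~\ref{lemma:killscone}, one may tensor with $\cone(h)$ alone and reach $\cone(\id_X-h)\tens\cone(h)=0$ directly.) Everything else---the subring and inverse-closedness verifications, and the deduction of locality from the claim---is routine.
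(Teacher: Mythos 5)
Your proof is correct, and it shares the paper's overall skeleton: reduce locality of the nonzero ring $E_X$ to the statement that non-units are closed under addition, then use Lemma~\ref{lemma:killscone} together with primality of $(0)$ and $X \neq 0$ to force one of the relevant cones to vanish. The decisive step, however, is handled by a different mechanism. The paper works with the two non-units symmetrically: writing $u = f_1 + f_2$, it expands $u^{\tens n}$ for $n \ge 3$ using bilinearity and the symmetry, notes that every summand contains $f_1^{\tens 2}$ or $f_2^{\tens 2}$ up to rearrangement, and so gets $u^{\tens n}\tens\cone(f_1)\tens\cone(f_2)=0$; since $u$ is an isomorphism this gives $X^{\tens n}\tens\cone(f_1)\tens\cone(f_2)=0$ and primality of $(0)$ finishes. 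You instead normalize by $u^{-1}$ (a second use of inverse-closedness, which the paper only needs for the final parenthetical ``isomorphism hence unit'') to reduce to the dichotomy that for each $h\in E_X$ either $h$ or $\id_X-h$ is invertible, and you prove that dichotomy by the unipotent trick: $h\tens\id_{X\tens\cone(h)}$ is square-zero by Lemma~\ref{lemma:killscone}, so $(\id_X-h)\tens\id_{X\tens\cone(h)}$ is invertible, whence $\cone(\id_X-h)\tens X\tens\cone(h)=0$ and primality applies. Your route avoids the multinomial/symmetry bookkeeping and the $n\ge3$ estimate, and (in the sharper variant you mention) only ever tensors with a single cone; the paper's expansion argument has the advantage that it is reused essentially verbatim later to prove that the commutative ring $R_\Phi$ is local. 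One trivial point: you attribute the identity $h^{\tens 2}=h^2\tens X$ to the proof of Lemma~\ref{lemma:killscone}, which actually records $f^{\tens 2}=X\tens f^2$; both identities follow from $\tens$-balancedness, so nothing is affected.
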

\begin{proof}
	The first statement follows easily from the definitions.
On the other hand, suppose that the zero ideal $(0)$ is a prime in $\K$ and that $X \neq 0$.
To prove that the non-zero ring $E_X$ is local
it suffices to show that the sum of two non-units is again a non-unit.
To this end, let $f_1,f_2 \in E_X$
and suppose that $f_1+f_2$ is a unit.
By Lemma~\ref{lemma:killscone}, $f_1^{\tens 2} \tens \cone(f_1) = 0$
and $f_2^{\tens 2} \tens \cone(f_2) = 0$.
It follows that $(f_1+f_2)^{\tens n} \tens \cone(f_1) \tens \cone(f_2) = 0$ for $n \ge 3$ by
expanding $(f_1 + f_2)^{\tens n}$ using bilinearity of the $\tens$-product and applying the symmetry.
But the unit $f_1 + f_2$ is a categorical isomorphism and hence any 
$\tens$-power $(f_1 + f_2)^{\tens n}$ is also an isomorphism.
It follows that
$X^{\tens n} \tens \cone(f_1) \tens \cone(f_2) = 0$ for $n \ge 3$ and hence
that $\cone(f_1) = 0$ or $\cone(f_2) =0$
since $(0)$ is prime and $X \neq 0$ by assumption.
In other words, $f_1$ or
$f_2$ is an isomorphism (and hence a unit in~$E_X$).
\end{proof}
\begin{lemma}
	\label{lemma:Ex_induced}
	If $F : \K \ra \L$ is a morphism of tensor triangulated
	categories then the induced ring homomorphism 
	$[X,X]_\K \ra [FX,FX]_\L$
	restricts to a
	ring
	homomorphism $E_{\K,X} \ra E_{\L,FX}$.
\end{lemma}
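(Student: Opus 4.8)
The plan is to unwind the definitions and check that a $\tens$-balanced endomorphism is sent to a $\tens$-balanced endomorphism under the ring homomorphism $F_{X} : [X,X]_{\K} \ra [FX,FX]_{\L}$ induced by the functor $F$. Recall that for a strong $\tens$-functor $F$ we have a natural isomorphism $\mu_{a,b} : Fa \tens Fb \xra{\simeq} F(a\tens b)$, and the induced map on endomorphism rings sends $f : X \ra X$ to $Ff : FX \ra FX$. Since $f \mapsto Ff$ is a ring homomorphism $[X,X]_\K \ra [FX,FX]_\L$ by functoriality and the additivity of $F$ (a morphism of tensor triangulated categories is in particular exact, hence additive), it remains only to verify that $Ff$ is $\tens$-balanced in $\L$ whenever $f$ is $\tens$-balanced in $\K$.

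First I would write down the square expressing naturality of $\mu$ with respect to the pair of morphisms $(f, \id_{X})$, and separately with respect to $(\id_{X}, f)$. Naturality of $\mu_{X,X}$ applied to $f \tens \id_X : X \tens X \ra X \tens X$ gives
\begin{equation*}
	\mu_{X,X} \circ (Ff \tens FX) = F(f \tens X) \circ \mu_{X,X},
\end{equation*}
and applied to $\id_X \tens f$ it gives
\begin{equation*}
	\mu_{X,X} \circ (FX \tens Ff) = F(X \tens f) \circ \mu_{X,X}.
\end{equation*}
Now I invoke the hypothesis that $f$ is $\tens$-balanced, i.e. $f \tens X = X \tens f$ as endomorphisms of $X \tens X$ in $\K$; applying $F$ to this equality of morphisms yields $F(f \tens X) = F(X \tens f)$. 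Combining this with the two displayed naturality squares and cancelling the isomorphism $\mu_{X,X}$ from both sides, I conclude $Ff \tens FX = FX \tens Ff$ as endomorphisms of $FX \tens FX$, which is precisely the statement that $Ff \in E_{\L, FX}$.

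There is essentially no hard part here — the only thing to be careful about is that the relevant "tensor" of a morphism with an object on the $\L$ side, namely $Ff \tens FX$, is computed as the honest monoidal product of morphisms in $\L$, and that $F(f \tens X)$ (the image under $F$ of the $\K$-side product) is identified with it through the structure isomorphism $\mu$; this is exactly what the naturality of $\mu$ encodes, so no compatibility axiom beyond "$F$ is a strong $\tens$-functor" is needed. (One does not even need the triangulated or suspension-compatibility axioms recorded in the excerpt, nor rigidity; this lemma is purely monoidal.) Finally, I would note that the restricted map $E_{\K,X} \ra E_{\L,FX}$ is a ring homomorphism simply because it is the restriction of the ring homomorphism $[X,X]_\K \ra [FX,FX]_\L$ to a subring, with image landing in the subring $E_{\L,FX}$ by the computation above.
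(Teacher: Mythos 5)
Your proposal is correct and is exactly the argument the paper has in mind: the paper's proof is the one-line observation that the claim follows from $F$ being a strong $\tens$-functor, and your naturality-of-$\mu$ computation is just that observation written out in full. Your closing remark is also consistent with the paper, which only invokes the extra compatibility axiom \eqref{diagram:morphism_compatibility} in the graded analogue (Lemma~\ref{lemma:graded_induced}), not here.
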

\begin{proof}
	This follows from the fact that $F: \K \ra \L$ is a strong $\tens$-functor.
\end{proof}
\begin{remark}
These results reveal the crucial properties that are secured by restricting
ourselves to
$\tens$-balanced endomorphisms: they provide us 
with rings of endomorphisms that are local when the category is local, behave well with respect to tensor triangular functors,
and have the property that the units are the elements that are categorical isomorphisms. 
However, these rings are not necessarily commutative.
\end{remark}
\begin{theorem}
	\label{theorem:unnatural_ungraded_comparison_map}
	Let $\K$ be a tensor triangulated category and let $X$ be an object in~$\K$.
	For any commutative ring $A$ and ring homomorphism $\alpha:A \ra E_X$ 
	there is an inclusion-reversing,
	spectral map \[\rho_{X,A} : \supp(X) \ra \Spec(A) \] defined by 
	$\rho_{X,A}(\P) := \{ a \in A \mid \cone(\alpha(a)) \notin \P \}$.
\end{theorem}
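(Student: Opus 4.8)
The plan is to verify directly that the assignment $\rho_{X,A}(\P) = \{a \in A \mid \cone(\alpha(a)) \notin \P\}$ lands in $\Spec(A)$, is inclusion-reversing, and is a spectral map. First I would check that $\rho_{X,A}(\P)$ is a prime ideal of $A$. The key input is Lemma~\ref{lemma:killscone}: since $\alpha$ factors through $E_X$, each $\alpha(a)$ is $\tens$-balanced, so $\alpha(a)^{\tens 2} \tens \cone(\alpha(a)) = 0$. Unwinding the definition, $a \in \rho_{X,A}(\P)$ means $\cone(\alpha(a)) \in \P$ (I will write it this way since it is cleaner; note $\cone(\alpha(a)) \notin \P$ is the \emph{complement}). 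Wait---reading the statement again, $\rho_{X,A}(\P)$ is defined as $\{a \mid \cone(\alpha(a)) \notin \P\}$, so I must show \emph{that} set is prime, which amounts to showing its complement $\{a \mid \cone(\alpha(a)) \in \P\}$ is a multiplicatively closed set not containing the elements forced out, plus the ideal axioms. Concretely: $0 \in \rho_{X,A}(\P)$ because $\cone(\alpha(0)) = \cone(0) = X \oplus \Si X$, and this lies outside $\P$ precisely when $X \notin \P$, i.e.\ when $\P \in \supp(X)$---which is exactly our hypothesis. For closure under addition and absorption of multiplication, the argument mirrors Proposition~\ref{proposition:local_ring}: if $\cone(\alpha(a)) \in \P$ and $\cone(\alpha(b)) \in \P$ then $\alpha(a+b)^{\tens n} \tens \cone(\alpha(a)) \tens \cone(\alpha(b)) = 0$ for $n \ge 3$ by expanding with bilinearity and symmetry, and since $\P$ is prime and $\cone(\alpha(a)), \cone(\alpha(b)) \in \P$ implies $\cone(\alpha(a)) \tens \cone(\alpha(b)) \in \P$, one deduces $\cone(\alpha(a+b))$ sits in $\P$ as well (using that $\cone$ of a sum is built from the two cones via octahedral-type arguments, or more directly that $\alpha(a+b)^{\tens 2} \tens \cone(\alpha(a+b)) = 0$ together with the vanishing just obtained). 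Absorption---$\cone(\alpha(a)) \in \P \Rightarrow \cone(\alpha(ra)) \in \P$---follows because $\cone(\alpha(ra))$ is related to $\cone(\alpha(a))$ through the factorization $\alpha(ra) = \alpha(r)\alpha(a)$; here one uses that $\cone(gf)$ lies in the thick ideal generated by $\cone(f)$ and $\cone(g)$, hence $\cone(\alpha(a)) \in \P$ forces $\cone(\alpha(ra)) \in \P$. Primality of $\rho_{X,A}(\P)$ itself is the cleanest part: $ab \in \rho_{X,A}(\P)$ says $\cone(\alpha(ab)) \notin \P$, and since $\cone(\alpha(ab)) \in \langle \cone(\alpha(a)), \cone(\alpha(b)) \rangle$, this forces $\cone(\alpha(a)) \notin \P$ or $\cone(\alpha(b)) \notin \P$, i.e.\ $a \in \rho_{X,A}(\P)$ or $b \in \rho_{X,A}(\P)$. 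That $\rho_{X,A}(\P) \ne A$ follows since $1 \notin \rho_{X,A}(\P)$: $\alpha(1) = \id_X$, so $\cone(\id_X) = 0 \in \P$.

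Next I would check that $\rho_{X,A}$ is inclusion-reversing. If $\P \subset \cQ$ in $\Spc(\K)$, then $\cone(\alpha(a)) \notin \cQ$ implies $\cone(\alpha(a)) \notin \P$; hence $\rho_{X,A}(\cQ) \subset \rho_{X,A}(\P)$. This is immediate from the definition and requires no real work---it is just the observation flagged in Section~1 that the Balmer topology is Hochster-dual, so our maps reverse inclusions.

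For continuity and the spectral property, I would describe $\rho_{X,A}^{-1}$ on the standard (co)basis of $\Spec(A)$. Recall that the Thomason closed subsets of $\Spec(A)$ are the $V(I)$ with $I = \langle f_1, \dots, f_n\rangle$ finitely generated, and these are a cobasis. I claim $\rho_{X,A}^{-1}(V(\langle f_1, \dots, f_n\rangle)) = \supp(X) \cap \supp(\cone(\alpha(f_1)) \tens \cdots \tens \cone(\alpha(f_n)))$. Indeed, $\P \in \rho_{X,A}^{-1}(V(I))$ iff $f_i \notin \rho_{X,A}(\P)$ for all $i$, iff $\cone(\alpha(f_i)) \in \P$ for all $i$, iff $\cone(\alpha(f_1)) \tens \cdots \tens \cone(\alpha(f_n)) \in \P$ (using primality of $\P$ in one direction and that an ideal absorbs tensor in the other), iff $\P \notin \supp(\cone(\alpha(f_1)) \tens \cdots \tens \cone(\alpha(f_n)))$. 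So the preimage of this Thomason-closed set is the intersection of $\supp(X)$ with the complement of a support---that is, a quasi-compact open subset of $\supp(X)$ intersected appropriately; more precisely the preimage is closed in $\supp(X)$ with quasi-compact complement, which is exactly what is needed to conclude $\rho_{X,A}$ is a spectral map (continuous, and preimages of quasi-compact opens are quasi-compact). Since every open of $\Spec(A)$ is a union of complements of such $V(I)$, continuity follows, and the quasi-compactness of preimages of the basic quasi-compact opens $\Spec(A) \setminus V(I)$ follows from the quasi-compactness results recorded in Lemma~\ref{lemma:thomason_subsets} and Section~2.

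The main obstacle I anticipate is the careful handling of $\cone(\alpha(a + b))$ and $\cone(\alpha(ra))$ in the ideal-axiom verification---one needs the standard fact that $\cone(g \circ f)$ lies in the thick $\tens$-ideal generated by $\cone(f)$ and $\cone(g)$, and for additivity one needs a clean relation between $\cone(\alpha(a+b))$ and the product $\cone(\alpha(a)) \tens \cone(\alpha(b))$, which is where the $\tens$-balanced hypothesis and Lemma~\ref{lemma:killscone} do their essential work exactly as in Proposition~\ref{proposition:local_ring}. Everything else---inclusion-reversal, the preimage computation, continuity---is formal once this algebraic core is in place.
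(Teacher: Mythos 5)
Your overall plan (a direct verification plus the Koszul-object computation of preimages) could be made to work, but as written the algebraic core is carried out in the wrong direction, and the statements you actually assert are false. The set $\rho_{X,A}(\P)=\{a\in A\mid \cone(\alpha(a))\notin\P\}$ consists of those $a$ whose image $\alpha(a)$ becomes a \emph{non}-isomorphism in $\K/\P$; its complement consists of those becoming isomorphisms, and is therefore multiplicatively closed but certainly not closed under addition or under multiplication by arbitrary elements of $A$. The ideal axioms you must check are: $\cone(\alpha(a))\notin\P$ and $\cone(\alpha(b))\notin\P$ imply $\cone(\alpha(a+b))\notin\P$, and $\cone(\alpha(a))\notin\P$ implies $\cone(\alpha(ra))\notin\P$. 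Instead you prove closure properties of the \emph{complement}: you claim $\cone(\alpha(a)),\cone(\alpha(b))\in\P\Rightarrow\cone(\alpha(a+b))\in\P$ and $\cone(\alpha(a))\in\P\Rightarrow\cone(\alpha(ra))\in\P$. Both are false whenever $\P\in\supp(X)$: take $a=1$, $b=-1$ (both cones are $0\in\P$, but $\cone(\alpha(0))\simeq X\oplus\Si X\notin\P$), and $a=1$, $r=0$ for the second. Likewise the auxiliary claim that ``$\cone$ of a sum is built from the two cones'' fails for $\id_X$ and $-\id_X$. The correct statements are the contrapositives, e.g.\ $\cone(\alpha(a+b))\in\P$ forces $\cone(\alpha(a))\in\P$ or $\cone(\alpha(b))\in\P$, and proving these is where the real content lies. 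The paper handles it by passing to the quotient $q:\K\ra\K/\P$: since $(0)$ is prime in $\K/\P$ and $q(X)\neq 0$, the ring $E_{\K/\P,q(X)}$ is local (Proposition~\ref{proposition:local_ring}); one observes $\cone(f)\notin\P$ iff $q(f)$ is a non-unit there, so $\rho_{X,A}(\P)$ is the contraction along $A\ra E_{\K,X}\ra E_{\K/\P,q(X)}$ of the two-sided ideal of non-units, and the ideal axioms come for free. (Your multiplicative-primality step via the octahedral triangle, and the checks that $0\in\rho_{X,A}(\P)$ and $1\notin\rho_{X,A}(\P)$, are fine.)

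The continuity computation has the same inversion and is internally inconsistent. $\P\in\rho_{X,A}^{-1}(V(I))$ means $\rho_{X,A}(\P)\supset I$, i.e.\ $f_i\in\rho_{X,A}(\P)$, i.e.\ $\cone(\alpha(f_i))\notin\P$ for all $i$, i.e.\ $\P\in\bigcap_i\supp(\cone(\alpha(f_i)))=\supp(\cone(\alpha(f_1))\tens\cdots\tens\cone(\alpha(f_n)))$ --- which is the formula you announce at the start of that paragraph; but your chain of equivalences asserts the opposite ($f_i\notin\rho_{X,A}(\P)$, cones in $\P$), which computes the complement of this set, and you then describe the preimage first as a quasi-compact open and then as a closed set with quasi-compact complement. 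With the correct chain the argument is exactly the paper's: $\rho_{X,A}^{-1}(V(E))=\bigcap_{a\in E}\supp(\cone(\alpha(a)))$ is closed for arbitrary $E\subset A$, giving continuity, and for $V(I)$ with $I$ finitely generated the preimage is a single support, hence has quasi-compact complement by Lemma~\ref{lemma:thomason_subsets}, giving the spectral property.
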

\begin{proof}
	The localization $q:\K \ra \K/\P$ induces a ring 
	homomorphism $E_{\K,X} \ra E_{\K/\P,q(X)}$ and since $X \notin \P$ the
	target ring $E_{\K/\P,q(X)}$ is a local ring.
	For any element $f\in E_{\K,X}$ observe that $\cone(f) \notin \P$ iff $q(f)$ is not an isomorphism in $\K/\P$
	iff $q(f)$ is a non-unit in the local ring $E_{\K/\P,q(X)}$.
	Since the non-units in a local ring form a two-sided ideal, it follows that
	the preimage $\{f \in E_{\K,X} \mid \cone(f) \notin \P \}$ is a two-sided ideal of $E_{\K,X}$.
	Moreover, this ideal is ``prime'' in the sense that $\cone(f\cdot g) \notin \P$ implies that $\cone(f) \notin \P$ or $\cone(g)\notin \P$ 
	(by an application of the octahedral axiom).
	In any case, this ``prime'' ideal of the non-commutative ring $E_{\K,X}$ pulls back via $\alpha$ to a 
	genuine prime ideal $\rho_{X,A}(\P)$ of the commutative ring~$A$.
This establishes that the map $\rho_{X,A}$ is well-defined and it is clear from the definition that it is inclusion-reversing.

An arbitrary closed set for the
Zariski topology on $\Spec(A)$ is of the form
$V(E) = \{\mathfrak{p} \in \Spec(A) \mid \mathfrak{p} \supset E\}$ for
some subset $E \subset A$. One  
readily checks that
$\rho_{X,A}^{-1}(V(E)) = \bigcap_{a \in E} \supp(\cone(\alpha(a)))$ and we conclude that $\rho_{X,A}$ is continuous.
Moreover, if $V(E)$ has quasi-compact complement then $V(E) = V(a_1,\ldots,a_n)$ for some
finite collection $a_1,\ldots,a_n \in A$ and the 
preimage 
$\rho_{X,A}^{-1}(V(a_1,\ldots,a_n)) = \bigcap_{i=1}^n \supp(\cone(\alpha(a_i))) = 
\supp(\cone(\alpha(a_1))\tens \cdots \tens \cone(\alpha(a_n)))$
also has quasi-compact complement
by Lemma~\ref{lemma:thomason_subsets}.
\end{proof}
 All the results above have corresponding graded analogues.
For a graded endomorphism $f: \Si^k X \ra X$ we abuse notation and write $f \tens X=X\tens f$ when we really mean that 
the following diagram commutes:
\begin{equation}
	\label{diagram:graded_weakly_central}
	\begin{gathered}
	\xymatrix @=0.2in{
		\Si^k(X\tens X) \arisod \arisor & \Si^k X \tens X \ar[d]^{f\tens X} \\
		X\tens \Si^kX \ar[r]_{X\tens f} & X \tens X
}
\end{gathered}
\end{equation}
\begin{proposition}
	\label{proposition:graded_ring_thing}
	A graded subring $E_X^\bullet$ of the graded endomorphism ring $[X,X]_\astt$
	is defined by setting $E_X^i := \{ f \in [X,X]_i \mid f\tens X = X \tens f\}$.
	It has the property that a homogeneous element is a unit in $E_X^\bullet$ iff
	it is a unit in $[X,X]_\astt$ iff it is a categorical isomorphism.
	Moreover, if $(0)$ is a prime in $\K$, for example if $\K$ is rigid and local, then $E_X^\astt$ is gr-local provided that $X \neq 0$.
\end{proposition}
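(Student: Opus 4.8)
The plan is to adapt the argument of Proposition~\ref{proposition:local_ring} degree by degree, with the commutative square~\eqref{diagram:graded_weakly_central} playing the role of the ungraded $\tens$-balanced condition, and to invoke the graded analogue of Lemma~\ref{lemma:killscone}, which is available since (as remarked above) all the earlier results have such analogues. For the first assertion, each $E_X^i$ is visibly an additive subgroup of $[X,X]_i$; and if $f\in E_X^i$ and $g\in E_X^j$ then their composite---a homogeneous endomorphism of degree $i+j$---again satisfies~\eqref{diagram:graded_weakly_central}, by a diagram chase that pastes two copies of~\eqref{diagram:graded_weakly_central} and invokes the coherence of the suspension isomorphisms~\eqref{suspension_isomorphisms} with the monoidal structure (together with~\eqref{diagram:anticommutes} wherever two suspensions get interchanged, which is harmless since only the commutativity of a diagram of isomorphisms is at stake). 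As $\id_X\in E_X^0$, this makes $E_X^\bullet$ a graded subring. For the units: a homogeneous element of $[X,X]_\astt$ has a two-sided inverse in $[X,X]_\astt$ iff it has a homogeneous one of the opposite degree (a standard fact about graded rings) iff it is a categorical isomorphism; and if $f\in E_X^i$ is a categorical isomorphism then $f\tens X$ and $X\tens f$ are isomorphisms---being $-\tens X$ and $X\tens-$ applied to $f$---so inverting all four arrows of the square~\eqref{diagram:graded_weakly_central} for $f$ yields the square for $f^{-1}$, whence $f^{-1}\in E_X^{-i}$ and $f$ is already a unit in $E_X^\bullet$. The remaining implications are immediate.

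For the gr-local statement, suppose $(0)$ is a prime of $\K$ and $X\neq 0$ (the clause ``for example if $\K$ is rigid and local'' being exactly the observation already used in Proposition~\ref{proposition:local_ring}). Then $E_X^\bullet\neq 0$, since $\id_X\in E_X^0$ is nonzero; and a graded ring whose homogeneous non-units are closed under addition within each degree is gr-local (the evident graded version of the usual characterization of local rings), so it suffices to show that if $f_1,f_2\in E_X^i$ and $f_1+f_2$ is a unit then $f_1$ or $f_2$ is a unit. By the graded analogue of Lemma~\ref{lemma:killscone}, $f_1^{\tens 2}\tens\cone(f_1)=0$ and $f_2^{\tens 2}\tens\cone(f_2)=0$, where $\cone(f_j)$ is the cone of $f_j$ regarded as a morphism $\Si^i X\to X$. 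Expanding $(f_1+f_2)^{\tens n}$ by bilinearity of $\tens$ and reordering the tensor factors with the symmetry isomorphism---the Koszul signs thereby produced being irrelevant, as we only care whether a morphism vanishes---every summand for $n\geq 3$ contains at least two copies of $f_1$ or at least two copies of $f_2$, so $(f_1+f_2)^{\tens n}\tens\cone(f_1)\tens\cone(f_2)=0$ for $n\geq 3$. By the first part $f_1+f_2$ is a categorical isomorphism, hence so is every $\tens$-power $(f_1+f_2)^{\tens n}$, and therefore $X^{\tens n}\tens\cone(f_1)\tens\cone(f_2)=0$ for $n\geq 3$. Since $(0)$ is prime and $X\neq 0$, this forces $\cone(f_1)=0$ or $\cone(f_2)=0$, i.e.\ $f_1$ or $f_2$ is a categorical isomorphism, hence a unit in $E_X^\bullet$ by the first part.

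I do not expect a genuine mathematical obstacle; the only real friction is the bookkeeping that the graded setting demands. Composites and $\tens$-powers of degree-shifting morphisms do not typecheck without interposing the suspension isomorphisms~\eqref{suspension_isomorphisms}, and one has to track where interchanging two suspensions inserts a sign via~\eqref{diagram:anticommutes}. At every step, however, the only assertions are that a diagram of isomorphisms commutes, or that a morphism is zero, or that a morphism is an isomorphism---all insensitive to signs and to the particular identifications chosen---so I would present the argument tersely, citing the ungraded proofs of Lemma~\ref{lemma:killscone} and Proposition~\ref{proposition:local_ring} and flagging only the points where~\eqref{diagram:graded_weakly_central} is used.
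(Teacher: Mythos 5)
Your proposal is correct and follows essentially the same route as the paper: the paper's proof is exactly a sketch of ``run the ungraded argument of Proposition~\ref{proposition:local_ring} again, using the graded analogue of Lemma~\ref{lemma:killscone} and keeping track of suspension isomorphisms,'' which is what you carry out in detail. The only difference is cosmetic: the paper also points out an optional shortcut for gr-locality (a $\Z$-graded ring is gr-local iff its degree-zero part is local, citing Li), whereas you instead verify the graded local-ring criterion directly, which is equally fine.
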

\begin{proof}
	The proof is similar to the ungraded version,
	one just needs to take the relevant suspension
	isomorphisms into account.
	In particular, 
the result of Lemma~\ref{lemma:killscone} holds for a graded endomorphism $f:\Si^k X \ra X$ satisfying $f \tens X = X \tens f$.
 On the other hand, one could save time and conclude that $E_X^\bullet$ is \mbox{gr-local} simply by invoking the fact that
 a $\Z$-graded ring $E_X^\astt$ is gr-local iff $E_X^0 = E_X$ is local 
 (see \cite[Theorem~2.5]{Li_monoid_graded_local_rings}).
\end{proof}
\begin{lemma}
	\label{lemma:graded_induced}
	If $F:\K \ra \L$ is a morphism of tensor triangulated
	categories then the induced graded ring homomorphism
	$[X,X]_{\K,\astt} \ra [FX,FX]_{\L,\astt}$ restricts to a
	graded ring homomorphism $E_{\K,X}^\astt \ra E_{\L,FX}^\astt$.
\end{lemma}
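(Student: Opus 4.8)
The plan is to follow the template of Lemma~\ref{lemma:Ex_induced} and of the proof of Proposition~\ref{proposition:graded_ring_thing}, the only new ingredient being that suspension isomorphisms must now be carried along; this is exactly what the compatibility axiom~\eqref{diagram:morphism_compatibility} is for. First I would recall how $F$ induces the graded ring homomorphism $[X,X]_{\K,\astt} \ra [FX,FX]_{\L,\astt}$ in the first place: a homogeneous element $f:\Si^i X \ra X$ is sent to the composite $\Si^i FX \xra{\simeq} F\Si^i X \xra{Ff} FX$, where the first arrow is the $i$-fold iterate of the structure isomorphism $\Si Fa \simeq F\Si a$ attached to $F$ (for $i<0$ one uses its inverse, which makes sense since $\Si$ is an equivalence). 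That this assignment is additive and multiplicative, i.e.\ a homomorphism of graded rings, is the standard fact that a strong exact $\tens$-functor induces a graded ring map on graded endomorphism rings, and it amounts precisely to the compatibility between the structure isomorphisms of $F$ and the suspension isomorphisms of $\K$ and $\L$. So it remains only to check that this homomorphism carries $E_{\K,X}^i$ into $E_{\L,FX}^i$ for each $i$.

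To that end I would fix $f \in E_{\K,X}^i$, so that the square~\eqref{diagram:graded_weakly_central} commutes in $\K$, and apply the functor $F$ to it; this yields a commuting square in $\L$ whose four vertices are $F\bigl(\Si^i(X\tens X)\bigr)$, $F\bigl(\Si^i X \tens X\bigr)$, $F\bigl(X\tens \Si^i X\bigr)$ and $F(X\tens X)$. I would then paste this square against four ``change of vertex'' isomorphisms that rewrite these $F$-images as $\Si^i(FX\tens FX)$, $\Si^i FX \tens FX$, $FX\tens \Si^i FX$ and $FX\tens FX$, built out of the monoidal isomorphism $Fa\tens Fb\simeq F(a\tens b)$ and the iterated suspension isomorphism $\Si Fa \simeq F\Si a$. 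For the bottom-right and top-left corners, commutativity of the pasting square is mere naturality/functoriality of these isomorphisms. For the two mixed corners $\Si^i X \tens X$ and $X\tens\Si^i X$ the relevant pasting square is precisely a suitable iterate of~\eqref{diagram:morphism_compatibility} (respectively, of its evident companion for the ``right'' suspension isomorphism $a\tens \Si b \simeq \Si(a\tens b)$, one of the compatibility axioms the bare definition leaves implicit; alternatively one reduces this corner to the other via the symmetry, which $F$ respects). Chasing the resulting diagram identifies the image under $F$ of the square~\eqref{diagram:graded_weakly_central} with the instance of~\eqref{diagram:graded_weakly_central} for the object $FX$, the degree $i$, and the induced endomorphism $\Si^i FX \xra{\simeq} F\Si^i X \xra{Ff} FX$; since the former commutes, so does the latter, i.e.\ the induced endomorphism lies in $E_{\L,FX}^i$.

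Finally I would assemble these degreewise statements: the graded ring homomorphism $[X,X]_{\K,\astt} \ra [FX,FX]_{\L,\astt}$ restricts in each degree to a map $E_{\K,X}^i \ra E_{\L,FX}^i$, hence restricts to a graded ring homomorphism $E_{\K,X}^\astt \ra E_{\L,FX}^\astt$. (In degree $0$ this recovers Lemma~\ref{lemma:Ex_induced}.) The only real obstacle is the bookkeeping: iterating~\eqref{diagram:morphism_compatibility} and its right-handed companion correctly, treating negative degrees by inversion, and making sure that no spurious signs intrude from the anticommuting square~\eqref{diagram:anticommutes} --- they do not, because throughout one uses the same suspension isomorphisms on the source and target sides of $F$. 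None of this is conceptually deep, but it is where the care is needed.
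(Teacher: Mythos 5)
Your proposal is correct and follows essentially the same route as the paper, which likewise applies $F$ to the square~\eqref{diagram:graded_weakly_central} and checks commutativity using the monoidal structure of $F$ together with the compatibility axiom~\eqref{diagram:morphism_compatibility}; you have simply spelled out the diagram chase that the paper leaves as a sketch.
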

\begin{proof}
This involves verifying that a diagram commutes using the monoidal nature of the functor.
	The subtle point is that because of the suspension isomorphisms involved in
	\eqref{diagram:graded_weakly_central}
	one must utilize the compatibility axiom 
	\eqref{diagram:morphism_compatibility} for morphisms of tensor triangulated categories.
\end{proof}
\begin{theorem}
	\label{theorem:graded_comparison_map}
	Let $\K$ be a tensor triangulated category and let $X$ be an object in~$\K$.
	For any (graded-)commutative graded ring $A^\astt$ and
	graded ring homomorphism $\alpha:A^\astt \ra E_X^\astt$ there is an
	inclusion-reversing, spectral map
	\[ \rho_{X,A^\astt}^\astt : \supp(X) \ra \Spech(A^\astt) \]
	defined by $\rho_{X,A^\astt}^\astt(\P) := \{ a \in A^i \mid \cone(\alpha(a)) \notin \P \}_{i \in \Z}$.
\end{theorem}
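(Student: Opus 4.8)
The plan is to mimic the proof of Theorem~\ref{theorem:unnatural_ungraded_comparison_map}, carrying all the suspension isomorphisms along. First I would note that the hypothesis ``$f \tens X = X \tens f$'' in the graded sense is precisely the commutativity of diagram~\eqref{diagram:graded_weakly_central}, so that $E_X^\astt$ is the graded subring furnished by Proposition~\ref{proposition:graded_ring_thing}, and in particular the homogeneous units of $E_X^\astt$ are exactly the categorical isomorphisms. Then, fixing a prime $\P \in \supp(X)$, I would pass to the localization $q : \K \ra \K/\P$; by the graded analogue of Lemma~\ref{lemma:Ex_induced} (namely Lemma~\ref{lemma:graded_induced} applied to the quotient functor) we get an induced graded ring homomorphism $E_{\K,X}^\astt \ra E_{\K/\P, q(X)}^\astt$, and since $X \notin \P$ the object $q(X)$ is nonzero and $(0)$ is prime in $\K/\P$, so $E_{\K/\P,q(X)}^\astt$ is gr-local by Proposition~\ref{proposition:graded_ring_thing}.

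Next I would argue that for a homogeneous $f : \Si^k X \ra X$ lying in $E_{\K,X}^\astt$, the condition $\cone(f) \notin \P$ is equivalent to $q(f)$ not being an isomorphism in $\K/\P$ (here one uses the suspension isomorphism $\Si^k q(X) \eiso q(\Si^k X)$ to make sense of $q(f)$ as a homogeneous endomorphism of $q(X)$), which in turn is equivalent to $q(f)$ being a non-unit of the gr-local ring $E_{\K/\P,q(X)}^\astt$. Since the homogeneous non-units of a gr-local ring form a two-sided homogeneous ideal, the preimage $\{ f \in E_{\K,X}^\astt \mid \cone(f) \notin \P\}$ is a two-sided homogeneous ideal of $E_{\K,X}^\astt$; and it is ``homogeneous prime'' in the sense that $\cone(f \cdot g) \notin \P$ forces $\cone(f) \notin \P$ or $\cone(g) \notin \P$, which follows from the octahedral axiom exactly as in the ungraded case — one has an exact triangle relating $\cone(f)$, $\cone(fg)$, and a suspension of $\cone(g)$, up to suspension isomorphisms. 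Pulling this homogeneous prime ideal back along the graded homomorphism $\alpha : A^\astt \ra E_X^\astt$ produces a genuine homogeneous prime ideal $\rho_{X,A^\astt}^\astt(\P)$ of the graded-commutative ring $A^\astt$, which shows the map is well-defined; that it is inclusion-reversing is immediate from the defining formula.

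For continuity and spectrality I would compute preimages of closed sets. A closed set in $\Spech(A^\astt)$ has the form $V(E) = \{ \p \in \Spech(A^\astt) \mid \p \supset E\}$ for a set $E$ of homogeneous elements, and one checks directly from the definition that $(\rho_{X,A^\astt}^\astt)^{-1}(V(E)) = \bigcap_{a \in E} \supp(\cone(\alpha(a)))$, an intersection of closed subsets of $\supp(X)$, giving continuity. If moreover $V(E)$ has quasi-compact complement, then (by the remark on Thomason closed subsets of a homogeneous spectrum cited in the excerpt, cf.~\cite[Lemma~2.2]{BKS07}) $V(E) = V(a_1,\ldots,a_n)$ for finitely many homogeneous $a_i$, and $(\rho_{X,A^\astt}^\astt)^{-1}(V(a_1,\ldots,a_n)) = \bigcap_{i=1}^n \supp(\cone(\alpha(a_i))) = \supp\big(\cone(\alpha(a_1)) \tens \cdots \tens \cone(\alpha(a_n))\big)$, which has quasi-compact complement by Lemma~\ref{lemma:thomason_subsets}; hence $\rho_{X,A^\astt}^\astt$ is a spectral map.

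The step I expect to require the most care is the bookkeeping of suspension isomorphisms when identifying $\cone(f) \notin \P$ with $q(f)$ being a non-unit in $E_{\K/\P,q(X)}^\astt$, and when running the octahedral-axiom argument for the ``homogeneous primeness'' of the ideal: the composite $f \cdot g$ of homogeneous endomorphisms of different degrees involves a suspension of one of the maps, so one must check that $\cone(f \cdot g)$ sits in an exact triangle with $\cone(f)$ and a suspension of $\cone(g)$ in a way compatible with diagram~\eqref{diagram:graded_weakly_central}. None of this is conceptually new — it is exactly the ungraded argument of Theorem~\ref{theorem:unnatural_ungraded_comparison_map} with the suspension isomorphisms inserted — but it is where the verification is least automatic.
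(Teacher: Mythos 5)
Your proposal is correct and follows essentially the same route as the paper's own proof: localize at $\P$, use that $E_{\K/\P,q(X)}^\astt$ is gr-local so homogeneous non-units form a two-sided ideal, establish primeness via the octahedral axiom, pull back along $\alpha$, and verify spectrality exactly as in Theorem~\ref{theorem:unnatural_ungraded_comparison_map}. You simply spell out the suspension-isomorphism bookkeeping and the Thomason-closed-set computation that the paper leaves implicit.
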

\begin{proof}
The localization functor $q:\K \ra \K/\P$ induces a graded ring homomorphism
$E_{\K,X}^\astt \ra E_{\K/\P,q(X)}^\astt$ and since $X \notin \P$ the target
ring $E_{\K/\P,q(X)}^\astt$ is gr-local.
The homogeneous non-units in $E_{\K/\P,q(X)}^\astt$ therefore form a two-sided ideal; moreover, an application of the octahedral axiom shows that this
ideal is prime in the sense that the product of two homogeneous units is again a unit.
The pullback of this ideal to $A^\astt$ is a genuine homogeneous prime ideal of $A^\astt$ and this is exactly what
$\rho_{X,A^\astt}^\astt(\P)$ is defined to be. This shows that the map $\rho_{X,A^\astt}^\astt$ is well-defined and it is clear from
the definition that it is inclusion-reversing.
Showing that it is spectral involves an argument similar to the one given in the proof of Theorem~\ref{theorem:unnatural_ungraded_comparison_map}.
\end{proof}
\begin{remark}
	It is clear from the definitions that there is a commutative diagram
	\[\xymatrix @C=3em{
			\supp(X) \ar[dr]_{\rho_{X,A^0}} \ar[r]^-{\rho_{X,A^\astt}^\astt} & \Spech(A^\astt) \ar[d]^{(-)^0} \\
			& \Spec(A^0)
		}
	\]
	where $(-)^0$ is the surjective spectral map sending $\mathfrak{p} \in \Spech(A^\astt)$
	to $\mathfrak{p}^0 = \mathfrak{p} \cap A^0$. The surjectivity of this map is explained in \cite[Remark~5.5]{Balmer_SSS}.
\end{remark}
\begin{example}
	Any (graded-)commutative graded subring of $E_X^\astt$ yields an associated comparison map.
	Obvious examples include 
	the graded-center of $E_X^\astt$ and 
	$\{ f \in \Center [X,X]_\astt \mid f \tens X = X \tens f\}$.
	A more exotic example is given by
\begin{equation}
	\label{eq:exotic}
	\{f \in \Center [X,X]_\astt \mid f\tens X \in \Center [X^{\tens 2},X^{\tens 2}]_\astt\}.
\end{equation}
For this third example, note that if $f \tens X \in \Center[X^{\tens 2}, X^{\tens 2}]_\astt$
then it follows from the fact that the symmetry $\tau : X\tens X \xra{\sim} X \tens X$ is in
$[X^{\tens 2},X^{\tens 2}]$
that $f \in E_X^\astt$;
so~\eqref{eq:exotic} does indeed give 
a graded subring of $E_X^\astt$.
\end{example}
\begin{example}
	\label{example:exampledef}
If $X = \unit$ then the condition $f \tens X = X \tens f$ holds for any graded
endomorphism 
and it is well-known that the ring $E_\unit^\astt = [\unit,\unit]_\astt$ is graded-commutative.
The map $\rho_{\unit,[\unit,\unit]_\astt}^\astt$ is the original graded comparison map from \cite{Balmer_SSS}.
\end{example}
\begin{example}
	\label{example:graded_center}
Recall the notion of the graded-center $Z^\astt(\mathcal{T})$ of a
triangulated category $\mathcal{T}$ (see \cite{KrauseYe_center}, for example).
For a \emph{tensor} triangulated category $\mathcal{T}$
one can define a graded-commutative graded subring
of $Z^\astt(\mathcal{T})$
by setting
\[Z_\tens^i(\mathcal{T}) := \{ \alpha \in Z^i(\mathcal{T}) \mid X \tens \alpha_Y = \alpha_{X \tens Y} \text{ for every } X,Y\in \mathcal{T}\}\]
for each $i\in \Z$.
Observe that any $\alpha \in Z_\tens^i(\mathcal{T})$ is completely determined by $\alpha_\unit$ and there is an obvious isomorphism $Z_\tens^\astt(\mathcal{T}) \xra{\sim} [\unit,\unit]_\astt$.
However, the definition makes sense for any thick $\tens$-ideal $\I \subseteq \T$ and
$Z_\tens^\astt(\I)$ is not obviously so trivial for $\I \subsetneq \T$.
For any object $X \in \I$ there is a 
graded ring homomorphism
$Z_\tens^\astt(\I) \ra E_X^\astt$ given by $\alpha \mapsto \alpha_X$ and so we obtain a map
$\supp(X) \ra \Spech(Z_\tens^\astt(\I))$.
Explicitly, it maps a prime $\P$ to
$\{ \alpha \in Z_{\tens}^\astt(\I) \mid \cone(\alpha_X) \notin \P \}$.
However, for a fixed prime $\P$ and a fixed $\alpha \in Z_{\tens}^\astt(\I)$ the set
$\{ Y \in \I \mid \cone(\alpha_Y) \in \P\}$ is readily checked to be a thick $\tens$-ideal.
It follows that if $X$ generates $\I$ as a thick $\tens$-ideal then
$\{ \alpha \in Z_{\tens}^\astt(\I) \mid \cone(\alpha_X) \notin \P \}$
is the same as $\{ \alpha \in Z_{\tens}^\astt(\I) \mid \cone(\alpha_Y) \notin \P \text{ for some } Y \in \I\}$.
In other words, if $\I$ is generated as a thick $\tens$-ideal by a single object (equivalently, by a finite number of objects)
then every generator gives the exact same comparison map.
In conclusion, every finitely generated thick $\tens$-ideal $\I$ has an associated (generator-independent) comparison map
$\supp(\I) \ra \Spech(Z_\tens^\astt(\I))$
which sends a prime $\P$ to
$\{ \alpha \in Z_\tens^\astt(\I) \mid \cone(\alpha_Y) \notin \P \text{ for some } Y \in \I \}$. 
\end{example}
\begin{remark}
	In the proof of Theorem~\ref{theorem:unnatural_ungraded_comparison_map}, we
	saw how to associate a ``prime'' ideal of the non-commutative ring $E_X$ to
	any prime $\P \in \supp(X)$ which was then pulled back to a genuine prime
	ideal of a commutative ring $A$ via a map $A \ra E_X$.  A suitable theory
	of spectra for non-commutative rings might allow us to work directly with
	the ring $E_X$ but this avenue has not been pursued.  In any case, taking
	commutative rings mapping into $E_X$ is a flexible approach which provides
	for some interesting examples not obviously tied to the ring $E_X$ (e.g.,
	Example~\ref{example:graded_center} above).  On the other hand, although
	the maps $\rho_{X,A}$ are useful for some purposes, they will not typically
	be natural with respect to tensor triangular functors.  The problem is that
	although the construction of the ring $E_X$ is functorial (recall
	Lemma~\ref{lemma:Ex_induced}), the construction of various commutative
	rings $A$ mapping into $E_X$ will typically not be.  For example, the
	center of $E_X$ is not a functorial construction, nor is the graded-center
	of a triangulated category.  In the next section, we will replace $E_X$ by
	a functorial commutative ring $R_X$ and obtain a comparison map $\rho_X :
	\supp(X) \ra \Spec(R_X)$ which \emph{is} natural with respect to tensor
	triangular functors.  In fact, the construction of $\rho_X$ will be a
	special case of a much more general construction, which will provide us
	with additional examples of natural comparison maps.
\end{remark}

\section{Natural constructions}
Let $\Phi$ be a non-empty set of objects in a tensor triangulated category $\K$ that is closed under the
$\tens$-product ($a,b\in \Phi \Rightarrow a\tens b\in \Phi$).
For any object $X \in \K$ recall that $E_X$ denotes the ring of $\tens$-balanced endomorphisms of $X$.
\begin{lemma}
	\label{lemma:twisting_lemma}
	Suppose $f:X\ra X$ and $g:Y \ra Y$ are two endomorphisms with $g \in E_Y$.
	If $X \tens f \tens g = f \tens X \tens g$ then $f \tens g \in E_{X\tens Y}$.
\end{lemma}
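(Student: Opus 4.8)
The plan is to unwind the condition defining membership in $E_{X\tens Y}$, namely the equation $(f\tens g)\tens(X\tens Y) = (X\tens Y)\tens(f\tens g)$ in $\End((X\tens Y)^{\tens 2})$, by transporting it along the canonical coherence isomorphism
\[
\Theta : (X\tens Y)\tens(X\tens Y) \xra{\sim} (X\tens X)\tens(Y\tens Y)
\]
assembled from the associativity and symmetry isomorphisms of $\K$. Regarding $\Theta$ as a natural isomorphism between the two functors $\K^{4}\ra\K$ given by $(A,B,C,D)\mapsto(A\tens B)\tens(C\tens D)$ and $(A,B,C,D)\mapsto(A\tens C)\tens(B\tens D)$, naturality along the endomorphism $(f,g,\id_X,\id_Y)$ of $(X,Y,X,Y)$ gives $\Theta\circ\big((f\tens g)\tens(X\tens Y)\big) = \big((f\tens X)\tens(g\tens Y)\big)\circ\Theta$, and naturality along $(\id_X,\id_Y,f,g)$ gives $\Theta\circ\big((X\tens Y)\tens(f\tens g)\big) = \big((X\tens f)\tens(Y\tens g)\big)\circ\Theta$. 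Since $\Theta$ is an isomorphism, it therefore suffices to prove the equality
\[
(f\tens X)\tens(g\tens Y) = (X\tens f)\tens(Y\tens g)
\]
of endomorphisms of $(X\tens X)\tens(Y\tens Y)$.

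This equality I would then establish in two moves. First, re-associating and invoking the hypothesis $X\tens f\tens g = f\tens X\tens g$ (an equation in $\End(X\tens X\tens Y)$) after tensoring it on the right with $\id_Y$:
\[
(f\tens X)\tens(g\tens Y) = (f\tens X\tens g)\tens Y = (X\tens f\tens g)\tens Y = (X\tens f)\tens(g\tens Y).
\]
Second, since $g\in E_Y$ we have $g\tens Y = Y\tens g$ in $\End(Y\tens Y)$; tensoring this equation on the left with the fixed endomorphism $X\tens f$ of $X\tens X$ yields
\[
(X\tens f)\tens(g\tens Y) = (X\tens f)\tens(Y\tens g),
\]
which closes the chain and finishes the proof.

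The only place that demands genuine care is the reduction in the first paragraph: one must pin down the coherence isomorphism $\Theta$ and check that conjugating by it really does carry $(f\tens g)\tens(X\tens Y)$ to $(f\tens X)\tens(g\tens Y)$ and $(X\tens Y)\tens(f\tens g)$ to $(X\tens f)\tens(Y\tens g)$. This is routine coherence bookkeeping --- it follows from Mac Lane coherence together with the naturality of the structure isomorphisms --- and since $f$ and $g$ are ungraded no suspensions intervene, so the anticommuting square \eqref{diagram:anticommutes} is not an issue here. It is worth noting that the argument uses the hypothesis only ``up to tensoring with $Y$'' and the $\tens$-balancedness of $g$ only ``up to tensoring with $X\tens f$'', which explains why both assumptions are needed.
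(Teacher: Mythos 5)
Your proof is correct and is essentially the same argument as the paper's: the published proof is exactly this symmetry-shuffling, carried out as a ladder of conjugations by transpositions inside $X\tens Y\tens X\tens Y$ (with the hypothesis and the $\tens$-balancedness of $g$ each invoked once along the way) rather than as a single conjugation by the middle-four interchange $\Theta$. The difference is purely one of bookkeeping, so there is nothing to add.
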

\begin{proof}
	The commutativity of the diagram
\begin{equation*}\label{eq:symmetry_tricks}
	\xymatrix @C=5em{
			X \tens Y \tens X \tens Y \ar@/_4pc/[dd]_{\id} \ar[d]^{\tau \tens X \tens Y} \ar[r]^{X \tens Y \tens f \tens g} & X \tens Y \tens X \tens Y \ar[d]_{\tau \tens X \tens Y}\ar@/^4pc/[dd]^{\id}\\
			Y \tens X \tens X \tens Y \ar[d]^{\tau \tens X \tens Y} \ar@<0.25em>[r]^{Y\tens X \tens f \tens g} \ar@<-0.25em>[r]_{Y\tens f \tens X \tens g} & Y \tens X \tens X \tens Y \ar[d]_{\tau \tens X \tens Y}\\
			X \tens Y \tens X \tens Y \ar@/_4pc/[dd]_{\id} \ar[d]^{X \tens \tau \tens Y} \ar[r]^{f\tens Y \tens X \tens g} & X \tens Y \tens X \tens Y \ar[d]_{X \tens \tau \tens Y}\ar@/^4pc/[dd]^{\id}\\
			X \tens X \tens Y \tens Y \ar[d]^{X \tens \tau \tens Y} \ar@<0.25em>[r]^{f\tens X \tens Y \tens g}\ar@<-0.25em>[r]_{f \tens X \tens g \tens Y} & X \tens X \tens Y \tens Y \ar[d]_{X \tens \tau \tens Y}\\
			X \tens Y \tens X \tens Y \ar[r]^{f\tens g \tens X \tens Y}& X \tens Y \tens X \tens Y.
		}
	\end{equation*}
	verifies that $f \tens g \in E_{X\tens Y}$. 
\end{proof}
\begin{corollary}
	\label{corollary:tensor_Ex}
	For any pair of objects $X,Y\in\K$ the functors $-\tens Y$ and $X \tens -$ induce
	ring homomorphisms $E_X \ra E_{X \tens Y}$ and $E_Y \ra E_{X \tens Y}$.
\end{corollary}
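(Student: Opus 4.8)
The plan is to deduce everything from Lemma~\ref{lemma:twisting_lemma}. First I would record the purely formal fact that, for any object $Y$, the exact functor $-\tens Y:\K\ra\K$ induces a ring homomorphism $[X,X]\ra[X\tens Y,X\tens Y]$ by $f\mapsto f\tens\id_Y$: additivity is bilinearity of $\tens$, multiplicativity is functoriality of $-\tens Y$, and $\id_X\tens\id_Y=\id_{X\tens Y}$ gives unitality; symmetrically $X\tens -$ induces $[Y,Y]\ra[X\tens Y,X\tens Y]$, $g\mapsto\id_X\tens g$. The only bookkeeping here is that the suppressed associator isomorphisms identify the objects in play, exactly as they are suppressed in the statement and proof of Lemma~\ref{lemma:twisting_lemma}. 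It then remains to check that these two ring homomorphisms restrict to the subrings of $\tens$-balanced endomorphisms.

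For $-\tens Y$, given $f\in E_X$ I would apply Lemma~\ref{lemma:twisting_lemma} to the pair $f$ and $g:=\id_Y$. Here $\id_Y\in E_Y$ holds trivially, and the side condition $X\tens f\tens g=f\tens X\tens g$ of the lemma becomes $X\tens f\tens\id_Y=f\tens X\tens\id_Y$, which follows by tensoring the defining relation $X\tens f=f\tens X$ (available because $f\in E_X$) on the right with $\id_Y$; the lemma then yields $f\tens\id_Y\in E_{X\tens Y}$. For $X\tens -$, given $g\in E_Y$ I would instead apply the lemma to the pair $f:=\id_X$ and $g$: now the side condition $X\tens\id_X\tens g=\id_X\tens X\tens g$ is automatic, both sides being $\id_{X\tens X}\tens g$, while the lemma's standing hypothesis $g\in E_Y$ is exactly our assumption, so the lemma gives $\id_X\tens g\in E_{X\tens Y}$.

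I do not expect a genuine obstacle: all the real content has already been isolated in Lemma~\ref{lemma:twisting_lemma}, and what is left is the routine observation that tensoring with a fixed object is a ring homomorphism on endomorphism rings together with the right choice of input to the lemma in each case. The one mild subtlety worth flagging is the asymmetric role of the two tensor factors in Lemma~\ref{lemma:twisting_lemma}: for $-\tens Y$ one uses the hypothesis $f\in E_X$ to verify the lemma's side condition, whereas for $X\tens -$ that side condition is vacuous and one uses $g\in E_Y$ directly as the lemma's standing hypothesis.
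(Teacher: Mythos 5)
Your argument is correct and is exactly the intended derivation: the paper states this as an immediate corollary of Lemma~\ref{lemma:twisting_lemma}, obtained by specializing to $g=\id_Y$ (using $f\in E_X$ to check the side condition) and to $f=\id_X$ (where the side condition is automatic and $g\in E_Y$ is the lemma's hypothesis), together with the routine fact that tensoring with a fixed object is a ring homomorphism on endomorphism rings. Nothing further is needed.
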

\begin{definition}
	\label{definition:general_ring}
Define $R_\Phi$ to be the set $\{(X,f) : X \in \Phi, f\in E_X\}/\mathord{\sim}$ where
	$\sim$ is the
	smallest equivalence relation such that
	$(X,f) \sim (a\tens X, a \tens f)$ and $(X,f) \sim (X \tens a,f\tens a)$ for every $a \in \Phi$.
\end{definition}
\begin{notation}
A subscript $f_X$ will indicate that $f$ is an endomorphism of $X$ and 
$[f_X]$ will denote the image of $(X,f)$ in $R_\Phi$.
\end{notation}
\begin{lemma}
	\label{lemma:iso_forced}
	For any isomorphism $\alpha : X \xra{\sim} Y$ in $\K$,
	the isomorphism of rings $[X,X] \xra{\sim} [Y,Y]$ given by
	$f \mapsto \alpha \circ f \circ \alpha^{-1}$ restricts to give an
	isomorphism \mbox{$\alpha_* : E_X \xra{\sim} E_Y$}.
	If $f\in E_X$ then $f \tens Y = X \tens \alpha_\ast(f)$ as an endomorphism of~$X \tens Y$.
\end{lemma}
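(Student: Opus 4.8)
The lemma is entirely formal and rests on a single ingredient: the bifunctoriality of the tensor product, i.e.\ the interchange law $(g_1 \circ f_1) \tens (g_2 \circ f_2) = (g_1 \tens g_2) \circ (f_1 \tens f_2)$, equivalently $f \tens g = (f \tens \id) \circ (\id \tens g) = (\id \tens g) \circ (f \tens \id)$. Since $f \mapsto \alpha \circ f \circ \alpha^{-1}$ is manifestly a ring isomorphism $[X,X] \xra{\sim} [Y,Y]$, the only thing to verify for the first assertion is that it carries the subring $E_X$ into $E_Y$ (the reverse inclusion then follows by symmetry), and for the second assertion one simply conjugates the balancing identity by a different isomorphism.

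For the first assertion I would take $f \in E_X$, so that $f \tens X = X \tens f$ as endomorphisms of $X \tens X$, and conjugate this equality by the isomorphism $\alpha \tens \alpha : X \tens X \xra{\sim} Y \tens Y$, whose inverse is $\alpha^{-1} \tens \alpha^{-1}$. Using the interchange law one computes $(\alpha \tens \alpha) \circ (f \tens X) \circ (\alpha^{-1} \tens \alpha^{-1}) = (\alpha f \alpha^{-1}) \tens (\alpha \alpha^{-1}) = \alpha_*(f) \tens Y$ and likewise $(\alpha \tens \alpha) \circ (X \tens f) \circ (\alpha^{-1} \tens \alpha^{-1}) = Y \tens \alpha_*(f)$, so $\alpha_*(f) \tens Y = Y \tens \alpha_*(f)$, i.e.\ $\alpha_*(f) \in E_Y$. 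Running the same argument with $\alpha^{-1}$ in place of $\alpha$ shows that $(\alpha^{-1})_*$ restricts to a map $E_Y \to E_X$; as $(\alpha^{-1})_*$ and $\alpha_*$ are already mutually inverse on the full endomorphism rings, their restrictions exhibit the desired isomorphism $\alpha_* : E_X \xra{\sim} E_Y$.

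For the second assertion I would conjugate the same identity $f \tens X = X \tens f$ by the isomorphism $X \tens \alpha : X \tens X \xra{\sim} X \tens Y$, with inverse $X \tens \alpha^{-1}$. Again the interchange law gives $(X \tens \alpha) \circ (f \tens X) \circ (X \tens \alpha^{-1}) = f \tens (\alpha \alpha^{-1}) = f \tens Y$ and $(X \tens \alpha) \circ (X \tens f) \circ (X \tens \alpha^{-1}) = X \tens (\alpha f \alpha^{-1}) = X \tens \alpha_*(f)$, whence $f \tens Y = X \tens \alpha_*(f)$ as endomorphisms of $X \tens Y$. The only potential pitfall in all of this is keeping straight which conjugating isomorphism is used for which statement---$\alpha \tens \alpha$ for preservation of $\tens$-balancedness, $X \tens \alpha$ for the twisting identity---since the computations themselves are routine applications of bifunctoriality; note also that the suspension isomorphisms play no role here because every morphism in sight has degree zero.
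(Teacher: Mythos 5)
Your proof is correct, and it fills in exactly the routine verification the paper leaves to the reader (the paper's proof is just ``This is routine from the definitions''): conjugating the balancing identity by $\alpha \tens \alpha$ for membership in $E_Y$ and by $X \tens \alpha$ for the twisting identity is the intended argument. No issues.
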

\begin{proof}
	This is routine from the definitions.
\end{proof}
\begin{notation}
For two endomorphisms $f_X$ and $g_Y$ the notation $f_X \eiso g_Y$ will indicate that there exists an isomorphism
$\alpha: X \xra{\sim} Y$ such that $\alpha_\ast(f_X)=g_Y$.
\end{notation}
\begin{remark}
	\label{remark:up_to_iso}
	The above lemma implies that in $R_\Phi$ endomorphisms of $X$ are identified with endomorphisms of $Y$ via all
	isomorphisms $X \xra{\sim} Y$.
	In particular, an endomorphism $f_X$ is identified 
	with the ``twisted'' version $\sigma \circ f \circ \sigma^{-1}$ for every automorphism $\sigma : X \ra X$.
Because of these identifications, an essentially equivalent approach to the construction of $R_\Phi$ could be obtained by
taking $\Phi$ to be a set of isomorphism classes of objects closed under the $\tens$-product.
However, such an approach would obscure the fact that these identifications up to isomorphism are forced by the innocuous identifications
	$f \sim a \tens f$ and $f \sim f \tens a$.
\end{remark}
\begin{lemma}
	\label{lemma:concrete_equivalence}
	Two endomorphisms $f_X$ and $g_Y$ are equivalent
	in $R_\Phi$ if and only if
there exist objects $a,b \in \Phi$ such 
that $a \tens f_X \eiso b \tens g_Y$ if and only if there exists an object $c \in \Phi$
such that $f_X \tens c \tens Y = X \tens c \tens g_Y$.
\end{lemma}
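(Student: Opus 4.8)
The plan is to establish the two biconditionals $(1)\Leftrightarrow(2)$ and $(2)\Leftrightarrow(3)$, where $(1),(2),(3)$ denote the three conditions in the order stated. Throughout, observe that $f_X,g_Y\in R_\Phi$ forces $X,Y\in\Phi$ and $f\in E_X$, $g\in E_Y$; since $\Phi$ is non-empty and closed under $\tens$, every auxiliary object introduced below (such as $a\tens X$, $c\tens Y$, or $X\tens c$) again lies in $\Phi$, which is what makes the defining relations of $R_\Phi$ applicable to it.

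$(2)\Rightarrow(1)$: if $a\tens f_X\eiso b\tens g_Y$ via an isomorphism $\alpha\colon a\tens X\xra{\sim}b\tens Y$, then Lemma~\ref{lemma:iso_forced}, applied to $\alpha$ and the $\tens$-balanced endomorphism $a\tens f$ (which lies in $E_{a\tens X}$ by Corollary~\ref{corollary:tensor_Ex}), shows that $(a\tens f)\tens(b\tens Y)$ and $(a\tens X)\tens(b\tens g)$ coincide as endomorphisms of $(a\tens X)\tens(b\tens Y)$. As $a\tens X$ and $b\tens Y$ belong to $\Phi$, the defining relations of $R_\Phi$ convert this into $[f_X]=[(a\tens f)_{a\tens X}]=[(b\tens g)_{b\tens Y}]=[g_Y]$, which is the mechanism behind Remark~\ref{remark:up_to_iso}. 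For the converse $(1)\Rightarrow(2)$ I would show that the relation $R$ on the pairs $(X,f)$ declaring $(X,f)$ and $(Y,g)$ equivalent whenever $a\tens f_X\eiso b\tens g_Y$ for some $a,b\in\Phi$ is an equivalence relation that contains the two defining relations of $\sim$; since $\sim$ is the \emph{smallest} such equivalence relation, this gives $\mathord{\sim}\subseteq R$, i.e. $(1)\Rightarrow(2)$. Reflexivity and symmetry of $R$ are immediate (non-emptiness of $\Phi$ supplying the padding object), and $R$ contains the defining relations because, for any $e\in\Phi$, reassociating yields the literal equality $(e\tens a)\tens f_X = e\tens(a\tens f)_{a\tens X}$, and likewise for $f_X\sim f_X\tens a$ after also using the symmetry to slide $a$ into place. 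The one step that needs an idea is transitivity: given $a\tens f_X\eiso b\tens g_Y$ and $a'\tens g_Y\eiso b'\tens h_Z$, tensor the first relation on the left by $a'$ and the second on the left by $b$, then invoke the symmetry isomorphism $a'\tens b\xra{\sim}b\tens a'$ (together with functoriality of $\tens$) to identify $a'\tens b\tens g_Y$ with $b\tens a'\tens g_Y$; chaining the three resulting isomorphisms produces $(a'\tens a)\tens f_X\eiso(b\tens b')\tens h_Z$, and $a'\tens a$ and $b\tens b'$ lie in $\Phi$.

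$(2)\Leftrightarrow(3)$: both directions are brief manipulations with the symmetry and Lemma~\ref{lemma:iso_forced}. For $(3)\Rightarrow(2)$, given $c\in\Phi$ with $f_X\tens c\tens Y=X\tens c\tens g_Y$, set $a:=c\tens Y$ and $b:=X\tens c$; the symmetry isomorphism $c\tens Y\tens X\xra{\sim}X\tens c\tens Y$ carries $(c\tens Y)\tens f$ to $f\tens c\tens Y$ (moving the $f$ from the last tensor factor to the first, by naturality of the symmetry), and this endomorphism equals $X\tens c\tens g=(X\tens c)\tens g$ by hypothesis, so that isomorphism witnesses $a\tens f_X\eiso b\tens g_Y$. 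For $(2)\Rightarrow(3)$, given $\alpha\colon a\tens X\xra{\sim}b\tens Y$ with $\alpha_\ast(a\tens f)=b\tens g$, Lemma~\ref{lemma:iso_forced} gives $a\tens f\tens b\tens Y=a\tens X\tens b\tens g$ as endomorphisms of $a\tens X\tens b\tens Y$; conjugating by the symmetry isomorphism that transposes the first two factors $a$ and $X$ rewrites this as $f\tens(a\tens b)\tens Y=X\tens(a\tens b)\tens g$, so $c:=a\tens b\in\Phi$ does the job.

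The only genuine obstacle is the transitivity of $R$ in the proof of $(1)\Rightarrow(2)$: it is the one place where something beyond bookkeeping is required, namely interposing the symmetry isomorphism $a'\tens b\xra{\sim}b\tens a'$ so that the two padding prefixes become compatible before one chains the isomorphisms. Everything else comes down to tracking the coherence isomorphisms of the symmetric monoidal structure and using naturality of the symmetry to move an endomorphism of a single tensor factor past the identity endomorphisms of the remaining factors; as in the paper's own conventions (compare the definition of $\tens$-balanced and the statement of Lemma~\ref{lemma:iso_forced}), all such identities are understood up to the associativity and unit coherence isomorphisms.
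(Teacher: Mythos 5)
Your proof is correct, and it uses the same basic toolkit as the paper (minimality of the generated equivalence relation $\sim$, Lemma~\ref{lemma:iso_forced}, Corollary~\ref{corollary:tensor_Ex}, and naturality of the symmetry), but it decomposes the three-way equivalence differently. The paper compares each of the two concrete conditions directly with equivalence in $R_\Phi$: it shows that the relation in the third condition contains the generating relations by using the $\tens$-balancedness of $f$ head-on (to get $f \tens c \tens a \tens X = X \tens c \tens a \tens f$, etc.), notes that the third condition implies $[f_X]=[g_Y]$ immediately from the defining relations, and handles the second condition by declaring its containment of the generators ``evident'' and deducing the converse from Lemma~\ref{lemma:iso_forced} exactly as you do. You instead chain $(1)\Leftrightarrow(2)\Leftrightarrow(3)$, proving $(2)\Leftrightarrow(3)$ directly by conjugating with symmetry isomorphisms (taking $a:=c\tens Y$, $b:=X\tens c$ in one direction and $c:=a\tens b$ in the other), so that $\tens$-balancedness enters only through Lemma~\ref{lemma:iso_forced} rather than through a separate computation. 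What your route buys is that the checks the paper waves through are made explicit---in particular the transitivity of the ``$a \tens f_X \eiso b \tens g_Y$'' relation via the interposed symmetry $a'\tens b \xra{\sim} b\tens a'$, and the verification that the generators of $\sim$ lie in it---at the cost of reaching $(3)\Rightarrow(1)$ only indirectly through $(2)$, whereas in the paper that implication is a one-line consequence of the defining relations $f\sim f\tens a$, $f \sim a \tens f$.
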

\begin{proof}
To show that the equivalence relation defined by the third condition
(existence of $c\in \Phi$ such that $f_X
	\tens c \tens Y = X \tens c \tens g_Y$)
is stronger 
than the equivalence relation defining $R_\Phi$ we need to show that $f \in E_X$ is identified with
	$a \tens f$ and $f \tens a$ for any $a \in \Phi$. Indeed, the fact that $f_X$ is $\tens$-balanced implies
	that $f \tens c \tens a \tens X = X \tens c \tens a \tens f$ and $f \tens c \tens X \tens a = X \tens c\tens f \tens a$ 
	for \emph{any} object~$c\in\K$ (cf.~the proof of Lemma~\ref{lemma:twisting_lemma}).
	On the other hand, 
	$f_X \tens c \tens Y = X \tens c \tens g_Y$ clearly implies that $[f_X] = [g_Y]$ in $R_\Phi$.
Similarly, the second condition defines an equivalence relation which is evidently stronger than the one defining $R_\Phi$.
	On the other hand, if $a \tens f_X \simeq b\tens g_Y$ then by definition there exists an isomorphism $\alpha:a \tens X \xra{\sim} b\tens Y$
	such that $b \tens g_Y = \alpha_\ast(a\tens f_X)$. 
	Lemma~\ref{lemma:iso_forced} then implies that $a\tens f_X \tens b \tens Y = a \tens X \tens b \tens g_Y$ so that $[f_X] = [g_Y]$ in $R_\Phi$.
\end{proof}
\begin{proposition}
	The set $R_\Phi$ is a commutative ring with addition and multiplication defined by
$[f_X]+[g_Y] := [f_X \tens Y + X \tens g_Y]$ and $[f_X]\cdot [g_Y] := [(f_X \tens Y)\circ (X \tens g_Y)]$.
It is non-zero provided that $\Phi$ does not contain 
a zero object.
	If $(0)$ is a prime in~$\K$, for example if $\K$ is rigid and local, then $R_\Phi$ is a local ring provided that it is \mbox{non-zero}.
\end{proposition}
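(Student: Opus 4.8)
The plan is to verify in turn: (i) the addition and multiplication are well defined on $\sim$-classes; (ii) they make $R_\Phi$ a commutative ring; (iii) the non-vanishing criterion; and (iv) locality when $(0)$ is prime. The two workhorses are Lemma~\ref{lemma:concrete_equivalence}, which gives a usable criterion for when $[f_X]=[g_Y]$, and Corollary~\ref{corollary:tensor_Ex}, which supplies ring homomorphisms $E_X\to E_{X\tens Y}$ and $E_Y\to E_{X\tens Y}$.

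For (i) and (ii) the organising idea is that $R_\Phi$ is, up to the harmless iso-identifications of Remark~\ref{remark:up_to_iso}, the colimit of the rings $E_Z$ ($Z\in\Phi$) along the transition maps of Corollary~\ref{corollary:tensor_Ex}. Concretely, any two classes $[f_X],[g_Y]$ can be represented on a common object, namely by $\bar f:=f\tens\id_Y$ and $\bar g:=\id_X\tens g$ in $E_{X\tens Y}$, and in these terms the defining formulas become $[f_X]+[g_Y]=[(\bar f+\bar g)_{X\tens Y}]$ and $[f_X]\cdot[g_Y]=[(\bar f\circ\bar g)_{X\tens Y}]$, where $\bar f\circ\bar g=f\tens g=\bar g\circ\bar f$ by bifunctoriality of $\tens$. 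So it suffices to check that for each fixed $Z\in\Phi$ the assignment $E_Z\ni h\mapsto[h_Z]\in R_\Phi$ is additive, multiplicative and unital---a short computation using that elements of $E_Z$ are $\tens$-balanced (so $h\tens Z=Z\tens h$ in $E_{Z\tens Z}$)---and that the output is unchanged when $Z$ is enlarged to $Z\tens W$ or a representative is replaced by an equivalent one; both reductions follow from the defining relations of $\sim$ together with Lemma~\ref{lemma:concrete_equivalence}. Granting this, all the ring axioms for $R_\Phi$ descend from the corresponding identities in the rings $E_Z$; the only points needing real care are commutativity and associativity of the two operations, where one must identify $f\tens g$ with $g\tens f$ and $f\tens(g\tens h)$ with $(f\tens g)\tens h$ using the symmetry and associator and invoking Lemma~\ref{lemma:iso_forced} (which says iso-conjugate $\tens$-balanced endomorphisms have the same class in $R_\Phi$). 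Additive inverses are $-[f_X]=[(-f)_X]$, and $[0_Z]$ and $[\id_Z]$ (both independent of $Z\in\Phi$) serve as the additive and multiplicative identities.

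For (iii): if $R_\Phi=0$ then $[\id_Z]=[0_Z]$ for any $Z\in\Phi$, so Lemma~\ref{lemma:concrete_equivalence} produces $c\in\Phi$ with $\id_{Z\tens c\tens Z}=0$, exhibiting $Z\tens c\tens Z\in\Phi$ as a zero object; contrapositively, if $\Phi$ contains no zero object then $R_\Phi\neq0$, and conversely a zero object in $\Phi$ would force $1=[\id]=[0]=0$ in $R_\Phi$. For (iv), assume $(0)$ is prime in $\K$ and $R_\Phi\neq0$; it is enough to show the non-units of $R_\Phi$ are closed under addition (closure under multiplication by arbitrary elements being automatic), since that makes them an ideal and forces locality. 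After dividing by the purported unit it suffices to show: if $[a]+[b]=1$ with $a\in E_X$ and $b\in E_Y$, then $[a]$ or $[b]$ is a unit. Writing $1=[a]+[b]=[(a\tens\id_Y+\id_X\tens b)_{X\tens Y}]$, set $Z=X\tens Y\in\Phi$ and $h=a\tens\id_Y+\id_X\tens b\in E_Z$; since $[h_Z]=[\id_Z]$, Lemma~\ref{lemma:concrete_equivalence} gives $c\in\Phi$ with $h\tens\id_{c\tens Z}=\id_{Z\tens c\tens Z}$. Put $P=Z\tens c\tens Z\in\Phi$ and let $a',b'\in E_P$ be the images of $a,b$ under Corollary~\ref{corollary:tensor_Ex}; they represent $[a]$ and $[b]$ and satisfy $a'+b'=\id_P$. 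Because $R_\Phi\neq0$, no object of $\Phi$ is a zero object, so $P\neq0$, whence $E_P$ is a local ring by Proposition~\ref{proposition:local_ring}---the sole use of the hypothesis on $(0)$. Since $a'+b'=\id_P$ is a unit of the local ring $E_P$, one of $a',b'$ is a unit there, i.e., a categorical isomorphism, and its image $[a]$ or $[b]$ is then a unit of $R_\Phi$ via the ring homomorphism $E_P\to R_\Phi$ from step (ii). The ``rigid and local'' case is included since then $(0)$ is prime.

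The step I expect to be the main obstacle is (i)--(ii): making the colimit description precise so that well-definedness and the ring axioms genuinely descend from the $E_Z$, while carefully bookkeeping the numerous associator- and symmetry-type identifications that Lemma~\ref{lemma:iso_forced} forces. By contrast (iv) is short once one notices that the relation ``$[a]+[b]=1$'' in $R_\Phi$ can, after tensoring with a suitable object of $\Phi$, be lifted to a genuine equation $a'+b'=\id_P$ in an honest local ring.
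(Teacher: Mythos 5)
Your proof is correct. For well-definedness, the ring axioms, commutativity, and the non-vanishing criterion you follow essentially the same route as the paper: everything is reduced, via Lemma~\ref{lemma:concrete_equivalence} and the elementary relations, to computations inside the rings $E_Z$ for $Z\in\Phi$ (the paper likewise treats this as a long but straightforward exercise), and your observation that $h\mapsto[h_Z]$ is a unital ring homomorphism $E_Z\to R_\Phi$ is exactly the bookkeeping required. Where you genuinely diverge is the locality statement. The paper mirrors the proof of Proposition~\ref{proposition:local_ring} directly at the level of $R_\Phi$: if $[f_X]+[g_Y]$ is a unit then $a\tens(f\tens Y+X\tens g)$ is an isomorphism for some $a\in\Phi$, and expanding $(f\tens Y+X\tens g)^{\tens n}\tens\cone(f)\tens\cone(g)$ with Lemma~\ref{lemma:killscone} shows this is zero for $n\ge 3$; primality of $(0)$ then forces $\cone(f)=0$ or $\cone(g)=0$, i.e.\ one of the chosen representatives $f,g$ is itself an isomorphism. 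You instead reduce to Proposition~\ref{proposition:local_ring} itself: after dividing by the unit, Lemma~\ref{lemma:concrete_equivalence} lets you realize $[a]+[b]=1$ as an honest equation $a'+b'=\id_P$ in the single ring $E_P$ with $P=Z\tens c\tens Z\in\Phi$ nonzero, whose locality (the only place the hypothesis that $(0)$ is prime enters) gives that $a'$ or $b'$ is invertible, hence $[a]$ or $[b]$ is a unit via the homomorphism $E_P\to R_\Phi$. Your reduction buys a cleaner argument that reuses an established result and avoids redoing the tensor-power/cone computation, and it isolates the use of primality in one citation; the paper's direct argument buys the marginally sharper conclusion that a representative is already a categorical isomorphism and stays self-contained, running in parallel with Proposition~\ref{proposition:local_ring} without normalizing the unit to $1$. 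Both arguments are sound.
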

\begin{proof}
	Armed with Lemma~\ref{lemma:concrete_equivalence}, it is a long but straightforward exercise to establish that addition and multiplication are well-defined and 
	endow  $R_\Phi$ with a ring structure.
	There are several ways to see that 
	this ring structure is commutative.
	For example, the fact that $f_X$ is $\tens$-balanced implies that
	$f \tens g \tens X = X \tens g \tens f$ and
	so $[f]\cdot [g] = [f\tens g] = [f\tens g\tens X]=[X \tens g \tens f] = [g\tens f]=[g]\cdot [f]$.
	One readily checks that $[f] = 0$ iff $a \tens f = 0$ for some $a \in \Phi$ and that $[f]$ is a unit iff $a \tens f$ is an isomorphism for some $a \in \Phi$.
	The proof that $R_\Phi$ is local when $(0)$ is prime closely mirrors the proof that $E_X$ is local (Proposition~\ref{proposition:local_ring}).
	Indeed, 
	if $[f_X] + [g_Y]$ 
	is a unit in $R_\Phi$ then $a \tens (f \tens Y + X \tens g)$ is an isomorphism for some $a \in \Phi$.
	It follows that $a^{\tens n} \tens (f \tens Y + X \tens g)^{\tens n} \tens \cone(f) \tens \cone(g)$ is both zero and an isomorphism for $n\ge 3$.
This implies that
$a^{\tens n}\tens (X \tens Y)^{\tens n} \tens \cone(f) \tens \cone(g) = 0$
for $n \ge 3$ and since $(0)$ is prime and $X,Y,a \in \Phi$ are non-zero we conclude that $f$ or $g$ is an isomorphism (and hence $[f]$ or $[g]$ is a unit in $R_\Phi$).
\end{proof}
\begin{remark}
	\label{remark:E_X_colimit}
	The ring $R_\Phi$ is the colimit of a diagram of rings consisting of $E_X$ for each $X \in \Phi$ with maps
	generated by $a\tens -:E_X \ra E_{a\tens X}$ and $-\tens a:E_X \ra E_{X \tens a}$.
	Although the index category on which this diagram is defined is not technically a filtered category (because there are parallel arrows that
	are not coequalized in the category), $\colim_{X\in \Phi} E_X$
	is still a filtered colimit in the more general sense of \cite[Chapter 4, Section 17]{stacks_project}
	which is sufficient for the colimit to be created in the category of sets. 
	Rather than define $R_\Phi$ to be this colimit (which would necessitate a longer discussion of these technicalities) we have opted for the concrete description
	given above.
\end{remark}
\begin{proposition}
	\label{proposition:general_induced}
	Let $F: \K \ra \L$ be a morphism of tensor triangulated categories.
	Suppose $\Phi \subset \K$ and $\Psi \subset \L$ are non-empty
	$\tens$-multiplicative subsets such that $F(\Phi) \subset \Psi$. 
	Then $[f] \mapsto [F(f)]$ defines a ring homomorphism
	$R_{\K,\Phi} \ra R_{\L,\Psi}$.
\end{proposition}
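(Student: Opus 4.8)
The plan is to define the map on representatives by sending $(X,f)$ to $[F(f)_{FX}]$, and then to verify in turn that this (i) makes sense elementwise, (ii) descends to $R_{\K,\Phi}$, and (iii) is a unital ring homomorphism. Step (i) is immediate from what we already have: if $X\in\Phi$ then $FX\in\Psi$ by hypothesis, and if $f\in E_{\K,X}$ then $F(f)\in E_{\L,FX}$ by Lemma~\ref{lemma:Ex_induced}, so $[F(f)_{FX}]$ is a genuine element of $R_{\L,\Psi}$. For step (ii) I would use that the equivalence relation defining $R_{\K,\Phi}$ is \emph{generated} by the two families $(X,f)\sim(a\tens X,a\tens f)$ and $(X,f)\sim(X\tens a,f\tens a)$ for $a\in\Phi$; since ``$[F(f)_{FX}]=[F(g)_{FY}]$ in $R_{\L,\Psi}$'' is itself an equivalence relation on representatives, it suffices to check these two generating identities.

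This is where the only genuine subtlety enters. Because $F$ is merely a \emph{strong} $\tens$-functor, $F(a\tens X)$ is not literally $Fa\tens FX$ but only isomorphic to it via the structural isomorphism $\phi_{a,X}\colon F(a\tens X)\xra{\sim}Fa\tens FX$. Naturality of $\phi$ in its second variable, applied to $\id_a\tens f\colon a\tens X\ra a\tens X$, gives $(\phi_{a,X})_\ast\bigl(F(a\tens f)\bigr)=Fa\tens Ff$ in the notation of Lemma~\ref{lemma:iso_forced} (using $F\id_a=\id_{Fa}$). Since $a\tens X\in\Phi$ forces $F(a\tens X)\in\Psi$, and $Fa,FX\in\Psi$ force $Fa\tens FX\in\Psi$, Remark~\ref{remark:up_to_iso} then identifies $[F(a\tens f)_{F(a\tens X)}]$ with $[(Fa\tens Ff)_{Fa\tens FX}]$ in $R_{\L,\Psi}$, and the latter equals $[F(f)_{FX}]$ by the defining relation of $R_{\L,\Psi}$ for the object $Fa\in\Psi$. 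The second family of generators is handled identically using $\phi_{X,a}$, which establishes that $[f_X]\mapsto[F(f)_{FX}]$ is well-defined on $R_{\K,\Phi}$.

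For step (iii), unit-preservation is clear since $[\id_X]$ is the unit of $R_{\K,\Phi}$, $[\id_{FX}]$ is the unit of $R_{\L,\Psi}$, and $F(\id_X)=\id_{FX}$. For additivity and multiplicativity I would use the same device: conjugation by $\phi_{X,Y}\colon F(X\tens Y)\xra{\sim}FX\tens FY$ is a ring isomorphism $E_{\L,F(X\tens Y)}\xra{\sim}E_{\L,FX\tens FY}$ (Lemma~\ref{lemma:iso_forced}) which, by naturality of $\phi$ in each variable, sends $F(f\tens Y)\mapsto Ff\tens FY$ and $F(X\tens h)\mapsto FX\tens Fh$; since $F$ is additive and functorial on Hom-sets (being exact), it carries $F(f\tens Y+X\tens h)$ and $F\bigl((f\tens Y)\circ(X\tens h)\bigr)$ to $Ff\tens FY+FX\tens Fh$ and $(Ff\tens FY)\circ(FX\tens Fh)$. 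Applying Remark~\ref{remark:up_to_iso} once more (again $F(X\tens Y)$ and $FX\tens FY$ both lie in $\Psi$) and unwinding the definitions of $+$ and $\cdot$ in $R_{\K,\Phi}$ and $R_{\L,\Psi}$ then yields the two homomorphism identities.

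I expect the main obstacle to be exactly this bookkeeping with the structural isomorphism $\phi$: there is no deep content, but one must be disciplined about never writing ``$F(a\tens b)=Fa\tens Fb$'' and instead routing every comparison between ``$F$ of a tensor'' and ``tensor of $F$'s'' through $\phi$ and Remark~\ref{remark:up_to_iso}. Note that no suspensions appear in the construction of $R_\Phi$, so the compatibility axiom \eqref{diagram:morphism_compatibility} is not needed here (it is what makes the graded analogue, Lemma~\ref{lemma:graded_induced}, work). A cleaner but more formal alternative would be to observe that $F$ induces a morphism between the colimit diagrams of Remark~\ref{remark:E_X_colimit} computing $R_{\K,\Phi}$ and $R_{\L,\Psi}$ and then pass to colimits; the concrete argument above simply avoids that formalism.
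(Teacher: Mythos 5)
Your proposal is correct, and in substance it is the same ``routine verification via the strong $\tens$-structure'' that the paper has in mind; the only difference is in which auxiliary fact you lean on for well-definedness. The paper's (one-line) proof routes the check through Lemma~\ref{lemma:concrete_equivalence}: if $[f_X]=[g_Y]$ in $R_{\K,\Phi}$ pick $c\in\Phi$ with $f_X\tens c\tens Y=X\tens c\tens g_Y$, apply $F$, and use the structural isomorphisms to turn this into the analogous identity for $Ff$, $Fg$ with the object $Fc\in\Psi$, which by the same lemma gives $[Ff]=[Fg]$ in $R_{\L,\Psi}$. You instead verify the two generating relations of the equivalence relation directly, using naturality of the monoidal structure map $\phi$ together with Lemma~\ref{lemma:iso_forced}/Remark~\ref{remark:up_to_iso} (correctly noting that both $F(a\tens X)$ and $Fa\tens FX$ lie in $\Psi$, so the identification along $\phi_{a,X}$ is legitimate in $R_{\L,\Psi}$). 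Both routes require exactly the same $\phi$-bookkeeping; the paper's lemma packages the ``compare across an isomorphism'' step once and for all, while your generator check avoids invoking the characterization of the equivalence relation at the cost of an extra appeal to Remark~\ref{remark:up_to_iso}. Your treatment of the ring operations (conjugating by $\phi_{X,Y}$, using additivity and functoriality of $F$) and your observation that the compatibility axiom~\eqref{diagram:morphism_compatibility} is only needed for the graded analogue are both accurate.
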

\begin{proof}
	This is a routine verification using Lemma~\ref{lemma:concrete_equivalence} and the fact that $F: \K \ra \L$ is a strong $\tens$-functor.
\end{proof}
\begin{theorem}\label{theorem:general_ungraded_comparison_map}
	Let $\K$ be a tensor triangulated category and let $\Phi
	\subset \K$ be a non-empty set of objects closed under the
	$\tens$-product.  
	Let $\cZ_\Phi := \bigcap_{X \in \Phi} \supp(X)$.
	There is an inclusion-reversing, spectral
	map
	\[ \rho_{\Phi} : \cZ_\Phi \ra \Spec(R_{\Phi})\]
	defined by $\P \mapsto \{ [f] \in R_{\Phi} \mid \cone(f) \notin \P\}$.
\end{theorem}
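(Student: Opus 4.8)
The plan is to reduce the statement to the special case already established in Theorem~\ref{theorem:unnatural_ungraded_comparison_map}, using the colimit description of $R_\Phi$ from Remark~\ref{remark:E_X_colimit} together with the fact that each $E_X$ comes with a canonical (non-commutative) map $E_X \to R_\Phi$ landing, after localizing at a prime $\P$, in something we already understand. Concretely, first I would observe that for each $X \in \Phi$ the assignment $f \mapsto [f_X]$ gives a ring homomorphism $E_X \to R_\Phi$ (this is just the structure map into the colimit; well-definedness is immediate from Definition~\ref{definition:general_ring}), and that these maps are compatible with the transition maps $a \tens -\colon E_X \to E_{a\tens X}$ and $-\tens a\colon E_X \to E_{X \tens a}$. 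Moreover $R_\Phi$ is generated as a ring by the union of the images of the $E_X$ over $X \in \Phi$, again by the colimit description.

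Next I would fix $\P \in \cZ_\Phi = \bigcap_{X\in\Phi}\supp(X)$ and show that $\rho_\Phi(\P) := \{[f] \in R_\Phi \mid \cone(f)\notin\P\}$ is a well-defined prime ideal. For well-definedness I need that the condition $\cone(f)\notin\P$ depends only on the class $[f_X]$; by Lemma~\ref{lemma:concrete_equivalence} it suffices to check that $\cone(f)\notin\P \iff \cone(a\tens f)\notin\P$ for $a\in\Phi$, which holds because $\supp(\cone(a\tens f)) = \supp(a)\cap\supp(\cone(f))$ (support of a tensor product is the intersection of supports) and $\P\notin\supp(a)$ is ruled out since $\P\in\cZ_\Phi$ forces $\P \in \supp(a)$ — wait, more carefully: $\P\in\supp(a)$ means $a\notin\P$, and $\cone(a\tens f)\in\P \iff \P\in\supp(a\tens f)^c$... let me instead argue as in the proof of Theorem~\ref{theorem:unnatural_ungraded_comparison_map}: pass to $\K/\P$. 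The localization $q\colon\K\to\K/\P$ and Proposition~\ref{proposition:general_induced} (applied with $\Psi = q(\Phi)$, which contains no zero object since $\P\in\cZ_\Phi$ means $X\notin\P$ i.e. $q(X)\neq 0$ for all $X\in\Phi$) induce a ring homomorphism $R_{\K,\Phi}\to R_{\K/\P,q(\Phi)}$; and since $(0)$ is prime in the local category $\K/\P$ and $q(\Phi)$ contains no zero object, the preceding Proposition tells us $R_{\K/\P,q(\Phi)}$ is a \emph{local} ring. As in the earlier proof, for $f\in E_X$ one has $\cone(f)\notin\P$ iff $q(f)$ is not an isomorphism in $\K/\P$ iff $[q(f)]$ is a non-unit in $R_{\K/\P,q(\Phi)}$ (using the characterization that $[g]$ is a unit iff $a\tens g$ is an isomorphism for some $a$, together with the fact established in Proposition~\ref{proposition:graded_ring_thing}'s ungraded analogue that units in $E_Y$ are the categorical isomorphisms). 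Hence $\rho_\Phi(\P)$ is exactly the preimage under $R_{\K,\Phi}\to R_{\K/\P,q(\Phi)}$ of the maximal ideal of that local ring, so it is a genuine prime ideal, and well-definedness is automatic since it is described as a preimage of a fixed ideal.

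Having shown $\rho_\Phi$ is well-defined with prime values, the inclusion-reversing property is immediate from the defining formula ($\P\subset\cQ$ makes the condition $\cone(f)\notin\cQ$ harder to satisfy). For continuity and spectrality I would mimic the last two paragraphs of the proof of Theorem~\ref{theorem:unnatural_ungraded_comparison_map}: a closed set $V(E)\subset\Spec(R_\Phi)$ pulls back to $\rho_\Phi^{-1}(V(E)) = \bigcap_{[f]\in E}\supp(\cone(f))$ — here I should note each element of $R_\Phi$ has a representative $f_X$ with $X\in\Phi$, and $\supp(\cone(f_X))$ is well-defined on the class by Lemma~\ref{lemma:concrete_equivalence} plus $\supp(\cone(a\tens f)) = \supp(a)\cap\supp(\cone(f)) \supseteq \cZ_\Phi \cap \supp(\cone(f))$, intersected appropriately; in any case the preimage is closed in $\cZ_\Phi$, giving continuity. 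When $V(E)$ has quasi-compact complement, $V(E) = V([f_1],\dots,[f_n])$ and choosing representatives $f_i$ of endomorphisms of objects $X_i\in\Phi$, one rewrites the elements over a common object $Y := X_1\tens\cdots\tens X_n\in\Phi$ (using $[f_i] = [f_i\tens (\text{other factors})]$ and Corollary~\ref{corollary:tensor_Ex}) so that $\rho_\Phi^{-1}(V([f_1],\dots,[f_n])) = \bigcap_i\supp(\cone(\tilde f_i)) = \supp(\cone(\tilde f_1)\tens\cdots\tens\cone(\tilde f_n))$, which has quasi-compact complement by Lemma~\ref{lemma:thomason_subsets}. The main obstacle I anticipate is bookkeeping: making sure all the constructions ($\cone(f)\notin\P$, $\supp(\cone(f))$, the various preimages) genuinely descend to the quotient $R_\Phi$, and in particular handling the rewriting-over-a-common-object step cleanly enough that the identity $\bigcap_i\supp(\cone(\tilde f_i)) = \supp(\bigotimes_i\cone(\tilde f_i))$ applies — everything else is a faithful transcription of the $A = $ commutative ring argument, now with the colimit ring $R_\Phi$ playing the role formerly played by a single $E_X$ via a map from a commutative ring.
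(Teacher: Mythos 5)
Your proposal is correct and follows essentially the same route as the paper: pass to the local category $\K/\P$, use the induced homomorphism $R_{\K,\Phi}\to R_{\K/\P,q(\Phi)}$ onto a local ring (local because $(0)$ is prime in $\K/\P$ and $q(\Phi)$ contains no zero object), identify $\rho_\Phi(\P)$ with the preimage of the non-units, and adapt the continuity/spectrality argument of Theorem~\ref{theorem:unnatural_ungraded_comparison_map}. The only cosmetic difference is that you conclude primality from commutativity of the local target ring (preimage of its maximal ideal), whereas the paper argues it via the support containment $\supp(\cone(f\tens g))\subset\supp(\cone(f))\cup\supp(\cone(g))$; both are fine.
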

\begin{proof}
	The first point to make is that for $\P \in \cZ_\Phi$ the condition $\cone(f) \notin \P$ does not depend on the choice of
	representative of $[f] \in R_\Phi$.
	Indeed, if $[f] = [g]$ then $a \tens f \simeq b \tens g$ for some $a,b\in \Phi$, so $\cone(a \tens f) \simeq \cone(b \tens g)$.
	If $\P \in \cZ_\Phi$ then 
	$\P \in \supp(\cone(f))$ iff $\P \in \supp(a)\cap \supp(\cone(f)) = \supp(b) \cap \supp(\cone(g))$
	iff $\P \in \supp(\cone(g))$.
	The second point to make is that for any prime $\P \in \Spc(\K)$ the quotient functor $q:\K \ra \K/\P$ induces
	a ring homomorphism $R_{\K,\Phi} \ra R_{\K/\P,q(\Phi)}$ whose target ring is local provided that $0 \notin q(\Phi)$; 
	in other words, provided that $\P \cap \Phi =\emptyset$ which is equivalent to saying that $\P \in \cZ_\Phi$.
	With these facts in mind the proof is similar to the proof of Theorem~\ref{theorem:unnatural_ungraded_comparison_map}.
	For an element $[f] \in R_{\K,\Phi}$, $[q(f)]$ is a unit in $R_{\K/\P,q(\Phi)}$ iff
	there exists $a \in \Phi$ such that $q(a\tens f)$ is an isomorphism in $\K/\P$ iff there exists $a\in \Phi$ such that $a \tens \cone(f) \in \P$ iff $\cone(f) \in \P$.
	Thus $\rho_\Phi(\P)$ is the pullback of the collection of non-units in $R_{\K/\P,q(\Phi)}$ and since the non-units in a local ring from a (two-sided) ideal, 
	this establishes that $\rho_\Phi(\P)$ is an ideal.
	It is prime since $[f_X]\cdot [g_Y] \in \rho_\Phi(\P)$ 
	implies that $\P \in \supp(\cone(f\tens g)) \subset
	\supp(\cone(f \tens Y))\cup\supp(\cone(X \tens g))
	\subset \supp(\cone(f)) \cup \supp(\cone(g))$.
	That $\rho_\Phi$ is a spectral map is established by similar modifications to the argument given in the proof of Theorem~\ref{theorem:unnatural_ungraded_comparison_map}.
\end{proof}
\begin{example}
	\label{example:object_comparison_maps}
	For any object $X \in \K$, taking $\Phi := \{ X^{\tens n} \mid n \ge 1\}$
	provides a comparison map
	$\rho_X : \supp(X) \ra \Spec(R_X)$.
	These are the ``object'' comparison maps mentioned in the introduction.
\end{example}
\begin{example}
	\label{example:closed_set_comparison_maps}
	For any closed set $\cZ \subset \Spc(\K)$,
	taking 
	$\Phi := \{ a \in \K \mid \supp(a) \supset \cZ \}$
	gives a comparison map
	$\rho_\cZ : \cZ \ra \Spec(R_\cZ)$.
	These are the ``closed set'' comparison maps mentioned in the introduction.
	Note that $\cZ_\Phi = \cZ$ because 
	$\{\supp(a) : a\in \K\}$ forms a basis of closed sets for the topology on $\Spc(\K)$.
Also note that if the category $\K$ is not small then there is the unfortunate detail that
$\Phi$ might not be a set; however, we do not need to worry about this technicality because of
Remark~\ref{remark:up_to_iso}.
\end{example}
\begin{example}\label{example:thomason_closed_subset_comparison_maps}
	For any Thomason closed set $\cZ \subset \Spc(\K)$,
	taking
	$\Phi := \{ a \in \K \mid \supp(a) = \cZ \}$
	provides another comparison map defined on $\cZ$.
	However, the ring $R_\Phi$ is canonically
	isomorphic to the one in Example~\ref{example:closed_set_comparison_maps}
	and under this identification the two comparison maps coincide.
	In other words, when $\cZ$ is Thomason we can take the target ring of
	the ``closed set'' comparison map $\rho_\cZ : \cZ \ra \Spec(R_\cZ)$ to
	be defined using only those objects $X$ for which $\supp(X) = \cZ$.
\end{example}
\begin{example}
	Another candidate to consider would be 
	$\Phi := \{ a \in \K | \supp(a) \subset \cZ\}$
	but in this case	
	 $\cZ_\Phi =
	\emptyset$ and $R_\Phi = 0$.
	Indeed, it is easily checked that $R_\Phi = 0$ iff $0 \in \Phi$ iff $\Phi$ contains an object that is $\tens$-nilpotent iff
	$\cZ_\Phi = \emptyset$.
\end{example}
\begin{remark}
	There are other examples that could be considered, such as
	the collection of $\tens$-invertible objects, or 
	the collection of objects that are isomorphic to a direct sum of suspensions of $\unit$.
\end{remark}
\begin{remark}
	For any commutative ring $A$ and ring homomorphism $A \ra R_\Phi$ one obtains an inclusion-reversing, spectral map
	$\cZ_\Phi \ra \Spec(A)$ by composing $\rho_\Phi$ with the induced map $\Spec(R_\Phi) \ra \Spec(A)$.
	For example, the comparison maps $\rho_{X,A}:\supp(X) \ra \Spec(A)$ defined in Section~\ref{section:basic_construction} are recovered
	from $\rho_X:\supp(X) \ra \Spec(R_X)$ by composing $A \ra E_X$ with the canonical map $E_X \ra R_X$.
	In fact, the ring $R_\Phi$ can be regarded as a colimit of all the commutative rings $A$ mapping into the 
	rings $E_X$ for $X \in \Phi$.
	More precisely, consider triples $(A,\alpha,X)$ where $A$ is a commutative ring, $\alpha : A \ra E_X$ is a ring homomorphism, and $X$ is an object of $\Phi$.
	Define a morphism $(A,\alpha,X) \ra (A',\alpha',X')$ to be a morphism $u:A \ra A'$ such that 
	\[\xymatrix @R=0.5em{
			A \ar[dd]_-{u} \ar[r]^-{\alpha} & E_X \ar[dr] \\
			& & E_{X\tens a \tens X'}\\
			A' \ar[r]^-{\alpha'} & E_{X'} \ar[ur]
		}\]
	commutes for some object $a \in \Phi$. (If the object $a$ were not included then one would run into difficulties composing such morphisms.)
There is an obvious functor $(A,\alpha,X) \mapsto A$ from the category of triples to the category of rings and it is easy to check
that the maps $A \xra{\alpha} E_X \ra R_\Phi$ induce a morphism \mbox{$\colim_{(A,\alpha,X)}A \ra R_\Phi$}.
Let us briefly sketch the proof that this is an isomorphism. For surjectivity, one observes that if $[f_X] \in R_\Phi$
then the subring $\Z[f_X] \subset E_X$ generated by $f_X$ is commutative and provides a triple $(\Z[f_X],i,X)$.
On the other hand, to establish injectivity one first proves that the category of triples is filtered. It follows that
every element of $\colim_{(A,\alpha,X)} A$ is the image of an element $x\in A$ under the canonical map
$A \ra \colim_{(A,\alpha,X)} A$ associated with a triple $(A,\alpha,X)$.
If the element in $\colim_{(A,\alpha,X)} A$ goes to zero in $R_\Phi$ then $[\alpha(x)] = 0$ in $R_\Phi$ and so 
there is some $a \in \Phi$
such that $a \tens \alpha(x) = 0$.
If $\beta$ is the composite $A \xra{\alpha} E_X \ra E_{a \tens X}$ then the induced map $\overline{\beta}:A/\ker \beta \ra E_{a \tens X}$
provides a triple $(A/\ker \beta,\overline{\beta},a \tens X)$ while the quotient map $\pi:A \ra A/\ker \beta$ defines a map of
triples $(A,\alpha,X) \ra (A/\ker \beta, \overline{\beta},a \tens X)$. Since the canonical map $A \ra \colim_{(A,\alpha,X)}$
for the triple $(A,\alpha,X)$ factors through the above map of triples, we conclude that the image of $x$ in $\colim_{(A,\alpha,X)} A$ is zero.
From this perspective, the maps defined in Section~\ref{section:basic_construction}
are obtained from $\rho_\Phi$ by pulling back via the canonical map $A \ra
\colim_{(A,\alpha,X)} A \simeq R_\Phi$ associated to a triple~$(A,\alpha,X)$.
\end{remark}
\begin{proposition}
	Let $F:(\K,\Phi) \ra (\L,\Psi)$ be a morphism of tensor triangulated categories $\K \ra \L$ such that $F(\Phi)\subset\Psi$.
Then there is a commutative diagram
\begin{equation}
	\label{diagram:naturality_of_general_map}
	\begin{gathered}
	\xymatrix @C=0em{
			\Spc(\L)\ar[d] & \supset &  \cZ_{\L,\Psi} \ar[rrrrrrrr]^-{\rho_{\L,\Psi}} \ar[d] &&&&&&&& \Spec(R_{\L,\Psi}) \ar[d] \\
			\Spc(\K)              & \supset & \cZ_{\K,\Phi} \ar[rrrrrrrr]_-{\rho_{\K,\Phi}} &&&&&&&& \Spec(R_{\K,\Phi})}
	\end{gathered}
\end{equation}
in the category of spectral spaces.
\end{proposition}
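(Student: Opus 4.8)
The plan is to identify the three vertical maps, check that the left square of inclusions is compatible, and then verify that the right square commutes by a short computation on points. For the left-hand vertical map I take the map $\Spc(F):\Spc(\L)\ra\Spc(\K)$, $\cQ\mapsto F^{-1}(\cQ)$, functorially attached to $F$ in \cite{Balmer_Spectrum}; it is a spectral map because the preimage of a basic quasi-compact open $U(a):=\Spc(\K)\setminus\supp(a)$ is $U(F(a))$, which is quasi-compact by Lemma~\ref{lemma:thomason_subsets}. For the right-hand vertical map I take $\Spec(\phi)$, where $\phi:R_{\K,\Phi}\ra R_{\L,\Psi}$, $[f]\mapsto[F(f)]$, is the ring homomorphism supplied by Proposition~\ref{proposition:general_induced}; the Zariski-spectrum functor always sends ring homomorphisms to spectral maps. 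The middle vertical map will be the restriction of $\Spc(F)$.

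First I would check that $\Spc(F)(\cZ_{\L,\Psi})\subset\cZ_{\K,\Phi}$, which makes the middle map well defined and forces the left square (of inclusions) to commute. If $\cQ\in\cZ_{\L,\Psi}=\bigcap_{Y\in\Psi}\supp(Y)$ and $X\in\Phi$, then $F(X)\in\Psi$, so $\cQ\in\supp(F(X))$, i.e.\ $F(X)\notin\cQ$, i.e.\ $X\notin F^{-1}(\cQ)$; letting $X$ range over $\Phi$ gives $F^{-1}(\cQ)\in\cZ_{\K,\Phi}$. The restriction of a spectral map to closed subspaces of its source and target is again spectral --- a closed subspace of a spectral space is spectral, and its quasi-compact opens are exactly the traces of the quasi-compact opens of the ambient space --- so the middle map is a spectral map $\cZ_{\L,\Psi}\ra\cZ_{\K,\Phi}$.

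It then remains to check that the right square commutes, which I would do on a point $\cQ\in\cZ_{\L,\Psi}$. Since $F$ is exact there is an isomorphism $F(\cone(f))\simeq\cone(F(f))$, so
\[
	\rho_{\K,\Phi}(F^{-1}(\cQ))=\{[f]\in R_{\K,\Phi}\mid \cone(f)\notin F^{-1}(\cQ)\}=\{[f]\mid \cone(F(f))\notin\cQ\},
\]
while
\[
	\Spec(\phi)(\rho_{\L,\Psi}(\cQ))=\phi^{-1}\bigl(\{[g]\in R_{\L,\Psi}\mid\cone(g)\notin\cQ\}\bigr)=\{[f]\in R_{\K,\Phi}\mid\cone(F(f))\notin\cQ\},
\]
and the two sides agree. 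With the horizontal maps spectral by Theorem~\ref{theorem:general_ungraded_comparison_map} and the vertical maps spectral as above, the whole diagram lives in the category of spectral spaces.

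I do not anticipate a serious obstacle: the content is essentially bookkeeping, and the only points needing a little care are the well-definedness of $\phi$ (Proposition~\ref{proposition:general_induced}), the fact --- used implicitly in the displays and established inside the proof of Theorem~\ref{theorem:general_ungraded_comparison_map} --- that ``$\cone(f)\notin\P$'' does not depend on the chosen representative of $[f]$, and the observation that restrictions of spectral maps to closed subspaces stay spectral. If $\Phi$ or $\Psi$ happens not to be a set (as for the closed-set comparison maps of Example~\ref{example:closed_set_comparison_maps}) one first passes to isomorphism representatives as in Remark~\ref{remark:up_to_iso}.
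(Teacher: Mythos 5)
Your proposal is correct and follows exactly the route the paper intends: the paper's proof consists of citing Proposition~\ref{proposition:general_induced} for the ring homomorphism and declaring the rest "a routine recollection of the relevant definitions," and your argument (vertical maps $\Spc(F)$, its restriction, and $\Spec$ of the induced homomorphism, plus the pointwise check via $F(\cone(f))\simeq\cone(F(f))$ and the representative-independence from the proof of Theorem~\ref{theorem:general_ungraded_comparison_map}) is precisely that routine verification, carried out carefully.
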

\begin{proof}
	A ring homomorphism $R_{\K,\Phi} \ra R_{\L,\Psi}$ is provided by Proposition~\ref{proposition:general_induced} and the rest is a routine recollection of the
	relevant definitions.
\end{proof}
\begin{example}\label{example:inclusion_of_closed_subsets}
	If $\cZ_1 \subset \cZ_2$ is an inclusion of closed subsets then there is a ring homomorphism $R_{\cZ_2} \ra R_{\cZ_1}$ and a commutative diagram
	\[\xymatrix @R=2em{
			\cZ_2 \ar[r]^-{\rho_{\cZ_2}} &\Spec(R_{\cZ_2}) \\
\cZ_1 \ar@{}[u]|-*[@]{\subset} \ar[r]^-{\rho_{\cZ_1}} & \Spec(R_{\cZ_1}). \ar[u]
		}\]
\end{example}
\begin{example}\label{example:from_thomason_to_object}
	If $\cZ$ is a Thomason closed subset
	then for any object $X$ with $\supp(X) = \cZ$ there is a 
	ring homomorphism $R_X \ra R_{\cZ}$ and a commutative diagram
	\[\xymatrix{
			\cZ \ar[r]^-{\rho_\cZ} \ar[dr]_-{\rho_X} & \Spec(R_\cZ) \ar[d] \\
			& \Spec(R_X).
		}\]
\end{example}
It is worth explicitly stating the naturality in the case of the object and closed set comparison maps:
\begin{proposition}
	If $F:\K \ra \L$ is a morphism of tensor triangulated categories and $X \in \K$ 
	then there is a commutative diagram
	\[\xymatrix @C=0em{
			\Spc(\L)\ar[d] & \supset & \supp_\L(FX) \ar[rrrrrrrr]^-{\rho_{\L,FX}} \ar[d] &&&&&&&& \Spec(R_{\L,FX}) \ar[d] \\
			\Spc(\K)              & \supset & \supp_\K(X) \ar[rrrrrrrr]_-{\rho_{\K,X}} &&&&&&&& \Spec(R_{\K,X})
		}\]
	in the category of spectral spaces, where the left square is cartesian. 
\end{proposition}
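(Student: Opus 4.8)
The plan is to read this off from the naturality statement giving diagram~\eqref{diagram:naturality_of_general_map}, applied to a well-chosen pair of $\tens$-multiplicative subsets, and then to identify the left square via the behaviour of support under the map induced on spectra.

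First I would take $\Phi := \{ X^{\tens n} \mid n \ge 1 \} \subset \K$ and $\Psi := \{ (FX)^{\tens n} \mid n \ge 1 \} \subset \L$. By Example~\ref{example:object_comparison_maps} these yield the object comparison maps, with $\cZ_\Phi = \supp_\K(X)$, $\cZ_\Psi = \supp_\L(FX)$, $R_\Phi = R_{\K,X}$, $R_\Psi = R_{\L,FX}$, $\rho_\Phi = \rho_{\K,X}$ and $\rho_\Psi = \rho_{\L,FX}$. Since $F$ is a strong $\tens$-functor, the coherence isomorphisms give $F(X^{\tens n}) \simeq (FX)^{\tens n}$ for every $n$, so after replacing $\Psi$ by its closure under isomorphism---which by Remark~\ref{remark:up_to_iso} changes neither $R_\Psi$ nor $\cZ_\Psi$ nor the comparison map---we have $F(\Phi) \subset \Psi$. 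Then Proposition~\ref{proposition:general_induced} supplies the ring homomorphism $R_{\K,X} \ra R_{\L,FX}$ and the naturality statement supplies the asserted commutative square of spectral spaces. (Equivalently, one keeps $\Psi$ as is and conjugates $F(f)$ through the structure isomorphism $F(X^{\tens n}) \xra{\sim} (FX)^{\tens n}$ when writing down the induced map on the $R$'s; this is the one piece of bookkeeping that needs care, and it is already built into Proposition~\ref{proposition:general_induced}.)

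It remains to check that the left-hand square is cartesian. For any $a \in \K$ and any $\cQ \in \Spc(\L)$ one has $\cQ \in \supp_\L(Fa)$ iff $Fa \notin \cQ$ iff $a \notin F^{-1}(\cQ) = \Spc(F)(\cQ)$ iff $\Spc(F)(\cQ) \in \supp_\K(a)$; that is, $\supp_\L(Fa) = \Spc(F)^{-1}(\supp_\K(a))$ as subsets of $\Spc(\L)$ (cf.~\cite{Balmer_Spectrum}). Taking $a = X$, the topological fibre product of the closed inclusion $\supp_\K(X) \hookrightarrow \Spc(\K)$ along $\Spc(F)$ is precisely $\Spc(F)^{-1}(\supp_\K(X)) = \supp_\L(FX)$ with its subspace topology, together with the inclusion into $\Spc(\L)$ and the restriction of $\Spc(F)$; hence the left square is cartesian in topological spaces. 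Since a closed subspace of a spectral space is again spectral and a spectral map restricts to a spectral map on such a subspace, this is also the fibre product in the category of spectral spaces, which completes the proof. The only real obstacle is the functoriality bookkeeping of the first step, namely that $F$ respects $\tens$-powers only up to coherence isomorphism; everything else is a matter of unwinding the definitions and quoting the displayed support formula.
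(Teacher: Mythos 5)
Your proposal is correct and follows essentially the same route the paper intends: the proposition is stated as an immediate specialization of the general naturality diagram~\eqref{diagram:naturality_of_general_map} to $\Phi = \{X^{\tens n}\}$ and $\Psi = \{(FX)^{\tens n}\}$, with the left square cartesian because $\supp_\L(FX) = \Spc(F)^{-1}(\supp_\K(X))$. Your explicit handling of the coherence isomorphisms $F(X^{\tens n}) \simeq (FX)^{\tens n}$ (via Remark~\ref{remark:up_to_iso} or Lemma~\ref{lemma:iso_forced}) is a point the paper glosses over, and it is handled correctly.
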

\begin{proposition}
	If $F:\K \ra \L$ is a morphism of small tensor triangulated categories and $\cZ \subset \Spc(\K)$ is a closed subset then there is a commutative diagram
	\[\xymatrix @C=0em{
			\Spc(\L)\ar[d]_{\phi} & \supset &  \phi^{-1}(\mathcal{Z}) \ar[rrrrrrrr]^-{\rho_{\L,\phi^{-1}(\mathcal{Z})}} \ar[d] &&&&&&&& \Spec(R_{\L,\phi^{-1}(\mathcal{Z})}) \ar[d] \\
			\Spc(\K)              & \supset & \mathcal{Z} \ar[rrrrrrrr]_-{\rho_{\K,\mathcal{Z}}} &&&&&&&& \Spec(R_{\K,\mathcal{Z}})
		}\]
	in the category of spectral spaces, where the left square is cartesian.
\end{proposition}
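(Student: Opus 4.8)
The plan is to obtain this statement as an instance of the naturality of the maps $\rho_{\Phi}$ recorded in diagram~\eqref{diagram:naturality_of_general_map}, together with the explicit description of the closed-set comparison maps in Example~\ref{example:closed_set_comparison_maps}. Write $\phi := \Spc(F) : \Spc(\L) \ra \Spc(\K)$ and $\cW := \phi^{-1}(\cZ)$, which is a closed subset of $\Spc(\L)$ since $\phi$ is continuous. By Example~\ref{example:closed_set_comparison_maps} we have $\rho_{\K,\cZ} = \rho_{\Phi_{\cZ}}$ and $\rho_{\L,\cW} = \rho_{\Phi_{\cW}}$ for the $\tens$-multiplicative sets $\Phi_{\cZ} := \{a \in \K \mid \supp(a) \supset \cZ\}$ and $\Phi_{\cW} := \{b \in \L \mid \supp(b) \supset \cW\}$, and moreover $\cZ_{\Phi_{\cZ}} = \cZ$ and $\cZ_{\Phi_{\cW}} = \cW$ because the supports form a basis of closed sets on each spectrum. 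Since $\K$ and $\L$ are small, $\Phi_{\cZ}$ and $\Phi_{\cW}$ are honest sets, so all the constructions apply.

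The only substantive step is to verify the compatibility $F(\Phi_{\cZ}) \subset \Phi_{\cW}$ needed to apply Proposition~\ref{proposition:general_induced} and the naturality result behind \eqref{diagram:naturality_of_general_map}. For this I would invoke the defining property of $\phi = \Spc(F)$, namely $\phi(\cQ) = F^{-1}(\cQ)$, which immediately yields $\supp_{\L}(Fa) = \phi^{-1}(\supp_{\K}(a))$ for every $a \in \K$. Consequently, if $\supp_{\K}(a) \supset \cZ$ then $\supp_{\L}(Fa) = \phi^{-1}(\supp_{\K}(a)) \supset \phi^{-1}(\cZ) = \cW$, i.e. $Fa \in \Phi_{\cW}$, as required. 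With this in hand, Proposition~\ref{proposition:general_induced} provides the ring homomorphism $R_{\K,\cZ} = R_{\K,\Phi_{\cZ}} \ra R_{\L,\Phi_{\cW}} = R_{\L,\cW}$, and applying the naturality statement of \eqref{diagram:naturality_of_general_map} to $F : (\K,\Phi_{\cZ}) \ra (\L,\Phi_{\cW})$ produces precisely the asserted commutative diagram of spectral spaces, the middle vertical map being the one induced by $\phi$ and hence equal to the restriction $\phi|_{\cW} : \cW \ra \cZ$ (which lands in $\cZ$ since $\phi(\cW) \subset \cZ$).

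Finally, the left-hand square is cartesian: as a diagram of topological spaces it is a pullback by construction, since $\cW = \phi^{-1}(\cZ)$ carries the subspace topology and the universal property is immediate; to upgrade this to a pullback in the category of spectral spaces it suffices to know that the inclusion of a closed subspace, and the restriction of a spectral map along such an inclusion, are again spectral. This in turn follows from the elementary observation that a quasi-compact open subset of a closed subspace $\cW \subset Y$ of a spectral space $Y$ is always of the form $V \cap \cW$ for some quasi-compact open $V \subset Y$ (cover it by restrictions of quasi-compact opens of $Y$ and use quasi-compactness); granting that, any spectral cone over the diagram factors through $\cW$ by a unique continuous map, which is then automatically spectral. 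I do not expect a genuine obstacle: the proposition is essentially an assembly of earlier results, and the only places requiring a little care are the identity $\supp(Fa) = \Spc(F)^{-1}(\supp a)$ underpinning the compatibility $F(\Phi_{\cZ}) \subset \Phi_{\cW}$, and the small (not entirely formal) verification that the left-hand square is cartesian in spectral spaces and not merely in topological spaces.
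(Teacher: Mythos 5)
Your proposal is correct and takes essentially the same route as the paper, which states this proposition as an explicit specialization of the general naturality diagram \eqref{diagram:naturality_of_general_map}: you apply it to $\Phi_{\cZ}=\{a\in\K \mid \supp(a)\supset\cZ\}$ and $\Phi_{\phi^{-1}(\cZ)}=\{b\in\L \mid \supp(b)\supset\phi^{-1}(\cZ)\}$, with the required compatibility $F(\Phi_{\cZ})\subset\Phi_{\phi^{-1}(\cZ)}$ coming from $\supp_{\L}(Fa)=\phi^{-1}(\supp_{\K}(a))$. Your additional verification that the left square is cartesian in the category of spectral spaces (using that quasi-compact opens of a closed subspace are traces of quasi-compact opens of the ambient space) is correct and merely fills in a detail the paper leaves implicit.
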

\begin{remark}
	Considering $\supp_\K(X)$ and $\Spec(R_{\K,X})$ as contravariant functors
	from the category of essentially small tensor triangulated categories with
	chosen object to the category of spectral spaces, the object comparison
	maps $\rho_{\K,X}$
	can be regarded 
	as a natural transformation
	$\supp_{\K}(X) \ra \Spec(R_{\K,X})$.
	Similarly, there is a contravariant ``forgetful'' functor
	$(\K,\cZ) \mapsto \cZ$	
	from the category
	of small tensor triangulated categories with chosen closed subset of their
	spectrum to the category of spectral spaces, and the closed set comparison maps 
	$\rho_{\K,\cZ}$
	form a natural transformation
	from this functor to the functor $(\K,\cZ) \mapsto\Spec(R_{\K,\cZ})$.
	Finally,  $\rho_{\K,\Phi}$ can be regarded
	as a natural transformation from
	$(\K,\Phi) \mapsto \cZ_\Phi$ to 
	$(\K,\Phi) \mapsto \Spec(R_{\K,\Phi})$.
\end{remark}
\begin{remark}
It is straightforward to develop the graded version of these constructions.
One checks that the graded analogue of Corollary~\ref{corollary:tensor_Ex} holds and then defines
$R_\Phi^\bullet$ to be the colimit of the diagram
of graded rings generated by the maps
$E_X^\bullet \ra E_{X\tens Y}^\bullet$ and $E_Y^\bullet \ra E_{X\tens Y}^\bullet$. 
One checks that this is a filtered colimit (in the weak sense---see Remark~\ref{remark:E_X_colimit})
and it is easily determined how filtered colimits of graded rings are constructed.
To be clear, the abelian group $R_\Phi^i$ is the filtered colimit of abelian groups $\colim_{X\in\Phi} E_X^i$
and thus
consists of equivalence classes $[f]$ where $f \in E_X^i$ for some $X \in \Phi$.
The product on $R_\Phi^\bullet$ is given by
\[\xymatrix @R=0.15em{
		\colim_{X\in \Phi} E_X^i \times \colim_{Y\in\Phi} E_Y^j \ar[r] & \colim_{Z\in\Phi}E_Z^{i+j} \\
		([f_X],[g_Y]) \ar@{|->}[r] & [(f_X\tens Y)\cdot (X\tens g_Y)]}\]
where $(f_X\tens Y)\cdot(X \tens g_Y)$ is the graded product in $E_{X\tens Y}^\bullet$.
Note that $R_\Phi^0$ is exactly the ungraded ring $R_\Phi$ from Definition~\ref{definition:general_ring}.
It is straightforward to show that $R_\Phi^\bullet$ is graded-commutative
although one needs to be clear 
about our abuses of notation
concerning the suspension isomorphisms. 
Ultimately the graded-commutativity comes from the anti-commutativity of
diagram~\eqref{diagram:anticommutes} in the axioms of a tensor triangulated category. 

The proof of the following theorem is very similar to the proof of Theorem~\ref{theorem:graded_comparison_map} just with the kind of modifications we saw in the proof
of Theorem~\ref{theorem:general_ungraded_comparison_map}.
\end{remark}
\begin{theorem} Let $\K$ be a tensor triangulated category and let $\Phi \subset \K$ be a non-empty set of objects closed under
	the $\tens$-product. There is a graded-commutative graded ring $R_\Phi^\bullet$ and an inclusion-reversing, spectral map
	\[ \rho_\Phi^\bullet : \cZ_\Phi \ra \Spech(R_\Phi^\bullet) \]
defined by $\rho_\Phi^\astt(\P) := \{[f] \in R_\Phi^i \mid \cone(f) \notin \P \}_{i \in \Z}$.
	The ring $R_\Phi$ is precisely $R_\Phi^0$ and $\p^\bullet \mapsto \p^\bullet \cap R_\Phi^0$ defines
	a surjective spectral map $\Spech(R_\Phi^\bullet) \ra \Spec(R_\Phi)$ such that the following diagram commutes
	\[\xymatrix{ 
			\cZ_\Phi \ar[dr]_{\rho_\Phi} \ar[r]^-{\rho_\Phi^\bullet} & \Spech(R_\Phi^\bullet) \ar[d]^{(-)^0} \\
			& \Spec(R_\Phi).
		}\]
\end{theorem}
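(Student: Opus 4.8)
The plan is to rerun the argument of Theorem~\ref{theorem:general_ungraded_comparison_map} in the graded setting, making the same adjustments for suspension isomorphisms that were needed to pass from Theorem~\ref{theorem:unnatural_ungraded_comparison_map} to Theorem~\ref{theorem:graded_comparison_map}; the graded-commutative graded ring $R_\Phi^\bullet$, its description as a (weakly) filtered colimit of the $E_X^\bullet$, and the equality $R_\Phi^0 = R_\Phi$ are all supplied by the remark preceding the statement. First I would check that, for a fixed $\P \in \cZ_\Phi$, the graded set $\rho_\Phi^\astt(\P)$ does not depend on the chosen homogeneous representatives: if $f \in E_X^i$ and $g \in E_Y^i$ satisfy $[f] = [g]$ in $R_\Phi^\bullet$, then the graded analogue of Lemma~\ref{lemma:concrete_equivalence} produces $a, b \in \Phi$ with $a \tens f \eiso b \tens g$, hence $\cone(a \tens f) \simeq \cone(b \tens g)$; since $a \tens \cone(f) \simeq \cone(a \tens f)$ and $\supp(a \tens \cone(f)) = \supp(a) \cap \supp(\cone(f))$ (and likewise for $b$, $g$), the hypothesis $\P \in \supp(a) \cap \supp(b)$ forces $\cone(f) \notin \P \Leftrightarrow \cone(g) \notin \P$. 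Thus $\rho_\Phi^\astt(\P)$ is a well-defined homogeneous subset of $R_\Phi^\bullet$, visibly inclusion-reversing in $\P$.

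Next I would exhibit $\rho_\Phi^\astt(\P)$ as a homogeneous prime ideal. The quotient $q : \K \ra \K/\P$ induces a graded ring homomorphism $R_{\K,\Phi}^\bullet \ra R_{\K/\P, q(\Phi)}^\bullet$ (the graded analogue of Proposition~\ref{proposition:general_induced}, via Lemma~\ref{lemma:graded_induced}). Since $\P \in \cZ_\Phi$, every object of $\Phi$ is nonzero in $\K/\P$, so $R_{\K/\P, q(\Phi)}^0 = R_{\K/\P, q(\Phi)}$ is nonzero, and it is local because $(0)$ is prime in $\K/\P$; hence $R_{\K/\P, q(\Phi)}^\bullet$ is gr-local by \cite[Theorem~2.5]{Li_monoid_graded_local_rings}. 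For homogeneous $[f] \in R_{\K,\Phi}^i$ the image $[q(f)]$ is a unit in $R_{\K/\P,q(\Phi)}^\bullet$ iff $q(a) \tens q(f)$ is an isomorphism in $\K/\P$ for some $a \in \Phi$ iff $a \tens \cone(f) \simeq \cone(a \tens f) \in \P$ for some $a \in \Phi$ iff $\cone(f) \in \P$ (the last step using that $\P$ is prime and $a \notin \P$). So $\rho_\Phi^\astt(\P)$ is precisely the preimage, under the above graded homomorphism, of the unique maximal homogeneous ideal of the gr-local graded-commutative ring $R_{\K/\P,q(\Phi)}^\bullet$, and is therefore a homogeneous prime ideal of $R_\Phi^\bullet$; alternatively its primality follows directly from $[f_X]\cdot[g_Y] \in \rho_\Phi^\astt(\P) \Rightarrow \P \in \supp(\cone(f \tens g)) \subset \supp(\cone f) \cup \supp(\cone g)$, exactly as in Theorem~\ref{theorem:general_ungraded_comparison_map}.

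For spectrality I would compute, for any set $E$ of homogeneous elements of $R_\Phi^\bullet$, that $(\rho_\Phi^\bullet)^{-1}(V(E)) = \cZ_\Phi \cap \bigcap_{[f] \in E} \supp(\cone f)$, which is closed in $\cZ_\Phi$, so $\rho_\Phi^\bullet$ is continuous; and if $V(E)$ has quasi-compact complement then $V(E) = V([f_1], \ldots, [f_n])$ for finitely many homogeneous $[f_i]$, so its preimage is $\bigcap_{i=1}^n \supp(\cone f_i) = \supp(\cone(f_1) \tens \cdots \tens \cone(f_n))$, which has quasi-compact complement by Lemma~\ref{lemma:thomason_subsets}. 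Finally, $R_\Phi^0 = R_\Phi$, the map $\p^\bullet \mapsto \p^\bullet \cap R_\Phi^0$ is a surjective spectral map $\Spech(R_\Phi^\bullet) \ra \Spec(R_\Phi)$ by \cite[Remark~5.5]{Balmer_SSS} (as in the remark after Theorem~\ref{theorem:graded_comparison_map}), and $\rho_\Phi^\bullet(\P) \cap R_\Phi^0 = \{ [f] \in R_\Phi \mid \cone(f) \notin \P \} = \rho_\Phi(\P)$, so the triangle commutes.

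I expect the only genuine work to be the graded bookkeeping hidden throughout: every equality ``$a \tens f = f \tens a$'' for a graded endomorphism secretly involves the suspension isomorphisms of diagram~\eqref{diagram:graded_weakly_central}, so the graded analogues of Lemma~\ref{lemma:twisting_lemma}, Corollary~\ref{corollary:tensor_Ex} and Lemma~\ref{lemma:concrete_equivalence}, the graded-commutativity of $R_\Phi^\bullet$ (which descends from the anti-commutativity of diagram~\eqref{diagram:anticommutes}), and the identification $\cone(a \tens f) \simeq a \tens \cone(f)$ must all be verified with those isomorphisms inserted correctly. This is routine but is the one place the argument could go astray if handled carelessly; with it in hand, the proof is formally identical to the ungraded one.
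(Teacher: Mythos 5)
Your proposal is correct and follows essentially the same route the paper intends: the paper itself only remarks that the proof combines the argument of Theorem~\ref{theorem:graded_comparison_map} with the modifications seen in Theorem~\ref{theorem:general_ungraded_comparison_map}, which is exactly what you carry out (well-definedness via the graded analogue of Lemma~\ref{lemma:concrete_equivalence}, primality via the gr-local ring $R_{\K/\P,q(\Phi)}^\bullet$, spectrality via preimages of Thomason closed sets, and compatibility with $(-)^0$ via \cite[Remark~5.5]{Balmer_SSS}). Your closing caveat about tracking the suspension isomorphisms is precisely the bookkeeping the paper flags as the only subtle point.
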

\begin{remark}
	The graded comparison maps have the same kind of naturality properties as the ungraded comparison maps.
\end{remark}

\section{Object comparison maps}
In this section we will establish some of the basic features of the 
natural ``object'' comparison maps $\rho_X:\supp(X) \ra \Spec(R_X)$ defined in Example~\ref{example:object_comparison_maps}.
More specifically, our primary goal is to establish that $\rho_X$ is invariant under some natural operations that can be performed on the object $X$ such as taking duals, or suspensions, or $\tens$-powers, etc.
Before we begin proving such results, let us remark that for $X = \unit$ the canonical
map $[\unit,\unit]=E_\unit \ra R_\unit$ is an isomorphism and under this identification \mbox{$\rho_\unit : \Spc(\K) \ra \Spec(R_\unit)$} is the original unit comparison map 
from \cite{Balmer_SSS}; similarly for the graded version.
It will also be convenient to recognize that the canonical homomorphisms $E_{X^{\tens n}} \ra R_X$ induce an isomorphism
\begin{equation}
	\label{eq:object_colimit}
	\xymatrix@1{
		R_X \simeq \colim (E_X
		\ar[r]^-{X \tens -} &
		E_{X^{\tens 2}}
		\ar[r]^-{X \tens -} &
		E_{X^{\tens 3}}
		\ar[r]^-{X \tens -} &
		E_{X^{\tens 4}}
		\ar[r]^-{X \tens -} &
		\cdots)
	}
\end{equation}
and we will often tacitly make the identification $R_X = \colim_{n \ge 1} E_{X^{\tens n}}$.
\begin{lemma}
	\label{lemma:Nf}
	If $f \in E_X$ then $\langle \cone(f) \rangle \subset \{ a \in \K \mid a \tens f^{\tens n} = 0 \text{ for some } n \ge 1\}$.
\end{lemma}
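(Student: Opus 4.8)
The plan is to show that the set on the right,
\[ \mathcal{J} := \{\, a \in \K \mid a \tens f^{\tens n} = 0 \text{ for some } n \ge 1 \,\}, \]
is a thick $\tens$-ideal of $\K$ containing $\cone(f)$; since $\langle \cone(f) \rangle$ is by definition the smallest such, the asserted inclusion follows at once. The membership $\cone(f) \in \mathcal{J}$ is precisely Lemma~\ref{lemma:killscone} (take $n = 2$), modulo the harmless identification of $f^{\tens 2} \tens \cone(f)$ with $\cone(f) \tens f^{\tens 2}$ via the symmetry. The easy stability properties of $\mathcal{J}$ are all immediate from the defining relation: if $a \tens f^{\tens n} = 0$ then $(b \tens a)\tens f^{\tens n} = b \tens (a \tens f^{\tens n}) = 0$ for every $b \in \K$, so $\mathcal{J}$ is a $\tens$-ideal; $\Si^{\pm 1}a \tens f^{\tens n} \cong \Si^{\pm 1}(a \tens f^{\tens n}) = 0$, so $\mathcal{J}$ is closed under (de)suspension; and $\mathcal{J}$ is evidently additive and closed under retracts, since a retract of an object annihilated by $f^{\tens n}$ is again annihilated by $f^{\tens n}$. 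Note also that $a \tens f^{\tens n} = 0$ forces $a \tens f^{\tens m} = 0$ for all $m \ge n$, by factoring $f^{\tens m} = f^{\tens n} \tens f^{\tens (m-n)}$.

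The one substantive step is closure under cones. Given a distinguished triangle $a \xra{u} b \xra{v} c \xra{w} \Si a$ with $a, b \in \mathcal{J}$, I would first choose $k$ large enough that $a \tens f^{\tens k} = b \tens f^{\tens k} = 0$. Tensoring the triangle with the object $X^{\tens k}$ and then applying the endomorphism $f^{\tens k}$ of $X^{\tens k}$ produces a morphism of triangles whose first two vertical maps vanish; a diagram chase in the spirit of the one in Lemma~\ref{lemma:killscone} then shows that $c \tens f^{\tens k}$ factors through the connecting map $w \tens X^{\tens k}$, which in turn annihilates $c \tens f^{\tens k}$, so that $(c \tens f^{\tens k})^{\circ 2} = 0$. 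To convert this into a statement of the form $c \tens f^{\tens N} = 0$, I would invoke the identity $f^{\tens n} = X^{\tens (n-1)} \tens f^{\circ n}$, valid for any $\tens$-balanced $f$ (this is the observation recorded at the end of the proof of Lemma~\ref{lemma:killscone}, extended to all $n$ by an immediate induction): it gives $(f^{\tens k})^{\circ 2} = X^{\tens (k-1)} \tens f^{\circ 2k}$, hence $c \tens X^{\tens (k-1)} \tens f^{\circ 2k} = 0$, and tensoring once more with $X^{\tens k}$ rewrites this as $c \tens f^{\tens 2k} = 0$. Thus $c \in \mathcal{J}$, completing the verification that $\mathcal{J}$ is a thick $\tens$-ideal.

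I expect the main obstacle to lie entirely in this last step. The individual ``levels'' $\{\, a \mid a \tens f^{\tens n} = 0 \,\}$ are \emph{not} themselves closed under cones---the diagram chase yields only a square-zero endomorphism of $c \tens X^{\tens k}$, not a zero one---so one is genuinely forced to pass to a higher $\tens$-power of $f$, and some care is needed to keep the tensor power $f^{\tens n}$ distinct from the iterated composite $f^{\circ n}$ throughout the argument.
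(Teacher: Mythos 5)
Your proof is correct and follows the same route as the paper, which simply notes that ``standard techniques'' show the right-hand side is a thick $\tens$-ideal containing $\cone(f)$ (via Lemma~\ref{lemma:killscone}) and leaves the verification to the reader. You have supplied exactly those standard details, including the genuinely substantive point---the square-zero trick for closure under cones and the resulting need to pass from $f^{\tens k}$ to $f^{\tens 2k}$ using $f^{\tens n} = X^{\tens(n-1)} \tens f^{\circ n}$---so there is nothing to correct.
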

\begin{proof}
Standard techniques verify that the right-hand side is a thick $\tens$-ideal and
the inclusion then follows from Lemma~\ref{lemma:killscone}; cf.~\cite[Theorem 2.15]{Balmer_SSS}.
\end{proof}
\begin{proposition}
An isomorphism $\alpha : X \xra{\sim} Y$ in $\K$ induces an isomorphism of rings
$\alpha_* : R_X \xra{\sim} R_Y$.
	Under this identification, $\rho_X$ coincides with $\rho_Y$.
\end{proposition}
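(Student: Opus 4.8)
The plan is to build $\alpha_*$ on the colimit description $R_X = \colim_{n \ge 1} E_{X^{\tens n}}$ supplied by \eqref{eq:object_colimit} (with transition maps $X \tens -$), and similarly for $R_Y$, and then to match the comparison maps prime-by-prime. First I would note that the isomorphism $\alpha : X \xra{\sim} Y$ induces, for each $n \ge 1$, an isomorphism $\alpha^{\tens n} : X^{\tens n} \xra{\sim} Y^{\tens n}$, hence by Lemma~\ref{lemma:iso_forced} a ring isomorphism $(\alpha^{\tens n})_* : E_{X^{\tens n}} \xra{\sim} E_{Y^{\tens n}}$ sending $f$ to $\alpha^{\tens n} \circ f \circ (\alpha^{\tens n})^{-1}$.

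The key step is to check that these isomorphisms are compatible with the transition maps of the two sequential diagrams, i.e. that for each $n$ the square with top arrow $X \tens - : E_{X^{\tens n}} \to E_{X^{\tens(n+1)}}$, bottom arrow $Y \tens - : E_{Y^{\tens n}} \to E_{Y^{\tens(n+1)}}$, and vertical arrows $(\alpha^{\tens n})_*$ and $(\alpha^{\tens(n+1)})_*$ commutes. This is a short diagram chase: writing $X \tens f = \id_X \tens f$ and using that $\alpha^{\tens(n+1)} = \alpha \tens \alpha^{\tens n}$ (modulo the coherence isomorphism $X \tens X^{\tens n} \simeq X^{\tens(n+1)}$), conjugation distributes over $\tens$, giving $(\alpha^{\tens(n+1)})_*(X \tens f) = (\alpha_*\id_X) \tens (\alpha^{\tens n})_*(f) = Y \tens (\alpha^{\tens n})_*(f)$. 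Thus the $(\alpha^{\tens n})_*$ assemble into an isomorphism of the two diagrams of rings, and passing to colimits yields a ring isomorphism $\alpha_* : R_X \xra{\sim} R_Y$; its inverse is induced analogously by $\alpha^{-1} : Y \xra{\sim} X$. (One could instead argue directly with the concrete description of the defining equivalence relation via Lemma~\ref{lemma:concrete_equivalence}, but the colimit bookkeeping is cleaner.)

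For the second assertion, note $\supp(X) = \supp(Y)$ since $X \cong Y$, and the ring isomorphism $\alpha_*$ induces a homeomorphism $\Spec(\alpha_*) : \Spec(R_Y) \xra{\sim} \Spec(R_X)$ sending $\mathfrak{q}$ to $\alpha_*^{-1}(\mathfrak{q})$; the statement that $\rho_X$ coincides with $\rho_Y$ under this identification means precisely that $\rho_X = \Spec(\alpha_*) \circ \rho_Y$. To verify this, fix $\P \in \supp(X) = \supp(Y)$ and an element $[f] \in R_X$ represented by some $f \in E_{X^{\tens n}}$. By definition $[f] \in \rho_X(\P)$ iff $\cone(f) \notin \P$, while $\alpha_*[f] = [(\alpha^{\tens n})_*(f)] = [\alpha^{\tens n} \circ f \circ (\alpha^{\tens n})^{-1}]$; since conjugating a morphism by an isomorphism does not change the isomorphism class of its cone, we have $\cone((\alpha^{\tens n})_*(f)) \cong \cone(f)$, so $\alpha_*[f] \in \rho_Y(\P)$ iff $\cone(f) \notin \P$ as well. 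Hence $\rho_X(\P) = \alpha_*^{-1}(\rho_Y(\P))$, which is the claim.

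The only genuine subtlety — and the step I would be most careful about — is the bookkeeping with the coherence (associativity and symmetry) isomorphisms implicit in the notation $X^{\tens n}$, $\alpha^{\tens n}$, and in the identity $X \tens f = \id_X \tens f$: one must confirm that the $(\alpha^{\tens n})_*$ form a genuine natural transformation between the two colimit diagrams rather than merely commuting with the transition maps up to these isomorphisms. Once Lemma~\ref{lemma:iso_forced} and monoidal coherence are invoked, this is routine, and the remainder of the argument is formal.
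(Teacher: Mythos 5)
Your proposal is correct, and it is essentially the argument the paper has in mind: the paper dismisses this as ``routine from the definitions'' with a pointer to Lemma~\ref{lemma:iso_forced}, and your write-up simply carries out that routine verification, conjugating by $\alpha^{\tens n}$ levelwise on the colimit description \eqref{eq:object_colimit} and using that conjugation by an isomorphism preserves the cone up to isomorphism. The coherence bookkeeping you flag is exactly the kind of detail the paper leaves implicit, so there is no substantive divergence.
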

\begin{proof}
	This is routine from the definitions; cf.~Lemma~\ref{lemma:iso_forced}.
\end{proof}
\begin{proposition}\label{proposition:tensor_maps}
Tensoring on the right by an object $Y$ induces a
	ring homomorphism $R_X \ra
	R_{X\tens Y}$ and a commutative diagram 
\begin{equation}
	\label{diagram:XtensY}
	\begin{gathered}
	\xymatrix @C=4em{ 
			\supp(X) \ar[r]^-{\rho_X} & \Spec(R_X) \\
			\supp(X) \cap \supp(Y) \ar@{^{(}->}[u] \ar[r]^-{\rho_{X\tens Y}} & \Spec(R_{X\tens Y}). \ar[u]
		}
	\end{gathered}
\end{equation}
	If $\supp(X) \subset \supp(Y)$ then the
	kernel of the map $R_X \ra R_{X \tens Y}$ consists entirely of
	nilpotents.
There is a similar result for tensoring on the left.
\end{proposition}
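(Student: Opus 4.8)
The plan is to prove the three assertions in turn, using throughout the colimit presentation $R_X \eiso \colim_{n\ge 1} E_{X^{\tens n}}$ of \eqref{eq:object_colimit} and its analogue for $R_{X\tens Y}$. First I would build the ring homomorphism level by level: for each $n\ge 1$, Corollary~\ref{corollary:tensor_Ex} supplies a ring homomorphism $-\tens Y^{\tens n}\colon E_{X^{\tens n}}\to E_{X^{\tens n}\tens Y^{\tens n}}$, and composing it with the ring isomorphism $E_{X^{\tens n}\tens Y^{\tens n}}\eiso E_{(X\tens Y)^{\tens n}}$ induced (via Lemma~\ref{lemma:iso_forced}) by the canonical shuffle $X^{\tens n}\tens Y^{\tens n}\eiso (X\tens Y)^{\tens n}$, and then with the structure map $E_{(X\tens Y)^{\tens n}}\to R_{X\tens Y}$, gives ring homomorphisms $\psi_n\colon E_{X^{\tens n}}\to R_{X\tens Y}$, $f\mapsto [\,(f\tens Y^{\tens n})_{X^{\tens n}\tens Y^{\tens n}}\,]$. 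I would then check compatibility with the transition maps $X\tens-\colon E_{X^{\tens n}}\to E_{X^{\tens(n+1)}}$, i.e.\ $\psi_{n+1}\circ(X\tens-)=\psi_n$; this reduces to the identity $[\,f\tens Y^{\tens n}\,]=[\,(X\tens f)\tens Y^{\tens(n+1)}\,]$ in $R_{X\tens Y}$, which holds because the left side equals $[\,(X\tens Y)\tens f\tens Y^{\tens n}\,]$ (the transition maps of $R_{X\tens Y}$ being $(X\tens Y)\tens-$) and the endomorphism $(X\tens Y)\tens f\tens Y^{\tens n}$ is carried onto $(X\tens f)\tens Y^{\tens(n+1)}$ by the evident shuffle, the only point being naturality of the symmetry, which is harmless by Remark~\ref{remark:up_to_iso}. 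Passing to the colimit yields the ring homomorphism $R_X\to R_{X\tens Y}$.

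Next, commutativity of the square. Because $-\tens Y^{\tens n}$ is exact, $\cone(f\tens Y^{\tens n})=\cone(f)\tens Y^{\tens n}$, so for a prime $\P$ the image of $[f]$ in $R_{X\tens Y}$ has cone outside $\P$ exactly when $\cone(f)\tens Y^{\tens n}\notin\P$. Since $\P$ is prime, $\cone(f)\tens Y^{\tens n}\in\P$ iff $\cone(f)\in\P$ or $Y\in\P$; hence for $\P$ in the domain $\supp(X)\cap\supp(Y)=\supp(X\tens Y)$ of $\rho_{X\tens Y}$ one has $\cone(f)\tens Y^{\tens n}\notin\P\iff\cone(f)\notin\P$. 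As the right-hand vertical map is $\Spec$ of the ring homomorphism, hence the pullback of primes, this is precisely the statement that $\rho_{X\tens Y}(\P)$ pulls back along $R_X\to R_{X\tens Y}$ to $\rho_X(\P)$.

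For the kernel, assume $\supp(X)\subseteq\supp(Y)$ and let $[f]\in R_X$, represented by $f\in E_{X^{\tens n}}$, lie in the kernel. Then $[f\tens Y^{\tens n}]=0$ in $R_{X\tens Y}$, so by the vanishing criterion for classes in $R_\Phi$ there is an $m\ge1$ with $(X\tens Y)^{\tens m}\tens f\tens Y^{\tens n}=0$; shuffling tensor factors, this reads $g\tens Y^{\tens N}=0$, where $N:=m+n$ and $g:=X^{\tens m}\tens f\in E_{X^{\tens N}}$ (which represents $[f]$ in $R_X$). Applying the exact functor $-\tens Y^{\tens N}$ to $\ftri{X^{\tens N}}{X^{\tens N}}{\cone(g)}{g}{}{}$ splits the resulting triangle, since $g\tens Y^{\tens N}=0$, so $\cone(g)\tens Y^{\tens N}\eiso(X^{\tens N}\oplus\Si X^{\tens N})\tens Y^{\tens N}$; passing to supports and using $\supp(X)\subseteq\supp(Y)$ gives $\supp(\cone(g))\cap\supp(Y)=\supp(X)$, hence $\supp(X)\subseteq\supp(\cone(g))$. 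By the radical characterization of supports (\cite{Balmer_Spectrum}; cf.\ the proof of Lemma~\ref{lemma:Nf}) it follows that $X^{\tens j}\in\langle\cone(g)\rangle$ for some $j\ge1$, and Lemma~\ref{lemma:Nf} then gives $X^{\tens j}\tens g^{\tens k}=0$ for some $k\ge1$. Since $g^{\tens k}$ represents $[g]^k=[f]^k$ in $R_X$ (by the multiplication rule on $R_\Phi$), the vanishing criterion shows $[f]^k=0$, so $[f]$ is nilpotent. The case of tensoring on the left follows by conjugating everything with the symmetry $Y\tens X\eiso X\tens Y$.

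The hard part is the kernel assertion, and within it the step passing from $g\tens Y^{\tens N}=0$ to $\tens$-nilpotence of $g$: this is where the hypothesis $\supp(X)\subseteq\supp(Y)$ is actually consumed (via $\supp(X)\subseteq\supp(\cone(g))$) and where one must invoke the passage from a containment of supports to membership in a thick $\tens$-ideal. The first two parts are essentially bookkeeping with the colimit presentation and with the primeness of the points of $\Spc(\K)$; the one minor nuisance there is keeping the various shuffle isomorphisms coherent, which Remark~\ref{remark:up_to_iso} renders irrelevant.
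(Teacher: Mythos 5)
Your proposal is correct and follows essentially the same route as the paper: the level-wise maps $E_{X^{\tens n}}\ra E_{(X\tens Y)^{\tens n}}$ via $-\tens Y^{\tens n}$ and the shuffle isomorphism, commutativity from primeness of $\P$ (i.e.\ $a\in\P$ iff $a\tens Y\in\P$ when $\P\in\supp(Y)$), and for the kernel the chain: vanishing criterion, containment $\supp(X)\subseteq\supp(\cone(\cdot))$, membership $X^{\tens k}\in\langle\cone(\cdot)\rangle$, then Lemma~\ref{lemma:Nf}. Your derivation of the support containment (splitting the cone of the modified representative $g=X^{\tens m}\tens f$ after tensoring with $Y^{\tens N}$) is just a slightly more explicit version of the step the paper leaves terse, so there is nothing to flag.
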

\begin{proof}
	Note that there is a canonical
	isomorphism $X^{\tens n} \tens Y^{\tens n}\xra{\sim}(X\tens Y)^{\tens n}$ obtained
	from the symmetry that preserves the
	order of the $X$'s and the order of the
	$Y$'s. 
	One can then define a ring homomorphism $E_{X^{\tens n}} \ra E_{(X \tens Y)^{\tens n}}$ as the composition of $-\tens Y^{\tens n} : E_{X^{\tens n}} \ra E_{X^{\tens n} \tens Y^{\tens n}}$
	and the isomorphism $E_{X^{\tens n} \tens Y^{\tens n}} \xra{\sim} E_{(X\tens Y)^{\tens n}}$ and
	one readily verifies that these maps
	induce a homomorphism $R_X \ra R_{X\tens Y}$.
That \eqref{diagram:XtensY} commutes follows
from the definitions, observing that if $\P
\in \supp(Y)$ then $a \in \P$ iff $a \tens Y
\in \P$.
Finally, if $[f] \in R_X$ 
is mapped to zero in
$R_{X\tens Y}$ 
then
$X^{\tens i} \tens Y^{\tens j} \tens f = 0$ for some $i,j\ge 1$.
It follows using the condition $\supp(X) \subset \supp(Y)$ that
$\supp(X) = \supp(\cone(f))$ and hence that 
$X^{\tens k} \in \langle \cone(f) \rangle$ for some $k \ge 1$.
Lemma~\ref{lemma:Nf} then implies that $X^{\tens k} \tens f^{\tens n} = 0$ for some $n \ge 1$
and we conclude that $[f]$ is a nilpotent element of~$R_X$.
\end{proof}
\begin{proposition} 
	\label{proposition:XkY}
For every pair of objects $X$ and $Y$ and every integer $k \ge 1$, 
tensoring on the left by $X^{\tens (k-1)}$ induces an isomorphism 
$R_{X\tens Y} \xra{\sim} R_{X^{\tens k} \tens Y}$.
Under this identification the maps $\rho_{X\tens Y}$ and $\rho_{X^{\tens k} \tens Y}$ coincide.
\end{proposition}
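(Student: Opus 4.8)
The plan is to take the ring homomorphism supplied by Proposition~\ref{proposition:tensor_maps} and prove it is bijective by an explicit ``padding with $Y$'' argument. The left‑hand version of Proposition~\ref{proposition:tensor_maps} (tensoring on the left by $X^{\tens(k-1)}$) gives a ring homomorphism
\[ \phi : R_{X\tens Y} \ra R_{X^{\tens(k-1)}\tens(X\tens Y)} = R_{X^{\tens k}\tens Y} \]
together with a commutative square of the type~\eqref{diagram:XtensY} whose bottom row is the restriction of $\rho_{X^{\tens k}\tens Y}$ to $\supp(X\tens Y)\cap\supp(X^{\tens(k-1)})$. Since $\supp(X^{\tens(k-1)})\supseteq\supp(X)\supseteq\supp(X\tens Y)$, that intersection is just $\supp(X\tens Y)$, which also equals $\supp(X^{\tens k}\tens Y)$; hence both comparison maps have the same domain, the left vertical ``inclusion'' in the square is the identity, and the square says precisely that $\rho_{X\tens Y}$ is the composite of $\rho_{X^{\tens k}\tens Y}$ with the map on spectra induced by $\phi$. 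Thus once $\phi$ is shown to be an isomorphism the two comparison maps are identified along $\phi$, which is the second claim; so it suffices to prove $\phi$ bijective.

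For surjectivity, recall (cf.~\eqref{eq:object_colimit}) that an element of $R_{X^{\tens k}\tens Y}$ has the form $[g]$ with $g\in E_{(X^{\tens k}\tens Y)^{\tens n}}$ for some $n\ge 1$. Transporting along the canonical symmetry isomorphism we may view $g \in E_{X^{\tens kn}\tens Y^{\tens n}}$ (Lemma~\ref{lemma:iso_forced}), and then Corollary~\ref{corollary:tensor_Ex} shows that $g\tens Y^{\tens(k-1)n}$ is a $\tens$‑balanced endomorphism of $X^{\tens kn}\tens Y^{\tens kn}\cong(X\tens Y)^{\tens kn}$, hence defines a class $[g\tens Y^{\tens(k-1)n}]\in R_{X\tens Y}$. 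Unwinding the definition of $\phi$, its image is $[\,X^{\tens(k-1)kn}\tens g\tens Y^{\tens(k-1)n}\,]$; since $X^{\tens(k-1)kn}\tens Y^{\tens(k-1)n}\cong(X^{\tens k}\tens Y)^{\tens(k-1)n}$, Lemma~\ref{lemma:iso_forced} gives $X^{\tens(k-1)kn}\tens g\tens Y^{\tens(k-1)n}\eiso(X^{\tens k}\tens Y)^{\tens(k-1)n}\tens g$, whose class in $R_{X^{\tens k}\tens Y}$ is $[g]$ by the defining relations of $R_\Phi$. So $\phi$ is surjective, with explicit section $[g]\mapsto[g\tens Y^{\tens(k-1)n}]$.

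For injectivity, suppose $\phi([f])=0$ for $f\in E_{(X\tens Y)^{\tens m}}$, viewed as an endomorphism of $X^{\tens m}\tens Y^{\tens m}$. Recalling that $[h]=0$ in $R_\Phi$ if and only if $a\tens h=0$ for some $a\in\Phi$, there is $l\ge 1$ with $(X^{\tens k}\tens Y)^{\tens l}\tens X^{\tens(k-1)m}\tens f=0$; reorganizing the tensor factors by symmetry this reads $X^{\tens P}\tens Y^{\tens l}\tens f=0$ with $P:=kl+(k-1)m$. Since $k\ge1$ we have $P\ge l$, so tensoring this zero map with $Y^{\tens(P-l)}$ and reorganizing gives $X^{\tens P}\tens Y^{\tens P}\tens f = 0$, i.e.\ $(X\tens Y)^{\tens P}\tens f=0$ with $(X\tens Y)^{\tens P}\in\Phi$; therefore $[f]=0$ in $R_{X\tens Y}$. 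Hence $\phi$ is injective, so an isomorphism, and the proposition follows.

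The only real obstacle is bookkeeping: one must keep careful track of the canonical symmetry isomorphisms and of the exponents of the $X$‑ and $Y$‑factors so that at each step the relevant endomorphism is, via Lemma~\ref{lemma:iso_forced}, identified with an endomorphism of an object of the $\tens$‑multiplicative set defining $R_{X\tens Y}$ or $R_{X^{\tens k}\tens Y}$. The underlying reason this works---and the reason the two rings coincide even though $X^{\tens k}\tens Y$ and $(X\tens Y)^{\tens k}$ are different objects---is simply that for $k\ge 1$ there are always enough $Y$‑factors available to absorb the surplus $X$‑factors.
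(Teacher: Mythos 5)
Your proposal is correct, and its skeleton matches the paper's: reduce everything to showing the ring map induced by $X^{\tens(k-1)}\tens-$ is bijective, get injectivity by a padding argument (the paper dismisses this as ``evidently injective''; you spell out the exponent bookkeeping, with $P=kl+(k-1)m\ge l$ doing the work), and get surjectivity by exhibiting a preimage obtained by padding with copies of $Y$. Where you genuinely diverge is in how the key surjectivity identification is justified. The paper reduces to the single step $k=2$, $n=1$, chooses the candidate corresponding to $X\tens f\tens Y^{\tens 2}$ at level $3$, and then proves an \emph{on-the-nose} equality of endomorphisms of $(X^{\tens 2}\tens Y)^{\tens 3}$ -- the ``unilluminating trick'' whose permutation diagram crucially uses the $\tens$-balancedness of $f$ to slide $f$ from one block to an adjacent one. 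You instead work with general $k$ and $n$ at once, pad only with $Y$'s ($g\tens Y^{\tens(k-1)n}$ at level $kn$), and settle the identification only up to isomorphism, outsourcing the balancedness to Lemma~\ref{lemma:iso_forced}, Remark~\ref{remark:up_to_iso} and Lemma~\ref{lemma:concrete_equivalence}, which guarantee that conjugate $\tens$-balanced endomorphisms of objects of $\Phi$ represent the same class in $R_\Phi$. This buys a uniform argument that avoids reproducing the trick, at the cost that the chain of identifications carries the real content: strictly, Lemma~\ref{lemma:iso_forced} alone does not give $X^{\tens(k-1)kn}\tens g\tens Y^{\tens(k-1)n}\eiso(X^{\tens k}\tens Y)^{\tens(k-1)n}\tens g$; you also need the naturality of the symmetry to commute the trailing $Y$-block past the object of $g$ and the interchange law to regroup, which is exactly the bookkeeping you flag at the end, so there is no gap. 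Two further small points: your handling of the second claim via the left-hand square of Proposition~\ref{proposition:tensor_maps} (noting the relevant supports all equal $\supp(X\tens Y)$) is a nice explicit account of what the paper leaves implicit, and the word ``section'' for $[g]\mapsto[g\tens Y^{\tens(k-1)n}]$ is an overstatement you do not need -- pointwise preimages suffice for surjectivity.
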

\begin{proof}
Since the homomorphisms $R_{X\tens Y} \hookrightarrow R_{X^{\tens 2}\tens Y} \hookrightarrow R_{X^{\tens 3}\tens Y} \hookrightarrow \cdots$
induced by $X \tens -$ are evidently injective, the problem reduces to showing that $R_{X \tens Y} \ra R_{X^{\tens 2} \tens Y}$ is surjective.
	In other words, we need to show that every $f \in E_{(X^{\tens 2}\tens Y)^{\tens n}}$ is equivalent in $R_{X^{\tens 2}\tens Y}$
	to an element coming from $R_{X\tens Y}$.
	We'll give the proof under the assumption that $n=1$. The proof for arbitrary $n \ge 1$ is similar.

	Consider the element $g \in E_{(X\tens Y)^{\tens 3}}$ corresponding to
	$X \tens f \tens Y^{\tens 2} \in E_{X^{\tens 3} \tens Y^{\tens 3}}$.
	Under the map $R_{X \tens Y} \ra R_{X^{\tens 2} \tens Y}$, $[g]$ is sent to the image in $R_{X^{\tens 2} \tens Y}$ of the element 
	in $E_{(X^{\tens 2} \tens Y)^{\tens 3}}$ corresponding to $X^{\tens 4} \tens f \tens Y^{\tens 2} \in E_{X^{\tens 6} \tens Y^{\tens 3}}$.
	We claim that this element in $E_{(X^{\tens 3} \tens Y)^{\tens 3}}$ equals $(X^{\tens 2} \tens Y)^{\tens 2} \tens f$ so that
	the image of $[g]$ in $R_{X^{\tens 2} \tens Y}$ equals $[f]$.
	This is not completely obvious and involves an unilluminating trick.
In order to describe this trick, let's
write $a:= X^{\tens 2}$ and $b := Y$ for simplicity of notation; so $f \in E_{a\tens b}$.
We will use subscripts to indicate position and we'll drop the tensors from the notation.
Consider the diagram
\[\xymatrix @C=8em{
		a_1  b_1 a_2 b_2 a_3 b_3
		\ar[d]^{\sim}
		\ar@<0.25em>[r]^-{a b a b f} 
		\ar@<-0.25em>[r]_-{a b f a b}
		&
		a_1  b_1 a_2 b_2 a_3 b_3 \ar[d]^{\sim}
		\\
		a_1 a_2 b_2 a_3 b_1 b_3
		\ar@<0.25em>[r]^-{a f a b^2}
		\ar@<-0.25em>[r]_-{a^2 b f b} \ar[d]^{\sim} & 
		a_1 a_2 b_2 a_3 b_1 b_3 \ar[d]^{\sim}
		\\
		a_1 a_2 a_3 b_1 b_2 b_3 \ar[r]^-{a^2 f b^2} &
		a_1 a_2 a_3 b_1 b_2 b_3 
	}\]
where the vertical maps are the indicated permutations of the factors induced by the symmetry.
Note that the composition of the two vertical permutations is the unique permutation from the source to the target that preserves the order of the $a$'s and the order of the $b$'s.
The top arrow is $(X^2 \tens Y)^{\tens 2}\tens f$ and the bottom arrow is the morphism that it
corresponds to in $E_{X^{\tens 6} \tens
	Y^{\tens 3}}$. The commutativity of the
diagram verifies that this equals $X^{\tens 4} \tens f \tens Y^{\tens 2}$ as claimed.
\end{proof}
\begin{proposition}
	If $\K$ is a rigid tensor triangulated category then for every object $X$ in $\K$ there is a
	canonical isomorphism $R_X \simeq R_{DX}$ under which the map $\rho_X$ coincides with $\rho_{DX}$.
\end{proposition}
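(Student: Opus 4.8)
The plan is to manufacture the isomorphism $R_X \simeq R_{DX}$ from the duality functor $D\colon\K^{\op}\ra\K$, and then to check that it intertwines the two comparison maps by comparing cones under $D$.

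\emph{The ring isomorphism.} For a dualizable object $Z$ the assignment $f\mapsto Df$ is a bijection $[Z,Z]\ra[DZ,DZ]$, and I would first observe that it carries $E_Z$ into $E_{DZ}$: applying the functor $D$ to the relation $f\tens Z=Z\tens f$ (an equation of endomorphisms of $Z\tens Z$) and transporting along the canonical isomorphism $D(Z\tens Z)\simeq DZ\tens DZ$ gives $Df\tens DZ=DZ\tens Df$. Since every object of a rigid category is dualizable, this applies to each $Z=X^{\tens n}$; using the canonical isomorphisms $D(X^{\tens n})\simeq(DX)^{\tens n}$ and $D(X\tens g)\simeq DX\tens Dg$ one checks that the resulting maps $E_{X^{\tens n}}\ra E_{(DX)^{\tens n}}$ are compatible with the transition maps $X\tens-$ of \eqref{eq:object_colimit}, and so assemble to a map $\delta_X\colon R_X\ra R_{DX}$, $[f_{X^{\tens n}}]\mapsto[(Df)_{(DX)^{\tens n}}]$. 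Well-definedness on representatives follows from Lemma~\ref{lemma:concrete_equivalence} by applying $D$ to the witnessing relation. That $\delta_X$ respects the ring operations is then a short verification: $D$ is additive, $D(f\tens g)\simeq Df\tens Dg$, and in $R_X$ one has $[f_X]\cdot[g_Y]=[f\tens g]$, so $\delta_X([f]\cdot[g])=[D(f\tens g)]=[Df\tens Dg]=[Df]\cdot[Dg]$ and likewise for sums; the contravariance of $D$ causes no trouble here precisely because in these formulas the endomorphisms being combined act on disjoint tensor factors. Finally $\delta_X$ is an isomorphism, with inverse $\delta_{DX}$ followed by the identification $R_{DDX}\simeq R_X$ induced by the natural isomorphism $DDX\simeq X$.

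\emph{Compatibility with the comparison maps.} Both $\rho_X$ and $\rho_{DX}$ are defined on $\supp(X)=\supp(DX)$, the equality of domains holding because the zig-zag identities exhibit $X$ as a retract of $X\tens DX\tens X$ and conversely, so $\langle X\rangle=\langle DX\rangle$. For $f\in E_{X^{\tens n}}$, applying the exact functor $D$ to the defining triangle $X^{\tens n}\xra{f}X^{\tens n}\ra\cone(f)\ra\Si X^{\tens n}$ exhibits $\cone(Df)$ as (a suspension of) $D(\cone(f))$, whence $\langle\cone(Df)\rangle=\langle D(\cone(f))\rangle=\langle\cone(f)\rangle$ and in particular $\supp(\cone(Df))=\supp(\cone(f))$. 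Consequently, for $\P$ in the common domain, $\cone(f)\notin\P$ iff $\P\in\supp(\cone(f))=\supp(\cone(Df))$ iff $\cone(Df)\notin\P$; hence $\delta_X$ carries $\rho_X(\P)=\{[f]\mid\cone(f)\notin\P\}$ bijectively onto $\rho_{DX}(\P)=\{[Df]\mid\cone(Df)\notin\P\}$, which is exactly the assertion that $\rho_X$ and $\rho_{DX}$ coincide under $\delta_X$.

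\emph{Main point of care.} None of the individual steps is deep, but one has to be disciplined about the interaction of the \emph{contravariant} functor $D$ with the various canonical monoidal, suspension, and evaluation/coevaluation isomorphisms --- in the same spirit as the bookkeeping in the proof of Proposition~\ref{proposition:XkY} --- and about the (standard) identity $\supp(DX)=\supp(X)$ on which the comparison-map compatibility rests.
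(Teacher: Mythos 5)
Your proposal is correct and follows essentially the same route as the paper: transport $\tens$-balanced endomorphisms along the strong $\tens$-functor $D$ to get $E_{X^{\tens n}} \ra E_{(DX)^{\tens n}}$, assemble these over the colimit \eqref{eq:object_colimit} into $R_X \simeq R_{DX}$, and deduce the compatibility of $\rho_X$ with $\rho_{DX}$ from the exactness of $D$ (so that $\cone(Df)$ is a suspension of $D(\cone(f))$) together with $\supp(DY)=\supp(Y)$ in a rigid category. The only presentational difference is how the contravariance is bookkept: the paper records the anti-isomorphisms as $E_X \xra{\sim} E_{DX}^{\op}$ and then uses $R_{DX}^{\op}=R_{DX}$ by commutativity, whereas you absorb the same point into the observation that in $[f]\cdot[g]=[f\tens g]$ the factors act on disjoint tensor slots--both amount to the same verification.
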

\begin{proof}
	The duality functor $D:\K^{\op} \ra \K$ gives a ring isomorphism $[X,X] \xra{\sim} [DX,DX]^{\op}$ and an easy application of the fact that 
	$D$ is a strong $\tens$-functor shows that the isomorphism restricts to an isomorphism $E_X \xra{\sim} E_{DX}^{\op}$.
	It is straightforward but tedious to verify that these isomorphisms induce an isomorphism $R_X \xra{\sim} R_{DX}^{\op} = R_{DX}$.
	Showing that $\rho_X$ and $\rho_{DX}$ correspond amounts to
	showing that $\supp(\cone(Df)) =
	\supp(D(\cone(f)))$. Here we use the fact
	that $D$ is an exact functor of
	triangulated categories and the fact that
	$\supp(DX) =
	\supp(X)$ in any rigid category.
\end{proof}
Our next goal is to establish that $\rho_X = \rho_{X \oplus X}$.
Note that under the usual identification of $(X \oplus X)^{\tens n}$ with a $\oplus$-sum of $2^n$ copies of $X^{\tens n}$ an endomorphism $(X\oplus X)^{\tens n} \ra (X \oplus X)^{\tens n}$ can be regarded as a $2^n \times 2^n$ matrix $(f_{ij})$ with entries $f_{ij} : X^{\tens n} \ra X^{\tens n}$.
We will make such identifications without further comment.
\begin{lemma}
	\label{lemma:direct_sum_characterization}
	An endomorphism $f=(f_{ij}) : a_1 \oplus \cdots \oplus a_n \ra a_1 \oplus \cdots \oplus a_n$ is contained in $E_{a_1 \oplus \cdots \oplus a_n}$ if and only if
	\begin{enumerate}
		\item $a_i \tens f_{jj} = f_{ii}\tens a_j$ for all $i,j$, and
		\item $f_{ij}\tens (a_1 \oplus \cdots \oplus a_n) = 0$ for $i\neq j$.
	\end{enumerate}
\end{lemma}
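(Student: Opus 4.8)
The plan is to unwind the defining relation $f \tens a = a \tens f$ into a system of equations between the matrix entries, where throughout I write $a := a_1 \oplus \cdots \oplus a_n$ and, following the conventions above, write $a_j$ for the identity $\id_{a_j}$.

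First I would fix the distributivity isomorphism $a \tens a \simeq \bigoplus_{i,j} a_i \tens a_j$ and compute the components of the two endomorphisms $f \tens a$ and $a \tens f$ of $a \tens a$ with respect to it. Since $f \tens \id_a$ acts as $f = (f_{ik})$ on the first tensor factor and as the identity on the second, a short computation with the inclusions and projections of the direct sum shows that its component from $a_k \tens a_l$ to $a_i \tens a_j$ is $f_{ik} \tens a_j$ when $j = l$ and $0$ otherwise; dually, the component of $\id_a \tens f$ from $a_k \tens a_l$ to $a_i \tens a_j$ is $a_i \tens f_{jl}$ when $i = k$ and $0$ otherwise.

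Next I would equate these entry by entry, so that the condition $f \in E_a$ becomes: for all indices $i,j,k,l$,
\[
\delta_{jl}\,(f_{ik}\tens a_j) \;=\; \delta_{ik}\,(a_i \tens f_{jl}).
\]
I would then run through the four cases. For $i = k$ and $j = l$ this reads $f_{ii} \tens a_j = a_i \tens f_{jj}$, which is condition~(1). For $i = k$ and $j \neq l$ it reads $a_i \tens f_{jl} = 0$; letting $i$ range over all indices this says $(a_1 \oplus \cdots \oplus a_n) \tens f_{jl} = 0$ for $j \neq l$. For $i \neq k$ and $j = l$ it reads $f_{ik} \tens a_j = 0$; letting $j$ range this says $f_{ik} \tens (a_1 \oplus \cdots \oplus a_n) = 0$ for $i \neq k$. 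The case $i \neq k$, $j \neq l$ is vacuous. Conversely, if (1) holds and $f_{ij} \tens (a_1 \oplus \cdots \oplus a_n) = 0$ for $i \neq j$, then all four cases are satisfied, so $f \in E_a$; here one uses that $f_{ij} \tens a = 0$ is equivalent to $a \tens f_{ij} = 0$ by naturality of the symmetry $(-)\tens a \simeq a \tens (-)$ applied to $f_{ij}$, which lets the single condition~(2) serve for both off-diagonal cases.

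The only step needing care is the component computation of $f \tens a$ and $a \tens f$, which is a matter of bookkeeping with the distributivity isomorphisms rather than a real difficulty: one just has to notice that tensoring with $\id_a$ on one side acts diagonally in the index of the other side. With that in hand the lemma reduces to the case analysis above, so I do not anticipate a genuine obstacle.
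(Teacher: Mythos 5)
Your proposal is correct and is essentially the paper's proof: both identify $a\tens a$ with $\bigoplus_{i,j}a_i\tens a_j$, compute the components of $f\tens a$ and $a\tens f$ (the paper phrases this as comparing the $n\times n$ blocks of the two $n^2\times n^2$ matrices), and observe via the symmetry that the off-diagonal condition $a\tens f_{ij}=0$ is equivalent to $f_{ij}\tens a=0$, so condition (2) covers both.
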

\begin{proof}
	Observe that $f\tens (a_1 \oplus \cdots
	\oplus a_n)$, viewed as an $n^2 \times
	n^2$ matrix, consists of $n\times n$
	blocks, each of which is diagonal, while
	$(a_1 \oplus \cdots \oplus a_n) \tens f$
	consists of $n\times n$ blocks, arranged
	along the diagonal.  Equating the
	off-diagonal blocks gives the condition
	that $f_{ij} \tens a_k=0$ for all $k$ if
	$i \neq j$, which is equivalent to
	condition~(2). 
	Similarly, equating the off-diagonals of
	the diagonal blocks 
	gives the equivalent condition that
	$a_k \tens f_{ij} = 0$ for all $k$ if $i \neq j$.
	On the other hand, the diagonal
	of the $i$th diagonal block 
	gives the condition that
	$f_{ii}\tens a_j = a_i \tens f_{jj}$ for
	all $j$.
\end{proof}
\begin{corollary}
	\label{corollary:direct_sum_surjective}
	If $f \in E_{(X\oplus X)^{\tens n}}$ then there exists some $\alpha \in
	E_{X^{\tens n}}$ such that ${(X\oplus X)^{\tens n}} \tens f$ (regarded as a
	matrix of endomorphisms of $X^{\tens 2n}$) is diagonal with copies of
	$X^{\tens n} \tens \alpha$ along the diagonal.
\end{corollary}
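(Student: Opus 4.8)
The plan is to obtain this directly from Lemma~\ref{lemma:direct_sum_characterization}, specialized to the case in which all the direct summands coincide. Using the standard identification $(X\oplus X)^{\tens n}\cong\bigoplus_{k=1}^{2^n}X^{\tens n}$, I would write $f\in E_{(X\oplus X)^{\tens n}}$ as a $2^n\times 2^n$ matrix $f=(f_{ij})$ with entries $f_{ij}\colon X^{\tens n}\ra X^{\tens n}$. The first step is to unwind the shape of $(X\oplus X)^{\tens n}\tens f$: under the canonical identification of $(X\oplus X)^{\tens n}\tens(X\oplus X)^{\tens n}$ with $(X^{\tens 2n})^{\oplus 2^{2n}}$, the endomorphism $(X\oplus X)^{\tens n}\tens f=\bigoplus_{k=1}^{2^n}\bigl(X^{\tens n}\tens f\bigr)$ is block-diagonal, consisting of $2^n$ copies of one and the same $2^n\times 2^n$ matrix, namely $\bigl(X^{\tens n}\tens f_{ij}\bigr)_{ij}$ (an endomorphism of $X^{\tens 2n}$ in each slot, via $X^{\tens n}\tens X^{\tens n}\cong X^{\tens 2n}$).

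Next I would feed in the two conditions of Lemma~\ref{lemma:direct_sum_characterization}, taking every $a_i$ to be $X^{\tens n}$. Condition~(2) gives $f_{ij}\tens X^{\tens n}=0$ for $i\neq j$; since $f_{ij}\tens X^{\tens n}$ and $X^{\tens n}\tens f_{ij}$ are conjugate under the symmetry isomorphism, this forces $X^{\tens n}\tens f_{ij}=0$ for $i\neq j$, so each of the blocks above is in fact diagonal, with diagonal entries $X^{\tens n}\tens f_{jj}$. Condition~(1) reads $X^{\tens n}\tens f_{jj}=f_{ii}\tens X^{\tens n}$ for all $i,j$; taking $i=1$ shows that the diagonal entries $X^{\tens n}\tens f_{jj}$ do not depend on $j$, while taking $i=j=1$ shows that $\alpha:=f_{11}$ satisfies $\alpha\tens X^{\tens n}=X^{\tens n}\tens\alpha$, i.e.\ $\alpha\in E_{X^{\tens n}}$. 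Putting this together, $(X\oplus X)^{\tens n}\tens f$ is diagonal with every diagonal entry equal to $X^{\tens n}\tens\alpha$, which is the assertion.

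The only point that needs care is the indexing bookkeeping --- keeping track of which tensor factor carries the block decomposition and which carries the matrix of $f$, and using the symmetry isomorphism correctly to pass between the conditions $f_{ij}\tens X^{\tens n}=0$ and $X^{\tens n}\tens f_{ij}=0$. Beyond that I do not anticipate any genuine obstacle: the corollary is just the content of Lemma~\ref{lemma:direct_sum_characterization} read off in the special case where the summands $a_1,\dots,a_n$ are all equal.
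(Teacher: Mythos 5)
Your proposal is correct and is exactly the paper's argument: the paper's proof simply invokes Lemma~\ref{lemma:direct_sum_characterization} and takes $\alpha := f_{11}$, which is what you do, with the bookkeeping (block-diagonal shape of $(X\oplus X)^{\tens n}\tens f$, vanishing of off-diagonal entries via the symmetry, and independence of the diagonal entries from $j$ via condition~(1)) spelled out. No gaps.
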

\begin{proof}
	This follows from
	Lemma~\ref{lemma:direct_sum_characterization}.
	We can take $\alpha := f_{11}$ for
	example.
\end{proof}
\begin{proposition}
	\label{proposition:object_direct_sum}
	Let $\K$ be a tensor triangulated category and let $X$ be an object in~$\K$.
	There is a canonical isomorphism $R_X \simeq R_{X \oplus X}$ under which $\rho_X$ coincides with $\rho_{X\oplus X}$.
\end{proposition}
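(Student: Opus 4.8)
The plan is to construct mutually inverse ring homomorphisms $\psi : R_X \to R_{X\oplus X}$ and $\phi : R_{X\oplus X}\to R_X$ and then verify directly that they intertwine the two comparison maps. Throughout I work with the identification $R_X = \colim_{n\ge 1} E_{X^{\tens n}}$ of \eqref{eq:object_colimit} (and its analogue for $X\oplus X$) together with the identification of $(X\oplus X)^{\tens n}$ with a $\oplus$-sum of $2^n$ copies of $X^{\tens n}$ used before Lemma~\ref{lemma:direct_sum_characterization}.

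First I would define $\psi$ via the ``diagonal'' homomorphisms $\psi_n : E_{X^{\tens n}}\to E_{(X\oplus X)^{\tens n}}$ sending $\alpha$ to the diagonal matrix with $2^n$ copies of $\alpha$ on the diagonal. Such a matrix lies in $E_{(X\oplus X)^{\tens n}}$ by Lemma~\ref{lemma:direct_sum_characterization}: condition~(2) is vacuous and condition~(1) reads $X^{\tens n}\tens\alpha = \alpha\tens X^{\tens n}$, which holds precisely because $\alpha\in E_{X^{\tens n}}$. Since diagonal matrices add and compose entrywise, each $\psi_n$ is a ring homomorphism, and a short check shows that tensoring a diagonal matrix on the left by $X\oplus X$ again produces a diagonal matrix (now with entries $X\tens\alpha$), so the $\psi_n$ are compatible with the transition homomorphisms of the two colimits and assemble into a ring homomorphism $\psi$.

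For the inverse, given $f\in E_{(X\oplus X)^{\tens n}}$, condition~(1) of Lemma~\ref{lemma:direct_sum_characterization} shows $f_{11}\in E_{X^{\tens n}}$, and I would set $\phi[f] := [f_{11}]$. This is visibly additive; it is multiplicative \emph{after} passing to $R_X$ because condition~(2) forces $X^{\tens n}\tens f_{1k}=0$ for $k\neq 1$, so the off-diagonal contributions to $(f\circ g)_{11}=\sum_k f_{1k}g_{k1}$ become zero in the colimit, leaving $[f_{11}g_{11}] = [f_{11}]\cdot[g_{11}]$ (here one uses that composing $\tens$-balanced endomorphisms of a fixed object of $\Phi$ computes the product in $R_X$, since $\alpha\tens Z = Z\tens\alpha$). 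Compatibility with the transition maps is checked as before, giving a ring homomorphism $\phi$. That $\phi\circ\psi = \mathrm{id}_{R_X}$ is immediate. For $\psi\circ\phi=\mathrm{id}$, apply Corollary~\ref{corollary:direct_sum_surjective}: in $R_{X\oplus X}$ one has $[f]=[(X\oplus X)^{\tens n}\tens f]$, which by the corollary is the class of a diagonal matrix with copies of $X^{\tens n}\tens f_{11}$ on the diagonal, and this is exactly $\psi[f_{11}]=[\psi_n(f_{11})]$ once the summands are reindexed --- the reordering automorphism being harmless by Remark~\ref{remark:up_to_iso}.

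Finally, $\supp(X\oplus X)=\supp(X)\cup\supp(X)=\supp(X)$, so $\rho_X$ and $\rho_{X\oplus X}$ have the same domain, and it remains to check that for $\P\in\supp(X)$ and $\alpha\in E_{X^{\tens n}}$ one has $\cone(\alpha)\notin\P$ if and only if $\cone(\psi_n(\alpha))\notin\P$. This is clear: the cone of a $\oplus$-sum of maps is the $\oplus$-sum of their cones, so $\cone(\psi_n(\alpha))$ is a $\oplus$-sum of copies of $\cone(\alpha)$ and hence has the same support. I expect the main obstacle to be entirely organizational --- bookkeeping the canonical isomorphisms $(X\oplus X)^{\tens n}\simeq\bigoplus_{2^n}X^{\tens n}$ and how left- and right-tensoring permute their summands, so that the ``diagonal'' and ``$(1,1)$-entry'' constructions really are compatible with the colimit transition maps and really are mutually inverse; the genuine mathematical input is confined to Lemma~\ref{lemma:direct_sum_characterization} and Corollary~\ref{corollary:direct_sum_surjective}.
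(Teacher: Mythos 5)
Your proposal is correct and follows essentially the same route as the paper: the diagonal homomorphisms $E_{X^{\tens n}} \hookrightarrow E_{(X\oplus X)^{\tens n}}$ justified by Lemma~\ref{lemma:direct_sum_characterization}, surjectivity via Corollary~\ref{corollary:direct_sum_surjective}, and the identification of the comparison maps via $\cone(f\oplus\cdots\oplus f)\simeq\cone(f)\oplus\cdots\oplus\cone(f)$. The only (harmless) stylistic difference is that you package injectivity by exhibiting the explicit $(1,1)$-entry map as a two-sided inverse, whereas the paper checks injectivity and surjectivity of the diagonal map directly.
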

\begin{proof}
Invoking Lemma~\ref{lemma:direct_sum_characterization},
we see that there is a ring homomorphism
	$\Delta: E_{X^{\tens n}} \hookrightarrow E_{(X \oplus X)^{\tens n}}$ for each $n \ge 1$
	which sends
	$f\in E_{X^{\tens n}}$ to the diagonal matrix consisting of copies of $f$ on the diagonal.
	One checks that these maps commute with $X \tens -$ and $(X\oplus X)\tens -$ and therefore induce an injection
	$R_X \hookrightarrow R_{X\oplus X}$.
	Surjectivity of this map follows from Corollary~\ref{corollary:direct_sum_surjective}.
That $\rho_X$ and $\rho_{X\oplus X}$ correspond boils down to the definitions and the fact that $\cone(f\oplus \cdots \oplus f)\simeq \cone(f) \oplus \cdots \oplus \cone(f)$.
\end{proof}
A similar argument shows that $\rho_X = \rho_{X \oplus \Si X}$ after a canonical identification $R_X \simeq R_{X\oplus \Si X}$.
More generally:
\begin{proposition}
	\label{proposition:oplus_suspensions}
	Let $\K$ be a tensor triangulated category and let $X$ be an object of $\K$.
	If $Y$ is a $\oplus$-sum of suspensions of $X$ then $\rho_X = \rho_Y$ after a canonical identification $R_X \simeq R_Y$.
\end{proposition}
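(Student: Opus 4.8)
The plan is to mimic the proof of Proposition~\ref{proposition:object_direct_sum}, upgrading Lemma~\ref{lemma:direct_sum_characterization} and Corollary~\ref{corollary:direct_sum_surjective} so that they accommodate summands which are \emph{suspensions} of $X^{\tens n}$ rather than literal copies. Write $Y = \Si^{k_1}X \oplus \cdots \oplus \Si^{k_m}X$. Expanding the tensor power and using the suspension isomorphisms~\eqref{suspension_isomorphisms}, I would fix once and for all a decomposition $Y^{\tens n} \simeq \bigoplus_{\mathbf{j}} \Si^{|\mathbf{j}|}X^{\tens n}$ indexed by multi-indices $\mathbf{j} = (j_1,\ldots,j_n) \in \{1,\ldots,m\}^n$, where $|\mathbf{j}| := k_{j_1}+\cdots+k_{j_n}$, chosen compatibly with the splitting $Y^{\tens 2n} = Y^{\tens n}\tens Y^{\tens n}$. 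An endomorphism of $Y^{\tens n}$ is then a matrix $(f_{\mathbf{j}\mathbf{l}})$ with $f_{\mathbf{j}\mathbf{l}} : \Si^{|\mathbf{l}|}X^{\tens n} \ra \Si^{|\mathbf{j}|}X^{\tens n}$, and Lemma~\ref{lemma:direct_sum_characterization} applies verbatim with summands $a_{\mathbf{j}} := \Si^{|\mathbf{j}|}X^{\tens n}$: such a matrix lies in $E_{Y^{\tens n}}$ iff $f_{\mathbf{j}\mathbf{l}} \tens Y^{\tens n} = 0$ for $\mathbf{j} \neq \mathbf{l}$ and $a_{\mathbf{j}'} \tens f_{\mathbf{j}\mathbf{j}} = f_{\mathbf{j}'\mathbf{j}'} \tens a_{\mathbf{j}}$ for all $\mathbf{j},\mathbf{j}'$.

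Next I would define a diagonal ring homomorphism $\Delta_n : E_{X^{\tens n}} \hookrightarrow E_{Y^{\tens n}}$ sending a ($\tens$-balanced, degree-$0$) endomorphism $h$ to the diagonal matrix with entry $\Si^{|\mathbf{j}|}h$ in position $(\mathbf{j},\mathbf{j})$. Since $h$ has degree $0$ and $h \tens X^{\tens n} = X^{\tens n} \tens h$, pulling the suspensions out of $a_{\mathbf{j}'} \tens \Si^{|\mathbf{j}|}h$ and $\Si^{|\mathbf{j}'|}h \tens a_{\mathbf{j}}$ (no signs from~\eqref{diagram:anticommutes} intervene, precisely because $h$ has degree $0$) reduces the second condition above to $X^{\tens n} \tens h = h \tens X^{\tens n}$; hence $\Delta_n(h) \in E_{Y^{\tens n}}$. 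One then checks routinely that $\Delta_n$ is a ring homomorphism and that the $\Delta_n$ are compatible with the structure maps $X \tens -$ and $Y \tens -$ (tensoring a diagonal matrix with $Y = \bigoplus_i \Si^{k_i}X$ adds $k_i$ to the suspension amount of each summand in the expected way), so they induce a ring homomorphism $R_X \ra R_Y$ via the colimit presentation~\eqref{eq:object_colimit} and its $Y$-analogue $R_Y \simeq \colim_{n\ge 1} E_{Y^{\tens n}}$. Each $\Delta_n$ is split injective -- projecting onto the $(\mathbf{1},\mathbf{1})$-entry recovers $h$ up to the auto-equivalence $\Si^{nk_1}$ -- so $R_X \ra R_Y$ is injective.

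For surjectivity, given $f \in E_{Y^{\tens n}}$ I would use that $[f] = [Y^{\tens n} \tens f]$ in $R_Y$. Writing $Y^{\tens 2n} = Y^{\tens n} \tens Y^{\tens n} = \bigoplus_{\mathbf{p}} a_{\mathbf{p}} \tens Y^{\tens n}$, the characterization above forces $a_{\mathbf{p}} \tens f_{\mathbf{q}'\mathbf{q}} = 0$ for $\mathbf{q}' \neq \mathbf{q}$, so $Y^{\tens n} \tens f$ is \emph{diagonal} in $E_{Y^{\tens 2n}}$ with entries $a_{\mathbf{p}} \tens f_{\mathbf{q}\mathbf{q}}$; this is the suspended refinement of Corollary~\ref{corollary:direct_sum_surjective}. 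Let $\alpha \in E_{X^{\tens n}}$ be the unique degree-$0$ endomorphism with $f_{\mathbf{1}\mathbf{1}} = \Si^{nk_1}\alpha$ (it is $\tens$-balanced by the relation $a_{\mathbf{1}} \tens f_{\mathbf{1}\mathbf{1}} = f_{\mathbf{1}\mathbf{1}} \tens a_{\mathbf{1}}$). The relation $a_{\mathbf{1}} \tens f_{\mathbf{q}\mathbf{q}} = f_{\mathbf{1}\mathbf{1}} \tens a_{\mathbf{q}}$ together with $\tens$-balancedness of $\alpha$ then identifies $a_{\mathbf{p}} \tens f_{\mathbf{q}\mathbf{q}}$ with $\Si^{|\mathbf{p}|+|\mathbf{q}|}(X^{\tens n} \tens \alpha)$, so $Y^{\tens n} \tens f = \Delta_{2n}(X^{\tens n} \tens \alpha)$, whence $[f]$ is the image of $[X^{\tens n} \tens \alpha] \in R_X$ and $R_X \xra{\sim} R_Y$. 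Finally, to see the comparison maps correspond, note $\supp(Y) = \bigcup_i \supp(\Si^{k_i}X) = \supp(X)$, so the two maps share a domain, and for $h \in E_{X^{\tens n}}$ one has $\cone(\Delta_n(h)) \simeq \bigoplus_{\mathbf{j}} \Si^{|\mathbf{j}|}\cone(h)$, whose support equals $\supp(\cone(h))$; hence $\cone(\Delta_n(h)) \notin \P$ iff $\cone(h) \notin \P$, which is exactly the required compatibility.

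The main obstacle I anticipate is the bookkeeping with the suspension isomorphisms: verifying that $\Delta_n$ genuinely lands in $E_{Y^{\tens n}}$ and carrying out the surjectivity identification $Y^{\tens n} \tens f = \Delta_{2n}(X^{\tens n} \tens \alpha)$ both require pinning down mutually compatible conventions for extracting suspensions from iterated tensor products. Because every endomorphism in sight has degree $0$, no Koszul signs intervene, so this is careful bookkeeping -- of exactly the flavour already encountered in Lemma~\ref{lemma:direct_sum_characterization}, Corollary~\ref{corollary:direct_sum_surjective} and Proposition~\ref{proposition:object_direct_sum} -- rather than a genuine difficulty.
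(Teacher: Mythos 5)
Your proposal is correct and follows essentially the same route as the paper, which likewise identifies $Y^{\tens n}$ with a $\oplus$-sum of suspensions of $X^{\tens n}$, defines the ``diagonal'' embedding $E_{X^{\tens n}} \hookrightarrow E_{Y^{\tens n}}$ by suspended copies of $f$, and deduces surjectivity by the suspended analogue of Corollary~\ref{corollary:direct_sum_surjective}. Your write-up simply fills in the bookkeeping that the paper leaves as ``a more advanced version of the proof of Proposition~\ref{proposition:object_direct_sum}.''
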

\begin{proof}
	The proof is a more advanced version of the proof of Proposition~\ref{proposition:object_direct_sum}.
	Observe that $(\Si^{i_1} X \oplus \cdots \oplus \Si^{i_k}X)^{\tens n}$
	may be identified with a $\oplus$-sum of suspensions of $X^{\tens n}$.
	One may define a ``diagonal'' map $E_{X^{\tens n}} \hookrightarrow E_{(\Si^{i_1} X \oplus \cdots \oplus \Si^{i_k} X)^{\tens n}}$
	which sends $f$ to a diagonal matrix whose diagonal entries are copies of $f$ suspended the appropriate numbers of times.
	One checks that these maps induce a map $R_X \hookrightarrow R_{(\Si^{i_1} X \oplus \cdots \oplus \Si^{i_k} X)}$
	and a similar argument shows that this map is in fact surjective.
\end{proof}
\begin{remark}
	There are graded versions of all of the above results, establishing that $\rho_X^\astt$ is invariant under suspension, tensor powers, and so on.
	The only result for which we should be careful is taking duals. 
The duality induces an isomorphism $R_X^\astt \simeq R_{DX}^{\astt,{\op}}$ but 
we can't remove the ${\op}$ because $R_{DX}^\astt$ is only graded-commutative.
	In any case, there is a canonical homeomorphism $\Spech(R_{DX}^{\astt,{\op}}) \simeq \Spech(R_{DX}^\astt)$
and under these identifications $\rho_X^\astt$ coincides with~$\rho_{DX}^\astt$.
\end{remark}
\begin{example}
For any object $X \in \K$ there is a homomorphism $[\unit,\unit]_\astt \ra E_X^\astt$ which sends $\alpha$
to $\alpha \tens X = X \tens \alpha$. These induce a homomorphism
$R_\unit^\astt \ra R_\Phi^\astt$ for any \mbox{$\tens$-multiplicative} set $\Phi \subset \K$ whose kernel consists of nilpotents (cf.~Proposition~\ref{proposition:tensor_maps}).
If $\Phi$ is taken to be the collection of objects that are isomorphic to direct sums of suspensions of $\unit$ then this map is an isomorphism (cf.~Proposition~\ref{proposition:oplus_suspensions}).
For example, if $\K = D^{\text{perf}}(k)$ for a field $k$, then every object is
a direct sum of suspensions of $\unit$ and the ``only'' comparison map is the
original unit comparison map $\rho_\unit^\astt : \Spc(\K) \ra
\Spech([\unit,\unit]_\astt)$.  More generally, it would be interesting to know whether
$R_\unit^\astt \ra
R_\Phi^\astt$ is an isomorphism (under suitable generation hypotheses) when
$\Phi$ is the collection of solid objects; in other words, whether the closed
set comparison map $\rho_{\Spc(\K)}^\astt$ associated with the whole spectrum reduces
to $\rho_\unit^\astt$.
\end{example}
\begin{remark}
Recall from the proof of Theorem~\ref{theorem:unnatural_ungraded_comparison_map} that under the unnatural comparison map
$\rho_{X,A} : \supp(X) \ra \Spec(A)$ the preimage of a Thomason closed subset $V(a_1,\ldots,a_n)\subset \Spec(A)$
is exactly the support of the ``Koszul'' object
$\cone(\alpha(a_1))\tens \cdots\tens \cone(\alpha(a_n))$.
On the other hand, for our natural comparison map $\rho_X:\supp(X) \ra \Spec(R_X)$
the elements of $R_X$ are equivalence classes of endomorphisms, but 
one still finds that
\[\rho_X^{-1}(V([f_1],\ldots,[f_n])) = \supp(\cone(f_1) \tens \cdots \tens \cone(f_n))\]
independent of the choice of representatives $f_i$.
Nevertheless, a different set of representatives $[f_1'],\ldots,[f_n']$ 
gives a different Koszul object $\cone(f_1')\tens \cdots \tens \cone(f_n')$ and there
is no reason \emph{a priori} for the comparison maps of these two
Koszul objects to coincide.
However,
$X^{\tens i} \tens \cone(f_1) \tens \cdots \tens \cone(f_n) \simeq 
X^{\tens j} \tens \cone(f_1') \tens \cdots \tens \cone(f_n')$
for some $i,j \ge 1$ and it follows from Proposition~\ref{proposition:XkY}
that the comparison map for $X \tens \cone(f_1) \tens \cdots \tens \cone(f_n)$
does not depend on the choice of representatives~$f_i$.
Thus when one decides to examine a closed set $\supp(X_0)$ more closely by
choosing generators of a Thomason closed subset $V([f_1],\ldots,[f_n]) \subset \Spec(R_{X_0})$, 
it is
advisable to take the generator of the preimage 
$\rho_{X_0}^{-1}(V([f_1],\ldots,[f_n]))$
to be
$X_1 := X_0 \tens \cone(f_1) \tens \cdots \tens \cone(f_n)$.
A serendipitous consequence of including $X_0$ as a $\tens$-factor is that
we then have a ring homomorphism $R_{X_0} \ra R_{X_1}$ and 
a commutative diagram
\begin{equation*}\label{diagram:next_step_object_method}
	\xymatrix @=1em {
		\supp(X_0) \ar[rr]^{\rho_{X_0}} && \Spec(R_{X_0}) \\
&& V([f_1],\ldots,[f_n]) \ar@{^{(}->}[u] \\
\supp(X_1) \ar[rr]^{\rho_{X_1}} \ar@{^{(}->}[uu] && \Spec(R_{X_1}). \ar[u]
	}
\end{equation*}
On the other hand, this procedure still apparently depends on the choice of
generators for the Thomason closed subset
$V([f_1],\ldots,[f_n])$.
\end{remark}

\subsection*{Idempotent completion}
Recall that every tensor triangulated category $\K$ may be embedded into an idempotent-complete tensor triangulated category $\K^\natural$ and that the
embedding $i: \K \hookrightarrow \K^\natural$ induces a homeomorphism of spectra (see~\cite[Remark~3.12]{Balmer_Spectrum}).
There is a precise sense in
which $\K$ and $\K^\natural$ admit ``the same'' theory of higher comparison maps.
We begin with the following unsurprising result.
\begin{proposition}
	\label{proposition:idempotent_invariant}
		For any non-empty $\tens$-multiplicative subset $\Phi \subset \K$, 
		there is a canonical identification $R_{\K,\Phi} \simeq
		R_{\K^\natural,i(\Phi)}$ while $\cZ_{\K,\Phi} \simeq
		\cZ_{\K^\natural,i(\Phi)}$ under the homeomorphism $i^* :
		\Spc(\K^\natural) \xra{\sim} \Spc(\K)$. Under these identifications,
		$\rho_{\K,\Phi} = \rho_{\K^\natural,i(\Phi)}$.  \end{proposition}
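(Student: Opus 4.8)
The plan is to separate the statement into three parts --- the isomorphism of rings $R_{\K,\Phi}\simeq R_{\K^\natural,i(\Phi)}$, the identification of the two domains under $i^*$, and the compatibility of the two comparison maps --- and to observe that, once the first two are in hand, the third drops out formally from the naturality of the $\rho_\Phi$ construction.

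First I would show that for every object $X\in\K$ the ring homomorphism $E_{\K,X}\to E_{\K^\natural,iX}$ furnished by Lemma~\ref{lemma:Ex_induced} is an isomorphism. Since $i:\K\hookrightarrow\K^\natural$ is fully faithful, the underlying map $[X,X]_\K\to[iX,iX]_{\K^\natural}$ is already a ring isomorphism, so it is enough to check that $f$ is $\tens$-balanced in $\K$ if and only if $i(f)$ is $\tens$-balanced in $\K^\natural$. The forward implication is Lemma~\ref{lemma:Ex_induced}; for the converse one uses that $i$ is a strong $\tens$-functor, so that under the canonical isomorphism $i(X\tens X)\simeq iX\tens iX$ the endomorphisms $i(f\tens X)$ and $i(X\tens f)$ correspond to $i(f)\tens iX$ and $iX\tens i(f)$, whence faithfulness of $i$ forces $f\tens X=X\tens f$. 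Using the concrete description of Definition~\ref{definition:general_ring} (equivalently the colimit description of Remark~\ref{remark:E_X_colimit}), these isomorphisms are compatible with the structure maps $a\tens-$ and $-\tens a$ --- again because $i$ is a strong $\tens$-functor --- and hence assemble into the desired canonical isomorphism $R_{\K,\Phi}\xra{\sim}R_{\K^\natural,i(\Phi)}$. Here $i(\Phi)$ is $\tens$-multiplicative (in the usual model of $\K^\natural$ literally so, and in any case up to isomorphism, which is all that matters by Remark~\ref{remark:up_to_iso}).

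Next I would record the identification of domains: the homeomorphism $i^*:\Spc(\K^\natural)\xra{\sim}\Spc(\K)$ sends a prime $\P$ to $i^{-1}(\P)$, and for $X\in\Phi$ one has $X\notin i^{-1}(\P)$ if and only if $iX\notin\P$; since $\cZ_\Phi$ consists exactly of the primes avoiding every object of $\Phi$, it follows that $i^*$ restricts to a homeomorphism $\cZ_{\K^\natural,i(\Phi)}\xra{\sim}\cZ_{\K,\Phi}$. Finally, the equality $\rho_{\K,\Phi}=\rho_{\K^\natural,i(\Phi)}$ under these identifications follows from the naturality diagram~\eqref{diagram:naturality_of_general_map} applied to the morphism $i:(\K,\Phi)\to(\K^\natural,i(\Phi))$: that square identifies $\rho_{\K,\Phi}\circ i^*$ with $\Spec$ of the ring map $R_{\K,\Phi}\to R_{\K^\natural,i(\Phi)}$ composed with $\rho_{\K^\natural,i(\Phi)}$, and both vertical maps of that square --- the restriction of $i^*$ and $\Spec$ of the ring map --- have just been shown to be isomorphisms. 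The only substantive point is the ``if and only if'' for the $\tens$-balanced condition in the first step, and even that is a short diagram chase using full faithfulness and strong monoidality of $i$; I expect it to be the main obstacle, though still a routine one, since the remaining work has essentially been packaged already into the naturality proposition.
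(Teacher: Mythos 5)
Your proposal is correct, and it fills in exactly the verification the paper leaves implicit (the paper's proof is literally ``routine verification once one recalls all the definitions''): full faithfulness plus strong monoidality of $i$ give $E_{\K,X}\simeq E_{\K^\natural,iX}$ compatibly with the structure maps (up to the canonical isomorphisms $ia\tens iX\simeq i(a\tens X)$, handled by Lemma~\ref{lemma:iso_forced}), and the compatibility of comparison maps then follows formally from the naturality square. So this is essentially the paper's intended argument, carried out in detail.
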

\begin{proof}
	This is a routine verification once one recalls all the definitions.
\end{proof}
In particular, this tells us that the object comparison maps for objects in $\K$ are unaffected when we pass to $\K^\natural$. 
But it still could be possible that in passing to $\K^\natural$ we get new object comparison maps coming from the new objects in $\K^\natural$.
However, this is not the case:
\begin{proposition}
	For any object $X \in \K^\natural$, the object $X \oplus \Si X$ is contained in $\K\subset \K^\natural$. There is 
	a canonical isomorphism $R_{\K^\natural,X} \simeq R_{\K,X\oplus \Si X}$ 
	and after this identification $\rho_{\K^\natural,X} = \rho_{\K,X\oplus \Si X}$.
\end{proposition}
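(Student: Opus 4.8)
The plan is to factor the desired comparison through the object $X\oplus\Si X$ regarded inside $\K^\natural$, using two results already in hand: Proposition~\ref{proposition:idempotent_invariant}, which compares the $\K$-side and $\K^\natural$-side constructions, and Proposition~\ref{proposition:oplus_suspensions}, which is internal to a single tensor triangulated category (here $\K^\natural$). First I would settle the claim that $X\oplus\Si X$ lies in $\K$. By construction of the idempotent completion, $X$ is a retract of some $a\in\K$; fix a splitting $a\simeq X\oplus Y$ in $\K^\natural$ and let $e\colon a\to a$ be the idempotent projecting onto the $Y$-summand. Since $\K\hookrightarrow\K^\natural$ is full, $e$ is a morphism of $\K$ and hence has a cone $\cone(e)\in\K$; computing in $\K^\natural$ gives $\cone(e)\simeq\cone(0_X)\oplus\cone(\id_Y)\simeq X\oplus\Si X$. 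Thus $X\oplus\Si X$ is isomorphic to an object of $\K$, and by the (unnamed) proposition immediately following Lemma~\ref{lemma:Nf}, the ring $R_{\K,X\oplus\Si X}$ and the map $\rho_{\K,X\oplus\Si X}$ are independent of this choice up to canonical isomorphism.

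Next I would apply Proposition~\ref{proposition:idempotent_invariant} to the $\tens$-multiplicative set $\Phi:=\{(X\oplus\Si X)^{\tens n}\mid n\ge 1\}\subset\K$. This yields a canonical isomorphism $R_{\K,X\oplus\Si X}\simeq R_{\K^\natural,X\oplus\Si X}$ together with a homeomorphism $\supp_\K(X\oplus\Si X)\simeq\supp_{\K^\natural}(X\oplus\Si X)$ induced by $i^*\colon\Spc(\K^\natural)\xra{\sim}\Spc(\K)$, under which $\rho_{\K,X\oplus\Si X}=\rho_{\K^\natural,X\oplus\Si X}$. Then, working entirely inside $\K^\natural$: since $X\oplus\Si X$ is literally a $\oplus$-sum of suspensions of $X$, Proposition~\ref{proposition:oplus_suspensions} provides a canonical isomorphism $R_{\K^\natural,X}\simeq R_{\K^\natural,X\oplus\Si X}$ under which $\rho_{\K^\natural,X}=\rho_{\K^\natural,X\oplus\Si X}$, the two maps sharing the common domain $\supp_{\K^\natural}(X)=\supp_{\K^\natural}(X\oplus\Si X)$ (a thick $\tens$-ideal being closed under suspensions and $\oplus$-summands). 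Composing the two ring isomorphisms gives the canonical identification $R_{\K^\natural,X}\simeq R_{\K,X\oplus\Si X}$, and tracing the domains back through $i^*$ gives the asserted equality $\rho_{\K^\natural,X}=\rho_{\K,X\oplus\Si X}$.

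The steps are essentially bookkeeping, and the main point requiring vigilance is that Proposition~\ref{proposition:idempotent_invariant} is phrased for $\tens$-multiplicative subsets \emph{of} $\K$, so one is obliged to route the comparison through the object $X\oplus\Si X$ viewed inside $\K$ rather than attempting to relate $R_{\K^\natural,X}$ to the $\K$-side directly. A secondary care point is to check that the two canonical isomorphisms in the chain compose to the expected canonical map, but this is immediate once one unwinds that each isomorphism used is the one induced functorially by an identity-or-symmetry morphism on the underlying objects. I do not anticipate a genuine obstacle: every link of the chain is a result whose canonicity has already been established.
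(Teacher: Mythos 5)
Your proposal is correct and follows essentially the same route as the paper: the paper cites the well-known fact that $X \oplus \Si X$ lies in $\K$ (your retract/idempotent/cone argument is exactly the standard proof, as in the proof of Proposition~3.13 of Balmer's spectrum paper), and then combines Proposition~\ref{proposition:idempotent_invariant} applied to $X \oplus \Si X$ viewed in $\K$ with Proposition~\ref{proposition:oplus_suspensions} applied inside $\K^\natural$, just as you do. The only difference is that you spell out the bookkeeping (choice of $\tens$-multiplicative set, invariance under isomorphism) that the paper leaves implicit.
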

\begin{proof} That $X \oplus \Si X$ is contained in $\K$ is a
	well-known fact; see the proof of
	\cite[Proposition~3.13]{Balmer_Spectrum} for example.
	Our result then follows from
	Proposition~\ref{proposition:idempotent_invariant} together
	with the result of
	Proposition~\ref{proposition:oplus_suspensions} which told us that $\rho_{X} = \rho_{X\oplus \Si X}$. 
\end{proof}
Next we can ask about the closed set comparison maps.
\begin{proposition}
	Let $\cZ$ be a closed subset of $\Spc(\K)$ and let $\cZ' = (i^*)^{-1}(\cZ)$ be the corresponding closed subset of $\Spc(\K^\natural)$.
	There is a canonical isomorphism $R_{\K,\cZ} \simeq R_{\K^\natural,\cZ'}$ such that with the identification $\cZ \simeq \cZ'$ the comparison map
	$\rho_{\K,\cZ}$ coincides with $\rho_{\K^\natural,\cZ'}$.
\end{proposition}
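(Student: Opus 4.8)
The plan is to realise the claimed isomorphism as a composite, reducing the content to Proposition~\ref{proposition:idempotent_invariant} together with one extra comparison carried out entirely inside $\K^\natural$. Write $\Phi := \{a \in \K \mid \supp(a) \supseteq \cZ\}$ and $\Phi' := \{a \in \K^\natural \mid \supp(a) \supseteq \cZ'\}$, so that by definition $R_{\K,\cZ} = R_{\K,\Phi}$ and $R_{\K^\natural,\cZ'} = R_{\K^\natural,\Phi'}$; both are non-empty and $\tens$-multiplicative. Since $i$ induces a homeomorphism $i^* : \Spc(\K^\natural) \xra{\sim} \Spc(\K)$ with $\supp_{\K^\natural}(i(a)) = (i^*)^{-1}(\supp_\K(a))$, one has $\supp_\K(a) \supseteq \cZ$ if and only if $\supp_{\K^\natural}(i(a)) \supseteq \cZ'$, so in particular $i(\Phi) \subseteq \Phi'$. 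Proposition~\ref{proposition:idempotent_invariant} supplies a canonical isomorphism $R_{\K,\Phi} \simeq R_{\K^\natural,i(\Phi)}$ intertwining $\rho_{\K,\Phi}$ with $\rho_{\K^\natural,i(\Phi)}$ and identifying $\cZ_{\K,\Phi}$ with $\cZ_{\K^\natural,i(\Phi)}$; since $\cZ_{\K,\Phi} = \cZ$ (as in Example~\ref{example:closed_set_comparison_maps}, using that $\{\supp(a)\}$ is a basis of closed sets) and $i^*$ is a homeomorphism, the latter space is just $\cZ'$. Thus everything reduces to showing that the homomorphism $R_{\K^\natural,i(\Phi)} \to R_{\K^\natural,\Phi'}$ induced by the inclusion $i(\Phi) \subseteq \Phi'$ (Proposition~\ref{proposition:general_induced}, applied to the identity functor of $\K^\natural$) is an isomorphism compatible with the comparison maps.

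The one external input is the well-known fact, recalled in the proof of the preceding proposition, that $Z \oplus \Si Z$ lies in $\K \subseteq \K^\natural$ for every $Z \in \K^\natural$. For surjectivity, take $[g_Y] \in R_{\K^\natural,\Phi'}$ with $Y \in \Phi'$ and $g \in E_Y$. As $Y \oplus \Si Y \in \Phi'$, the defining relation of $R_{\Phi'}$ gives $[g_Y] = [((Y\oplus\Si Y)\tens g)_{(Y\oplus\Si Y)\tens Y}]$, and $(Y\oplus\Si Y)\tens Y \simeq Y^{\tens 2} \oplus \Si(Y^{\tens 2})$ is of the form $Z \oplus \Si Z$, hence lies in $\K$; its support is $\supp(Y^{\tens 2}) = \supp(Y) \supseteq \cZ'$, so it represents an object of $i(\Phi)$. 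Since $(Y\oplus\Si Y)\tens g \in E_{(Y\oplus\Si Y)\tens Y}$ by Corollary~\ref{corollary:tensor_Ex}, this exhibits $[g_Y]$ as the image of a class in $R_{\K^\natural,i(\Phi)}$. For injectivity, suppose $[f_X] = 0$ in $R_{\K^\natural,\Phi'}$ with $X \in i(\Phi)$ and $f \in E_X$; then $b \tens f = 0$ for some $b \in \Phi'$, and $b \oplus \Si b$ again lies in $i(\Phi)$ by the same support computation, with $(b\oplus\Si b)\tens f = (b\tens f) \oplus \Si(b\tens f) = 0$, so $[f_X] = 0$ already in $R_{\K^\natural,i(\Phi)}$.

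Finally, the domains agree ($\cZ_{\K^\natural,\Phi'} = \cZ'$ because $\{\supp(a) : a \in \K^\natural\}$ is a basis of closed sets, and $\cZ_{\K^\natural,i(\Phi)} = \cZ'$ as noted), and compatibility of the isomorphism $R_{\K^\natural,i(\Phi)} \simeq R_{\K^\natural,\Phi'}$ with the two comparison maps is immediate from the formula $\P \mapsto \{[f] \mid \cone(f) \notin \P\}$ defining both: the isomorphism sends $[f]$ to $[f]$, leaving the representing endomorphism, and hence $\cone(f)$, untouched. Composing with Proposition~\ref{proposition:idempotent_invariant} then yields the canonical isomorphism $R_{\K,\cZ} \simeq R_{\K^\natural,\cZ'}$ intertwining $\rho_{\K,\cZ}$ and $\rho_{\K^\natural,\cZ'}$. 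I expect the surjectivity step to be the only point needing genuine care: one must remember to keep $Y$ (equivalently $Y \oplus \Si Y$) as a $\tens$-factor so that the resulting object simultaneously lands in $\K$ and retains support containing $\cZ'$; the remaining verifications are routine bookkeeping with the equivalence relation of Definition~\ref{definition:general_ring}.
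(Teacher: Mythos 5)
Your proof is correct and follows essentially the same route as the paper: reduce via Proposition~\ref{proposition:idempotent_invariant} to comparing $R_{\K^\natural,i(\Phi)}$ with $R_{\K^\natural,\Phi'}$ inside $\K^\natural$, and use the fact that $a \oplus \Si a$ lies in $\K$ to prove the induced map is bijective, with an identical injectivity argument. The only (harmless) variation is in surjectivity, where you apply the defining relation directly---tensoring a representative $g_Y$ with $Y \oplus \Si Y$ to land on an object of the form $Z \oplus \Si Z$ in one step---whereas the paper routes the same idea through the isomorphism $R_a \simeq R_{a \oplus \Si a}$ of Proposition~\ref{proposition:oplus_suspensions} and a commutative diagram, handling general tensor powers separately.
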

\begin{proof}
	Let $\Phi = \{x \in \K \mid \supp_\K(x) \supset \cZ\}$ and let $\Phi' = \{ a\in \K^\natural \mid \supp_{\K^\natural}(a) \supset \cZ'\}$.
	Then $i(\Phi) \subset \Phi'$ and we have an induced ring homomorphism 
	$R_{i(\Phi)} \ra R_{\Phi'}$.
	If $a \in \K^\natural$ then $a\oplus \Si a \in i(\K)$ as before and it follows that if $a \in \Phi'$ then
	$a \oplus \Si a \in i(\Phi)$.
	We claim that for any $a \in \Phi'$, the diagram
	\[\xymatrix{
			R_{i(\Phi)} \ar[r]& R_{\Phi'} \\
			R_{a \oplus \Si a} \ar[ur] \ar[u] & R_a \ar[l]_-{\sim} \ar[u]
		}\]
	commutes, where the bottom row is the isomorphism obtained in Proposition~\ref{proposition:oplus_suspensions}; it will follow that the map 
	$R_{i(\Phi)} \ra R_{\Phi'}$ is surjective.
	The commutativity of the top triangle is immediate. On the other hand,
	consider some $[f] \in R_a$, say with $f \in E_{a^{\tens n}}$.
Suppose for starters that $n=1$.
	Then $[f]$ maps to $[f \oplus \Si f]$ in $R_{a \oplus \Si a}$ and so
	the question (in this case) is whether $[f \oplus \Si f] = [f]$ in $R_{\Phi'}$.
	This is readily verified: 
	\begin{multline*}
		 [f \oplus \Si f] = [(f \oplus \Si f) \tens a] = [(f \tens a) \oplus (\Si f \tens a)] = \\ 
		[(a \tens f)\oplus (\Si a \tens f)] = [(a \oplus \Si a) \tens f] = [f].
	\end{multline*}
	For general $n \ge 1$, regarding $(a \oplus \Si a)^{\tens n}$ as a $\oplus$-sum of suspensions of $a \tens a$, 
	an element $f \in E_{a^{\tens n}}$ is sent to a $\oplus$-sum of suspensions of $f$ and it comes down to showing that
	$[f] = [\Si^{i_1} f \oplus \Si^{i_2} f \oplus \cdots \oplus \Si^{i_k} f]$ in $R_{\Phi'}$ (which can
	be verified in a similar manner).

	On the other hand, if $[f] \in R_{i(\Phi)}$ is an element that is sent to zero in $R_{\Phi'}$ 
	then $a \tens f = 0$ for some $a \in \Phi'$ and 
	$(a \oplus \Si a)\tens f \simeq (a \tens f) \oplus \Si (a \tens f) = 0$ shows that $[f] = 0$ in $R_{i(\Phi)}$.
	Therefore the map
	$R_{i(\Phi)} \ra R_{\Phi'}$ is also injective.
	It is clear that under this isomorphism $\rho_{i(\Phi)} = \rho_{\Phi'}$
	while
	Proposition~\ref{proposition:idempotent_invariant} implies that
	$\rho_\Phi = \rho_{i(\Phi)}$ after identifying $\cZ \simeq \cZ'$.
\end{proof}
In other words, we have established that $\K$ and $\K^\natural$ give precisely the same object comparison maps and precisely the same closed set comparison maps.

\section{Topological results}
Throughout this section let $\K$ be a tensor triangulated category and let $\Phi \subset \K$ be a non-empty set of objects closed under the $\tens$-product.
\begin{proposition}
	If $\cZ_\Phi$ is connected then $\Spec(R_\Phi)$ is connected.
\end{proposition}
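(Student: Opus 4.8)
The plan is to prove the contrapositive: assuming $\Spec(R_\Phi)$ is disconnected, I will exhibit a decomposition of $\cZ_\Phi$ into two disjoint, nonempty, closed (hence clopen) subsets. Recall that the prime spectrum of a commutative ring is disconnected precisely when the ring carries an idempotent other than $0$ and $1$, so I may fix such an idempotent $e\in R_\Phi$. I would write $e=[f_X]$ for some $f\in E_X$ with $X\in\Phi$, and set $g:=\id_X-f\in E_X$ (which lies in the subring $E_X$ by Proposition~\ref{proposition:local_ring}), so that $1-e=[g_X]$. Then $V(e)$ and $V(1-e)$ are complementary clopen subsets of $\Spec(R_\Phi)$, and both are nonempty: a nontrivial idempotent is a non-unit, so it lies in some prime, and likewise for $1-e$.

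The second step is to pull these back along the continuous map $\rho_\Phi\colon\cZ_\Phi\ra\Spec(R_\Phi)$ of Theorem~\ref{theorem:general_ungraded_comparison_map}. By the formula for preimages of closed sets established in that proof (the analogue of the computation $\rho_{X,A}^{-1}(V(E))=\bigcap_{a\in E}\supp(\cone(\alpha(a)))$), one has $\rho_\Phi^{-1}(V([f]))=\supp(\cone(f))\cap\cZ_\Phi$ and $\rho_\Phi^{-1}(V([g]))=\supp(\cone(g))\cap\cZ_\Phi$. Since $V(e)$ and $V(1-e)$ are complementary in $\Spec(R_\Phi)$, continuity yields that $\cZ_\Phi=(\supp(\cone(f))\cap\cZ_\Phi)\sqcup(\supp(\cone(g))\cap\cZ_\Phi)$ is a disjoint union of two subsets, each closed (supports are closed) and each therefore also open in $\cZ_\Phi$. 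It remains only to show that both pieces are nonempty; then $\cZ_\Phi$ is disconnected and the proof is complete.

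So suppose for contradiction that $\supp(\cone(f))\cap\cZ_\Phi=\emptyset$. Using $\cZ_\Phi=\bigcap_{a\in\Phi}\supp(a)$ together with $\supp(\cone(f))\cap\supp(a)=\supp(\cone(f)\tens a)$, the closed sets $\supp(\cone(f)\tens a)$, $a\in\Phi$, have empty intersection; each has quasi-compact complement by Lemma~\ref{lemma:thomason_subsets}, and $\Spc(\K)$ is quasi-compact, so already finitely many $a_1,\dots,a_n\in\Phi$ satisfy $\supp(\cone(f)\tens a_1\tens\cdots\tens a_n)=\emptyset$, i.e.\ $a\tens\cone(f)^{\tens M}=0$ for a suitable $a\in\Phi$ and $M\ge 1$. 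I then want to trade the high tensor power of $\cone(f)$ for tensor powers of $X$: since $f$ is $\tens$-balanced, Lemma~\ref{lemma:killscone} gives $f^{\tens 2}\tens\cone(f)=0$, hence $a\tens\cone(f)^{\tens(M-1)}\tens f^{\tens 2}=0$; writing $f^{\tens 2}=(f\tens\id_X)\circ(\id_X\tens f)$ and noting that the triangle on $\cone(f)$ forces $a\tens\cone(f)^{\tens(M-1)}\tens f$ to be an isomorphism (its cone is $a\tens\cone(f)^{\tens M}=0$), one concludes $a\tens\cone(f)^{\tens(M-1)}\tens X\tens f=0$, and then a split-triangle argument upgrades this to $a\tens X^{\tens 2}\tens\cone(f)^{\tens(M-1)}=0$. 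Iterating $M-1$ times produces $b:=a\tens X^{\tens 2(M-1)}\in\Phi$ with $b\tens\cone(f)=0$; tensoring the defining triangle $\ftri{X}{X}{\cone(f)}{f}{}{}$ with $b$ then shows $b\tens f$ is an isomorphism, so $e=[f]$ is a unit in $R_\Phi$, forcing $e=1$ and contradicting the choice of $e$. The symmetric argument applied to $g$ (also $\tens$-balanced) rules out $\supp(\cone(g))\cap\cZ_\Phi=\emptyset$.

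The routine ingredients — the dictionary between idempotents and disconnectedness, the preimage formula for $\rho_\Phi$, and the pullback of complementary clopen sets — are immediate from the excerpt. I expect the main obstacle to be the ``power-reduction'' step in the last paragraph, namely converting $\tens$-nilpotence of $\cone(f)$ against $\Phi$ into an honest vanishing $b\tens\cone(f)=0$ with $b\in\Phi$; this is precisely where the $\tens$-balanced hypothesis (through Lemma~\ref{lemma:killscone}) does its work, and carrying out the bookkeeping of tensor factors cleanly is the only delicate point.
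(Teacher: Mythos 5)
Your argument is correct, and it takes a genuinely different route through the key step. The paper first reduces to the idempotent-complete case (using the invariance results for $\K^\natural$), manufactures from $[e_X]=[e_X]^2$ an honest idempotent endomorphism $f$ of $X\tens a\tens X$, splits it as $X\tens a\tens X\simeq a_1\oplus a_2$, and decomposes $\cZ_\Phi=(\cZ_\Phi\cap\supp(a_1))\sqcup(\cZ_\Phi\cap\supp(a_2))$; nonemptiness is then checked via the explicit matrix form of $f^{\tens n}$, which makes $c^{\tens n}\tens f^{\tens n}=0$ immediate once $c\tens a_1$ is $\tens$-nilpotent. You avoid both the idempotent completion and the splitting entirely: you pull back the clopen decomposition $V([f])\sqcup V([\id-f])$ of $\Spec(R_\Phi)$, which by the preimage formula is exactly $(\supp(\cone(f))\cap\cZ_\Phi)\sqcup(\supp(\cone(\id-f))\cap\cZ_\Phi)$ --- topologically the same partition the paper gets, since its $\cone(f)\simeq\Si a_2\oplus a_2$ --- and you replace the matrix bookkeeping in the nonemptiness step by a power-reduction using Lemma~\ref{lemma:killscone}: from $a\tens\cone(f)^{\tens M}=0$ you trade one $\cone(f)$-factor for $X^{\tens 2}$ at a time (the split-triangle step is fine, since $W\tens f=0$ gives $W\tens\cone(f)\simeq W\tens X\oplus\Si(W\tens X)$ while $W\tens\cone(f)$ contains $\cone(f)^{\tens M}$ and so vanishes), eventually making $[f]$ an idempotent unit, hence $1$. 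This buys self-containedness (no reliance on the $\K\leftrightarrow\K^\natural$ comparison, and never needing $f$ itself to be idempotent as an endomorphism), at the cost of the more delicate tensor-factor bookkeeping; the paper's splitting buys a more concrete geometric picture and a shorter nonemptiness argument. One small wording caution: $\supp(\cone(f)\tens a_1\tens\cdots\tens a_n)=\emptyset$ only gives $\tens$-nilpotence of that object, not vanishing, so you must pass to a tensor power before writing $a\tens\cone(f)^{\tens M}=0$; this is harmless precisely because $\Phi$ is closed under $\tens$ and your $M\ge 1$ already accommodates it, but it should be said explicitly.
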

\begin{proof}
By the results of the last section it suffices
	to prove the result under the additional hypothesis that $\K$ is
	idempotent-complete.  If $\Spec(R_\Phi)$ is disconnected then there is a
	non-trivial idempotent $[e_X]$ in the ring $R_\Phi$. 
By Lemma~\ref{lemma:concrete_equivalence}, $[e_X] = [e_X]^2
	= [e_X^2]$ 
	implies that there is an $a \in \Phi$
such that $e_X \tens a \tens X = X \tens a \tens e_X^2 = (X \tens a \tens e_X)^2$
while $e_X \tens a \tens X = X \tens a \tens e_X$ since $e_X$ is $\tens$-balanced.
	Moreover, $[e_X]\neq
	0$ implies $X \tens a \tens e_X \neq 0$ and $[e_X]\neq 1$ implies 
	$X \tens a \tens
	e_X \neq \id_{X \tens a \tens X}$, so $f := X \tens a \tens e_X$
	is a non-trivial idempotent endomorphism of $X \tens a \tens X$.  Moreover,
	it is contained in $E_{X\tens a \tens X}$ and hence gives an element
	$[f]$ of $R_\Phi$.

	Since $\K$ is idempotent-complete, 
	there is an associated decomposition $X \tens a\tens X 
	\simeq a_1 \oplus a_2$ for two non-zero
	objects $a_1$ and $a_2$ such that $f$
	becomes the matrix~$\big(\begin{smallmatrix} 1 & 0 \\ 0 & 0
	\end{smallmatrix}\big)$.  In particular, $\cone(f) \simeq \Si a_2 \oplus
	a_2$ and $\cone(\id_{X \tens a \tens X} -f)\simeq \Si a_1 \oplus a_1$.
	It follows that $\cZ_\Phi \cap \supp(a_1) \cap \supp(a_2) = \emptyset$
	because otherwise there would be a $\P \in \cZ_\Phi$ with $\cone(f)
	\notin \P$ and $\cone(\id_{X\tens a \tens X} - f) \notin \P$, which
	would imply that both $[f]$ and $1-[f]$ are contained in the prime
	$\rho_\Phi(\P)$. 
	We conclude that $\cZ_\Phi = \cZ_\Phi \cap {\supp(X \tens a \tens X)} = (\cZ_\Phi \cap \supp(a_1)) \sqcup (\cZ_\Phi \cap \supp(a_2))$
	is a disjoint union of closed sets, and it remains to show that $\cZ_\Phi
	\cap \supp(a_i) \neq \emptyset$ for $i=1,2$.

	If $\cZ_\Phi \cap \supp(a_1)$ were empty then the quasi-compactness of
	$\Spc(\K)$ would imply that there is a $c \in \Phi$ such that $\supp(c)
	\cap \supp(a_1) = \emptyset$ and hence that $c \tens a_1$ is
	$\tens$-nilpotent.  But observe that under the identification $X\tens a
	\tens X \simeq a_1 \oplus a_2$, the endomorphism $f^{\tens n}$ becomes a
	matrix with all zero entries except for $\id_{a_1^{\tens n}}$ at one
	position along the diagonal.  From this it is clear that $c^{\tens n} \tens
	f^{\tens n} = 0$ for some $n \ge 1$ since $c \tens a_1$ is
	$\tens$-nilpotent.  But then $[e_X] = [f] = [f^n] = [f^{\tens n}]
	= [c^{\tens n} \tens f^{\tens n}] = 0$ in the ring $R_\Phi$, which contradicts
	the fact that $[e_X]$ is nontrivial.  A similar argument shows that if
	$\cZ_\Phi \cap \supp(a_2)$ were empty then $[e_X]=1$.
\end{proof}
For the converse of the above result, we need to add additional assumptions.
\begin{proposition}
\label{proposition:connected_hard}
Suppose $\K$ is rigid and $\cZ_\Phi$ is Thomason.
	If $\Spec(R_\Phi)$ is connected then $\cZ_\Phi$ is connected.
\end{proposition}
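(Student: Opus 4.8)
The plan is to prove the contrapositive: if $\cZ_\Phi$ is disconnected, I will produce a nontrivial idempotent in $R_\Phi$, which forces $\Spec(R_\Phi)$ to be disconnected. By the idempotent-completion results of the previous sections I may assume $\K$ is idempotent-complete (and still rigid). Write $\cZ_\Phi = \cZ_1 \sqcup \cZ_2$ with $\cZ_1,\cZ_2$ non-empty and clopen in $\cZ_\Phi$; each $\cZ_i$ is then closed in $\Spc(\K)$. First I would observe that each $\cZ_i$ is in fact Thomason closed in $\Spc(\K)$: indeed $\Spc(\K)\setminus\cZ_1 = (\Spc(\K)\setminus\cZ_\Phi)\cup\cZ_2$ is quasi-compact, being the union of $\Spc(\K)\setminus\cZ_\Phi$ (quasi-compact since $\cZ_\Phi$ is Thomason, by Lemma~\ref{lemma:thomason_subsets}) and $\cZ_2$ (closed in the quasi-compact space $\Spc(\K)$). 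Hence $\cZ_i = \supp(w_i)$ for some $w_i \in \K$ (Lemma~\ref{lemma:thomason_subsets} again). Moreover, since $\Spc(\K)\setminus\cZ_\Phi = \bigcup_{Y\in\Phi}\bigl(\Spc(\K)\setminus\supp(Y)\bigr)$ is an open cover of a quasi-compact set, a finite subcover gives $\cZ_\Phi = \supp(X)$ for some $X \in \Phi$ (a finite $\tens$-product of objects of $\Phi$). Finally, replacing each $w_i$ by a suitable $\tens$-power, I may assume $w_1 \tens w_2 = 0$: this is possible because $\supp(w_1\tens w_2) = \cZ_1\cap\cZ_2 = \emptyset$ makes $w_1\tens w_2$ a $\tens$-nilpotent object, while passing to $\tens$-powers does not change $\supp(w_i) = \cZ_i$.

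The heart of the argument, and the only place rigidity is used, is the following decomposition. From $w_1\tens w_2 = 0$ one checks (exactly as in the proof of Lemma~\ref{lemma:concrete_equivalence}, using that $\{a : a\tens w_2 = 0\}$ is a thick $\tens$-ideal) that $\langle w_1\rangle$ and $\langle w_2\rangle$ are $\tens$-orthogonal: $x \tens y = 0$ whenever $x \in \langle w_1\rangle$, $y\in\langle w_2\rangle$. Since $\K$ is rigid, every thick $\tens$-ideal is closed under duals, and then $[\Si^n x, y] \cong [\unit, \Si^{-n}(Dx\tens y)] = 0$, so $\langle w_1\rangle$ and $\langle w_2\rangle$ are $\Hom$-orthogonal in both directions; consequently the thick $\tens$-ideal they generate is exactly $\{x\oplus y : x\in\langle w_1\rangle,\ y\in\langle w_2\rangle\}$ (any map or connecting map between the two pieces vanishes, and $\K$ is idempotent-complete). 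On the other hand $\supp(w_1\oplus w_2) = \cZ_1\cup\cZ_2 = \cZ_\Phi = \supp(X)$, so by Balmer's classification of radical thick $\tens$-ideals (equivalently, $\sqrt{\langle a\rangle} = \{b : \supp(b)\subseteq\supp(a)\}$; see \cite{Balmer_Spectrum}) we get $X^{\tens N} \in \langle w_1\oplus w_2\rangle$ for some $N\ge 1$. Combining, $X^{\tens N} \simeq A \oplus B$ with $A\in\langle w_1\rangle$ and $B\in\langle w_2\rangle$; since $\supp(A)\cup\supp(B) = \supp(X) = \cZ_1\sqcup\cZ_2$ with $\supp(A)\subseteq\cZ_1$ and $\supp(B)\subseteq\cZ_2$, in fact $\supp(A) = \cZ_1$ and $\supp(B) = \cZ_2$, and $A\tens B = 0 = B\tens A$ by orthogonality.

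Finally I would feed this back into $R_\Phi$. Let $e\colon X^{\tens N}\to X^{\tens N}$ be the idempotent with image $A$. Because $A\tens B = 0 = B\tens A$, the endomorphisms $e\tens X^{\tens N}$ and $X^{\tens N}\tens e$ of $X^{\tens 2N}$ both equal the projection onto $A^{\tens 2}$, so $e$ is $\tens$-balanced; as $X^{\tens N}\in\Phi$ this yields a class $[e]\in R_\Phi$, and a short computation with the multiplication of $R_\Phi$ (again using that all ``mixed'' tensors of $A$'s and $B$'s vanish) shows $[e]^2 = [e]$. It is nontrivial: if $[e] = 0$ then $c\tens e = 0$, i.e.\ $c\tens A = 0$, for some $c\in\Phi$, whence $\emptyset = \supp(c)\cap\supp(A) = \supp(A) = \cZ_1$ (since $\supp(c)\supseteq\cZ_\Phi\supseteq\cZ_1$), contradicting $\cZ_1\neq\emptyset$; symmetrically $[e] = 1$ would force $\cZ_2 = \emptyset$. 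Hence $\Spec(R_\Phi) = V([e])\sqcup V(1-[e])$ is disconnected, as required. The step I expect to be the main obstacle is the decomposition in the middle paragraph: extracting a genuine $\oplus$-splitting $X^{\tens N}\simeq A\oplus B$ with vanishing cross-tensors from the merely topological partition of $\supp(X)$, and pinning down that it is precisely rigidity (via the $\Hom$-orthogonality argument) that makes the relevant connecting maps vanish. The remaining steps are bookkeeping running parallel to — and somewhat simpler than — the proof of the converse implication already established.
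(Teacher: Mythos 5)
Your proof is correct, and it reaches the paper's punchline by the same overall strategy---manufacture a non-trivial $\tens$-balanced idempotent in $R_\Phi$ from a disconnection $\cZ_\Phi = \cZ_1 \sqcup \cZ_2$---but the decisive middle step is carried out differently. The paper takes a single generator $a \in \Phi$ with $\supp(a)=\cZ_\Phi$ and immediately invokes the generalized Carlson theorem \cite[Remark~2.12]{Balmer_Supports} to split $a \simeq a_1 \oplus a_2$ with $\supp(a_i)=\cZ_i$, then gets $\tens$-balancedness of the projection $\imatrixtr$ from the Hom-vanishing $[a_i,a_j]=0$ of \cite[Corollary~2.8]{Balmer_Supports}. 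You instead reconstruct such a splitting by hand: choose generators $w_i$ of the Thomason closed pieces $\cZ_i$, prove $\langle w_1\rangle$ and $\langle w_2\rangle$ are $\tens$- and (via duals, using rigidity) Hom-orthogonal, identify $\langle w_1\oplus w_2\rangle$ with the ``direct sum'' subcategory, and then use the support-theoretic classification of (radical) thick $\tens$-ideals to land $X^{\tens N}\in\langle w_1\oplus w_2\rangle$, obtaining $X^{\tens N}\simeq A\oplus B$ with $A\tens B=0=B\tens A$; balancedness of the idempotent then comes from this $\tens$-vanishing rather than Hom-vanishing (arguably the cleaner mechanism, since it is the vanishing of the cross tensor summands that literally makes $e\tens X^{\tens N}=X^{\tens N}\tens e$). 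What the paper's route buys is brevity, outsourcing the splitting to a citation; what yours buys is self-containedness and a transparent accounting of exactly where rigidity enters (duals preserving ideals, disjoint supports forcing vanishing), at the price of re-proving a known decomposition theorem---and note that your parenthetical claim that $\langle w_1\oplus w_2\rangle = \{x\oplus y\}$ quietly needs the standard argument that a direct summand of $x\oplus y$ splits diagonally (vanishing cross-Homs force any idempotent to be diagonal, then apply idempotent-completeness), which you gesture at but should spell out. The surrounding bookkeeping (each $\cZ_i$ Thomason via quasi-compactness of $(\Spc(\K)\setminus\cZ_\Phi)\cup\cZ_2$, the existence of $X\in\Phi$ with $\supp(X)=\cZ_\Phi$, and the non-triviality of $[e]$ using $\supp(c)\supset\cZ_\Phi$ for $c\in\Phi$) coincides with the paper's.
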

\begin{proof}
	By passing to the idempotent completion
	it suffices to prove the result
	under the additional hypothesis that $\K$ is idempotent-complete (cf.~the
	results 
	in the last section and note that
	$\K$ rigid implies that $\K^\natural$ is also rigid \cite[Proposition~2.15(i)]{Balmer_Supports}).
	Suppose $\cZ_\Phi = Y_1 \sqcup Y_2$ is a disjoint union of non-empty closed sets.
	Each $Y_i$ is quasi-compact (being closed) and it follows from the fact that they are disjoint and that $\cZ_\Phi$ is Thomason
	that each $Y_i$ is Thomason.
It also follows from the definition $\cZ_\Phi := \bigcap_{X \in \Phi}\supp(X)$ and the fact that $\Spc(\K) \setminus \cZ_\Phi$ is quasi-compact that $\cZ_\Phi = \supp(a)$ for some $a\in \Phi$.
Since $\K$ is rigid and idempotent-complete, the generalized Carlson theorem \cite[Remark~2.12]{Balmer_Supports} implies that there exist $a_1, a_2 \in \K$ such
	that $a \simeq a_1 \oplus a_2$ and $\supp(a_i) = Y_i$ for $i=1,2$.
	Since $\K$ is rigid and $\supp(a_1) \cap \supp(a_2) = \emptyset$ it follows 
	\cite[Corollary~2.8]{Balmer_Supports} that $[a_i,a_j] =0$ for $i\neq j$.
	This implies that the idempotent
	$f := \left(\begin{smallmatrix}1&0\\0&0\end{smallmatrix}\right)$
	is $\tens$-balanced ($f \tens a = a \tens f$) and hence provides an idempotent
	element $[f]$ of the ring $R_\Phi$.
	If $[f]$ was a trivial idempotent it would follow that
	$b \tens a_1 = 0$ or $b \tens a_2 = 0$ for some $b \in \Phi$.
	But $Y_i = \supp(a_i) = \supp(b \tens a_i)$ since $\supp(a_i) \subset \cZ_\Phi \subset \supp(b)$ and
	so $b \tens a_i =0$ contradicts the fact that $Y_i$ is non-empty.
	We conclude that $[f]$ is a nontrivial
	idempotent of the commutative ring
	$R_\Phi$ and hence that $\Spec(R_\Phi)$ is
	disconnected.  \end{proof}
\begin{remark}
	Example~\ref{example:posets} below will provide some (non-rigid) examples
	of tensor triangulated categories for which the conclusion 
	of Proposition~\ref{proposition:connected_hard} does not hold.
\end{remark}
\begin{remark}
For any (graded-)commutative graded ring $R^\astt$, 
the space $\Spech(R^\astt)$ is connected if and only if $\Spec(R^0)$ is connected, so the graded version of the above statements also holds.
\end{remark}
\begin{lemma}\label{lemma:thomason_killer}
	If $c$ is an object in $\K$ with $\cZ_\Phi \subset \supp(c)$ then there is some $a \in \Phi$ such that $\supp(a) \subset \supp(c)$.
\end{lemma}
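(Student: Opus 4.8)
The plan is to pass to complements and run a quasi-compactness argument, exploiting that $\Phi$ is closed under the $\tens$-product and that $\supp$ converts $\tens$ into intersection.

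First I would recall, via Lemma~\ref{lemma:thomason_subsets}, that since $\supp(c)$ is a Thomason closed subset its complement $U := \Spc(\K)\setminus\supp(c)$ is quasi-compact. The hypothesis $\cZ_\Phi \subset \supp(c)$ then unwinds, using $\cZ_\Phi = \bigcap_{X\in\Phi}\supp(X)$, to the statement $U \cap \bigcap_{X\in\Phi}\supp(X) = \emptyset$; equivalently, the family of open sets $\{\Spc(\K)\setminus\supp(X) : X\in\Phi\}$ covers $U$.

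Next, by quasi-compactness of $U$, finitely many members of this family already cover $U$: there exist $X_1,\ldots,X_n\in\Phi$ with $U\subset\bigcup_{i=1}^n(\Spc(\K)\setminus\supp(X_i))$. (If this finite subcover is empty, i.e.\ $U=\emptyset$, then $\supp(c)=\Spc(\K)$ and any $a\in\Phi$ works since $\Phi\neq\emptyset$; so assume $n\ge 1$.) Put $a := X_1\tens\cdots\tens X_n$, which lies in $\Phi$ because $\Phi$ is $\tens$-multiplicative.

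Finally, using the standard identity $\supp(x\tens y)=\supp(x)\cap\supp(y)$, I obtain $\Spc(\K)\setminus\supp(a)=\bigcup_{i=1}^n(\Spc(\K)\setminus\supp(X_i))\supset U=\Spc(\K)\setminus\supp(c)$, and passing back to complements gives $\supp(a)\subset\supp(c)$, as desired. I do not anticipate a genuine obstacle here: the argument is essentially a single compactness step, and the only points worth flagging are the appeal to quasi-compactness of the complement of a Thomason closed subset (the content of Lemma~\ref{lemma:thomason_subsets}) and the bookkeeping of the degenerate empty-subcover case.
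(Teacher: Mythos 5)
Your argument is correct and is essentially the paper's proof: both use quasi-compactness of $\Spc(\K)\setminus\supp(c)$ (via Lemma~\ref{lemma:thomason_subsets}) to extract finitely many $X_1,\ldots,X_n\in\Phi$ with $\supp(X_1)\cap\cdots\cap\supp(X_n)\subset\supp(c)$ and then take $a:=X_1\tens\cdots\tens X_n\in\Phi$, using $\supp(x\tens y)=\supp(x)\cap\supp(y)$. Your explicit handling of the empty-subcover case is a harmless extra detail the paper leaves implicit.
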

\begin{proof}
	Since $\Spc(\K) \setminus \supp(c)$ is quasi-compact (recall Lemma~\ref{lemma:thomason_subsets}), it follows from
	the definition $\cZ_\Phi := \bigcap_{X\in\Phi} \supp(X)$ that there is a finite collection of objects $X_1, \ldots, X_n \in \Phi$
	such that $\supp(X_1) \cap \cdots \cap \supp(X_n) \subset \supp(c)$ and we can take $a := X_1 \tens \cdots \tens X_n \in \Phi$.
\end{proof}
\begin{proposition}
	\label{proposition:proper_pullbacks}
	Consider the map $\rho_\Phi^\astt : \cZ_\Phi \ra \Spech(R_\Phi^\astt)$ and
	any
	closed set $\cW \subset \Spech(R_\Phi^\astt)$.
	If $(\rho_\Phi^\astt)^{-1}(\cW) = \cZ_\Phi$ then $\cW = \Spech(R_\Phi^\astt)$.
	In other words, the preimage of a proper closed set remains proper.
\end{proposition}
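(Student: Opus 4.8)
The plan is to observe that the statement is precisely the assertion that the image of $\rho_\Phi^\astt$ is dense in $\Spech(R_\Phi^\astt)$, and to prove it in that form. Write $\cW = V(I)$ for a homogeneous ideal $I \subseteq R_\Phi^\astt$. Since $\rho_\Phi^\astt$ is defined on $\cZ_\Phi$, its preimages are automatically contained in $\cZ_\Phi$, so the hypothesis $(\rho_\Phi^\astt)^{-1}(\cW) = \cZ_\Phi$ simply says that $\rho_\Phi^\astt(\P) \supseteq I$ for every $\P \in \cZ_\Phi$. Unwinding the formula $\rho_\Phi^\astt(\P) = \{[f] \in R_\Phi^i \mid \cone(f) \notin \P\}_{i \in \Z}$, this means that every homogeneous element $[f] \in I$ --- say represented by some $f \in E_Y^i$ with $Y \in \Phi$ --- satisfies $\cone(f) \notin \P$ for all $\P \in \cZ_\Phi$, i.e.\ $\cZ_\Phi \subseteq \supp(\cone(f))$. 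As a homogeneous ideal is generated by its homogeneous elements and the nilradical is an ideal, it therefore suffices to prove the following claim: \emph{if $f \in E_Y^i$ with $Y \in \Phi$ and $\cZ_\Phi \subseteq \supp(\cone(f))$, then $[f]$ is nilpotent in $R_\Phi^\astt$.} Granting this, $I$ lies in the nilradical, whence $\cW = V(I) = \Spech(R_\Phi^\astt)$.

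To prove the claim I would first apply Lemma~\ref{lemma:thomason_killer} to the object $\cone(f)$ to obtain $a \in \Phi$ with $\supp(a) \subseteq \supp(\cone(f))$; from the defining triangle of $\cone(f)$ one also has $\supp(\cone(f)) \subseteq \supp(Y)$, so $\supp(a) \subseteq \supp(Y)$. Because $a \tens -$ is exact, $\cone(a \tens f) \simeq a \tens \cone(f)$, and hence $\supp(\cone(a \tens f)) = \supp(a) \cap \supp(\cone(f)) = \supp(a) = \supp(a) \cap \supp(Y) = \supp(a \tens Y)$. Thus the cone of the $\tens$-balanced graded endomorphism $a \tens f \in E_{a \tens Y}^i$ has the same support as the object $a \tens Y$, so exactly as in the proof of Proposition~\ref{proposition:tensor_maps} (Balmer's classification of radical thick $\tens$-ideals, cf.~Lemma~\ref{lemma:Nf}) we get $(a \tens Y)^{\tens k} \in \langle \cone(a \tens f) \rangle$ for some $k \ge 1$, and then the graded form of Lemma~\ref{lemma:Nf} gives $(a \tens Y)^{\tens k} \tens (a \tens f)^{\tens n} = 0$ for some $n \ge 1$. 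Rearranging the tensor factors by the symmetry, this reads $b \tens f^{\tens n} = 0$ with $b := a^{\tens(k+n)} \tens Y^{\tens k} \in \Phi$. Since $(Y^{\tens n}, f^{\tens n}) \sim (b \tens Y^{\tens n}, b \tens f^{\tens n})$ in the construction of $R_\Phi^\astt$, this yields $[f^{\tens n}] = 0$; and since $[f]^n = [f^{\tens n}]$ (by the interchange law, just as for the product on $R_\Phi^\astt$), we conclude $[f]^n = 0$, proving the claim.

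There is no real conceptual obstacle; the work is all in the bookkeeping of the claim. The delicate points are: verifying the support identity $\supp(\cone(a \tens f)) = \supp(a \tens Y)$; being sure that the consequent membership $(a \tens Y)^{\tens k} \in \langle \cone(a \tens f) \rangle$ is legitimately supplied by results already in hand --- it is an instance of exactly the step ``$\supp(X) = \supp(\cone(f)) \Rightarrow X^{\tens k} \in \langle \cone(f)\rangle$'' used in the proof of Proposition~\ref{proposition:tensor_maps}; and making the final passage from $(a \tens Y)^{\tens k} \tens (a \tens f)^{\tens n} = 0$ to $[f]^n = 0$ via the defining relations of $R_\Phi^\astt$ together with $[f]^n = [f^{\tens n}]$. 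As elsewhere when moving between the ungraded and graded constructions, the one thing needing genuine care is tracking the suspension isomorphisms in the manipulations of $f^{\tens n}$, but only the paper's standard abuses of notation are involved.
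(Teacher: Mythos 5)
Your proof is correct and follows essentially the same route as the paper's: reduce to showing every homogeneous element of $I$ is nilpotent, invoke Lemma~\ref{lemma:thomason_killer} to produce $a \in \Phi$ with $\supp(a) \subseteq \supp(\cone(f))$, deduce a tensor-power membership in $\langle \cone(f)\rangle$, and conclude $[f]^n = 0$ via Lemma~\ref{lemma:Nf} together with the defining relations of $R_\Phi^\astt$. Your detour through $a \tens f$ and the support equality $\supp(\cone(a \tens f)) = \supp(a \tens Y)$ is harmless but unnecessary: the paper applies the same support/radical fact directly to $\supp(a) \subseteq \supp(\cone(f))$ to get $a^{\tens i} \in \langle \cone(f)\rangle$, hence $a^{\tens i} \tens f^{\tens j} = 0$ and $[f]^j = [a^{\tens i}\tens f^{\tens j}] = 0$.
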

\begin{proof}
	Let $\mathcal{W} = V(I)$ for some homogeneous ideal $I \subset R_\Phi^\astt$ 
	and let $[f]$ be an arbitrary homogeneous element of $I$.
	It follows from our hypothesis that  
	 $[f] \in \rho_\Phi^\astt(\P)$ for every
	 $\P \in \cZ_\Phi$ and hence that $\cZ_\Phi \subset \supp(\cone(f))$.
	 Lemma~\ref{lemma:thomason_killer} then implies that there is some $a \in \Phi$
	 such that $\supp(a) \subset \supp(\cone(f))$.
	 It follows that $a^{\tens i} \in \langle \cone(f) \rangle$ for some $i \ge 1$ and hence 
	 that $a^{\tens i} \tens f^{\tens j} = 0$ for some $i,j \ge 1$ by Lemma~\ref{lemma:Nf}.
	But then $[f]^j= [f^{\tens j}] = [a^{\tens i}\tens f^{\tens j}] = 0$ so that $[f]$ is a nilpotent element of $R_\Phi$.
	Since every homogeneous element of $I$ is nilpotent, 
	$\cW = V(I) = \Spech(R_\Phi^\astt)$.
\end{proof}
\begin{remark}
	\label{remark:ungraded_version_of_preimages}
	One easily verifies that the ungraded
	maps $\rho_\Phi$ also have the
	property that preimages of proper closed subsets
	remain proper, either by carrying out an
	ungraded version of the above proof, or as a corollary of the above proposition by
	observing that the 
	map $(-)^0 : \Spech(R_\Phi^\astt) \ra \Spec(R_\Phi)$ has this property.
\end{remark}
\begin{corollary}\label{corollary:dense_image}
	The maps $\rho_\Phi^\astt : \cZ_\Phi \ra \Spec(R_\Phi^\astt)$ and
	$\rho_\Phi : \cZ \ra \Spec(R_\Phi)$ have dense images.
	Consequently, if $\cZ_\Phi$ is irreducible then $\Spec(R_\Phi^\astt)$ and $\Spec(R_\Phi)$ are also irreducible.
\end{corollary}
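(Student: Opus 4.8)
The plan is to deduce both assertions directly from Proposition~\ref{proposition:proper_pullbacks} (in the graded case) and Remark~\ref{remark:ungraded_version_of_preimages} (in the ungraded case), which tell us that the preimage of a proper closed subset of the target remains a proper closed subset of the domain. Density is essentially a restatement of this.

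First I would handle density. Set $\cW := \overline{\im \rho_\Phi^\astt} \subset \Spech(R_\Phi^\astt)$, the closure of the image; this is a closed set. Every point of $\cZ_\Phi$ maps into $\im \rho_\Phi^\astt \subset \cW$, so $(\rho_\Phi^\astt)^{-1}(\cW) = \cZ_\Phi$. Proposition~\ref{proposition:proper_pullbacks} then forces $\cW = \Spech(R_\Phi^\astt)$, i.e.\ the image of $\rho_\Phi^\astt$ is dense. The identical argument, invoking Remark~\ref{remark:ungraded_version_of_preimages} in place of Proposition~\ref{proposition:proper_pullbacks}, shows that the image of $\rho_\Phi$ is dense in $\Spec(R_\Phi)$.

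For the consequence, suppose $\cZ_\Phi$ is irreducible. The continuous image of an irreducible space is irreducible, so $\im \rho_\Phi^\astt$ is irreducible; and the closure of an irreducible subset is again irreducible. By the density just established, $\Spech(R_\Phi^\astt) = \overline{\im \rho_\Phi^\astt}$, which is therefore irreducible. The same reasoning applies verbatim to $\rho_\Phi$, giving that $\Spec(R_\Phi)$ is irreducible.

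\textbf{Main obstacle.} There is essentially no obstacle: the corollary is a topological repackaging of ``preimages of proper closed sets remain proper,'' so the only things to be careful about are keeping the graded and ungraded statements separate (each uses the appropriate one of Proposition~\ref{proposition:proper_pullbacks} / Remark~\ref{remark:ungraded_version_of_preimages}) and citing the two elementary facts that continuous images and closures of irreducible sets are irreducible.
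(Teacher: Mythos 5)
Your proof is correct and is precisely the argument the paper intends: the corollary is stated immediately after Proposition~\ref{proposition:proper_pullbacks} and Remark~\ref{remark:ungraded_version_of_preimages}, and density follows by applying them to the closure of the image, with irreducibility then following from the standard facts that continuous images and closures of irreducible sets are irreducible. Nothing further is needed.
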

\begin{lemma}\label{lemma:rho_covers}
If $\K$ is rigid and $[f]$ is a homogeneous non-unit in the ring $R_\Phi^\astt$
then there is 
	some $\P \in \cZ_\Phi$ such that $[f] \in
	\rho_\Phi^\astt(\P)$.
\end{lemma}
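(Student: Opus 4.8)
The plan is to argue by contraposition. Represent the homogeneous element $[f]$ by an endomorphism $f \in E_X^k$ for some $X \in \Phi$, so that $f : \Si^k X \ra X$ satisfies the graded $\tens$-balanced condition \eqref{diagram:graded_weakly_central}; the suspension isomorphisms attached to $f$ will be harmless throughout, since they affect neither the vanishing nor the invertibility of the morphisms we produce. Suppose, contrary to the claim, that $[f] \notin \rho_\Phi^\astt(\P)$ for every $\P \in \cZ_\Phi$. By the definition of $\rho_\Phi^\astt$ this says that $\cone(f) \in \P$ for all $\P \in \cZ_\Phi$ (a condition independent of the chosen representative, as recorded in the proof of Theorem~\ref{theorem:general_ungraded_comparison_map}), i.e.\ that $\cZ_\Phi \cap \supp(\cone(f)) = \emptyset$. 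I would then deduce from this that $[f]$ is a unit in $R_\Phi^\astt$, contradicting the hypothesis.

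First I would extract a finite subcollection by the same quasi-compactness argument used in the proofs of Theorems~\ref{theorem:unnatural_ungraded_comparison_map} and~\ref{theorem:general_ungraded_comparison_map}. Writing $\cZ_\Phi = \bigcap_{Y \in \Phi} \supp(Y)$, the assumption becomes $\bigcap_{Y \in \Phi}\bigl(\supp(Y) \cap \supp(\cone(f))\bigr) = \emptyset$. For each $Y \in \Phi$ the set $\supp(Y) \cap \supp(\cone(f)) = \supp(Y \tens \cone(f))$ is a Thomason closed subset (Lemma~\ref{lemma:thomason_subsets}), so its complement is quasi-compact open; these complements cover the quasi-compact space $\Spc(\K)$, hence finitely many already do. So there are $Y_1,\dots,Y_n \in \Phi$ with $\supp(Y_1 \tens \cdots \tens Y_n \tens \cone(f)) = \emptyset$, and setting $a := Y_1 \tens \cdots \tens Y_n \in \Phi$ gives $\supp(a \tens \cone(f)) = \emptyset$.

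Now the one genuinely non-formal ingredient enters: because $\K$ is rigid, an object with empty support must be zero. (An object $b$ with $\supp(b) = \emptyset$ is $\tens$-nilpotent; and for dualizable $b$, if $b^{\tens 2} = 0$ then the identity of $b$ factors through $b \tens D b \tens b \simeq D b \tens b^{\tens 2} = 0$ and hence vanishes, so $b = 0$, the general nilpotent case reducing to this by repeatedly halving the exponent.) Applying this to $a \tens \cone(f)$ gives $a \tens \cone(f) = 0$. Since $a \tens -$ is exact, $\cone(a \tens f) \simeq a \tens \cone(f) = 0$ (up to a suspension), so $a \tens f$ is a categorical isomorphism; as $a \in \Phi$, this says exactly that $[f]$ is a unit in $R_\Phi^\astt$ (the graded analogue of the fact, noted when $R_\Phi$ was shown to be a ring, that $[f]$ is a unit iff $a \tens f$ is an isomorphism for some $a \in \Phi$). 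This contradiction finishes the argument: some $\P \in \cZ_\Phi$ satisfies $[f] \in \rho_\Phi^\astt(\P)$.

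The main obstacle is really just pinpointing where rigidity is indispensable: without it one cannot pass from $\supp(a \tens \cone(f)) = \emptyset$ to $a \tens \cone(f) = 0$ (a nonzero $\tens$-nilpotent object would allow $[f]$ to remain a non-unit while still lying in $\rho_\Phi^\astt(\P)$ for no $\P \in \cZ_\Phi$), and the conclusion does in fact fail for suitable non-rigid $\K$. Everything else --- the quasi-compactness packaging and the recognition of units of $R_\Phi^\astt$ via $\tens$-ing a representative with some $a \in \Phi$ --- is routine and parallels arguments already given; the only bookkeeping nuisance is carrying the suspension isomorphisms of the homogeneous endomorphism $f$ along, which is immaterial for the vanishing and invertibility statements used.
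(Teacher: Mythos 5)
Your proof is correct and follows essentially the same route as the paper: assume $\cZ_\Phi \cap \supp(\cone(f)) = \emptyset$, use quasi-compactness of $\Spc(\K)$ to find a single $a \in \Phi$ with $\supp(a \tens \cone(f)) = \emptyset$, invoke rigidity to get $a \tens \cone(f) = 0$, and conclude $[f]$ is a unit. The only difference is that you spell out the standard argument that empty support forces vanishing in a rigid category, which the paper leaves implicit.
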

\begin{proof}
	If there were no such $\P$ then $\cZ_\Phi \cap \supp(\cone(f)) = \emptyset$ which
	implies by the quasi-compactness
	of $\Spc(\K)$ that
	$\supp(a) \cap \supp(\cone(f)) = \emptyset$
	for some $a \in \Phi$. 
	But $\supp(a \tens \cone(f)) = \emptyset$ implies by the rigidity of $\K$ that $a \tens \cone(f)= 0$.
	Thus $a \tens f$ is invertible and hence $[f]$ is a unit 
	in $R_\Phi^\astt$.
\end{proof}
\begin{proposition}
Suppose $\K$ is rigid.
	If the map $\rho_\Phi : \cZ_\Phi \ra \Spec(R_\Phi)$ is a constant map then $\Spec(R_\Phi)$ is a point.
	Similarly, if the map $\rho_\Phi^\astt : \cZ_\Phi \ra \Spech(R_\Phi^\astt)$ is a constant map then
	$\Spech(R_\Phi^\astt)$ is a point.
\end{proposition}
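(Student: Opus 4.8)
The plan is to pin down the (unique) value $\frakp_0 \in \Spec(R_\Phi)$ of the constant map $\rho_\Phi$ from both sides: I will argue that $\frakp_0$ is simultaneously the unique minimal prime and the unique maximal ideal of $R_\Phi$, which forces $\Spec(R_\Phi)=\{\frakp_0\}$ since every prime of any ring lies between some minimal prime and some maximal ideal. (We may assume $\cZ_\Phi\neq\emptyset$, so that there is a value $\frakp_0$ to speak of; otherwise $R_\Phi=0$.) For minimality: since $\rho_\Phi$ is constant its image is the single point $\{\frakp_0\}$, which by Corollary~\ref{corollary:dense_image} is dense in $\Spec(R_\Phi)$; hence $V(\frakp_0)=\closure{\{\frakp_0\}}=\Spec(R_\Phi)$, i.e.\ $\frakp_0\subseteq\frakq$ for every $\frakq\in\Spec(R_\Phi)$, so $\frakp_0$ is the unique minimal prime (the nilradical of $R_\Phi$).

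For maximality --- which is where rigidity enters --- I would first record the ungraded analogue of Lemma~\ref{lemma:rho_covers}: in a rigid category every non-unit $[f]\in R_\Phi$ lies in $\rho_\Phi(\P)$ for some $\P\in\cZ_\Phi$. This follows by repeating the proof of Lemma~\ref{lemma:rho_covers} verbatim: if there were no such $\P$ then $\cZ_\Phi\cap\supp(\cone(f))=\emptyset$, so by quasi-compactness of $\Spc(\K)$ some $a\in\Phi$ has $\supp(a\tens\cone(f))=\emptyset$, whence $a\tens\cone(f)=0$ by rigidity, so $a\tens f$ is invertible and $[f]=[a\tens f]$ is a unit. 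Granting this, constancy of $\rho_\Phi$ forces every non-unit of $R_\Phi$ into $\frakp_0$; since $\frakp_0$ is a proper ideal it contains no units, so $\frakp_0$ is exactly the set of non-units of $R_\Phi$. In particular that set is an ideal, so $R_\Phi$ is local with maximal ideal $\frakp_0$.

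Combining the two steps, every $\frakq\in\Spec(R_\Phi)$ satisfies $\frakp_0\subseteq\frakq\subseteq\frakp_0$, so $\frakq=\frakp_0$ and $\Spec(R_\Phi)$ is a point. The main obstacle is really only the maximality step, and it is mild: the point is to recognise that rigidity is precisely what guarantees that the image of $\rho_\Phi$ meets every ``proper'' part of $\Spec(R_\Phi)$ (covering all non-units), so that the proposition comes down to the observation that density squeezes $\frakp_0$ from below while this covering property squeezes it from above, neither being enough on its own. The graded statement is proved in the same way --- using Lemma~\ref{lemma:rho_covers} directly for the homogeneous non-units --- together with the routine fact that a gr-local graded-commutative graded ring has a unique maximal homogeneous ideal containing every proper homogeneous ideal, so every point of $\Spech(R_\Phi^\astt)$ is caught between the minimal and maximal homogeneous ideals, both of which equal the constant value of $\rho_\Phi^\astt$.
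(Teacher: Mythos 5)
Your proposal is correct and follows essentially the same route as the paper: both use Corollary~\ref{corollary:dense_image} to make the constant value a generic point and the covering property of Lemma~\ref{lemma:rho_covers} (rigidity) to make it the unique maximal ideal, forcing the spectrum to be a single point. Your explicit ungraded restatement of Lemma~\ref{lemma:rho_covers} and the remark about $\cZ_\Phi\neq\emptyset$ are just minor bookkeeping the paper leaves implicit.
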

\begin{proof}
	Suppose $\p$ is a prime in $R_\Phi$ such that $\rho_\Phi(\P) = \p$ for all
	$\P \in \cZ_\Phi$. Lemma~\ref{lemma:rho_covers} then implies that $\p$
	contains every non-unit of $R_\Phi$.
	It follows that $R_\Phi$ is local with $\p$ its unique maximal ideal. 
On the other hand, Corollary~\ref{corollary:dense_image} implies that $\{\p\}$ is dense, and so
$\{\p\} =
	\closure{\{\p\}} = \Spec(R_\Phi)$.
	An identical argument works in the graded case.
\end{proof}
\begin{remark}
	The theorems proved in this section are new even in the case of Balmer's original map $\rho_\unit$.
	For example, Corollary~\ref{corollary:dense_image} implies that $\rho_\unit$ always has dense image without any assumptions on the
	category $\K$.
	This result is particularly interesting in light of the surjectivity criteria for $\rho_\unit$ established in 
	\cite{Balmer_SSS}.
\end{remark}
\begin{remark}
	One strategy for studying the spectrum is to iteratively build a filtration of closed subsets by pulling back filtrations via our closed set comparison maps.
	More precisely, we begin with the trivial filtration $\{\Spc(\K)\}$ and at each iterative step we consider every closed set $\cZ$ in the filtration
	(or only those that were newly added at the last step) and refine the filtration below $\cZ$ by 
	pulling back the filtration of closed subsets of $\Spech(R_\cZ^\astt)$ via the map
	$\rho_\cZ^\astt : \cZ \ra \Spech(R_\cZ^\astt)$.
A key result for this idea is Proposition~\ref{proposition:proper_pullbacks}
which asserts that proper closed subsets of $\Spech(R_\cZ^\astt)$ pull back to proper
subsets of $\cZ$. This implies that the process continues to refine the spectrum for as long as the spaces $\Spech(R_\cZ^\astt)$ are non-trivial.
However, an obstacle is the possibility that 
$\Spech(R_\cZ^\astt)$ might be trivial for some non-trivial closed set $\cZ$, in which case the internal structure of $\cZ$ would remain hidden.
In fact, there are examples of tensor triangulated categories for which it seems the process may hit the wall at the very first step (cf.~Example~\ref{example:posets} below).
Nevertheless, these examples are non-rigid and there is some hope that under suitable hypotheses 
this obstacle might disappear.
\end{remark}
\begin{example}\label{example:posets}
Let $P$ be a finite poset, $k$ a field, and let $\K := D^b(\rep_k(P))$ be the
derived category of finite-dimensional $k$-linear representations of $P$.  The
abelian category $\rep_k(P)$ has an exact vertex-wise $\tens$-structure and
$\K$ inherits the structure of a tensor triangulated category.  Recognizing
that representations of $P$ are the same thing as quiver representations of the
associated Hasse diagram with full commutativity relations, one sees that the
work of \cite{sierra_preprint} completely describes the spectrum $\Spc(\K)$.
It turns out to be rather trivial: a discrete space with points corresponding
to the elements of $P$.  For a representation $V$ regarded as a complex
concentrated in degree $0$, $\supp(V) = \{x \in P \mid V_x \neq 0\}$.  If $P$
has a least element then $\unit$ is projective (so $[\unit,\unit]_i = 0$ for $i
\neq 0$) and $[\unit,\unit]\simeq k$ by inspection.  There are thus many
examples where $\Spech(R_\unit^\astt)$ is trivial (in particular, connected)
but $\Spc(\K)$ is disconnected.  This doesn't contradict
Proposition~\ref{proposition:connected_hard} because these examples of derived
quiver representations are not rigid.  For example, consider the simplest
non-trivial example: $P = (1 \ra 2)$.  Let $S_1 = (k\ra 0)$ and $S_2 = (0\ra
k)$ be the two simple representations.  There is an obvious exact triangle $S_2
\ra \unit \ra S_1 \ra \Si S_2$.  If~$\K$ were rigid then the fact that
$\supp(S_1) \cap \supp(\Si S_2) = \supp(S_1) \cap \supp(S_2) = \emptyset$
implies that $[S_1,\Si S_2]=0$, and hence that the exact triangle splits:
$\unit \simeq S_1 \oplus S_2$ which is evidently not the case.
\end{example}

\section{Triangular localization}
An important feature of the original unit comparison maps developed in
\cite{Balmer_SSS} is that it is possible to localize the category $\K$ with
respect to primes in the ring $[\unit,\unit]$, which enables one to reduce
questions about the unit comparison maps to the case where the target ring is
local.  Fortunately, one may establish such a localization technique for our
more general comparison maps.
\begin{theorem}
	\label{theorem:graded_localization}
	Let $\K$ be a tensor triangulated category,
	$\Phi \subset \K$ a non-empty set of objects closed under the $\tens$-product,
	$S \subset R_{\K,\Phi}^\astt$ a multiplicative set of even (hence central) homogeneous elements, and 
	$q:\K \ra \K/\J$ the Verdier quotient of $\K$ with
	respect to the thick $\tens$-ideal $\J := \langle\cone(s)
		\mid [s] \in S \rangle$.
	Then $R_{\K/\J,\Phi}^\astt$ is isomorphic to the ring-theoretic localization $S^{-1}(R_{\K,\Phi}^\astt)$ 
	and we have a diagram
\begin{equation}\label{diagram:graded_localization}
	\begin{gathered}
		\xymatrix @C=1em {	
\cZ_{\K/\J,\Phi} \ar[d]^{\rho^\astt_{\K/\J,\Phi}} \ar@{^{(}->}[rrr] &&& \cZ_{\K,\Phi} \ar[d]^{\rho^\astt_{\K,\Phi}}\\
			\Spech(R_{\K/\J,\Phi}^\astt) \ar@{}[r]|-*[@]{=} &\Spech(S^{-1}R_{\K,\Phi}^\astt) \ar@{^{(}->}[rr] && \Spech(R_{\K,\Phi}^\astt).
		}
	\end{gathered}
	\end{equation}
	that is commutative and cartesian: $\cZ_{\K/\J,\Phi} \cong \{ \P \in \cZ_{\K,\Phi} \mid \rho_{\K,\Phi}^\astt(\P) \cap S = \emptyset \}$.
\end{theorem}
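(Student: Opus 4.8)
This result is the common generalization of Balmer's central localization \cite[\S 3]{Balmer_SSS}, and the plan is to run the same template with $R_{\K,\Phi}^{\astt}$ in place of $[\unit,\unit]_{\astt}$. After enlarging $S$ so that $1\in S$ (which alters neither $S^{-1}R_{\K,\Phi}^{\astt}$ nor $\J$), there are two things to establish: the ring isomorphism $R_{\K/\J,\Phi}^{\astt}\cong S^{-1}R_{\K,\Phi}^{\astt}$, compatible with the homomorphism induced by $q$; and then the commutativity and cartesianness of the square in \eqref{diagram:graded_localization}, which I expect to be formal once the ring isomorphism is available. All the real work is in the ring isomorphism.

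First I would construct the comparison map. By the graded form of Proposition~\ref{proposition:general_induced}, $q\colon\K\to\K/\J$ induces $q_{\ast}\colon R_{\K,\Phi}^{\astt}\to R_{\K/\J,\Phi}^{\astt}$, and $q_{\ast}$ inverts $S$: for $[s]\in S$ with representative $s$, the relation $\cone(s)\in\J$ gives $q(\cone(s))=\cone(q(s))=0$, so $q(s)$ is a categorical isomorphism and hence $q_{\ast}([s])=[q(s)]$ is a unit by Proposition~\ref{proposition:graded_ring_thing}. This produces $\psi\colon S^{-1}R_{\K,\Phi}^{\astt}\to R_{\K/\J,\Phi}^{\astt}$, and it remains to see that $\psi$ is bijective. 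For injectivity, suppose $[f]\in R_{\K,\Phi}^{\astt}$ (with $f\in E_{X}^{\astt}$, $X\in\Phi$) is killed by $q_{\ast}$; then $q(b\tens f)=0$ in $\K/\J$ for some $b\in\Phi$, so $b\tens f$ factors through an object $J'$ of $\J$. Since $\J$ is the thick $\tens$-ideal generated by cones of finitely many representatives of elements of $S$, whose tensor product $t$ represents an element $[t]\in S$, the collection $\{a\in\K\mid a\tens t^{\tens n}=0\text{ for some }n\ge 1\}$ is a thick $\tens$-ideal containing those cones (by Lemma~\ref{lemma:killscone}, exactly as in Lemma~\ref{lemma:Nf}), so $J'\tens t^{\tens N}=0$ for some $N$; a short diagram chase then gives $(b\tens f)\tens t^{\tens N}=0$ and hence $[t]^{N}[f]=0$ in $R_{\K,\Phi}^{\astt}$, i.e. $[f]=0$ in $S^{-1}R_{\K,\Phi}^{\astt}$.

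Surjectivity is the heart of the matter and the step I expect to be the main obstacle. Given $[g]\in R_{\K/\J,\Phi}^{\astt}$ with $g\in E_{q(X)}^{\astt}$, $X\in\Phi$, represent $g$ by a roof $\Si^{k}X\xleftarrow{\sigma}W\xrightarrow{\gamma}X$ in $\K$ with $\cone(\sigma)\in\J$, and choose as before a representative $t$ of some $[t]\in S$, a homogeneous endomorphism of some $T\in\Phi$, with $\cone(\sigma)\tens t^{\tens n}=0$. Applying $-\tens t^{\tens n}$ to the tensored exact triangle of $\sigma$, the map-of-triangles argument from the proof of Lemma~\ref{lemma:killscone} shows that $\Si^{k}X\tens t^{\tens n}$ factors through $\sigma\tens T^{\tens n}$; since $\sigma\tens T^{\tens n}$ and $t^{\tens n}$ become invertible in $\K/\J$, this forces $g\tens q(T^{\tens n})=q(\delta)\circ\theta^{-1}$ for a genuine $\K$-morphism $\delta$ between objects of $\Phi$ and $\theta:=q(X)\tens q(s^{\tens n})$ with $[\theta]=q_{\ast}([s]^{n})$. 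Hence $[g]=[g\tens q(T^{\tens n})]=[q(\delta)]\cdot q_{\ast}([s])^{-n}$, and it suffices to show $[q(\delta)]\in\im\psi$. The obstacle is that $\delta$ need not be $\tens$-balanced in $\K$; but $q(\delta)=(g\tens q(T^{\tens n}))\circ\theta$ \emph{is} $\tens$-balanced in $\K/\J$ (a composite of $\tens$-balanced endomorphisms, using Corollary~\ref{corollary:tensor_Ex}), so $\delta\tens(X\tens T^{\tens n})\tens t^{\tens N}=(X\tens T^{\tens n})\tens\delta\tens t^{\tens N}$ for $N$ large enough (the difference vanishes in $\K/\J$, hence factors through $\J$, hence is annihilated by $-\tens t^{\tens N}$ after possibly enlarging $t$). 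By Lemma~\ref{lemma:twisting_lemma} this says that $\delta':=\delta\tens t^{\tens N}$ \emph{is} a $\tens$-balanced endomorphism of $X\tens T^{\tens(n+N)}\in\Phi$, and since $[q(\delta')]=[q(\delta)]\cdot q_{\ast}([t])^{N}$ we obtain $[q(\delta)]=q_{\ast}([\delta'])\cdot q_{\ast}([t])^{-N}=\psi([\delta']/[t]^{N})$, whence $[g]\in\im\psi$. The points that require genuine care are: keeping every object that appears inside $\Phi$ (handled because we only ever tensor with representatives of elements of $S$, which lie in $\Phi$, and because $R_{\K,\Phi}^{\astt}$ is built from the $E^{\astt}$ of objects of $\Phi$); recovering the $\tens$-balancedness of the numerator as above; and carrying the graded suspension-isomorphism bookkeeping throughout, precisely the extra care that separates Theorem~\ref{theorem:graded_comparison_map} from Theorem~\ref{theorem:unnatural_ungraded_comparison_map}.

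Finally, the commutative–cartesian statement is formal. By Balmer's description of quotient spectra \cite[Proposition~3.11]{Balmer_Spectrum}, $q^{\ast}\colon\Spc(\K/\J)\hookrightarrow\Spc(\K)$ is a homeomorphism onto $\{\P\mid\J\subseteq\P\}$, under which $\supp_{\K/\J}(q(X))$ corresponds to $\supp_{\K}(X)\cap\{\P\mid\J\subseteq\P\}$; intersecting over $X\in\Phi$ gives $\cZ_{\K/\J,\Phi}\cong\cZ_{\K,\Phi}\cap\{\P\mid\J\subseteq\P\}$. For $\P\in\cZ_{\K,\Phi}$ the containment $\J\subseteq\P$ is equivalent to $\cone(s)\in\P$ for every $[s]\in S$ (it is enough that $\P$ contain the generators of $\J$, and by the representative-independence recorded at the start of the proof of Theorem~\ref{theorem:general_ungraded_comparison_map} this condition does not depend on choices), i.e. to $\rho_{\K,\Phi}^{\astt}(\P)\cap S=\emptyset$; this is exactly the set underlying the canonical inclusion $\Spech(S^{-1}R_{\K,\Phi}^{\astt})\hookrightarrow\Spech(R_{\K,\Phi}^{\astt})$, and $q^{\ast}$ matches the subspace topologies. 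Since $\cone(q(f))=q(\cone(f))$ and composing with the invertible $q(s)$ leaves supports unchanged, unwinding $\rho_{\K/\J,\Phi}^{\astt}(\P/\J)=\{[q(f)]\mid\cone(q(f))\notin\P/\J\}$ shows it corresponds, via $\psi$, to the localized prime $S^{-1}(\rho_{\K,\Phi}^{\astt}(\P))$, i.e. to the point of $\Spech(S^{-1}R_{\K,\Phi}^{\astt})$ lying over $\rho_{\K,\Phi}^{\astt}(\P)$; this gives commutativity, and the square is cartesian because its bottom arrow is a subspace inclusion and the fibre product of spectral spaces computes to $\{\P\in\cZ_{\K,\Phi}\mid\rho_{\K,\Phi}^{\astt}(\P)\cap S=\emptyset\}=\cZ_{\K/\J,\Phi}$.
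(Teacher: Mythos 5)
Your proposal is correct and follows essentially the same route as the paper's: factor the canonical map through $S^{-1}R_{\K,\Phi}^\astt$, prove injectivity by annihilating $\J$-objects with a representative of an element of $S$ (the content of Lemma~\ref{lemma:localization_nilpotence}), and prove surjectivity by clearing the denominator via the weak-kernel trick and then restoring $\tens$-balancedness by tensoring with a further representative of $S$ and invoking Lemma~\ref{lemma:twisting_lemma}. The differences are only organizational---you use roofs and the factor-through-$\J$ characterization where the paper runs an explicit left-fraction calculus through Lemmas~\ref{lemma:localization_trick} and~\ref{lemma:twooutofthreeinvertible}, and you additionally spell out the topological commutative/cartesian part that the paper treats as routine.
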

\begin{remark}
If $S \subset R^0$ then
$(S^{-1}R^\astt)^0 = S^{-1}R^0$ and one
readily verifies that 
applying $(-)^0$ to the bottom row of
\eqref{diagram:graded_localization}
yields a commutative, cartesian diagram
\begin{equation}\label{diagram:ungraded_localization}
		\begin{gathered}
		\xymatrix @C=1em {	
\cZ_{\K/\J,\Phi} \ar[d]^{\rho_{\K/\J,\Phi}} \ar@{^{(}->}[rrr] &&& \cZ_{\K,\Phi} \ar[d]^{\rho_{\K,\Phi}}\\
			\Spech(R_{\K/\J,\Phi}) \ar@{}[r]|-*[@]{=} &\Spech(S^{-1}R_{\K,\Phi}) \ar@{^{(}->}[rr] && \Spech(R_{\K,\Phi}).
		}
	\end{gathered}
	\end{equation}
This gives the ungraded version of our localization result.
\end{remark}
The remainder of this section is devoted to proving the theorem.
For purposes of clarity we will prove the ungraded version---the
graded result stated in 
Theorem~\ref{theorem:graded_localization} is proved in the
same way but the
notation gets more cumbersome.
Thus, for the rest of the section we fix a multiplicative set $S \subset R_{\K,\Phi}$.
For an element $[s] \in S$, we'll use the notation $X_s$ to indicate that the representative $s$ is an endomorphism of $X_s$.
For morphisms in $\K/\J$ we'll use the left fractions 
of
\cite[Section~3]{Krause_Localization}.
It is immediate from the definition of $\J$ that the canonical map
$R_{\K,\Phi} \ra R_{\K/J,q(\Phi)}$ factors as
\[ \xymatrix@1{ R_{\K,\Phi} \ar[r]^-{\eps} & S^{-1}(R_{\K,\Phi}) \ar[r]^-{i} & R_{\K/\J,q(\Phi)} }\]
where $\eps$ is the canonical localization map.
In order to show that $i$ is an isomorphism we will need the following three lemmas:
\begin{lemma}
	\label{lemma:localization_nilpotence}
	If $a \in \J$ then there is a representative $s$ of an element $[s] \in S$ with
	the property that $a \tens s = 0$.
\end{lemma}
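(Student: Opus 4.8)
The plan is to prove the (a priori stronger) statement that the full subcategory
\[
	\mathcal{N} := \{\, a \in \K \mid a \tens s = 0 \text{ for some } X \in \Phi \text{ and } s \in E_X \text{ with } [s] \in S \,\}
\]
is a thick $\tens$-ideal of $\K$ that contains the object $\cone(s)$ for every $[s]\in S$. Since $\J = \langle \cone(s)\mid [s]\in S\rangle$ is by definition the smallest thick $\tens$-ideal containing those objects, this forces $\J \subseteq \mathcal{N}$, which is precisely the assertion of the lemma (a ``representative $s$ of $[s]\in S$'' meaning an endomorphism $s\in E_X$ with $X\in\Phi$ and $[s]\in S$).

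First I would check the generators. Given $[s]\in S$ with a representative $s\in E_X$, $X\in\Phi$, Lemma~\ref{lemma:killscone} gives $s^{\tens 2}\tens\cone(s)=0$, hence $\cone(s)\tens s^{\tens 2}=0$ after transposing the two $\tens$-factors by the symmetry; as $s^{\tens 2}\in E_{X^{\tens 2}}$, $X^{\tens 2}\in\Phi$, and $[s^{\tens 2}]=[s]^2\in S$ since $S$ is multiplicative, this places $\cone(s)$ in $\mathcal{N}$. Next I would verify that $\mathcal{N}$ is a thick $\tens$-ideal. Three of the four conditions are routine bookkeeping with the symmetry and suspension isomorphisms: $0\in\mathcal{N}$ since the zero object is $\tens$-absorbing and $S\neq\emptyset$; $\mathcal{N}$ is closed under $\oplus$-summands because $-\tens X$ is additive; it is closed under $\Si^{\pm 1}$ by \eqref{suspension_isomorphisms}; and if $a\tens s=0$ then for any $b\in\K$ one has $(a\tens b)\tens s\cong(a\tens s)\tens b=0$ by shuffling $b$ out of the way with the symmetry, so $\mathcal{N}$ is a $\tens$-ideal.

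The substantive point is closure under cones, and this is where the multiplicativity of $S$ is essential. Given an exact triangle $a\ra b\ra c\ra\Si a$ with $a,b\in\mathcal{N}$, I would choose representatives $s_1\in E_{X_1}$ of $[s_1]\in S$ with $a\tens s_1=0$ and $s_2\in E_{X_2}$ of $[s_2]\in S$ with $b\tens s_2=0$, put $Y:=X_1\tens X_2\in\Phi$, and set $t:=(s_1\tens X_2)\circ(X_1\tens s_2)\in E_Y$, which represents $[s_1]\cdot[s_2]\in S$ (using Corollary~\ref{corollary:tensor_Ex} and the definition of multiplication in $R_\Phi$). A short manipulation with functoriality and the symmetry shows $a\tens t=0$ and $b\tens t=0$. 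Now the exact triangle $a\tens Y\ra b\tens Y\ra c\tens Y\ra\Si(a\tens Y)$ carries the endomorphism $(a\tens t,\,b\tens t,\,c\tens t)$, two of whose three components vanish. The standard triangulated-category fact that in such a situation the square of the third component is zero — proved exactly as in Lemma~\ref{lemma:killscone}, since the third map $w$ then factors both through the connecting morphism $\partial$ and through the preceding morphism, so that $w^2$ factors through $\partial\circ w=0$ — gives $(c\tens t)^2=c\tens t^2=0$. As $t^2\in E_Y$ and, $t$ being $\tens$-balanced, $[t^2]=[t]^2\in S$, we conclude $c\in\mathcal{N}$; the remaining two-out-of-three configurations follow by rotating the triangle and using closure under $\Si^{\pm 1}$. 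The main obstacle is exactly this cone-closure step: one must manufacture a single element of $S$ annihilating two of the three vertices of a triangle — which is what multiplicativity buys — and then invoke the nilpotence lemma for the induced endomorphism on the third vertex; the rest is formal.
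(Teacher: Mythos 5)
Your proposal is correct and is essentially the paper's own argument: the paper likewise shows that the class of objects annihilated by (a tensor power of) a representative of an element of $S$ is a thick $\tens$-ideal containing each $\cone(s)$ via Lemma~\ref{lemma:killscone}, hence containing $\J$, with multiplicativity of $S$ keeping the relevant classes inside $S$. You have merely spelled out the ``standard techniques'' the paper cites (in particular the two-out-of-three step using the product $t=[s_1]\cdot[s_2]$ and the square-zero trick), which is exactly the intended verification.
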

\begin{proof}
	One verifies using standard techniques that the collection of objects $a \in \K$ for which there is a representative $s$ of an element $[s]\in S$ such that $a \tens s^{\tens n} = 0$ for some $n\ge 1$ forms a thick $\tens$-ideal of $\K$.
	It contains each $\cone(s)$ by Lemma~\ref{lemma:killscone} and hence it contains $\J$.
	Since $S$ is multiplicative, $[s^{\tens n}] = [s]^n \in S$.
\end{proof}
\begin{lemma}
	\label{lemma:localization_trick}
	If $f:a \ra b$ and $g: c\ra d$ are morphisms such that
	$\cone(f) \tens g = 0$
	then there exists a morphism
	$u: b \tens c \ra a\tens d$ such that $a \tens g = u\circ(f\tens c)$
	and a morphism $v : b \tens c \ra a \tens d$ such that
	$b \tens g = (f \tens d)\circ v$.
\end{lemma}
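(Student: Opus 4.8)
The plan is to obtain each of the two factorizations from one of the standard ``weak (co)kernel'' properties of an exact triangle, after applying the exact functors $-\tens c$ and $-\tens d$ to the defining triangle of $\cone(f)$ and feeding in the hypothesis that $\cone(f)\tens g=0$.

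First I would fix an exact triangle $\ftri{a}{b}{\cone(f)}{f}{f'}{f''}$. Applying the exact functor $-\tens c$ yields an exact triangle $\ftri{a\tens c}{b\tens c}{\cone(f)\tens c}{f\tens c}{f'\tens c}{f''\tens c}$, where I suppress the suspension isomorphism $\Si a\tens c\simeq\Si(a\tens c)$ throughout. Recall that a morphism out of $a\tens c$ factors through $f\tens c$ if and only if its precomposition with $\Si^{-1}(f''\tens c)\colon\Si^{-1}(\cone(f)\tens c)\to a\tens c$ vanishes (signs being irrelevant for a vanishing statement). Applied to the morphism $a\tens g\colon a\tens c\to a\tens d$, the interchange law for the bifunctor $\tens$ rewrites $(a\tens g)\circ\Si^{-1}(f''\tens c)$ as $(\Si^{-1}f'')\tens g=\big((\Si^{-1}f'')\tens d\big)\circ\big((\Si^{-1}\cone(f))\tens g\big)$, which vanishes because $(\Si^{-1}\cone(f))\tens g=\Si^{-1}(\cone(f)\tens g)=0$. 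Hence $a\tens g$ factors through $f\tens c$, i.e.\ there is $u\colon b\tens c\to a\tens d$ with $a\tens g=u\circ(f\tens c)$.

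For the morphism $v$, I would instead apply $-\tens d$ to the triangle and use the dual property: a morphism into $b\tens d$ factors through $f\tens d$ if and only if its postcomposition with $f'\tens d$ vanishes. Applied to $b\tens g\colon b\tens c\to b\tens d$, the interchange law rewrites $(f'\tens d)\circ(b\tens g)$ as $f'\tens g=(\cone(f)\tens g)\circ(f'\tens c)=0$. Hence $b\tens g$ factors through $f\tens d$, i.e.\ there is $v\colon b\tens c\to a\tens d$ with $b\tens g=(f\tens d)\circ v$.

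The only delicate point---and the closest thing to an obstacle in an otherwise formal argument---is keeping the suspension isomorphisms of \eqref{suspension_isomorphisms} straight: one must check that identifying $\Si^{-1}(\cone(f)\tens c)$ with $(\Si^{-1}\cone(f))\tens c$ is compatible with the above uses of the interchange law, so that the composites really do reduce to $(\Si^{-1}f'')\tens g$ and $f'\tens g$. This is routine but should be written out. No hypothesis beyond $\cone(f)\tens g=0$ and the exactness of $-\tens c$ and $-\tens d$ is needed; in particular no rigidity or idempotent-completeness enters.
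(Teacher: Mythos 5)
Your proof is correct and is essentially the paper's argument: your interchange-law identities are exactly the commuting squares of the morphism of exact triangles $(a\tens g,\ b\tens g,\ \cone(f)\tens g=0)$ comparing the triangles obtained by tensoring with $c$ and with $d$, and $u$ and $v$ then come from the same weak cokernel/kernel properties the paper invokes. The suspension-isomorphism bookkeeping you flag is the only delicate point and affects nothing, since only vanishing of the relevant composites is needed.
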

\begin{proof}
	The morphisms $u$ and $v$ are obtained from the morphism of exact triangles
\[\xymatrix{
		a\tens c \ar[rr]^{f\tens c} \ar[d]_{a\tens g} && b \tens c \ar@{.>}@<-0.15em>[dll]_{u}\ar@{.>}@<0.15em>[dll]^{v} \ar[d]^{b \tens g} \ar[r] & \cone(f) \tens c \ar[d]^{\cone(f)\tens g=0} \ar[r] & \Si(a \tens c) \ar[d]^{\Si(a \tens g)}\\
		a\tens d \ar[rr]_{f\tens d} && b \tens d \ar[r] & \cone(f)\tens d \ar[r] & \Si(a \tens d)
	}\]
by the weak kernel and cokernel properties of exact triangles.
\end{proof}
\begin{lemma}
	\label{lemma:twooutofthreeinvertible}
	Let $f : a \ra a$ be an endomorphism in a tensor triangulated category.
	If there exists a $\tens$-balanced automorphism $\sigma : a \ra a$ such that
	$f \circ \sigma$ is $\tens$-balanced then $f$ is $\tens$-balanced.
\end{lemma}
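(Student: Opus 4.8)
The plan is to reduce the statement to two elementary facts about the functors $-\tens a$ and $a\tens -$: that they respect composition and that they preserve isomorphisms. First I would translate all three hypotheses into equalities of morphisms $a\tens a\ra a\tens a$. Recall that $\sigma\in E_a$ means exactly $\sigma\tens a = a\tens\sigma$; since $\sigma$ is an automorphism and $-\tens a$ sends isomorphisms to isomorphisms, this common morphism $s:=\sigma\tens a = a\tens\sigma$ is an \emph{automorphism} of $a\tens a$. This is where the hypotheses ``$\sigma$ balanced'' and ``$\sigma$ an automorphism'' get used.

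Next I would expand the hypothesis that $f\circ\sigma$ is $\tens$-balanced. Functoriality of $-\tens a$ gives $(f\circ\sigma)\tens a = (f\tens a)\circ(\sigma\tens a)$, and functoriality of $a\tens -$ gives $a\tens(f\circ\sigma) = (a\tens f)\circ(a\tens\sigma)$. Hence the defining equation $(f\circ\sigma)\tens a = a\tens(f\circ\sigma)$ becomes $(f\tens a)\circ s = (a\tens f)\circ s$. Cancelling the automorphism $s$ on the right yields $f\tens a = a\tens f$, i.e., $f\in E_a$, which is the assertion.

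I do not expect any genuine obstacle here; the entire content is the cancellation of an invertible morphism, and the only thing requiring a little care is bookkeeping of which factor is being tensored on which side. If one later wants the analogue for a graded endomorphism $f:\Si^k a\ra a$, the suspension isomorphisms appearing in \eqref{diagram:graded_weakly_central} would have to be carried along, but for the ungraded statement above they play no role.
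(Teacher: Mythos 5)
Your proof is correct and is essentially the paper's argument: the paper simply packages the same cancellation by noting that $E_a$ is an inverse-closed subring of $[a,a]$, so $\sigma^{-1}\in E_a$ and $f=(f\circ\sigma)\circ\sigma^{-1}\in E_a$, which is exactly your right-cancellation of the automorphism $\sigma\tens a = a\tens\sigma$ written multiplicatively.
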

\begin{proof}
	This is easily verified from the definitions recalling that $E_a$ is an inverse closed subring of $[a,a]$.
\end{proof}
\begin{proposition}
	The map $i: S^{-1}(R_{\K,\Phi}) \ra R_{\K/J,q(\Phi)}$ is injective.
\end{proposition}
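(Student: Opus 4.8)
The plan is to deduce the injectivity of $i$ from a statement about the kernel of the canonical map $R_{\K,\Phi}\to R_{\K/\J,q(\Phi)}$, which factors as $i\circ\eps$. Every element of $S^{-1}R_{\K,\Phi}$ can be written $\eps([f])\,\eps([s])^{-1}$ with $[f]\in R_{\K,\Phi}$ and $[s]\in S$, and $i(\eps([s]))$ is a unit in $R_{\K/\J,q(\Phi)}$ since $\eps([s])$ is a unit in $S^{-1}R_{\K,\Phi}$ and $i$ is a ring homomorphism; hence $i\big(\eps([f])\,\eps([s])^{-1}\big)=0$ if and only if $i(\eps([f]))=0$. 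So it suffices to prove the following: if $[f]\in R_{\K,\Phi}$ maps to $0$ under the canonical map $R_{\K,\Phi}\to R_{\K/\J,q(\Phi)}$, then there is an $[s]\in S$ with $[s]\cdot[f]=0$ in $R_{\K,\Phi}$ (equivalently, $\eps([f])=0$).

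So fix such an $[f]$, represented by $f\in E_X$ with $X\in\Phi$, whose image $[q(f)]$ vanishes in $R_{\K/\J,q(\Phi)}$. By the description of the zero class in $R_{\K/\J,q(\Phi)}$ there is an object $a\in\Phi$ with $q(a\tens f)=0$ in $\K/\J$. Using the calculus of left fractions for $\K/\J$, this means there is a morphism $t\colon a\tens X\to W$ in $\K$ with $\cone(t)\in\J$ and $t\circ(a\tens f)=0$. Since $\cone(t)\in\J$, Lemma~\ref{lemma:localization_nilpotence} furnishes a representative $s\in E_{X_s}$ of some $[s]\in S$ (so $X_s\in\Phi$) with $\cone(t)\tens s=0$, and then Lemma~\ref{lemma:localization_trick}, applied to the morphisms $t$ and $s$, produces a morphism $u\colon W\tens X_s\to (a\tens X)\tens X_s$ with $(a\tens X)\tens s=u\circ(t\tens X_s)$.

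It remains to chase the consequences. Tensoring $t\circ(a\tens f)=0$ with $X_s$ and composing on the left with $u$ yields
\[
0=u\circ(t\tens X_s)\circ\big((a\tens f)\tens X_s\big)=\big((a\tens X)\tens s\big)\circ\big((a\tens f)\tens X_s\big)=a\tens\big((f\tens X_s)\circ(X\tens s)\big),
\]
where the last step uses that $X\tens s$ and $f\tens X_s$ act on distinct tensor factors of $X\tens X_s$, so they commute and their composite is $f\tens s$. Now $(f\tens X_s)\circ(X\tens s)$ is a representative of $[f]\cdot[s]\in R_{\K,\Phi}$ and $a\in\Phi$; since a class in $R_{\K,\Phi}$ is zero as soon as the $\tens$-product of some object of $\Phi$ with one of its representatives vanishes, we conclude $[f]\cdot[s]=0$ in $R_{\K,\Phi}$, as required.

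The categorical content is entirely carried by the three preliminary lemmas; beyond them the argument is bookkeeping. I expect the only real point of care to be the translation of ``$q(a\tens f)=0$'' into a morphism $t$ with $\cone(t)\in\J$ and $t\circ(a\tens f)=0$ (the shape of the hypothesis that Lemma~\ref{lemma:localization_trick} is designed to consume), together with keeping straight which composite is being killed and which tensor factor each endomorphism acts on. No appeal to the octahedral axiom or to rigidity is needed here.
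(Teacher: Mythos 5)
Your proof is correct and follows essentially the same route as the paper: translate vanishing in $R_{\K/\J,q(\Phi)}$ into a morphism with cone in $\J$ annihilating $a\tens f$ (after tensoring by some $a\in\Phi$), then use Lemma~\ref{lemma:localization_nilpotence} and Lemma~\ref{lemma:localization_trick} to produce an element of $S$ killing $[f]$ in $R_{\K,\Phi}$. Your preliminary reduction to elements of the form $\eps([f])$ (using that $i(\eps([s]))$ is a unit) is just a slight repackaging of the paper's concluding step $[f]/[s]=([t]\cdot[f])/([t]\cdot[s])=0$, so the two arguments are essentially identical.
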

\begin{proof}
	Consider an element $[f]/[s] \in S^{-1}(R_{\K,\Phi})$. If $i([f]/[s]) = 0$ then
	\begin{equation*}
		[\lfraction{X_f\tens X_s}{f\tens 1}{X_f \tens X_s}{1\tens s}{X_f \tens X_s}]=0
	\end{equation*}
	in $R_{\K/\J,q(\Phi)}$ which implies that there is an $a \in \Phi$ such that
	\[\lfractionstretch{a\tens X_f\tens X_s}{1\tens f\tens 1}{a \tens X_f \tens X_s}{1\tens s\tens 1}{a\tens X_f \tens X_s}{4em}=0\]
	as a morphism in $\K/J$.
It follows that there is a morphisms $k:a \tens X_f \tens X_s \ra b$ in~$\K$ with $\cone(k) \in \J$ such that
$k \circ (a \tens f \tens X_s) = 0$.
By Lemma~\ref{lemma:localization_nilpotence} there is some $[t] \in S$
such that
$\cone(k) \tens t  = 0$ and hence by Lemma~\ref{lemma:localization_trick}
there is a morphism
$u:b \tens X_t \ra a\tens X_f \tens X_s \tens X_t$ such that 
$u \circ (k\tens X_t) = a\tens X_f \tens X_s \tens t$.
In the ring $R_{\K,\Phi}$ we then have
\begin{align*}
	[t]\cdot [f] = [(a\tens X_f \tens X_s \tens t)\circ (a\tens f\tens X_s \tens X_t)] = 0
\end{align*}
and we conclude that $[f]/[s]=([t]\cdot [f])/([t]\cdot [s]) =0$ in $S^{-1}(R_{\K,\Phi})$.
\end{proof}
\begin{proposition}
	\label{proposition:localization_surjective}
	The map $i:S^{-1}(R_{\K,\Phi}) \ra R_{\K/\J,\Phi}$ is surjective.
\end{proposition}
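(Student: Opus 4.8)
The plan is to represent an arbitrary element of $R_{\K/\J,q(\Phi)}$ as $[g]$ for a $\tens$-balanced endomorphism $g$ of some $q(X)$ with $X\in\Phi$, to clear the denominator of the left fraction representing $g$ so as to replace $[g]$ by the class of an honest endomorphism of $\K$ divided by an element of $S$, and then to repair the single feature that clearing the denominator destroys, namely $\tens$-balancedness.

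\textbf{Clearing the denominator.} Write $g=q(b)^{-1}\circ q(a)$ as a left fraction, with $a,b\colon X\ra Y$ in $\K$ and $\cone(b)\in\J$; since $\J$ is a $\tens$-ideal, every denominator of the form $b\tens(-)$ we meet stays invertible in $\K/\J$. By Lemma~\ref{lemma:localization_nilpotence} there is a representative $s\colon X_s\ra X_s$ of an element $[s]\in S$ (so $X_s\in\Phi$ and $s\in E_{X_s}$) with $\cone(b)\tens s=0$, and Lemma~\ref{lemma:localization_trick} applied to $b$ and $s$ produces $v\colon Y\tens X_s\ra X\tens X_s$ with $(b\tens X_s)\circ v=Y\tens s$. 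Put $W:=X\tens X_s\in\Phi$ and $f:=v\circ(a\tens X_s)\in[W,W]_\K$. A short computation in $\K/\J$ with the fraction calculus, using the identity $(a\tens X_s)\circ(X\tens s)=a\tens s=(Y\tens s)\circ(a\tens X_s)$, shows that
\[ g\tens q(X_s)\;=\;q(b\tens X_s)^{-1}\circ q(a\tens X_s)\;=\;q(f)\circ q(X\tens s)^{-1}. \]
Hence $q(f)=(g\tens q(X_s))\circ q(X\tens s)$ is a composite of $\tens$-balanced endomorphisms of $q(W)$ (by Corollary~\ref{corollary:tensor_Ex} and Lemma~\ref{lemma:Ex_induced}), so $q(f)\in E_{q(W)}$, the class $[q(f)]$ is a well-defined element of $R_{\K/\J,q(\Phi)}$, and the display gives $[g]=[q(f)]\cdot[q(s)]^{-1}$ there (using $[q(X\tens s)]=[q(s)]$, which is a unit because $\cone(s)\in\J$).

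\textbf{Repairing $\tens$-balancedness and concluding.} The endomorphism $f$ of $W$ need not be $\tens$-balanced in $\K$, but its obstruction $\delta:=f\tens W-W\tens f$ satisfies $q(\delta)=0$, so it factors through an object $Z\in\J$; Lemma~\ref{lemma:localization_nilpotence} applied to $Z$ gives a representative $s'\colon X_{s'}\ra X_{s'}$ of an element $[s']\in S$ with $Z\tens s'=0$, whence $\delta\tens s'=0$. As this says precisely that $W\tens f\tens s'=f\tens W\tens s'$, and $s'\in E_{X_{s'}}$, Lemma~\ref{lemma:twisting_lemma} yields $f\tens s'\in E_{W\tens X_{s'}}$, so $[f\tens s']$ is a genuine element of $R_{\K,\Phi}$. (Lemma~\ref{lemma:twooutofthreeinvertible} could be used in place of Lemma~\ref{lemma:twisting_lemma} here.) Let $q_*\colon R_{\K,\Phi}\ra R_{\K/\J,q(\Phi)}$ be the induced homomorphism (Proposition~\ref{proposition:general_induced}), which factors as $i\circ\eps$. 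Then $q_*([f\tens s'])=[q(f)\tens q(s')]=[q(f)]\cdot[q(s')]$, so combining with $[g]=[q(f)]\cdot[q(s)]^{-1}$ we obtain
\[ [g]\;=\;q_*([f\tens s'])\cdot[q(s')]^{-1}\cdot[q(s)]^{-1}\;=\;i\!\left(\frac{[f\tens s']}{[s]\cdot[s']}\right), \]
which shows $[g]$ lies in the image of $i$.

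The step I expect to be the crux is the balancedness repair: clearing the denominator of a left fraction produces an endomorphism that is $\tens$-balanced only after passing to $\K/\J$, so one is forced to tensor with a further object $X_{s'}$ coming from $S$ and to invoke Lemma~\ref{lemma:twisting_lemma} before the resulting element lands in $E_{\bullet}$, hence in $R_{\K,\Phi}$. The fraction-calculus manipulations in the first step are routine but, exactly as in the proof of injectivity above, require some care with the symmetry isomorphisms.
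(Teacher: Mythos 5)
Your proof is correct, and its overall architecture coincides with the paper's: first clear the denominator of the left fraction using Lemma~\ref{lemma:localization_nilpotence} and Lemma~\ref{lemma:localization_trick}, reducing to an honest endomorphism of an object of $\Phi$ whose image in $\K/\J$ is $\tens$-balanced, and then tensor with one more representative coming from $S$ so that Lemma~\ref{lemma:twisting_lemma} upgrades this to genuine $\tens$-balancedness in $\K$. The one genuine difference is in the repair step, which the paper isolates as Lemma~\ref{lemma:surjectivity_lemma}: there, the equality of fractions $q(a\tens f)=q(f\tens a)$ is used to extract an equalizing morphism $\tau$ with $\cone(\tau)\in\J$, which is then cancelled by a second application of Lemma~\ref{lemma:localization_trick} after tensoring with $t$; you instead use additivity, observing that the obstruction $\delta=f\tens W-W\tens f$ dies in $\K/\J$, hence factors through an object of $\J$, which is then annihilated by tensoring with $s'$ via Lemma~\ref{lemma:localization_nilpotence} alone. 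This is a mild but real streamlining (one fewer invocation of the trick lemma), at the cost of using the standard Verdier-quotient fact that a morphism vanishing in $\K/\J$ factors through $\J$. One small quibble: your parenthetical claim that Lemma~\ref{lemma:twooutofthreeinvertible} could replace Lemma~\ref{lemma:twisting_lemma} at that point does not work as stated, since $W\tens s'$ is not an automorphism in $\K$ (only its image in $\K/\J$ is invertible); this aside is harmless, as the argument you actually run goes through Lemma~\ref{lemma:twisting_lemma} exactly as in the paper.
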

\begin{proof}
Consider an element $[\lfraction{a}{f}{b}{\sigma}{a}]\in
R_{\K/\J,q(\Phi)}$. By Lemma~\ref{lemma:localization_nilpotence} there is some
$[s] \in S$ such that $\cone(\sigma) \tens s=0$.
It then follows from Lemma~\ref{lemma:localization_trick} 
that there exists a morphism $u:b \tens X_s \ra a\tens X_s$
such that $u\circ(\sigma \tens X_s)=a\tens s$.
We thus have an equality of left fractions
\begin{equation*}
	\lfractionstretch{a\tens X_s}{f\tens 1}{b\tens X_s}{\sigma \tens 1}{a \tens X_s}{3em} 
	= \lfractionstretch{a\tens X_s}{u\circ(f\tens 1)}{a\tens X_s}{1\tens s}{a \tens X_s}{3em}.
\end{equation*}
The left-hand side is an element of $E_{\K/\J,a \tens X_s}$ and so it follows from
Lemma~\ref{lemma:twooutofthreeinvertible} that
\begin{equation*}
	\lfractionstretch{a\tens X_s}{u \circ(f \tens 1)}{a\tens X_s}{\id}{a\tens X_s}{4em}
\end{equation*}
is an element of $E_{\K/\J,X\tens X_s}$.
The claim then follows from Lemma~\ref{lemma:surjectivity_lemma} below.
\end{proof}
\begin{lemma}
	\label{lemma:surjectivity_lemma}
	If $f:a \ra a$ is an endomorphism of an object $a \in \Phi$ such that 
	$q(f)$ is $\tens$-balanced as an endomorphism in $\K/\J$ then
	$[q(f)]\in R_{\K,q(\Phi)}$ is contained in the image of $i:S^{-1}(R_{\K,\Phi}) \ra R_{\K/\J,\Phi}$.
\end{lemma}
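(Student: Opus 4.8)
The plan is to exploit that, although $f$ itself need not be $\tens$-balanced in $\K$, the obstruction $f\tens a-a\tens f$ to its being so becomes zero in $\K/\J$, and morphisms killed by the Verdier quotient can be annihilated outright by tensoring with a representative of an element of $S$ (Lemma~\ref{lemma:localization_nilpotence}). After such a twist, $f$ does land in the appropriate ring of $\tens$-balanced endomorphisms, and its class differs from $[q(f)]$ only by a unit.

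First I would record that, since $q(f)$ is $\tens$-balanced in $\K/\J$, the endomorphism $f\tens a-a\tens f$ of $a\tens a$ maps to zero in $\K/\J$, hence factors as $a\tens a\xra{u}j\xra{v}a\tens a$ for some object $j\in\J$ (a standard property of Verdier quotients; cf.~\cite{Krause_Localization}). By Lemma~\ref{lemma:localization_nilpotence} there is a representative $s$ of an element $[s]\in S$ with $j\tens s=0$. By the interchange law, $(f\tens a-a\tens f)\tens s$ factors as $(v\tens X_s)\circ(j\tens s)\circ(u\tens X_s)$ and therefore vanishes; that is, $a\tens f\tens s=f\tens a\tens s$ as endomorphisms of $a\tens a\tens X_s$. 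Since $[s]\in S\subseteq R_{\K,\Phi}$, the chosen representative $s$ lies in $E_{X_s}$, so Lemma~\ref{lemma:twisting_lemma} (taking $X=a$, $Y=X_s$, $g=s$) gives $f\tens s\in E_{a\tens X_s}$.

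As $a\in\Phi$ and $X_s\in\Phi$, we have $a\tens X_s\in\Phi$, so $[f\tens s]$ is a genuine element of $R_{\K,\Phi}$. Because $q$ is a strong $\tens$-functor, Proposition~\ref{proposition:general_induced} and the multiplication formula in $R_{\K/\J,q(\Phi)}$ give $(i\circ\eps)([f\tens s])=[q(f\tens s)]=[q(f)\tens q(s)]=[q(f)]\cdot[q(s)]$. Since $[s]\in S$, the element $\eps([s])$ is a unit in $S^{-1}(R_{\K,\Phi})$, hence $[q(s)]=i(\eps([s]))$ is a unit in $R_{\K/\J,q(\Phi)}$; dividing, $[q(f)]=i\big(\eps([f\tens s])\cdot\eps([s])^{-1}\big)$ lies in the image of $i$, as required.

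The only step I expect to need any care is the first one---recognizing $f\tens a-a\tens f$ as a morphism that factors through $\J$---but this is a standard fact about Verdier localization rather than something requiring new input, and everything after it is a direct application of Lemmas~\ref{lemma:localization_nilpotence} and~\ref{lemma:twisting_lemma} together with routine bookkeeping in the rings $R_\Phi$.
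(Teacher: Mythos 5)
Your proof is correct, and it reaches the key identity by a slightly different mechanism than the paper. Both arguments share the same skeleton: twist $f$ by (a representative of) an element of $S$ so that it becomes $\tens$-balanced in $\K$ via Lemma~\ref{lemma:twisting_lemma}, observe that the twisted class lives in $R_{\K,\Phi}$, and then recover $[q(f)]$ by dividing by the image of $[s]$, which is a unit in $S^{-1}(R_{\K,\Phi})$ (the paper leaves this last division implicit; you spell it out, which is a small improvement). The difference lies in how the relation $a \tens f \tens s = f \tens a \tens s$ is produced. The paper stays inside the fraction calculus: the equality of left fractions yields a morphism $\tau : a \tens a \ra b$ with $\cone(\tau) \in \J$ coequalizing $a \tens f$ and $f \tens a$, and then Lemma~\ref{lemma:localization_nilpotence} together with Lemma~\ref{lemma:localization_trick} converts ``composing with $\tau$'' into ``tensoring with $t$''. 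You instead form the difference $f \tens a - a \tens f$, note that it is killed by $q$ and hence factors through an object $j \in \J$ (the standard characterization of morphisms annihilated by a Verdier quotient, which the paper's route avoids invoking), and then kill $j$ directly with Lemma~\ref{lemma:localization_nilpotence}; bifunctoriality of $\tens$ finishes it. This bypasses Lemma~\ref{lemma:localization_trick} entirely and is arguably more direct, at the cost of importing the factorization criterion for Verdier quotients and using the additive structure; the paper's version is the one that generalizes verbatim to the graded setting where one must track suspension isomorphisms, but your ungraded argument is complete and sound.
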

\begin{proof}
	It follows from the equality of fractions
	\[ \lfraction{a\tens a}{a \tens f}{a\tens a}{\id}{a\tens a}
		= \lfraction{a\tens a}{f \tens a}{a \tens a}{\id}{a\tens a} \]
	that there is a morphism $\tau : a \tens a \ra b$ in $\K$ such that
	$\cone(\tau)\in\J$ and $\tau \circ (a \tens f) = \tau \circ (f \tens a)$.
	By Lemma~\ref{lemma:localization_nilpotence} and Lemma~\ref{lemma:localization_trick}
	there is a $[t] \in S$ such that $\cone(\tau) \tens t = 0$
	and a morphism 
	$u : b \tens X_t \ra a \tens a \tens X_t$
	such that $u \circ (\tau \tens X_t) = a \tens a \tens t$.
	It follows that $a \tens f \tens t = f \tens a \tens t$ and we conclude 
	using Lemma~\ref{lemma:twisting_lemma} that $f \tens t$ is an element of $E_{a\tens X_t}$.
	Thus 
	\[[\lfraction{a}{f}{a}{\id}{a}]\cdot[\lfraction{X_t}{t}{X_t}{\id}{X_t}]=
		[\lfraction{a\tens X_t}{f\tens t}{a \tens X_t}{\id}{a \tens X_t}]\]
	is contained in the image of $i$.
\end{proof}

\section{Stable homotopy theory}\label{section:stable_homotopy_theory}
Let $\SH$ denote the stable homotopy category of spectra and let $\SHf$ denote
its full subcategory of finite spectra.  For a fixed prime~$p$, one usually
defines the stable homotopy category of finite $p$-local spectra $\SHfp$ as the
full subcategory of $\SH$ consisting of all spectra isomorphic to the
$p$-localization of a finite spectrum, but for our purposes it is convenient to
recognize that this is equivalent to the Verdier quotient of $\SHf$ by the
finite $p$-acyclic spectra.

The purpose of this section is to illustrate the iterative method for examining
fibers of comparison maps in the example of the stable homotopy category of
finite spectra $\SHf$.  This will depend on a description of the graded centers
of endomorphism rings of finite $p$-local spectra provided by \cite{HS98} which
affords a description of the ring 
\[A_X^\astt := \Center([X,X]_\astt)\cap	E_X^\astt\] 
for every finite $p$-local spectrum~$X$, but not of the non-commutative ring
$E_X^\astt$, nor the ring $R_X^\astt = \colim_{n \ge 1} E_{X^{\tens n}}^\astt$.
For this reason, we'll have to settle for the unnatural comparison maps of
Section~\ref{section:basic_construction}. 

First, let us briefly recall the classification of thick subcategories in
$\SHfp$.  For each $n \ge 1$, the $n$th Morava $K$-theory spectrum $K(n)$ is a
$p$-local ring spectrum whose coefficient ring $K(n)_\astt := \pi_\astt(K(n))$
is the graded-field $\Fp[\v_n,\v_n^{-1}]$ with $|\v_n| = {2(p^n-1)}$.  The
collection of Morava $K$-theories is completed by $K(0):=H\Q$ and
$K(\infty):=H\Fp$ and for each $0 \le n \le \infty$ provides a stable
homological functor
\[ K(n)_\astt(-) : \SHpf \ra K(n)_\astt\text{-grMod}\]
which is in fact a strong $\tens$-functor. 
It follows, using the fact that $K(n)_\astt$ is a graded-field, that the kernel
of this functor is more than a thick subcategory---it is a prime $\tens$-ideal;
i.e.\ a point of $\Spc(\SHpf)$.
Conforming to the notation of \cite{HS98} we define
$\C_0 := \SHpf$, $\C_n := \ker(K(n-1)_\astt(-))$ for $n\ge 1$, 
and $\C_\infty := \ker(K(\infty)_\astt(-))$.
These categories fit into a filtration
\[0 = \C_\infty \subsetneq \cdots \subsetneq \C_{n+1} \subsetneq \C_n \subsetneq \cdots \subsetneq \C_1 \subsetneq \C_0 = \SHfp\]
and the Hopkins-Smith classification theorem 
asserts that \emph{every} thick $\tensor$-ideal is of the form $\C_n$ for
some $0 \le n \le \infty$.
From a different point of view, this shows that $\Spc(\SHfp)$ consists of a sequence of points
\[\C_1 \ra \C_2 \ra \C_3 \ra \cdots \ra \C_n \ra \C_{n+1} \ra \cdots \ra \C_\infty = (0) \]
where $\ra$ indicates specialization: $\overline{\{\C_n\}} = \{ \C_i \mid i \ge n\}$.

The main results from \cite{HS98} which allow for a description of
our higher comparison maps arise from their study of non-nilpotent
(graded) endomorphisms of finite $p$-local spectra.  Recall that
one statement of the Nilpotence Theorem is that an
endomorphism $f: \Si^d X \ra X$ of a finite $p$-local spectrum 
is nilpotent iff $K(n)_\astt(f)$ is nilpotent for all $0 \le n < \infty$.
This motivates the following definition
which aims to pin down those
non-nilpotent endomorphisms that are as
simple as possible.
\begin{definition}\cite[Definition~8]{HS98}
	Let $n \ge 1$ and let $X$ be a finite $p$-local spectrum.
	An endomorphism $f:\Si^d X \ra X$ is a \emph{$\v_n$-selfmap} if
	$K(n)_\astt(f)$ is an isomorphism, and $K(i)_\astt(f)$ is nilpotent for $i\neq n$.
\end{definition}
It follows from the definitions (and the Nilpotence Theorem) that
	if $X \in \C_{n+1}$ then $\v_n$-selfmaps are the same thing as nilpotent selfmaps
	but 
	that if $X \notin \C_{n+1}$ then $\v_n$-selfmaps are never
	nilpotent.
	It is also easily shown that $X \in \C_n$ is a necessary
	condition for the existence of a $\v_n$-selfmap.
Thus, the notion is mostly of interest for $X\in \C_n \setminus \C_{n+1}$.	
The first result of substance is that $\v_n$-selfmaps exist:
\begin{theorem}\label{theorem:vn_existence}\cite[Theorem~9]{HS98}
	A finite $p$-local spectrum $X$ admits a $\v_n$-selfmap if and only if $X\in \C_n$.
\end{theorem}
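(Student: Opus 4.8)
The content of this theorem is the ``if'' direction --- that every $X\in\C_n$ carries a $\v_n$-selfmap --- which is the Periodicity Theorem; the plan is to prove it by following Hopkins and Smith and realising a suitable class of spectra as a thick subcategory. Write $\mathcal{T}_n\subset\SHfp$ for the class of spectra admitting a $\v_n$-selfmap (allowing nilpotent ones, so that $\C_{n+1}\subseteq\mathcal{T}_n$ automatically). Three ingredients are needed: (i) $\mathcal{T}_n\subseteq\C_n$ --- this is the ``only if'' direction, noted above to be elementary, following from the Nilpotence Theorem together with the classification of thick $\tens$-ideals (applied with periodicity at lower heights in hand); (ii) $\mathcal{T}_n$ is a thick $\tens$-ideal; (iii) $\mathcal{T}_n$ contains a spectrum of type exactly $n$, so that $\C_{n+1}\subsetneq\mathcal{T}_n$. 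Granting (i)--(iii), the fact that the only thick $\tens$-ideals squeezed between $\C_{n+1}$ and $\C_n$ are these two themselves forces $\mathcal{T}_n=\C_n$.

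I would run (ii) and (iii) together by induction on $n$. The base case $n=0$ is immediate: $p\cdot\id_X$ is a $\v_0$-selfmap for every $X\in\C_0=\SHfp$, since it is a rational isomorphism and acts as zero --- hence nilpotently --- on each $K(i)$ with $i\ge 1$. For the inductive step, assume periodicity at height $n-1$. The one substantive point in the thickness of $\mathcal{T}_n$ is the cofibre axiom: given a triangle $A\ra B\ra C$ with $\v_n$-selfmaps $f_A,f_B$, one replaces these by powers of a common degree fitting into a commutative square over $A\ra B$, forms the induced selfmap $f_C$ on $C$, and verifies it is a $\v_n$-selfmap by a five-lemma argument applied to $K(n)_\astt(-)$ together with a nilpotence bound for the remaining $K(i)$. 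What makes the square commute after raising to powers --- and likewise handles the retract case --- is the \emph{asymptotic uniqueness and naturality} of $\v_n$-selfmaps: any two become compatible once raised to suitable powers. This is the key technical statement and is where the genuine work sits.

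Both ingredient (iii) and the asymptotic-naturality statement underlying (ii) rest on a vanishing line in the mod-$p$ Adams spectral sequence: for $X\in\C_n$ the $E_\infty$-page of the spectral sequence computing the graded endomorphism ring $[X,X]_\astt$ --- and, more generally, $[Y,X]_\astt$ for finite $Y$ --- vanishes above a line of slope $1/|\v_n|=1/(2(p^n-1))$ in the standard $(t-s,s)$ indexing. I would establish this first for a single explicit type-$n$ complex. Since $X\in\C_n\subset\C_{n-1}$, the inductive hypothesis furnishes a $\v_{n-1}$-selfmap $w$ on a type-$(n-1)$ complex $Y$, and $Y/w:=\cone(w)$ has type exactly $n$; the $w$-Bockstein relating the Adams spectral sequence of $Y/w$ to that of $Y$, together with the vanishing line for $Y$ (available at height $n-1$) and the growth of the Adams filtration of the iterates $w^{i}$ relative to their internal degree (which pins the slope), yields the vanishing line for $Y/w$. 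A standard propagation along cofibre sequences and retracts, combined once more with the thick-subcategory theorem --- every type-$n$ complex lies in the thick $\tens$-ideal generated by $Y/w$ --- then spreads the vanishing line to all of $\C_n$. Given the line, one builds a $\v_n$-selfmap by realising a sufficiently high power of the algebraic class $\v_n\in BP_\astt/I_n$ (coming from the Adams--Novikov $E_2$-term) as an honest selfmap, the obstructions to this being confined and eventually killed by the vanishing line; the same mechanism yields the asymptotic uniqueness needed above.

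The main obstacle is the vanishing-line theorem at height $n$, and within it the ``small descent'' / Bockstein step: one must control precisely how the Adams filtration of iterated $\v_{n-1}$-selfmaps grows relative to their internal degree, so that the line inherited by $Y/w$ has slope exactly $1/(2(p^n-1))$ rather than something worse. Everything else --- the base case, the five-lemma bookkeeping for the cofibre axiom, the propagation of vanishing lines, the reduction to the thick-subcategory theorem, and the elementary ``only if'' direction --- is routine by comparison, once one has the (standard but non-formal) obstruction theory translating a vanishing line into realisability and uniqueness of $\v_n$-selfmaps.
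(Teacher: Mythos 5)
First, a point of comparison: the paper does not prove this statement at all---it is the Hopkins--Smith Periodicity Theorem, imported by citation as \cite[Theorem~9]{HS98} (the only thing the paper itself notes nearby is the easy fact that $X \in \C_n$ is necessary). So your proposal is really being measured against the argument of \cite{HS98}, whose architecture you reproduce correctly: reduce, via the thick-subcategory classification, to (a) thickness of the class of spectra admitting $\v_n$-selfmaps, which follows from asymptotic uniqueness and centrality, and (b) the construction of a single complex of type exactly $n$ carrying a $\v_n$-selfmap. One misattribution along the way: asymptotic uniqueness and centrality are consequences of the Nilpotence Theorem alone---they are \cite[Corollaries~3.7 and~3.8]{HS98}, quoted as such in this very paper---and do not rest on Adams spectral sequence vanishing lines.

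The genuine gap is in (b), exactly where you locate the difficulty, but the mechanism you propose does not work as stated. A vanishing line of slope $1/(2p^{n-1}-2)$ for $Y$ does not, via the $w$-Bockstein, yield a line of the strictly shallower slope $1/(2p^{n}-2)$ for $Y/w$: the long exact sequence of the cofibre only transports a vanishing line of the \emph{same} slope (with a worse intercept). To improve the slope one needs to know that, above a line of slope $1/(2p^{n}-2)$, the Adams $E_r$-term of $Y$ is $w$-periodic (multiplication by the class detecting $w$ eventually bijective there), so that those contributions cancel in the cofibre; that is a statement of essentially the same depth as the theorem being proved and is not supplied by the inductive hypothesis ``periodicity at height $n-1$.'' Hopkins and Smith avoid this entirely: the Smith construction---idempotents coming from the modular representation theory of symmetric groups acting on smash powers---manufactures one special type-$n$ complex whose mod-$p$ cohomology is free over a large subalgebra $A(m)$ of the Steenrod algebra, for which the Anderson--Davis/Miller--Wilkerson criterion gives the slope-$1/(2p^{n}-2)$ vanishing line already at $E_2$ \emph{with controlled intercept}; that intercept control is what allows the obstruction theory realizing a power of $v_n$ (a class produced in $\mathrm{Ext}$ over the exterior algebra on the Milnor primitive $Q_n$, not, as you write, from the Adams--Novikov $E_2$-term $BP_\astt/I_n$---mixing the two requires a comparison you do not address) to close up. This representation-theoretic construction is absent from your sketch, and without it---or a proof of the periodicity-above-a-line statement---the existence step remains open.
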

The most important properties about $v_n$-selfmaps, other than the fact that they exist, are that they are asymptotically unique and asymptotically central:
\begin{proposition}\label{proposition:vn_are_unique}\cite[Corollary~3.7]{HS98}
	\label{uniqueness} If $f$ and $g$ are two $v_n$-selfmaps of $X$ then
		$f^i = g^j$ for some $i,j \ge 1$.
\end{proposition}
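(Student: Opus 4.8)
The plan is to follow Hopkins--Smith \cite{HS98}: one repeatedly passes to composite powers and invokes the Nilpotence Theorem until the statement collapses to an essentially algebraic assertion. First I would dispose of the degenerate case. If $X \in \C_{n+1}$ then every $v_n$-selfmap of $X$ is a nilpotent selfmap, so $f^i = 0 = g^j$ for $i,j$ large and there is nothing to prove. Since $X \in \C_n$ is necessary for a $v_n$-selfmap to exist, I may therefore assume $X \in \C_n \setminus \C_{n+1}$; in this case $K(m)_\astt(X) = 0$ precisely for $m < n$, while $K(m)_\astt(X) \neq 0$ for all $m \ge n$.

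Next I would normalise $f$ and $g$ by passing to composite powers. This is legitimate because $K(m)_\astt(\phi^i) = K(m)_\astt(\phi)^i$ shows that a composite power of a $v_n$-selfmap is again a $v_n$-selfmap, and it clearly suffices to produce composite powers of the normalised maps which coincide. Passing to such powers I would successively arrange: (i) $2(p^n-1) \divides \deg f$ and $2(p^n-1) \divides \deg g$; (ii) $K(m)_\astt(f) = K(m)_\astt(g) = 0$ for every $m \neq n$ --- here one uses that the $K(m)_\astt$-rank of $X$ is bounded by the number of cells of $X$ uniformly in $m$, so a single exponent $r$ kills $K(m)_\astt(f)$ and $K(m)_\astt(g)$ for all $m \neq n$; (iii) $K(n)_\astt(f) = v_n^{a}\cdot\id$ and $K(n)_\astt(g) = v_n^{b}\cdot\id$ with $a,b \ge 1$ --- here, writing $K(n)_\astt(f) = v_n^{a}A$ with $A$ a degree-zero $K(n)_\astt$-linear automorphism of $K(n)_\astt(X)$ and using that the group of such automorphisms is finite (a product of general linear groups over $\Fp$), a power of $A$ is the identity; and finally (iv), replacing $f$ by $f^{b}$ and $g$ by $g^{a}$, that $\deg f = \deg g$ and $K(n)_\astt(f) = v_n^{ab}\cdot\id = K(n)_\astt(g)$.

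With $f$ and $g$ so normalised --- selfmaps of $X$ of a common degree $d$ --- the selfmap $f - g : \Si^d X \ra X$ satisfies $K(m)_\astt(f-g) = K(m)_\astt(f) - K(m)_\astt(g) = 0$ for every $m$, so by the Nilpotence Theorem in its selfmap form (applicable since $X$ is finite) $f - g$ is nilpotent. By the same token $K(m)_\astt(fg-gf) = 0$ for all $m$ --- at height $n$ both factors are central scalars, at the other heights both vanish --- so the commutator $fg - gf$ is nilpotent as well. Thus, after these reductions, $f$ and $g$ are selfmaps of equal degree which are congruent to, and which commute with, one another modulo nilpotent selfmaps.

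It then remains to upgrade this to the honest equality $f^i = g^j$ (necessarily $i = j$, by the degree). If $[X,X]_\astt$ were commutative this would be immediate: one could write $g = f\,(1-\varepsilon)$ with $\varepsilon$ nilpotent, so $1-\varepsilon$ would have finite multiplicative order $t$ in the finite ring $[X,X]_0$ and $f^t = g^t$. The hard part is exactly that $[X,X]_\astt$ is \emph{not} commutative, so $f$ and $g$ need not literally commute and $g$ cannot be factored past $f$ in this naive way; taming the resulting non-commutative error is the real content of \cite[Section~3]{HS98}. One route I would pursue is a second application of the Nilpotence Theorem: since $\cone(g)$ has type $\ge n+1$, the selfmap of $D X \wedge \cone(g)$ induced by $f$ has vanishing Morava $K$-theory in every height, hence is nilpotent, which forces a composite power of $f$ to factor through $g$; one would then organise the two symmetric families of factorisations into a single equality by an induction on $n$ using the thick subcategory theorem.
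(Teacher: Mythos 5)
The paper does not prove this proposition at all---it is quoted from \cite[Corollary~3.7]{HS98}---so the only meaningful comparison is with the Hopkins--Smith argument. Your reductions (i)--(iv) do reproduce the first half of that argument correctly: powers of $v_n$-selfmaps are $v_n$-selfmaps; a single exponent bounded by the number of cells kills $K(m)_\astt(f)$ and $K(m)_\astt(g)$ simultaneously for all $m\neq n$; and the finiteness of the degree-zero automorphism group of $K(n)_\astt(X)$ lets you arrange a common degree and $K(n)_\astt(f)=K(n)_\astt(g)=v_n^{ab}$. (Small quibble: that group is finite because the degree-zero endomorphism algebra of a finite graded $K(n)_\astt$-module is a finite-dimensional $\Fp$-algebra, not because it is literally a product of groups $GL_k(\Fp)$; finiteness is all you need.)

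The genuine gap is in your last paragraph, which is where the proof actually lives. First, you misdiagnose the role of non-commutativity: knowing only that $fg-gf$ is nilpotent is not enough for any binomial manipulation, but the remedy is precisely the asymptotic centrality statement quoted immediately afterwards in the paper (\cite[Corollary~3.8]{HS98}), which lets you replace $f$ and $g$ by powers that are literally central, hence commute on the nose; Hopkins--Smith prove centrality by applying the nilpotence theorem to $\mathrm{ad}(f)$ on the endomorphism spectrum, they do not bypass it. Second, with commutativity secured, the finish is elementary but needs one ingredient you never mention: after your normalisation, $h:=f-g$ vanishes in $K(0)$, i.e.\ rationally, so since $[\Si^{d}X,X]$ is a finitely generated $\Z_{(p)}$-module, $h$ is killed by $p^{T}$ for some $T$; it is also nilpotent, say $h^{m}=0$, by the nilpotence theorem. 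Expanding $f^{p^{e}}=(g+h)^{p^{e}}$ in the commutative subring generated by $f$ and $g$, the terms with $h^{i}$, $i\ge m$, vanish, while for $1\le i<m$ the coefficient $\binom{p^{e}}{i}$ has $p$-adic valuation at least $e-\log_{p}(m)$ and so is divisible by $p^{T}$ once $e$ is large; hence $f^{p^{e}}=g^{p^{e}}$. By contrast, the route you actually propose does not close: nilpotence of the selfmap of $DX\wedge\cone(g)$ induced by $f$ gives (a power of) $f$ smashed with $\cone(g)$ equal to zero, not a factorisation of a power of $f$ through $g$ (the obstruction to such a factorisation lives in $[\Si^{dN}X,\cone(g)]$ and does not vanish merely because some selfmap is nilpotent), and ``organise the two symmetric families of factorisations into a single equality by an induction on $n$ using the thick subcategory theorem'' is a hope, not an argument---even granting factorisations $f^{N}=g\circ u$ and $g^{M}=f\circ w$, nothing you have said produces an equality of powers.
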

\begin{proposition}\cite[Corollary~3.8]{HS98}
	If $f$ is a $v_n$-selfmap of $X$ then some power of $f$ is contained in the center of $[X,X]_\astt$.
\end{proposition}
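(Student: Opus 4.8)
This is \cite[Corollary~3.8]{HS98}; the plan is to deduce it from the asymptotic uniqueness of Proposition~\ref{proposition:vn_are_unique} via a ``matrix trick''. First I would reduce the assertion to showing that some power $f^{i}$ of the given $\v_n$-selfmap $f:\Si^{d}X\ra X$ commutes with every \emph{homogeneous} element of $[X,X]_\astt$, treating one homogeneous element at a time. Fix a homogeneous $a:\Si^{e}X\ra X$ and set $e':=e-d$; on the finite $p$-local spectrum $Y:=X\oplus\Si^{e'}X$ consider the two degree-$d$ endomorphisms
\[
	P:=\begin{pmatrix} f & 0\\ a & \Si^{e'}f\end{pmatrix}
	\qquad\text{and}\qquad
	Q:=\begin{pmatrix} f & 0\\ 0 & \Si^{e'}f\end{pmatrix}.
\]
Applying $K(m)_\astt(-)$ to either one gives a $2\times 2$ block lower-triangular matrix whose diagonal blocks are $K(m)_\astt(f)$ and $K(m)_\astt(\Si^{e'}f)$; since $f$ is a $\v_n$-selfmap these are isomorphisms for $m=n$ and nilpotent for $m\neq n$, and a block lower-triangular matrix with such diagonal blocks is itself invertible (resp.\ nilpotent). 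Hence $P$ and $Q$ are both $\v_n$-selfmaps of $Y$.

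By Proposition~\ref{proposition:vn_are_unique} there are $i,j\ge 1$ with $P^{i}=Q^{j}$, and comparing degrees (since $d\neq 0$) forces $i=j$. The diagonal entries of $P^{i}$ and $Q^{i}$ agree automatically, while equating the lower-left entries yields the ``norm'' identity $\sum_{l=0}^{i-1}f^{l}\,a\,f^{i-1-l}=0$, after the evident identification of suspensions (signs are immaterial). Multiplying this identity on the left by $f$, then separately on the right by $f$, and subtracting, the two telescoping sums cancel everything except $f^{i}a-af^{i}$; as the norm vanishes we conclude $f^{i}a=af^{i}$. Thus every homogeneous element of $[X,X]_\astt$ is commuted by some power of $f$.

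The remaining, and most delicate, point is to upgrade this to a \emph{single} power of $f$ that is central, and for this one must feed in a finiteness property of $[X,X]_\astt$. The input used in \cite{HS98} is that $[X,X]_\astt$ is finitely generated as a module over the graded subring $\Z_{(p)}[f]$ --- proved by induction on the type of $X$, using that $\cone(f)$ has strictly larger type, together with a graded Nakayama argument --- whence $[X,X]_\astt$ is a finitely generated ring, and taking $i$ to be a common multiple of the exponents produced above for a finite generating set gives a power $f^{i}$ commuting with all of $[X,X]_\astt$, i.e.\ lying in its centre. I expect this finiteness to be the main obstacle: without it the centralizers $C_{[X,X]_\astt}(f^{i})$ merely exhaust $[X,X]_\astt$ in the limit over $i$, and one needs a Noetherian-type statement to conclude the exhaustion stabilizes.
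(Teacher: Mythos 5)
There is a genuine gap, and it is exactly at the point you flag as delicate. Your per-element argument is fine: the two triangular matrices on $X\oplus\Si^{e'}X$ are indeed $v_n$-selfmaps (modulo small bookkeeping with the suspension convention and the remark that comparing degrees needs $d\neq 0$), asymptotic uniqueness (Proposition~\ref{proposition:vn_are_unique}) gives $P^{i}=Q^{i}$, and the telescoping identity correctly yields $f^{i}a=af^{i}$ --- but with $i$ depending on $a$. The uniformization step you then invoke is false: $[X,X]_\astt$ is \emph{not} finitely generated as a module over $\Z_{(p)}[f]$, and this is not the input used in \cite{HS98}. Already for $n=0$, $X=\unit$, $f=p$ the claim would say $\pi_\astt(\unit)_{(p)}$ is a finitely generated $\Z_{(p)}$-module. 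For $n\ge 1$ it fails as well: finite generation over the Noetherian ring $\Z_{(p)}[f]$ would force both the cokernel and the $f$-torsion of $f\cdot\colon [X,X]_{\astt-d}\ra[X,X]_\astt$ to be bounded above in degree, hence $[X,\cone(f)]_\astt\cong\pi_\astt(DX\wedge\cone(f))$ would be bounded above; but $DX\wedge\cone(f)$ is a nonzero finite $p$-local spectrum of finite type, so it has $K(m)_\astt\neq 0$ for some finite $m\ge 1$, whereas a spectrum with bounded homotopy is a finite Postnikov tower of Eilenberg--MacLane spectra and has $K(m)_\astt=0$ for all $0<m<\infty$. So the ``Noetherian-type statement'' you hope for is simply not available, and without it the centralizers $C_{[X,X]_\astt}(f^{i})$ only exhaust $[X,X]_\astt$ in the limit, which does not prove the proposition.

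Note also that the paper gives no proof here: the proposition is imported verbatim from \cite[Corollary~3.8]{HS98}. The actual argument there (see also Ravenel's treatment) gets uniformity for free from Spanier--Whitehead duality rather than from any finiteness of $[X,X]_\astt$: the two maps $Df\wedge X$ and $DX\wedge f$ are both $v_n$-selfmaps of the single finite spectrum $DX\wedge X$, so asymptotic uniqueness gives $(Df\wedge X)^{i}=(DX\wedge f)^{i}$ as an equality of \emph{maps of spectra}; applying $\pi_\astt$ and using $\pi_\astt(DX\wedge X)\cong[X,X]_\astt$, these two maps induce right and left composition by $f^{i}$, so the single power $f^{i}$ commutes with every element of $[X,X]_\astt$ at once. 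If you replace your finiteness step by this duality step (your matrix trick then becomes unnecessary, though it is a correct way to see the per-element statement), the proof is complete.
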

For our purposes, we need to include the following result:
\begin{lemma}\label{lemma:vn_are_asymptotically_tensor_balanced}
	If $f$ is a $v_n$-selfmap of $X$ then some power of $f$ is $\tens$-balanced.
\end{lemma}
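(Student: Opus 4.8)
The plan is to deduce this from the asymptotic uniqueness of $v_n$-selfmaps (Proposition~\ref{uniqueness}, i.e.\ \cite[Corollary~3.7]{HS98}), applied not to $X$ itself but to $X \tens X$, and then to use the symmetry isomorphism to upgrade ``equality of some powers'' to the $\tens$-balanced condition.

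First I would check that both $f \tens X$ and $X \tens f$ are $v_n$-selfmaps of $X \tens X$. Since each Morava $K$-theory $K(m)_\astt(-)$, $0 \le m \le \infty$, is a strong $\tens$-functor landing in graded modules over the graded-field $K(m)_\astt$, the Künneth isomorphism identifies $K(m)_\astt(f \tens X)$ with $K(m)_\astt(f) \tens \id$ and $K(m)_\astt(X \tens f)$ with $\id \tens K(m)_\astt(f)$. Tensoring with the identity over a graded-field preserves both the property of being an isomorphism and the property of being nilpotent, so $K(n)_\astt(f \tens X)$ and $K(n)_\astt(X \tens f)$ are isomorphisms while $K(m)_\astt(f \tens X)$ and $K(m)_\astt(X \tens f)$ are nilpotent for $m \neq n$; this is exactly the assertion that $f \tens X$ and $X \tens f$ are $v_n$-selfmaps of $X \tens X$.

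Next, Proposition~\ref{uniqueness} applied to the two $v_n$-selfmaps $f \tens X$ and $X \tens f$ of $X \tens X$ produces integers $a, b \ge 1$ with $(f \tens X)^a = (X \tens f)^b$, i.e.\ $f^a \tens X = X \tens f^b$ after the evident suspension-isomorphism identifications. Now the symmetry $\tau : X \tens X \xra{\sim} X \tens X$ is an automorphism, and by naturality together with the involutivity of the symmetry, conjugation by $\tau$ interchanges $f \tens X$ and $X \tens f$; conjugating the previous equation by $\tau$ therefore also gives $(X \tens f)^a = (f \tens X)^b$. Combining the two relations,
\[
(f \tens X)^{a+b} = (f \tens X)^a \circ (f \tens X)^b = (X \tens f)^b \circ (X \tens f)^a = (X \tens f)^{a+b},
\]
which says precisely that $f^{a+b}$ is $\tens$-balanced. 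In the graded case one runs the same argument, keeping track of the suspension isomorphisms built into \eqref{diagram:graded_weakly_central} and of the coherence signs incurred in conjugating by the graded symmetry on suspended objects.

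The first step is routine (the Künneth bookkeeping, and the stability of ``isomorphism'' and ``nilpotent'' under $-\tens\id$). The only genuinely delicate point is making sure, in the graded setting, that the suspension-isomorphism conventions and the signs introduced by the symmetry on suspended objects do not obstruct the final equality $f^{a+b}\tens X = X \tens f^{a+b}$; since the symmetry enters only through the conjugation $\tau(-)\tau^{-1}$ (which is sign-free as a conjugation) before the two relations are composed, I expect these to cancel, but this is where the care is needed.
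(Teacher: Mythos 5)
Your proof is correct and is essentially the paper's argument: the paper's own proof is the one-line observation that $f \tens X$ and $X \tens f$ are both $v_n$-selfmaps of $X \tens X$, so Proposition~\ref{proposition:vn_are_unique} applies. Your symmetry-conjugation step to reconcile the exponents (and the remark on graded sign bookkeeping) is just a careful filling-in of what the paper dismisses as ``straightforward.''
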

\begin{proof}
	This is straightforward in light of Proposition~\ref{proposition:vn_are_unique} by recognizing
	that $f \tens X$ and $X \tens f$ are two $v_n$-selfmaps of $X \tens X$.
\end{proof}
The notion of a $v_n$-selfmap leads to a very complete description of the centers of graded endomorphism rings of finite $p$-local spectra---up to nilpotents.
\begin{theorem}\cite[Corollary~5.5, Proposition~5.6]{HS98}
	\label{theorem:class}
	Let $X$ be a finite \mbox{$p$-local} spectrum and let $f$ be a graded endomorphism of $X$ which is in the center of $[X,X]_\astt$.
	If $f$ is degree 0 then some power of $f$ is a multiple of the identity; otherwise,
	$f$ is nilpotent or a $v_n$-selfmap.
\end{theorem}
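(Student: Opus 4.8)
The plan is to deduce both assertions from the Nilpotence Theorem, combined with the existence and asymptotic uniqueness of $v_n$-selfmaps (Theorem~\ref{theorem:vn_existence} and Proposition~\ref{uniqueness}), by following a central endomorphism through the Morava $K$-theory functors. We may take $X\neq 0$ and indecomposable, say of type~$n$: thus $K(m)_\astt X=0$ for $m<n$ while $K(m)_\astt X\neq 0$ for $n\le m\le\infty$, and $\langle X\rangle=\C_n$ by the Hopkins--Smith classification. Two facts are used throughout: for finite~$X$ there is a graded ring isomorphism $K(m)_\astt F(X,X)\cong\End_{K(m)_\astt}(K(m)_\astt X)$ compatible with $K(m)_\astt(-)$, and $K(\infty)_\astt X=H_\astt(X;\Fp)$ is degreewise bounded.

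For the nonzero-degree statement, suppose $f$ has degree $d\neq 0$. If $f$ is nilpotent we are done, so assume not. By the Nilpotence Theorem $K(m)_\astt(f)$ is non-nilpotent for some~$m$; since $d\neq 0$ and $H_\astt(X;\Fp)$ is bounded we have $m\neq\infty$, and since $K(m)_\astt X=0$ for $m<n$ we have $n\le m<\infty$. It is enough to show that \emph{some power} $f^k$ is a $v_n$-selfmap: then $K(n)_\astt(f)$ is an isomorphism and $K(j)_\astt(f)$ is nilpotent for every $j\neq n$, which is to say $f$ itself is a $v_n$-selfmap. To produce such a power, fix a $v_n$-selfmap $v$ of~$X$ (Theorem~\ref{theorem:vn_existence}), which we may take central after passing to a power, and form the telescope $v^{-1}X:=\colim\bigl(X\xra{v}\Si^{-|v|}X\xra{v}\cdots\bigr)$. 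Since $K(j)_\astt(v)$ is nilpotent for every $j\neq n$ (including $j=\infty$) while $K(n)_\astt(v)$ is invertible, $v^{-1}X$ has $K(j)_\astt(v^{-1}X)=0$ for $j\neq n$ and $K(n)_\astt(v^{-1}X)=K(n)_\astt X\neq 0$; the central endomorphism $f$ induces a self-map $v^{-1}f$ of $v^{-1}X$. The heart of the argument is to show that $v^{-1}f$ is either an equivalence or nilpotent, and that tracing this back along $X\to v^{-1}X$ --- using the Nilpotence Theorem one Morava $K$-theory at a time and the uniqueness of $v_n$-selfmaps (Proposition~\ref{uniqueness}) --- forces a power of $f$ to be a $v_n$-selfmap: in the equivalence case $\cone(f)$ becomes trivial after inverting~$v$, hence lies in $\langle\cone(v)\rangle=\C_{n+1}$ and one checks that $K(n)_\astt(f)$ is then an isomorphism while $K(j)_\astt(f)$ is nilpotent for $j\neq n$; the nilpotent case contradicts our assumption on~$f$. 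Establishing this dichotomy is the \textbf{main obstacle}: it requires understanding central self-maps of the non-compact spectrum $v^{-1}X$ and showing its endomorphism ring is controlled tightly enough by $K(n)_\astt$. I would approach it by expressing $[v^{-1}X,v^{-1}X]$ as a limit of localized endomorphism rings of $X$, exploiting $\tens$-balancedness of a power of $v$ (Lemma~\ref{lemma:vn_are_asymptotically_tensor_balanced}) and Proposition~\ref{uniqueness} to pin $v^{-1}f$ against the invertible~$v$, and reducing at the end to the Nilpotence Theorem again.

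For the degree-zero statement, let $f\in\Center([X,X]_0)$. If $f$ is nilpotent then $f^k=0=0\cdot\id_X$ for $k\gg 0$, so assume not. The subring $\Z_{(p)}[f]\subseteq\Center([X,X]_0)$ is a commutative, finitely generated $\Z_{(p)}$-algebra (as $\pi_0F(X,X)$ is a finitely generated abelian group) with no idempotents other than $0$ and~$1$, since $X$ is indecomposable; in particular $f$ is integral over $\Z_{(p)}$. Analysing $f$ through the ring maps $\Z_{(p)}[f]\to\End_{K(m)_\astt}(K(m)_\astt X)$ --- for $m<\infty$ the degree-$0$ operator $K(m)_\astt(f)$ respects the finitely many residues of internal degrees modulo $|\v_m|$, while for $m=\infty$ it acts on the bounded graded vector space $H_\astt(X;\Fp)$ --- together with the Nilpotence Theorem shows that $\Z_{(p)}[f]$ modulo its nilradical is a quotient of $\Z_{(p)}$, so that $f^k\equiv\lambda\cdot\id_X$ modulo nilpotents for some $k\ge 1$ and $\lambda\in\Z_{(p)}$; an elementary exponent argument (replacing $k$ by a suitable $p$-power multiple to absorb the nilpotent correction) then upgrades this to the asserted equality $f^k=\lambda\cdot\id_X$.
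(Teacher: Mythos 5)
First, note that the paper does not prove this statement at all: it is imported verbatim from Hopkins--Smith \cite{HS98} (their Corollary~5.5 and Proposition~5.6), so the only meaningful benchmark is the argument there --- and your sketch neither follows it nor closes its own central gap. The step you flag as the ``main obstacle'' (that the induced self-map $v^{-1}f$ of the telescope is either an equivalence or nilpotent, and that this can be traced back to a power of $f$ on $X$) is precisely the mathematical content of the theorem, and the tools you invoke do not reach it. The Nilpotence Theorem concerns maps of \emph{finite} spectra (and ring spectra); it says nothing directly about self-maps of the non-compact telescope $v^{-1}X$, whose homotopy endomorphism ring is notoriously inaccessible --- this is exactly telescope-conjecture territory. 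Moreover $[v^{-1}X,v^{-1}X]_\astt$ is \emph{not} a localization of $[X,X]_\astt$: since $v^{-1}X$ is a homotopy colimit, maps out of it involve an inverse limit and a $\lim^1$ term; only maps \emph{from} the compact object localize, i.e.\ $[X,v^{-1}X]_\astt \cong v^{-1}[X,X]_\astt$. So the proposed repair (``express $[v^{-1}X,v^{-1}X]$ as a limit of localized endomorphism rings \dots reducing at the end to the Nilpotence Theorem'') does not go through as stated, and without it the nonzero-degree case is unproved.

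For contrast, the Hopkins--Smith argument never leaves finite spectra: centrality is exploited through the K\"unneth isomorphism $K(m)_\astt\End(X)\cong \End_{K(m)_\astt}(K(m)_\astt X)$ to show that (after passing to a power, up to nilpotents) the image of a central $f$ is a \emph{scalar} in $K(m)_\astt$; then degree bookkeeping --- $K(0)_\astt=\Q$ and $K(\infty)_\astt=\Fp$ are concentrated in degree $0$, $|\v_m|=2(p^m-1)$, and $K(m)_\astt X=0$ below the type of $X$ --- combined with the Nilpotence Theorem and asymptotic uniqueness yields both halves of the statement. Your degree-zero paragraph also has soft spots: the claim that $\Z_{(p)}[f]$ modulo its nilradical is a quotient of $\Z_{(p)}$ is asserted rather than derived; the final ``absorb the nilpotent correction by a $p$-power exponent'' move fails in general when the central nilpotent correction is not torsion (the binomial cross-terms in $(\lambda+\epsilon)^{p^N}$ do not vanish merely because the exponent is a $p$-power); and the reduction to indecomposable $X$ is not automatic, since a central element of $\End(X_1\oplus X_2)$ must also intertwine all maps between the summands, a constraint your summandwise argument discards. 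In short: the overall strategy (detect $f$ through the Morava $K$-theories) is the right spirit, but the proof as written has a genuine, unclosed gap at its crucial step, and the telescope route chosen to close it is not viable with the quoted tools.
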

\begin{corollary}\label{corollary:two_point_description}
	Let $n \ge 1$ and $X \in \C_n \setminus \C_{n+1}$.
	The space $\Spech(A_X^\astt)$ consists of two points:
	a generic point consisting of the homogeneous nilpotents and a closed point
	consisting of the homogeneous non-units. The closed point is of the form $\sqrt{(f)}$ for
	any $\tens$-balanced $v_n$-selfmap $f:\Si^d X \ra X$.
\end{corollary}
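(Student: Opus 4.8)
The plan is to prove that $A_X^\astt$ has exactly two homogeneous prime ideals, namely the set $\mathfrak{n}$ of homogeneous nilpotents (which I will show is an ideal) and the ideal $\m$ of homogeneous non-units, with $\mathfrak{n}\subsetneq\m$. Granting this, $\Spech(A_X^\astt)=\{\mathfrak{n},\m\}$ has two points; since $\mathfrak{n}\subsetneq\m$, the closure of $\{\mathfrak{n}\}$ is the whole space while $\{\m\}$ is closed, so $\mathfrak{n}$ is the generic point and $\m$ the closed point; and since (as we will see) $\m$ is the only homogeneous prime containing a $\tens$-balanced $v_n$-selfmap $f$ — such $f$ being non-nilpotent — we get $\m=\sqrt{(f)}$.

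First I would assemble three facts. \emph{(a) $A_X^\astt$ is gr-local, with maximal homogeneous ideal $\m$ equal to its set of homogeneous non-units.} Indeed $(0)=\C_\infty$ is a prime $\tens$-ideal of $\SHpf$ and $X\neq 0$, so $E_X^\astt$ is gr-local by Proposition~\ref{proposition:graded_ring_thing}; since $A_X^\astt=\Center([X,X]_\astt)\cap E_X^\astt$ and $E_X^\astt$ is inverse-closed while inverses of central elements are central, a homogeneous element of $A_X^\astt$ is a unit there iff it is a unit in $E_X^\astt$, so the homogeneous non-units of $A_X^\astt$ are the intersection of $A_X^\astt$ with the maximal homogeneous ideal of $E_X^\astt$, hence form an ideal. \emph{(b) Every homogeneous element of $A_X^\astt$ is a unit, nilpotent, or a $v_n$-selfmap.} For a degree-$0$ element $h$, Theorem~\ref{theorem:class} gives $h^k=c\cdot\id_X$ with $c\in\Z_{(p)}$; since $X\in\C_1$ we have $H\Q_\astt(X)=0$, so the finite spectrum $X$ is rationally trivial, hence so is $F(X,X)=DX\wedge X$, so $[X,X]_\astt$ is all $p$-torsion and $p^N\id_X=0$ for some $N$; therefore $c\cdot\id_X$, and thus $h$, is a unit if $p\nmid c$ and nilpotent if $p\mid c$. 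For $h$ of nonzero degree, Theorem~\ref{theorem:class} says $h$ is nilpotent or a $v_m$-selfmap, and the necessary condition $X\in\C_m$ for the existence of a $v_m$-selfmap, together with $X\notin\C_{n+1}$ (which, the $\C_i$ being nested, forces $X\notin\C_m$ for all $m>n$), pins down $m=n$. \emph{(c) There is a $\tens$-balanced $v_n$-selfmap $f\in A_X^\astt$, and it is not nilpotent.} Since $X\in\C_n$, Theorem~\ref{theorem:vn_existence} yields a $v_n$-selfmap; a suitable power is central, a further power is $\tens$-balanced by Lemma~\ref{lemma:vn_are_asymptotically_tensor_balanced}, this power is again a $v_n$-selfmap, hence lies in $\Center([X,X]_\astt)\cap E_X^\astt=A_X^\astt$ and is non-nilpotent because $X\notin\C_{n+1}$.

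Next I would show $\mathfrak{n}$ is a prime ideal. Suppose $a,b$ are homogeneous, $ab$ is nilpotent, and neither $a$ nor $b$ is nilpotent; by \emph{(b)} each is a unit or a $v_n$-selfmap, and by Proposition~\ref{proposition:vn_are_unique} a power of each $v_n$-selfmap among $\{a,b\}$ equals a power of $f$. Raising $(ab)^k=0$ to a high enough power and using graded-commutativity then forces $f^M=0$ for some $M\ge 1$ (after cancelling a unit if one of $a,b$ is a unit), contradicting \emph{(c)}; hence $\mathfrak{n}$ is prime (and it is an ideal since its homogeneous elements are exactly those of $\sqrt{0}$). Now let $\p$ be any homogeneous prime of $A_X^\astt$: it is proper, so $\p\subseteq\m$ by \emph{(a)}, and always $\mathfrak{n}\subseteq\p$. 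If $\p\neq\mathfrak{n}$, pick homogeneous $h\in\p\setminus\mathfrak{n}$; by \emph{(b)}, $h$ is a $v_n$-selfmap, so $h^i=f^j$ for some $i,j\ge 1$, whence $f\in\p$; then every homogeneous non-unit $g$ lies in $\p$ (if $g$ is nilpotent then $g\in\mathfrak{n}\subseteq\p$; if $g$ is a $v_n$-selfmap then $g^i=f^j\in\p$), so $\m\subseteq\p$ and $\p=\m$. Thus the homogeneous primes are precisely $\mathfrak{n}$ and $\m$, and $\mathfrak{n}\neq\m$ by \emph{(c)}. Since $f\notin\mathfrak{n}$, the only homogeneous prime containing $(f)$ is $\m$, so $\sqrt{(f)}=\m$; and for any other $\tens$-balanced $v_n$-selfmap $f'$ one has $(f')^i=f^j$ by Proposition~\ref{proposition:vn_are_unique}, hence $\sqrt{(f')}=\sqrt{(f)}=\m$.

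The one genuinely non-formal input is fact \emph{(b)} in degree $0$ — that a finite $p$-local spectrum of type $\ge 1$ has $p$-torsion identity and that the ``multiples of the identity'' in Theorem~\ref{theorem:class} are $\Z_{(p)}$-scalars — together with the existence, asymptotic uniqueness, and asymptotic centrality of $v_n$-selfmaps from \cite{HS98}; the rest is bookkeeping around gr-locality of $A_X^\astt$.
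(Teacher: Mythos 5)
Your argument is correct and is essentially the paper's own proof: both classify the homogeneous elements of $A_X^\astt$ via Theorem~\ref{theorem:class} (degree $0$: nilpotent or unit; nonzero degree: nilpotent or $v_n$-selfmap), show the homogeneous nilpotents form a prime and the homogeneous non-units the unique maximal homogeneous ideal, and use asymptotic uniqueness of $v_n$-selfmaps (Proposition~\ref{proposition:vn_are_unique}) to force any prime strictly containing the nilpotents to equal that maximal ideal, whence $\sqrt{(f)}=\m$. The only differences are minor: you derive the degree-$0$ dichotomy from the $p$-torsion of $[X,X]_\astt$ where the paper invokes the Nilpotence Theorem directly, and you leave implicit the easy fact---explicitly recorded in the paper and genuinely needed for $\mathfrak{n}\neq\m$ and for $\sqrt{(f)}=\m$ rather than the unit ideal---that a $v_n$-selfmap of $X\in\C_n\setminus\C_{n+1}$ is never a unit (e.g.\ because $K(\infty)_\astt(f)$ is nilpotent while $K(\infty)_\astt(X)\neq 0$).
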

\begin{proof}
	If $f \in A_X^0$ then $f^k = m.\id_X$ for some $k \ge 1$ and $m \in \Z$.
	If $p \mid m$ then it follows from the Nilpotence Theorem and the fact that $X \in \C_1$ that $f$ is nilpotent, 
	while if $p \nmid m$ then $m.\id_X$ is an isomorphism.
	Thus, every element of degree zero in $A_X^\astt$ is either nilpotent or a unit.
	On the other hand, every element of non-zero degree is either nilpotent or a $v_n$-selfmap;
	moreover, $v_n$-selfmaps are not nilpotent since $X \notin \C_{n+1}$, nor are they units.
	It follows that the homogeneous non-units form an ideal $\mathfrak{m}$ which is necessarily the unique
	maximal homogeneous ideal of $A_X^\astt$. On the other hand, the 
	ideal of homogeneous nilpotents $\mathfrak{n}$ is readily seen to be prime just using the fact that the product of two $v_n$-selfmaps is again a $v_n$-selfmap and the product of a unit and a $v_n$-selfmap is again a $v_n$-selfmap.
	Since a $v_n$-selfmap exists in $A_X^\astt$ (by Lemma~\ref{lemma:vn_are_asymptotically_tensor_balanced} and Theorem~\ref{theorem:vn_existence}), $\mathfrak{n} \subsetneq \mathfrak{m}$.
	Moreover, if $\mathfrak{p}$ is a homogeneous prime ideal then $\mathfrak{n} \subsetneq \mathfrak{p}$ implies that there is a $v_n$-selfmap $f$ contained in $\mathfrak{p}$.
	By the asymptotic uniqueness of $v_n$-selfmaps, it follows that every $v_n$-selfmap is contained in $\mathfrak{p}$, so that $\mathfrak{p} = \mathfrak{m}$.
	We conclude that the only homogeneous primes of $A_X^\astt$ are $\mathfrak{n}$ and~$\mathfrak{m}$.
	Finally note that the asymptotic uniqueness of $v_n$-selfmaps implies that $\mathfrak{m} = \sqrt{(f)}$ for any $\tens$-balanced $v_n$-selfmap $f : \Si^d X \ra X$.
\end{proof}
\begin{lemma}\label{lemma:cone_next}
	Let $n \ge 1$ and $X \in \C_n \setminus \C_{n+1}$. If $f:\Si^d X \ra X$ is a $v_n$-selfmap
	then
	$\cone(f) \in \C_{n+1} \setminus \C_{n+2}$.
\end{lemma}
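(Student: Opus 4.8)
The plan is to detect membership in the categories $\C_n$ via the Morava $K$-theory functors $K(i)_\astt(-)$: I would apply each of them to the defining exact triangle $\Si^d X \xra{f} X \to \cone(f) \to \Si^{d+1}X$ and read off the vanishing or non-vanishing of $K(i)_\astt(\cone(f))$ from the long exact sequence, one value of $i$ at a time. (Note that only the behaviour of $f$ under the $K(i)_\astt$ is used, i.e.\ the $v_n$-selfmap property; $\tens$-balancedness plays no role in this particular statement.) First I would translate the hypothesis into $K$-theoretic terms. Since $\C_{m+1} = \ker(K(m)_\astt(-))$ and the Hopkins--Smith classification forces the thick $\tens$-ideals, hence in particular these kernels, to form a linear chain, the condition $X \in \C_n \setminus \C_{n+1}$ is equivalent to $K(i)_\astt(X) = 0$ for $0 \le i \le n-1$ together with $K(n)_\astt(X) \neq 0$. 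I would then record the slightly stronger fact that $K(m)_\astt(X) \neq 0$ for \emph{every} $m \ge n$: were $K(m)_\astt(X)$ to vanish for some such $m$, then $X$ would lie in $\ker(K(m)_\astt(-)) = \C_{m+1} \subseteq \C_{n+1}$, contradicting $X \notin \C_{n+1}$. This is the only place the full classification is invoked, and keeping the indexing $\C_{m+1} = \ker(K(m)_\astt(-))$ straight is the main thing to watch.

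Granting this, I would first show $\cone(f) \in \C_{n+1}$, i.e.\ $K(i)_\astt(\cone(f)) = 0$ for $0 \le i \le n$. For $i < n$ both $K(i)_\astt(X)$ and $K(i)_\astt(\Si^d X)$ vanish, so in the long exact sequence $K(i)_\astt(\cone(f))$ sits between two zero groups and must vanish. For $i = n$, the definition of a $v_n$-selfmap gives that $K(n)_\astt(f)$ is an isomorphism, hence surjective and injective in every degree; exactness then forces the map $K(n)_\astt(X) \to K(n)_\astt(\cone(f))$ to vanish (since $K(n)_\astt(f)$ is onto) and the following map $K(n)_\astt(\cone(f)) \to K(n)_\astt(\Si^{d+1}X)$ to vanish (since the next arrow, which is $K(n)_\astt(f)$ up to a suspension isomorphism, is injective), whence $K(n)_\astt(\cone(f)) = 0$.

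Finally I would show $\cone(f) \notin \C_{n+2}$, i.e.\ $K(n+1)_\astt(\cone(f)) \neq 0$. Since $n+1 \neq n$, the map $K(n+1)_\astt(f)$ is nilpotent, while $K(n+1)_\astt(X) \neq 0$ by the first paragraph. A nilpotent endomorphism of a nonzero module cannot be surjective --- a surjection has surjective, and hence nonzero, iterates --- so $\operatorname{coker}(K(n+1)_\astt(f)) \neq 0$. Exactness of the long exact sequence at $K(n+1)_\astt(X)$ then realises this nonzero cokernel as a submodule of $K(n+1)_\astt(\cone(f))$ via the map induced by $X \to \cone(f)$, so $K(n+1)_\astt(\cone(f)) \neq 0$. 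Putting the three pieces together gives $\cone(f) \in \C_{n+1}\setminus\C_{n+2}$. Beyond the bookkeeping with suspension shifts in the long exact sequence and the off-by-one in the definition $\C_{m+1}=\ker(K(m)_\astt(-))$, I do not anticipate any real difficulty.
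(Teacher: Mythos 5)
Your proposal is correct and is exactly the argument the paper has in mind: its proof simply says the lemma is a routine exercise with the long exact sequences obtained by applying the Morava $K$-theories to an exact triangle for $f$, which is what you carry out (vanishing of $K(i)_\astt(\cone(f))$ for $i\le n$ from the isomorphism/zero behaviour of $K(i)_\astt(f)$, and non-vanishing of $K(n+1)_\astt(\cone(f))$ from the nonzero cokernel of a nilpotent endomorphism of the nonzero module $K(n+1)_\astt(X)$). Your bookkeeping with the nested chain $\C_{m+1}=\ker(K(m)_\astt(-))$ is also consistent with the paper's conventions, so there is nothing to add.
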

\begin{proof}
	This is a routine exercise using the long exact sequences obtained by applying Morava $K$-theories to an exact triangle for $f:\Si^d X \ra X$.
\end{proof}
We are now in a position to examine the structure of $\SHf$ via higher comparison maps.
The starting point is the comparison map for the unit object: the sphere spectrum.
It is well-known that the endomorphism ring of the sphere spectrum is
$\End_{\SHf}(\unit)\cong \Z$.  On the other hand,
$\pi_i(\unit) = 0$ for $i<0$ and all the graded endomorphisms of positive
degree are nilpotent by Nishida's theorem.
It follows that $\Spech(\End_{\SHf}^\astt(\unit)) \simeq \Spec(\End_{\SHf}(\unit))$ and 
that
the graded and ungraded comparison maps coincide.
Moreover, triangular localization with respect to $\mathfrak{p}^\astt \subset \End_{\SHf}^\astt(\unit)$
is the same as triangular localization with respect to $\mathfrak{p}^0 \subset \End_{\SHf}(\unit)$.
\begin{proposition}
The triangular localization of $\SHf$ at a prime $(p) \subset
\End_{\SHf}(\unit)$ is equivalent as a tensor triangulated category to 
the stable homotopy category of finite $p$-local spectra $\SHfp$
while the triangular localization of $\SHf$ at the prime $(0)$ is
equivalent to
the quotient of $\SHf$ by the finite torsion spectra
$\SHftor$.
\end{proposition}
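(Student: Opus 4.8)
The plan is to unwind ``triangular localization at a prime'' via Theorem~\ref{theorem:graded_localization} (in its ungraded form) and then identify the thick $\tens$-ideal that is quotiented out. Take $\Phi = \{\unit^{\tens n} \mid n \ge 1\} = \{\unit\}$, so that $\cZ_\Phi = \supp(\unit) = \Spc(\SHf)$ and $R_\Phi = R_\unit = [\unit,\unit] = \End_{\SHf}(\unit) \cong \Z$ (here $R_\unit = E_\unit = [\unit,\unit]$ because every endomorphism of $\unit$ is $\tens$-balanced and the colimit $\colim_n E_{\unit^{\tens n}}$ defining $R_\unit$ is constant). For a prime $\mathfrak{p} \subset \Z$, localizing at $\mathfrak{p}$ means taking the multiplicative set $S = \Z \setminus \mathfrak{p}$, and the localized category is the Verdier quotient $\SHf / \J_\mathfrak{p}$ with $\J_\mathfrak{p} = \langle \cone(n \colon \unit \ra \unit) \mid n \in S \rangle = \langle \unit/n \mid n \ge 1,\ n \in S \rangle$, where $\unit/n := \cone(n \colon \unit \ra \unit)$ is the mod-$n$ Moore spectrum (we may restrict to positive $n$ since $\cone(-n) \simeq \cone(n)$ and $\cone(1) = 0$). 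Since a Verdier quotient by a thick $\tens$-ideal is again a tensor triangulated category and the quotient functor is a morphism of such, it suffices to prove that $\J_{(p)}$ equals the thick $\tens$-ideal of finite $p$-acyclic spectra --- which, as recalled at the start of this section, is precisely $\ker\big((-)_{(p)} \colon \SHf \ra \SHf\big)$ and satisfies $\SHf / \J_{(p)} \simeq \SHfp$ by definition --- and that $\J_{(0)} = \SHftor$.

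The inclusions ``$\subseteq$'' are immediate. A generator $\unit/n$ with $p \nmid n$ satisfies $(\unit/n)_{(p)} \simeq \cone(n \colon \unit_{(p)} \ra \unit_{(p)}) = 0$, since $n$ acts invertibly on $\unit_{(p)}$; as the finite $p$-acyclic spectra form a thick $\tens$-ideal (being the kernel of the tensor-exact functor $(-)_{(p)}$), we get $\J_{(p)} \subseteq \{\text{finite }p\text{-acyclics}\}$. Likewise a generator $\unit/n$ with $n \ne 0$ is rationally trivial, so $\J_{(0)} \subseteq \SHftor$.

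For the reverse inclusions I would use that $\unit_{(p)}$ is the filtered colimit of copies of $\unit$ indexed by the positive integers prime to $p$, ordered by divisibility, with structure maps given by multiplication (and similarly $\unit_\Q$ is the colimit over all positive integers). Given a finite $p$-acyclic spectrum $X$, tensoring with $X$ gives $0 = X_{(p)} = \colim_n X$, and compactness of the finite spectrum $X$ yields $0 = [X, X_{(p)}] = \colim_n [X,X]$; hence $\id_X$ is sent to $0$ in this colimit, which means $n \cdot \id_X = 0$ in $[X,X]$ for some $n$ prime to $p$. Then $X$ is a retract of $\cone(n \colon X \ra X) \simeq X \tens \unit/n$, which lies in $\langle \unit/n \rangle \subseteq \J_{(p)}$, so $X \in \J_{(p)}$. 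The torsion case is word-for-word the same with $\unit_\Q$ in place of $\unit_{(p)}$ and $n$ allowed to be any positive integer. Together with the previous paragraph this establishes both identifications of $\J_\mathfrak{p}$, and hence the proposition.

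I do not expect a genuine obstacle. The only points needing a little care are the interchange $[X, \colim_n \unit] = \colim_n [X, \unit]$ (valid because finite spectra are compact and the indexing poset is filtered), the description of $\unit_{(p)}$ and $\unit_\Q$ as such filtered colimits, and the bookkeeping that ``triangular localization at $\mathfrak{p}$'' corresponds to $S = \Z \setminus \mathfrak{p}$ with $R_\unit \cong \Z$ and $\cZ_\Phi = \Spc(\SHf)$; the final identifications $\SHf/\J_{(p)} \simeq \SHfp$ and $\SHf/\J_{(0)} = \SHftor$ are then just the definitions used in this section.
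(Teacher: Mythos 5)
Your proposal is correct and follows the same overall skeleton as the paper's proof (unwind the triangular localization as the Verdier quotient by $\J_{\mathfrak{p}}=\langle\cone(d:\unit\ra\unit)\mid d\notin\mathfrak{p}\rangle$, check the easy inclusion, then show each finite $p$-acyclic, resp.\ torsion, spectrum $X$ satisfies $d\cdot\id_X=0$ for some $d$ prime to $p$, resp.\ $d\neq 0$, so that $X$ is a retract of $\cone(d\cdot\id_X)\simeq X\tens\cone(d:\unit\ra\unit)$), but the key middle step is done by a genuinely different argument. The paper stays entirely inside $\SHf$: from $p$-acyclicity it notes that each $\pi_i(X)$ is finite with no $p$-torsion, then runs a thick-subcategory bootstrap with $\I_X=\{Y\in\SHf\mid [Y,X]_i \text{ finite with no } p\text{-torsion for all } i\}$, which contains $\unit$ and hence all of $\SHf$, in particular $X$ itself, giving that $\id_X$ has finite order prime to $p$. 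You instead pass to the ambient category $\SH$, write $\unit_{(p)}$ (resp.\ $\unit_\Q$) as a filtered colimit of copies of $\unit$ along multiplication maps, and use compactness of the finite spectrum $X$ to get $0=[X,X_{(p)}]=\colim_d\,[X,X]$, so $\id_X$ is killed by some $d$ prime to $p$. Your route is slicker but imports structure external to the tensor triangulated category $\SHf$ (homotopy colimits in $\SH$, compactness of finite objects there, the identification of $p$-localization with smashing against that colimit, and the small cofinality/telescope point needed to invoke compactness), whereas the paper's argument is self-contained at the level of finite spectra and their homotopy groups. Your explicit bookkeeping that ``localization at $\mathfrak{p}$'' means $S=\Z\setminus\mathfrak{p}$ in Theorem~\ref{theorem:graded_localization} is exactly what the paper leaves implicit. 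One cosmetic slip: $(-)_{(p)}$ does not take values in $\SHf$, so the finite $p$-acyclics should be described as the kernel of $(-)_{(p)}:\SHf\ra\SH$ (equivalently of $\pi_\astt(-)\tens\Z_{(p)}$); this does not affect the argument.
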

\begin{proof}
	Proving the first statement amounts to showing that the 
thick $\tens$-ideal $\langle \cone(d:\unit \ra \unit) \mid p \nmid d\rangle$
is precisely the set of finite $p$-acyclic spectra.
One inclusion is easily obtained by applying $\pi_\astt(-)\tens \Z_{(p)}$ to an exact triangle for ${d:\unit \ra \unit}$.
On the other hand, if $X$ is a finite $p$-acyclic spectrum then $\pi_i(X)$ is finite with no $p$-torsion for all $i \in \Z$.
For any finite spectrum $X$ it is straightforward to check that $\I_X := \{Y \in \SHf \mid [Y,X]_i \text{ is finite with no $p$-torsion for all } i\in \Z\}$ 
is a thick subcategory of $\SHf$. If $X$ is finite $p$-acyclic then $\I_X$ contains $\unit$ and hence contains the whole of $\SHf$.
In particular $\I_X$ contains $X$ itself and we conclude that $\id_X$ has finite order~$d$ prime to $p$.
Then $\Si X \oplus X \simeq \cone(d.\id_X) \simeq X \tens \cone(d.\id_\unit)$ establishes that $X$ is contained in
$\langle \cone(d:\unit \ra \unit) \mid p \nmid d \rangle$.
A similar approach can be used to prove that $\SHftor$ is equal to $\langle \cone(d:\unit\ra\unit) \mid d \neq 0\rangle$.
\end{proof}
Then consider the map $\rho_\unit : \Spc(\SHf) \ra \Spec(\Z)$.
Triangular localization
with respect to 
the generic point $(0) \in \Spec(\Z)$ gives a map
\[\Spc(\SHf/\SHftor) \ra \Spec(\Q)\]
and we conclude that the fiber over $(0)$
is $V(\SHftor) \subset \SHf$. 
In fact, one can show that $\SHf/\SHftor \simeq D^b(\Q)$ (see \cite[page~113]{Margolis_book}) and hence
the spectrum of $\SHf/\SHftor$ is a single point.  Moreover, $\SHftor$
itself is prime and so we conclude that
the fiber over $(0)$ is the single point $\{\SHftor\}$. 

Next consider the fiber over a closed point $(p) \in \Spec(\Z)$.
Triangular localization provides a map
$\Spc(\SHfp) \ra \Spec(\Z_{(p)})$
and the fiber over the unique closed point $(p) \in \Spec(\Z_{(p)})$ is
$\supp(\cone(p.\id_{\unit})) = \closure{\{\C_2\}}$.
In other words, the fiber includes everything with the exception of a single point: $\C_1$.
The next step is to define $X_1 := \cone(p.\id_\unit)$ and consider
\[ \rho_{X_1,A_{X_1}^\astt}^\astt : \closure{\{ \C_2 \}} = \supp(X_1) \rightarrow \Spech(A_{X_1}^\astt). \]
By Corollary~\ref{corollary:two_point_description} the unique closed point 
of $\Spech(A_{X_1}^\astt)$ is of the form
$\sqrt{(f)}$ for any  
$\tens$-balanced $v_n$-selfmap $f$ of $X_1$ and by Lemma~\ref{lemma:cone_next}
the fiber over this point is
$\supp(\cone(f)) = \closure{\{\C_3\}}$.
Again the fiber consists of everything except for one point
and 
the process continues. At the $n$th step we have an object $X_n$ and a map
\[ \rho_{X_n,A_{X_n}^\astt}^\astt : \closure{\{ \C_{n+1} \}} = \supp(X_n) \ra \Spech(A_{X_n}^\astt). \]
The unique closed point is generated as a radical ideal by any $\tens$-balanced \mbox{$v_n$-selfmap} $f_n$ 
and the fiber over this point is $\supp(\cone(f_n)) = \closure{\{\C_{n+2}\}}$.
Altogether this gives a filtration  of the fiber
\begin{equation}\label{diagram:fiber}
\rho_{\SHfp,\unit}^{-1}((p)) = \closure{\{\C_2\}} \supset
\closure{\{\C_3\}} \supset
\closure{\{\C_4\}} \supset \cdots 
\end{equation}
where exactly one point is removed at each step.
All of this may be better appreciated by considering the picture of $\Spc(\SHf)$ displayed on page~\pageref{diagram:stable_rho_map}.
Triangular localization at $p$ focuses on a single branch and then each successive comparison map chops off the root heading towards $\C_{p,\infty}$.
Note that we obtain every irreducible closed subset of the fiber
\eqref{diagram:fiber} 
except for
the closed point $\closure{\{\C_\infty\}} = \{\C_\infty\}$.
The fact that this point is missed shouldn't be alarming since it corresponds
to an irreducible closed subset which is not Thomason.
If all the rings
involved are noetherian then we can only expect to obtain Thomason closed
subsets since 
our comparison maps
are spectral---when using these strategies we should take arbitrary
intersections of all of the closed subsets that we obtain.
Keep in mind that the Thomason closed subsets are a basis
of closed sets, so if we can obtain all the Thomason closed subsets of the
spectrum then we
have obtained the entire spectrum.

\bibliographystyle{alpha}
\bibliography{biblio}{}
\end{document}